\newtheorem{thm}{Theorem}[section]
\newtheorem{cor}[thm]{Corollary}
\newtheorem{lem}[thm]{Lemma}
\newtheorem{prop}[thm]{Proposition}
\newtheorem{conj}[thm]{Conjecture}
\theoremstyle{definition}
\newtheorem{defn}[thm]{Definition}
\newtheorem{rem}[thm]{Remark}
\numberwithin{equation}{section}
\newcommand{\na}{\nabla}
\newcommand{\pa}{\partial}
\renewcommand{\div}{\operatorname{div}}
\newcommand\al{\alpha}
\newcommand\de{\delta}
\newcommand\Ga{\Gamma}
\newcommand{\la}{\lambda}
\newcommand{\La}{\Lambda}
\newcommand{\T}{\mathbb{T}}
\newcommand{\R}{\mathbb{R}}
\newcommand{\Z}{\mathbb{Z}}
\newcommand{\N}{\mathbb{N}}
\newcommand{\supp}{\operatorname{supp}}
\newcommand{\I}{\operatorname{Id}}
\renewcommand{\dot}[1]{\accentset{\circ}#1}
\DeclarePairedDelimiter{\ceil}{\lceil}{\rceil}
\def\dint{\,\ThisStyle{\ensurestackMath{%
  \stackinset{c}{.2\LMpt}{c}{.5\LMpt}{\SavedStyle-}{\SavedStyle\phantom{\int}}}%
  \setbox0=\hbox{$\SavedStyle\int\,$}\kern-\wd0}\int}
\title{An Onsager-type theorem for SQG}
\author{Mimi Dai}
\address{Department of Mathematics, Statistics and Computer Science, University of Illinois at Chicago\\
851 S. Morgan Street, Chicago, IL 60607-7045, USA}
\email{mdai@uic.edu}
\author{Vikram Giri}
\address{Department of Mathematics, ETH Z\"urich\\
R\"amistrasse 101, 8092 Z\"urich, Switzerland}
\email{vikramaditya.giri@math.ethz.ch}
\author{R\u{a}zvan-Octavian Radu}
\address{Department of Mathematics, Princeton University\\
Fine Hall, Princeton, NJ 08544, USA}
\email{rradu@math.princeton.edu}
\keywords{surface quasi-geostrophic equation; Onsager conjecture; non-uniqueness}
\subjclass{35Q35; 35Q86; 76D03}
\begin{document}

\begin{abstract}
    We construct non-trivial weak solutions $\theta\in C_t^0C_x^{0-}$ to the surface quasi-geostrophic (SQG) equations, which have compact support in time and, thus, violate the conservation of the Hamiltonian. The result is sharp in view of the fact that such a conservation law holds for all weak solutions in the class $C_{t,x}^0 \subset L_{t,x}^3$ (\cite{IV}) and resolves the Onsager conjecture for SQG. The construction is achieved by means of a Nash iteration together with the linear decoupling method recently introduced in \cite{GR23}.
\end{abstract}
\maketitle

\bigskip

\section{Introduction}
\label{sec-intro}

%\subsection{Overview}

Consider the surface quasi-geostrophic (SQG) equations on a periodic spatial domain $\mathbb T^2 = \R^2/(2\pi\Z)^2$:
\begin{equation}\begin{cases}\label{sqg}
\pa_t \theta+u\cdot\nabla \theta= 0,\\
u= \nabla^\perp (-\Delta)^{-\frac{1}{2}}\theta.
\end{cases}
\end{equation}
The system is structurally an active scalar equation: the incompressible transporting velocity field $u:\mathbb T^2 \times \mathbb R \rightarrow \mathbb R^2$ is determined through the Riesz transform $T=\na^\perp \Lambda^{-1}=\na^\perp (-\Delta)^{-\frac12}$ by the transported scalar $\theta:\mathbb T^2 \times \mathbb R \rightarrow \mathbb R$. 

The SQG equations \eqref{sqg} are of interest, on the one hand, because of physical applications in the study of atmospheric and oceanic fluid flows (\cite{Ped}), and, on the other, because of the mathematical similarities with the 3D Euler equations (\cite{CMT}). On the mathematical side, the inviscid system \eqref{sqg} and its dissipative variants have been studied extensively. We refer the reader to \cite{CMT, Res, Cor, CW, CF, KNV, Mar, CaV, KN, CasC, KN2, CoVi, CTV2, CaCoS} and references there-in for information on such developments. 

The equations formally possess a number of conservation laws, among which that of the Hamiltonian. Indeed, let $\theta:\mathbb T^2 \times \mathbb R \rightarrow \mathbb R$ be a smooth solution of \eqref{sqg} and define 
\[\mathcal H(t)=\frac{1}{2}\int_{\mathbb T^2}|\Lambda^{-\frac12}\theta|^2(x, t)\,dx.\]
Then, the simple calculation
\begin{equation*}
    \frac{d}{dt} \mathcal{H} = \int_{\mathbb T^2} \Lambda^{-1} \theta \partial_t \theta dx = - \int_{\mathbb T^2} \Lambda^{-1} \theta \div \left(\theta \nabla^\perp \Lambda^{-1} \theta \right)dx = \int_{\mathbb T^2} \theta \nabla \Lambda^{-1}\theta \cdot \nabla^{\perp} \Lambda^{-1} \theta dx = 0
\end{equation*}
shows that the Hamiltonian $\mathcal{H}$ is conserved in time. In fact, following the classical work of Constantin-E-Titi (\cite{CET}), it was shown in \cite{IV} that this property is satisfied for all weak solutions in the class $\theta \in L_{t,x}^3$. Working on the scale of H\"older spaces, this leads to the following conjecture (first expressed in \cite{BSV}) which is analogous to the one famously raised by Onsager for the Euler equations (\cite{Onsager49}).
\begin{conj}\label{conj-1}
[Onsager conjecture for SQG]
All weak solutions $\theta\in C_{t,x}^0$ of ~\eqref{sqg} conserve the Hamiltonian. However, for any $\frac{1}{2}\leq \gamma<1$, there exist weak solutions of class $\Lambda^{-1}\theta\in C_t^0C^\gamma_x$ that fail to conserve the Hamiltonian.
\end{conj}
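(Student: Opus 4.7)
The plan is to prove the flexibility half of the conjecture via a Nash-type convex integration scheme, producing a nontrivial compactly supported weak solution $\theta\in C_t^0 C_x^{0-}$ that necessarily violates $\mathcal{H}$-conservation; the rigidity half is already known by the $L_{t,x}^3$ conservation result of \cite{IV}. I would iteratively construct a sequence $(\theta_q, R_q)_{q\ge 0}$ of smooth approximate solutions to the SQG--Reynolds relation
\begin{equation*}
    \partial_t \theta_q + \operatorname{div}(\theta_q u_q) = \operatorname{div}(R_q), \qquad u_q = \nabla^\perp \Lambda^{-1}\theta_q,
\end{equation*}
with the vector-valued error $R_q$ satisfying $\|R_q\|_{L^1_{t,x}}\le \delta_{q+1}$ and $\|\Lambda^{-1}\theta_q\|_{C^0_t C^\gamma_x}$ bounded uniformly in $q$, where $\delta_q = \lambda_q^{-2\beta}$ are super-exponentially decaying amplitudes and $\lambda_q$ are super-exponentially growing frequencies. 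Compact temporal support is preserved by confining the support of $R_q$ to a shrinking neighborhood of a fixed interval and starting from $\theta_0\equiv 0$. To obtain a nontrivial solution one perturbs at intermediate stages so that $\mathcal{H}$ is positive at some time, which then automatically rules out conservation given the compact support.

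The core of the construction is the perturbation $w_{q+1}=\theta_{q+1}-\theta_q$. Because the Biot--Savart operator $u=\nabla^\perp \Lambda^{-1}\theta$ is of order zero, the velocity associated to a high-frequency perturbation is of the same size and at essentially the same frequencies as the perturbation itself; there is no free derivative to exchange, and the natural Euler-style cancellation $w_{q+1}\nabla^\perp\Lambda^{-1}w_{q+1}\approx -R_q$ cannot be closed at the Onsager-critical regularity using purely nonlinear Mikado or intermittent building blocks. The key idea is to adapt the linear decoupling method of \cite{GR23}: rather than relying on the self-interaction of a single oscillatory profile, we introduce two species of building blocks $\psi^{(1)}_k$ and $\psi^{(2)}_k$, indexed over a finite set of wave vectors $k$, and set $w_{q+1}=\sum_k a_{q+1,k}\bigl(\psi^{(1)}_k+\psi^{(2)}_k\bigr)+(\text{correctors})$ so that the dominant contribution to the new error is the \emph{linear} cross interaction $\sum_k a_{q+1,k}^2\,\psi^{(1)}_k\,\nabla^\perp\Lambda^{-1}\psi^{(2)}_k$. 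This cross interaction is prescribed to cancel a principal-direction decomposition of $R_q$, while each species individually contributes only either genuinely high-frequency pieces (which are absorbed into $R_{q+1}$) or terms controllably small in $C_x^{0-}$.

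The iteration proceeds through the standard Nash steps: (i) mollify $(\theta_q, R_q)$ at a small scale $\ell=\ell(\lambda_q,\lambda_{q+1})$ to avoid derivative loss; (ii) define local coefficient functions $a_{q+1,k}(x,t)$ on a partition of the support of $R_q$ so that the averaged cross interaction reproduces $-R_q$ up to acceptable error; (iii) add transport-adapted phase functions and divergence/mean-value correctors to preserve the structural identities and the compact temporal support; (iv) estimate each component of $R_{q+1}$---transport, Nash, linear self-interaction, and corrector errors---in $L^1_{t,x}$, using an anti-divergence operator adapted to the inhomogeneous frequency supports of the two species; (v) verify the inductive bounds and pass to the limit in $C_t^0 C_x^{0-}$, using the smallness of $R_q$ and the H\"older control of $\Lambda^{-1}\theta_q$ to interpret the limit as a weak solution of \eqref{sqg}.

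The main obstacle is step (iv): controlling the interaction errors that the zero-order Biot--Savart law generates when the perturbation is built out of two decoupled species. One must simultaneously exploit the linear cancellation to keep the oscillation frequency as low as possible, so that $\Lambda^{-1}w_{q+1}$ remains bounded in $C^\gamma$ as $\gamma\to 1$, while preventing the cross interaction between $\psi^{(1)}_k$ and the long-range component of $\Lambda^{-1}\psi^{(2)}_{k'}$ for $k\neq k'$ from re-contaminating the low-frequency content of $R_{q+1}$. Balancing these constraints forces a tight choice of the relative frequencies, amplitudes, and spatial supports of the two species---the principal quantitative content of the linear decoupling method---and is where the Onsager-endpoint threshold for SQG is ultimately decided.
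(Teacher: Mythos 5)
Your proposal correctly identifies the overall architecture (convex integration for a relaxed SQG--Reynolds system, compact temporal support forcing non-conservation of $\mathcal H$, rigidity already known from \cite{IV}), but it misrepresents the decoupling mechanism that is the actual content of the proof, and the scheme as you describe it would not close. The ``linear decoupling method'' of \cite{GR23} is not a construction with two species of spatial building blocks whose bilinear cross-interaction is prescribed to cancel $R_q$. It consists of a \emph{Newton step}: one solves the Newtonian linearization of SQG around $u_q$ with a temporally oscillatory forcing $\sum_\xi f_\xi(\mu_{q+1}t)\,\nabla^\perp\cdot\div A_\xi$, where $f_\xi = 1-g_\xi^2$ and the $1$-periodic profiles $g_\xi$ have pairwise disjoint temporal supports. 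This Newton perturbation absorbs the time-averaged part of the stress and leaves a remainder $-\sum_\xi g_\xi^2\,\nabla^\perp\cdot\div A_\xi$ in which, at each instant, only \emph{one} direction $\xi$ is active. The subsequent Nash perturbation then cancels this remainder through its own \emph{quadratic self-interaction} (analyzed via a bilinear microlocal lemma), and the cross terms between different directions vanish identically because the corresponding temporal supports are disjoint. Nothing in your proposal supplies this mechanism: you name the cross-direction interaction problem as ``the main obstacle'' and assert that a ``tight choice of relative frequencies, amplitudes, and spatial supports'' resolves it, but in 2D no such spatial/frequency separation is available (this is precisely why Mikado flows fail here), and a short computation shows the obstruction is not a matter of tuning. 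If two directions $\xi\neq\xi'$ are simultaneously active, the interaction $T[a_\xi e^{i\lambda_{q+1}\Phi\cdot\xi}]\cdot\nabla(a_{\xi'}e^{i\lambda_{q+1}\Phi\cdot\xi'})$ has size $\delta_{q+1}\lambda_{q+1}^2$ and oscillates at frequency $\sim\lambda_{q+1}$; applying the order $-2$ antidivergence adapted to $\nabla^\perp\cdot\div$ yields an error of size $\delta_{q+1}$, not $\delta_{q+2}$, so the iteration does not close at any regularity, let alone the Onsager endpoint. The same computation defeats your claim that the self-interactions of each ``species'' produce ``genuinely high-frequency pieces which are absorbed into $R_{q+1}$.''

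Two further points. First, you propose to control $R_q$ in $L^1_{t,x}$ with a vector-valued stress; the H\"older-space scheme needed here requires uniform ($C^0$) control of a symmetric $2$-tensor stress, since the amplitudes of the next perturbation are defined pointwise via the geometric lemma as $\gamma_\xi\bigl(\mathrm{Id}-R_q/\delta_{q+1}\bigr)$ composed with the flow gradients, and an $L^1$ bound gives no pointwise information. Second, even granting a working decoupling, you omit the quantitative heart of the endpoint result: the Newton perturbation generates a new gluing error of size $\tau_q^{-1}\delta_{q+1}/\mu_{q+1}$ which is \emph{not} yet of size $\delta_{q+2}$, so the linear step must be iterated a fixed number $\Gamma=\Gamma(\beta)$ of times; and the transport error (material derivative hitting the temporal profiles $g_\xi$, costing a factor $\mu_{q+1}$) must be balanced against the Newton error (costing $\mu_{q+1}^{-2}$), which fixes $\mu_{q+1}=\delta_{q+1}^{1/2}\lambda_{q+1}^{1/2}\lambda_q$ and is what ultimately yields the full range $\beta<1/2$, i.e.\ $\gamma<1$. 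Without these ingredients the argument is a program, not a proof.
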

The condition $\gamma \geq \frac{1}{2}$ is added so as to ensure that $\Lambda^{-\frac{1}{2}}\theta \in L^2$, which is, of course, necessary to make sense of the Hamiltonian.

As is the case for the Euler equations, the only known approach to the flexible side of conjecture \ref{conj-1} is that of the Nash iteration technique. This method, which was introduced by Nash in the context of the isometric embedding problem (\cite{Nash}), was adapted by De Lellis and Sz\'ekelyhidi (\cite{DLS1, DLS2}) in order to construct flexible solutions to the Euler equations. This triggered a series of works which eventually led to the resolution of the Onsager conjecture by Isett in 3D \cite{Is} (see also \cite{BDLSV}) and, more recently, by the second and third named authors in 2D \cite{GR23}. We refer the reader to the excellent surveys \cite{BV20, DLS3, DLSs1,DLSs2} for more information on the developments, as well as for applications to other equations. 

For the SQG system \eqref{sqg}, the question of constructing non-unique solutions was first raised by De Lellis and Sz\'ekelyhidi in \cite{DLS3}. This was accomplished by Buckmaster, Shkoller and Vicol in \cite{BSV}, who contributed the first partial result towards conjecture \ref{conj-1} by constructing $\mathcal{H}$-non-conservative weak solutions which satisfy $\Lambda^{-1}\theta \in  C_t^0C_x^\gamma$ for any $\frac{1}{2}<\gamma < \frac{4}{5}$. Their approach consists in a Nash iteration scheme at the level of the potential field $v = \Lambda^{-1} u$ (note that $-\nabla^\perp \cdot v = \theta$), where the high-high-to-low interaction is carefully exhibited in Fourier space. Later, solutions in the same regularity class were constructed by Isett and Ma (\cite{IM}), who work at the level of the scalar $\theta$ and employ a bilinear microlocal lemma (extending the linear microlocal lemma of \cite{IV}) to treat the high-high-to-low cascade. These works have influenced all other Nash iterative constructions of solutions to \eqref{sqg}: in \cite{CKL} infinitely many stationary solutions to dissipative SQG are constructed; the works \cite{ BHP, DP, DP-s} show non-uniqueness for a forced SQG system.

As also pointed out in \cite{BSV}, the main obstacle to achieving the sharp regularity of conjecture \ref{conj-1} is the presence of unwanted interaction between high-frequency oscillations corresponding to different directions. This difficulty can be overcome for the 3D Euler equations (\cite{Is}) by defining perturbations based on spatially separated pipe (Mikado) flows. For \eqref{sqg}, as for 2D Euler, these flows are not available, and the decoupling of directions has to be achieved by a different method. The goal of this paper is to provide a proof of the flexible side of conjecture \ref{conj-1}. The decoupling is achieved by adapting the linear iteration introduced in \cite{GR23} for the 2D Euler equations, while in the Nash perturbation step, we use a combination of the ideas of \cite{BSV} and \cite{IM}.

\subsection{Notion of weak solutions and the main result}
\label{sec-result}

As remarked in \cite{BSV}, weak solutions can be defined in spaces of sufficiently low regularity so that conjecture \ref{conj-1} makes sense.
\begin{defn} 
A function $\theta\in L^2_{loc}(\mathbb R; \dot H^{-\frac12}(\mathbb T^2))$ is said to be a weak solution of (\ref{sqg}) if 
\begin{equation}\notag
\int_{\mathbb R}\int_{\mathbb T^2} \Lambda^{-\frac12}\theta\partial_t\Lambda^{\frac12}\psi \, dxdt-\frac12\int_{\mathbb R}\int_{\mathbb T^2}\Lambda^{-\frac12}\theta \Lambda^{\frac12}\left([\nabla^{\perp}\Lambda^{-1}, \nabla\psi]\theta \right)\, dxdt=0, \,\,\, \forall \psi \in C_c^\infty(\mathbb T^2 \times \mathbb R),
\end{equation}
where
  $[\nabla^{\perp}\Lambda^{-1}, \nabla\psi]\theta := \nabla^{\perp}\Lambda^{-1} \cdot (\theta \na\psi) - \na\psi \cdot \nabla^{\perp}\Lambda^{-1}\theta$. 
\end{defn}

The definition implicitly uses the fact that the commutator $[\nabla^{\perp} \Lambda^{-1}, \nabla\psi]$ is bounded from $H^{-\frac12}$ to $H^{\frac12}$. We refer the reader to \cite{BSV} for further discussions regarding different notions of weak solutions and their equivalence. 

%\begin{equation}\notag
%\int_{\mathbb T^2} \theta u\cdot \nabla\psi \, dx
%=-\frac12\int_{\mathbb T^2} \theta [\nabla^{\perp} \Lambda^{-1}, \nabla\psi]\theta\, dx
%\end{equation}
%which is valid for $\theta\in\dot H^{-\frac12}$.

The main result of this paper is the following answer to conjecture \ref{conj-1}.
\begin{thm}[Main Theorem]\label{thm.main}
Let $\frac12\leq \gamma<1$. There exists a non-trivial weak solution $\theta$ of \eqref{sqg} satisfying $\Lambda^{-1}\theta \in C^0_tC_x^{\gamma}$ and having compact support in time.
\end{thm}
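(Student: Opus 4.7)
The plan is to prove Theorem~\ref{thm.main} by a convex-integration / Nash iteration carried out directly on the scalar $\theta$. I construct inductively a sequence of smooth triples $(\theta_q, u_q, R_q)_{q \geq 0}$ with $u_q = \na^\perp \La^{-1}\theta_q$ and a vector field $R_q$ solving
\begin{equation*}
\pa_t \theta_q + \div(\theta_q u_q) = \div R_q,
\end{equation*}
controlled by parameters $\la_q \sim a^{b^q}$ and $\de_q \sim \la_q^{-2\al}$ with $\al$ slightly less than $1-\ga$, and obeying
\begin{equation*}
\norm{\theta_q}_{C^0_{t,x}} \lec \de_q^{1/2}, \qquad \norm{R_q}_{C^0_{t,x}} \lec \de_{q+1}, \qquad \norm{\theta_q}_{C^1_{t,x}} \lec \de_q^{1/2} \la_q.
\end{equation*}
The threshold $\al > 1-\ga$ is precisely what guarantees that $\La^{-1}\theta_q$ is Cauchy in $C^0_t C^\ga_x$.

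At the inductive step I do not attempt to cancel $R_q$ by a single perturbation mixing many oscillation directions; as noted in \cite{BSV, IM}, direction-to-direction cross interactions then obstruct the passage to the sharp Onsager exponent. Instead I adapt the linear decoupling technique of \cite{GR23}. Between the levels $\la_q$ and $\la_{q+1}$ I insert a large but $q$-independent number of intermediate frequencies $\la_q = \mu_0 < \mu_1 < \dots < \mu_M = \la_{q+1}$, and at each sub-level $m$ I add a building block $\vartheta_m = a_m(x,t)\, e^{i\mu_m \xi_m \cdot x} + \text{c.c.}$ concentrated in a single direction $\xi_m$, whose amplitude is transported along a mollified flow of $u_q$. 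A bilinear microlocal computation in the spirit of \cite{IM} shows that the low-frequency part of $\div(\vartheta_m \na^\perp \La^{-1} \vartheta_m)$ is of the form $c\, \xi_m^\perp |a_m|^2$ plus lower-order terms, which is used to cancel the rank-one projection of the running error onto $\xi_m^\perp$. Cycling through a rich enough finite set $\{\xi_m\}_{m=1}^M$ reduces $R_q$ to size $\de_{q+2}$ while keeping the sub-level blocks disjoint in Fourier space.

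After all sub-levels are added, the new error $R_{q+1}$ decomposes into three pieces: a \emph{transport} error coming from mollification in space-time and from the commutator of $\na^\perp \La^{-1}$ with multiplication by a slow amplitude; a \emph{self-interaction} error gathering the high-frequency residual of $\div(\vartheta_m \na^\perp \La^{-1}\vartheta_m)$ at each sub-level; and a \emph{cross-interaction} error $\div(\vartheta_m \na^\perp \La^{-1}\vartheta_{m'} + \vartheta_{m'} \na^\perp \La^{-1}\vartheta_m)$ for $m \neq m'$. Each piece is put in divergence form by an inverse-divergence operator tuned to the Fourier support of its inputs. The transport error is handled by choosing the time cutoffs of $a_m$ shorter than $\la_q^{-1}\de_q^{-1/2}$ so that the advective derivative along the slow flow is small. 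The self-interaction error is controlled because its low-frequency part has been used, by design, to cancel the previous error. The cross-interaction error is what the linear decoupling is built for: since $\vartheta_m, \vartheta_{m'}$ live in disjoint annuli around $\mu_m, \mu_{m'}$, the inverse divergence gains $\mu_m/\mu_{m'}$ for $m<m'$, and telescoping keeps the total cross contribution under $\de_{q+2}$.

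The main obstacle I anticipate is closing the sub-iteration without losing the regularity gain. Concretely, the integer $M$ and the geometric spacing of the $\mu_m$ must be chosen so that (i) at every sub-level the shrinkage of the current error strictly dominates the newly created error, (ii) the compounded losses from $M$ mollifications and commutator estimates do not erode the reduction from $\de_{q+1}$ to $\de_{q+2}$, and (iii) the total perturbation $\theta_{q+1}-\theta_q$ has a Hölder norm of the expected size $\de_{q+1}^{1/2}\la_{q+1}^\al$. Once these balances are secured, the sequence $\theta_q$ converges to a weak solution $\theta$ with $\La^{-1}\theta\in C^0_t C^\ga_x$. Non-triviality and compact temporal support are arranged, as in \cite{BSV}, by cutting off every amplitude $a_m$ in time from the first stage and by seeding the iteration with a prescribed bump supported in the interior of the time interval.
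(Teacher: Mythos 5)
Your proposal diverges from the paper in two essential ways, and each hides a genuine gap. First, the cancellation mechanism you describe is not available for SQG. You claim that the low-frequency part of $\div\big(\vartheta_m\,\na^\perp\La^{-1}\vartheta_m\big)$ is $c\,\xi_m^\perp|a_m|^2$ plus lower-order terms, and you propose to use it to cancel a \emph{vector-valued} error $R_q$ appearing as $\div R_q$. Because the multiplier $\na^\perp\La^{-1}$ is odd and of order zero, this leading term vanishes identically: writing $\vartheta=ae^{i\la\xi\cdot x}+\mathrm{c.c.}$, the zero-frequency contributions of the two cross terms cancel exactly. The first non-trivial low-frequency contribution is one order down in the fast frequency and has the form $\na^\perp\cdot\div\big(\tfrac{1}{4\la}|a|^2\,\xi\otimes\xi/|\xi|^3\big)$ — this is the content of the bilinear microlocal lemma (lemma~\ref{le-bilinear-odd}, following \cite{IM}). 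This forces the relaxation to be $\na^\perp\cdot\div$ of a \emph{symmetric $2$-tensor}, the geometric lemma to decompose $R_q$ into $\xi\otimes\xi$ (not $\xi^\perp$), and, crucially, the perturbation cancelling an error of size $\de_{q+1}$ at frequency $\la_{q+1}$ to have amplitude $\de_{q+1}^{1/2}\la_{q+1}^{1/2}$, not $\de_{q+1}^{1/2}$. Your bookkeeping $\norm{\theta_q}_{C^0}\lec\de_q^{1/2}$ with $\de_q=\la_q^{-2\al}$, $\al\approx 1-\ga$, is therefore inconsistent with the actual interaction: the constructed $\theta$ cannot be bounded in $C^0$ near the Onsager threshold (it lives in $C^{0-}$), and the correct convergence statement is that $\La^{-1}\theta_q$ is Cauchy in $C^\ga$ for $\ga<\be+\tfrac12$ with $\de_q=\la_q^{-2\be}$, $\be<\tfrac12$.

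Second, your reading of the ``linear decoupling'' of \cite{GR23} as a ladder of intermediate spatial frequencies $\la_q=\mu_0<\dots<\mu_M=\la_{q+1}$ is not what that method (or this paper) does, and the frequency-ladder version is exactly the kind of scheme that is not known to close at the sharp exponent. The decoupling here is \emph{temporal}: one first runs $\Gamma$ Newton steps, solving the linearization of SQG around $u_q$ with forcing $\sum_\xi f_\xi(\mu_{q+1}t)\,\na^\perp\cdot\div A_\xi$, where the $1$-periodic profiles $g_\xi$ (with $f_\xi=1-g_\xi^2$) have pairwise disjoint temporal supports. This replaces $R_q$ by $-\sum_\xi g_\xi^2\,\na^\perp\cdot\div A_\xi$ plus a smaller gluing error, so that at each time only one direction is active; the Nash perturbations for different $\xi$ then all live at the single frequency $\la_{q+1}$ but never coexist in time, and the cross-interaction terms you list simply do not arise. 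The price is a new quadratic ``Newton error'' $\div^{-1}\De^{-1}\na^\perp\cdot\div\big(w^{(t)}\theta^{(t)}\big)$ and a transport error proportional to the temporal frequency $\mu_{q+1}$, whose optimization fixes $\mu_{q+1}=\de_{q+1}^{1/2}\la_q\la_{q+1}^{1/2}$; controlling the Newton error requires the sharp bilinear H\"older estimates of section~\ref{sec-tech}. Your proposal contains no analogue of any of this, and the balance you flag as ``the main obstacle'' — that the sub-iteration's compounded losses not erode the gain from $\de_{q+1}$ to $\de_{q+2}$ — is precisely the point left unresolved; as written, the scheme does not close.
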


To achieve the critical Onsager regularity for SQG, the Nash iteration scheme involves two steps, as for the 2D Euler case in \cite{GR23}. In the first step, we construct perturbations which solve the Newtonian linearization of the SQG equations, augmented with temporally oscillatory forcing (following \cite{GR23}, we call this the Newton step). The purpose of this procedure is to decouple the different directions of the Reynolds stress, so that we are left with errors which are, at each time, essentially simple tensors. In the second step, we perform the standard Nash perturbation to reduce the size of the error. In terms of setup, we solve the SQG-Reynolds system iteratively directly at the level of the scalar $\theta$: 
\begin{equation}\begin{cases}\label{ASE-q}
\pa_t \theta_q+u_q\cdot\nabla \theta_q= \ \nabla^{\perp}\cdot \div R_q\\
u_q=\ T[\theta_q], 
\end{cases}
\end{equation}
where $q \in \mathbb N$ and $R_q$ is a symmetric $2$-tensor. We consider the differential operator $\nabla^{\perp}\cdot\div$, different from the double divergence form used in~\cite{IM}. This allows us to erase errors which are built on modulated simple tensors of the form $\xi \otimes \xi$, $\xi \in \mathbb Z^2$, and, therefore, the standard geometric decomposition lemma used for the Euler equations (and, for example, the isometric embedding problem) can be employed (see lemma \ref{le-geo}). This relaxation is closer in spirit to the original approach of \cite{BSV} though we stay at the level of $\theta$ and do not pass to the potential $v = \nabla^\perp \Delta^{-1} \theta$.
Implementing the scheme requires sharp estimates of commutators and bilinear forms associated with the nonlinear structure. This is done in section~\ref{sec-tech} and is the main technical difference between the present paper and the 2D Euler case studied in \cite{GR23}.

In the final stages of writing this manuscript, we have learned that Isett and Looi~\cite{IL} have an independently obtained approach that also resolves the Onsager conjecture for SQG.

\subsection{Further questions}
As the Newton-Nash iteration is able to construct flexible solutions up to the Onsager exponent for both the 2D Euler equations (\cite{GR23}) and the SQG equations, it would be natural to expect that the method can be used to construct such solutions to the generalized SQG equations, which interpolate between the two. The generalized SQG equations are active scalar equations where the velocity field $u$ is related to the density $\theta$ by
\[ u = \na^\perp (-\Delta)^{-(s+1)/2}, \]
where $s=-1$ corresponds to 2D Euler in vorticity form and $s=0$ corresponds to SQG. The main difficulties seem to be related to obtaining sharp Fourier analytic estimates analogous to the ones proved in section \ref{sec-tech}. While, due to the fact that the solutions constructed in the present work are below $C_x^0$, we only have to consider bilinear Fourier multiplier operators, this would not be the case for any other exponent in the gSQG equations and bounds for trilinear Fourier multiplier operators seem to become necessary.  

Perhaps a more fundamental problem is that of obtaining high regularity solutions to \eqref{sqg} for which $\|\theta\|_{L^2}(t)$ is not conserved. The regularity threshold for this conservation law is $C_x^{1/3}$, and, thus, any flexible solution with regularity close to the threshold would necessarily conserve the Hamiltonian. In particular, the solutions cannot have compact support in time -- this seems to be a serious obstruction for all available techniques.

On the other hand, if one considers the case of active scalars for which the structure law is given by a zero-order Fourier multiplier operator with non-odd symbol (as is the case in \cite{IV}), the $C_x^{1/3}$ Onsager threshold is likely within reach of the methods of \cite{GR23} and of the present paper. Importantly, these systems no longer admit Hamiltonian structures.

\subsection{Outline of the paper}
To conclude this section, we provide an outline for the rest of the paper. 

Section \ref{sec-tech} is devoted to proving technical lemmas on the boundedness of bilinear Fourier multiplier operators on H\"older spaces. 

In section \ref{sec-induct}, we state the main iterative proposition \ref{prop.main} and prove the main theorem~\ref{thm.main} assuming the proposition. We end the section with a heuristic analysis which leads to the critical regularity threshold for the non-conservative weak solutions. 

The proof of the main iterative proposition \ref{prop.main} will be completed in two steps -- the Newton step and Nash step -- in sections \ref{sec-Newton} and \ref{sec-Nash}, respectively. Some auxiliary estimates and well-known lemmas are provided in the appendices. 

\subsection*{Acknowledgements}
{The authors wish to thank Camillo De Lellis for useful discussions and, in particular, for an algebraic identity that was used in the proofs of lemmas~\ref{lem.bilin} and~\ref{le-bilinear-odd}. R.Radu is also thankful to Noah Stevenson for discussions related to the Fourier analytic content of this paper. M.Dai is grateful for the support of the NSF grants DMS-2009422 and DMS-2308208.}

\section{Preliminary harmonic analysis}
\label{sec-tech}
We will require a couple of Fourier analysis lemmas that provide H\"older estimates on the bilinear operators that will appear in course of our proof. To avoid tracking non-essential constants, we use the notation $A\lesssim B$. The reader can find the conventions and preliminaries used in this section in appendix~\ref{sec.bha}.

\begin{lem}\label{lem.bilin}
      Let $0<\alpha<1$. Then, the bilinear Fourier multiplier operator 
    \begin{equation}
        T[f,g] = \Lambda^{-1}\big((\Lambda f)g - f(\Lambda g)\big),
    \end{equation}
    applied to smooth functions $f, g: \mathbb
     T^2 \rightarrow \mathbb R$, satisfies the estimate
    \begin{equation}
        \|T[f,g]\|_{N+\alpha} \lesssim \|f\|_{N+\alpha} \|g\|_\alpha + \|f\|_{\alpha} \|g\|_{N+\alpha}, \,\,\, \forall N \geq 0,
    \end{equation}
    with implicit constant depending only on $\alpha$ and $N$.
\end{lem}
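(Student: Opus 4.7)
The plan is to exploit the algebraic identity
\[
|\eta|-|\zeta| \;=\; \frac{(\eta+\zeta)\cdot(\eta-\zeta)}{|\eta|+|\zeta|}
\]
(which, per the acknowledgements, is due to De Lellis) in order to rewrite the symbol $m(\eta,\zeta) = (|\eta|-|\zeta|)/|\eta+\zeta|$ of $T$ in the factored form
\[
m(\eta,\zeta) \;=\; \sum_{i=1}^{2} \frac{(\eta+\zeta)_i}{|\eta+\zeta|} \cdot \frac{(\eta-\zeta)_i}{|\eta|+|\zeta|}.
\]
Translating to the physical side, this gives the decomposition $T[f,g] = \sum_i R_i \widetilde T_i[f,g]$, where $R_i = \partial_i \Lambda^{-1}$ is the $i$-th Riesz transform (acting on the output) and $\widetilde T_i$ is the bilinear Fourier multiplier with symbol $\widetilde m_i(\eta,\zeta) = (\eta-\zeta)_i/(|\eta|+|\zeta|)$. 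The crucial point is that while the original symbol $m$ fails to be smooth on the high-high-to-low locus $\{\eta+\zeta=0\}$, the new symbol $\widetilde m_i$ is smooth and homogeneous of degree zero on $\R^4\setminus\{0\}$, and is therefore amenable to classical bilinear multiplier analysis.

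Given this decomposition, it suffices to establish the H\"older bound $\|\widetilde T_i[f,g]\|_{N+\alpha} \lesssim \|f\|_{N+\alpha}\|g\|_\alpha + \|f\|_\alpha \|g\|_{N+\alpha}$ and then invoke the boundedness of the Riesz transform on $C^{N+\alpha}$ (valid for $0<\alpha<1$). I would prove the bilinear bound by a Littlewood--Paley decomposition: writing $f = \sum_j f_j$ and $g = \sum_k g_k$ with $f_j, g_k$ frequency-localized at scales $2^j$ and $2^k$, the sum $\widetilde T_i[f,g] = \sum_{j,k}\widetilde T_i[f_j,g_k]$ splits into the high-low ($j\gg k$), low-high ($j\ll k$), and high-high ($|j-k|\leq 2$) regimes. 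In the first two, the output frequency is localized at the high scale and $\widetilde m_i$ is uniformly bounded and Coifman--Meyer smooth, yielding $\|\widetilde T_i[f_j, g_k]\|_\infty \lesssim \|f_j\|_\infty \|g_k\|_\infty$; the low index is then summed geometrically against the $C^\alpha$ norm and the high one against the $C^{N+\alpha}$ norm. In the high-high regime the output frequency can range all the way down to zero, but the smoothness of $\widetilde m_i$ produces sufficient decay of the associated bilinear kernel at each dyadic output scale that the sum over output frequencies still converges to the same estimate.

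The main obstacle throughout is the high-high-to-low cascade, which precisely reflects the singularity of $m$ at $\eta+\zeta=0$: standard Coifman--Meyer bilinear multiplier theorems do not apply to $m$ itself, since the symbol is merely bounded but not smooth there. The algebraic identity above is what enables one to factor this singularity out as a Riesz transform acting on the output variable, leaving behind a smooth bilinear symbol to which classical Littlewood--Paley techniques apply directly.
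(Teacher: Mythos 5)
Your proposal takes essentially the same route as the paper's proof: the same algebraic identity is used to factor the symbol as a Riesz transform acting on the output variable composed with the bilinear multiplier $(\eta-\zeta)_i/(|\eta|+|\zeta|)$, and the high-low/low-high/high-high trichotomy is then resolved by a Littlewood--Paley block estimate exactly as in the paper (the paper invokes the factorization only for the high-high piece and handles the other two regimes by a direct Bernstein argument with $\Lambda^{-1}$, but this is a cosmetic difference). One caveat: the symbol $\widetilde m_i$ is \emph{not} smooth on $\mathbb{R}^4\setminus\{0\}$ and is not a Coifman--Meyer symbol globally --- second and higher derivatives of $|\eta|$ and $|\zeta|$ blow up on the planes $\{\eta=0\}$ and $\{\zeta=0\}$, violating the bound $|\partial^{a}_\eta\partial^b_\zeta \widetilde m_i|\lesssim(|\eta|+|\zeta|)^{-|a|-|b|}$ --- so the blanket appeal to classical bilinear multiplier theory does not apply as stated; the argument survives because on each dyadic paraproduct block the cut-off symbol avoids these planes and is smooth with uniformly controlled $L^1$ kernels (a point the paper makes explicitly), and you should phrase the estimate block-wise rather than for $\widetilde T_i$ as a whole.
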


\begin{proof}
    It suffices to show the estimate in the case $N=0$. Indeed, 
    % by similar arguments to the ones used in the proof of lemma \ref{le-commu}
    with $|\gamma|\leq N$ a multi-index, we have
    \begin{eqnarray*}
        \|\partial^\gamma T[f,g]\|_\alpha &\lesssim& \sum_{\beta \leq \gamma} \|T[\partial^\beta f, \partial^{\gamma-\beta}g]\|_\alpha \\ 
        &\lesssim& \sum_{\beta \leq \gamma} \|f\|_{|\beta| + \alpha} \|g\|_{|\gamma| - |\beta|+\alpha} \\ 
        &\lesssim& \sum_{\beta \leq \gamma} \big(\|f\|_{|\gamma|+\alpha} \|g\|_\alpha\big)^{|\beta|/|\gamma|} \big(\|f\|_\alpha \|g\|_{|\gamma |+\alpha}\big)^{1-|\beta|/|\gamma|} \\ 
        &\lesssim& \|f\|_{|\gamma|+\alpha}\|g\|_\alpha + \|f\|_\alpha\|g\|_{|\gamma|+\alpha}.
    \end{eqnarray*}

    To argue for the case $N=0$, let $j_0 \in \mathbb N$ be fixed and decompose the operator as 
    \begin{equation*}
        T[f,g] = \underbrace{\sum_{j \in \mathbb Z} T[\Delta_j f, S_{j-j_0} g]}_{T_{HL}[f, g]} + \underbrace{\sum_{j \in \mathbb Z} T[\Delta_j f, S_{j+j_0-1}g - S_{j-j_0} g]}_{T_{HH}[f, g]} + \underbrace{\sum_{j \in \mathbb Z} T[S_{j - j_0} f , \Delta_j g] }_{T_{LH}[f, g]}.
    \end{equation*}

    \textit{Estimate for $T_{HL}$ and $T_{LH}$.} It suffices to argue for $T_{HL}$. Note that 
    \begin{equation*}
        \supp \widehat{T[\Delta_j f, S_{j-j_0} g]} \subset B_{2^{j+1}+ 2^{j-j_0+1}} \setminus B_{2^{j-1} - 2^{j-j_0 + 1}},
    \end{equation*}
    which implies that for $j_0 \geq 4$ and $l \in \mathbb Z$, 
    \begin{equation*}
        \Delta_l T_{HL}[f,g] = \sum_{|j-l| \leq 2} \Delta_l T[\Delta_j f, S_{j-j_0}g],
    \end{equation*}
    which implies 
    \begin{eqnarray*}
        \|\Delta_l T_{HL}[f,g]\|_0 &\lesssim& \sum_{|j-l|\leq 2} 2^{-l} \big(\|\Lambda \Delta_j f\|_0 \|S_{j-j_0}g\|_0 + \|\Delta_j f\|_0 \|\Lambda S_{j-j_0}g\|_0 \big) \\ 
        &\lesssim& 2^{-l} \sum_{|j-l|\leq 2}\big(2^{j(1-\alpha)} \|f\|_\alpha \|g\|_0 + 2^{-j\alpha}\|f\|_\alpha \sum_{m=0}^{j-j_0} 2^m \|g\|_0 \big) \\ 
        &\lesssim& 2^{-l\alpha}\|f\|_\alpha\|g\|_0,
    \end{eqnarray*}
    and the H\"older estimate follows.
    
    \textit{Estimate for $T_{HH}$.} From the definition, we have for $k \in \mathbb Z^2 \setminus \{0\}$ that
    \begin{equation*}
        \widehat{T[f,g]}(k) = \sum_{j \in \mathbb Z^2} \frac{|j| - |k-j|}{|k|} \hat f(j) \hat g(k-j),
    \end{equation*}
    and, thus, 
    \begin{eqnarray*}
        T[f,g] &=& \sum_{j + k \neq 0} \frac{|j| - |k|}{|j+k|} \hat f (j) \hat g(k) e^{i(j+k)\cdot x} \\ 
        &=& \sum_{j+ k \neq 0} \frac{j+k}{|j+k|}\cdot \frac{j-k}{|j|+|k|} \hat f (j) \hat g(k) e^{i(j+k)\cdot x} \\ 
        &=& \mathcal{R} \cdot \sum_{(j,k) \in \mathbb Z^4 \setminus \{0\}} i \frac{j-k}{|j| + |k|} \hat f(j) \hat g(k) e^{i(j+k)\cdot x} \\ 
        &=:& \mathcal{R} \cdot P[f,g].
    \end{eqnarray*}
    % It follows as before that
    Since
    \begin{equation*}
        \supp \widehat{T[\Delta_j f, S_{j+j_0-1}g - S_{j-j_0}g]} \subset B_{2^{j+1} + 2^{j+j_0}},
    \end{equation*}
    we have that, for sufficiently large $j_0$,
    \begin{equation*}
        \Delta_l T_{HH}[f,g] = \sum_{j \geq l - j_0 -1} \mathcal{R}\cdot \Delta_l P[\Delta_j f, S_{j+j_0-1}g - S_{j-j_0}g],
    \end{equation*}
    and, therefore, 
    \begin{equation*}
        \|\Delta_l T_{HH}[f,g]\|_0 \lesssim \sum_{j \geq l-j_0-1} \|P[\Delta_j f, S_{j+j_0-1}g - S_{j-j_0}g]\|_0.
    \end{equation*}
    Let $\bar \chi_j$ be as in the proof of lemma \ref{Bernstein}. Then, 
    \begin{equation*}
        P[\Delta_j f, S_{j+j_0-1}g - S_{j-j_0}g](x) = \sum_{m = j-j_0+1}^{j+j_0-1} \sum_{(l,k) \in \Z^4\setminus\{0\}} \bar \chi_j(l) \bar \chi_m(k) M(l,k)\widehat{\Delta_j f}(l) \widehat{\Delta_m g}(k) e^{i(l+k)\cdot x},
    \end{equation*}
    where 
    \begin{equation*}
        M(\eta, \xi) = i \frac{\eta - \xi}{|\eta| + |\xi|}
    \end{equation*}
    is the multiplier in the definition of $P$. It follows that 
    \begin{equation*}
        P[\Delta_j f, S_{j+j_0-1}g - S_{j-j_0}g](x) = \sum_{m=j-j_0+1}^{j+j_0-1} \int_{\mathbb R^2 \times \mathbb R^2} K_{j,m}(x-y_1, x-y_2) \Delta_j f(y_1) \Delta_m g(y_2) dy_1 dy_2,
    \end{equation*}
    where $\Delta_j f$ and $\Delta_m g$ are identified with their periodic extensions and
    \begin{eqnarray*}
        K_{j,m}(x,y) &=& \frac{1}{(2\pi)^4} \int_{\mathbb R^2 \times \mathbb R^2}M(\eta, \xi) \bar \chi_j(\eta) \bar \chi_m(\xi) e^{i(\eta \cdot x + \xi \cdot y)} d\eta d\xi \\ 
        &=&2^{4j} \frac{1}{(2\pi)^4} \int_{\mathbb R^2 \times \mathbb R^2} M(\eta, \xi) \bar \chi_0(\eta) \bar \chi_0 (2^{j-m} \xi) e^{i2^j(\eta \cdot x + \xi \cdot y)} d\eta d\xi \\ 
        &=:& 2^{4j} K_{0, m-j} (2^jx, 2^jy). 
    \end{eqnarray*}
    We note that $M(\eta, \xi)\bar\chi_0(\eta)\bar\chi_0(2^{j-m}\xi)$ are smooth and compactly supported, since they vanish in neighbourhoods of the planes $\{|\eta|=0\}$ and $\{|\xi|=0\}$. Thus, the kernels $K_{0, j-m}$ are in $L^1(\mathbb R^2 \times \mathbb R^2)$, and it follows that 
    \begin{equation*}
        \|P[\Delta_j, S_{j+j_0-1}g - S_{j-j_0}g]\|_0 \lesssim \sum_{m=j-j_0+1}^{j+j_0-1} \|K_{0, j-m}\|_{L^1(\mathbb R^2 \times \mathbb R^2)} \|\Delta_j f\|_0\|\Delta_m g\|_0 \lesssim 2^{-2j\alpha} \|f\|_\alpha\|g\|_\alpha.
    \end{equation*}
    Consequently, 
    \begin{equation*}
        \|\Delta_l T_{HH}[f,g]\|_0 \lesssim \sum_{j \geq l - j_0 + 1} 2^{-2j\alpha} \|f\|_\alpha \|g\|_\alpha \lesssim 2^{-l\alpha}\|f\|_\alpha \|g\|_\alpha,
    \end{equation*}
    and the conclusion follows.
\end{proof}

Let $T$ be the operator in (\ref{sqg}), i.e. $T=\nabla^{\perp}\Lambda^{-1}$. Define the bilinear form 
\[S[f,g]=\Delta^{-1}\div \left(T[\Delta f]g+T[g]\Delta f\right).\]
As a consequence of Lemma \ref{lem.bilin}, we can see that
\begin{lem}\label{le-bilinear-S}
Let $0<\alpha<1$ and $N\geq 0$. For smooth and mean-zero functions $f,g: \mathbb T^2\to \mathbb R$, the estimate 
\[\|S[f,g]\|_{N+\alpha}\lesssim _{\alpha,N} \|f\|_{N+1+\alpha}\|g\|_\alpha+\|f\|_{1+\alpha}\|g\|_{N+\alpha}\]
holds.
\end{lem}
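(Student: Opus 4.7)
The plan is to derive an algebraic identity expressing $S[f,g]$ as $\Lambda^{-1}\div$ applied (componentwise) to the bilinear operator of Lemma~\ref{lem.bilin}; the desired estimate will then reduce immediately to that lemma. The starting point is the simplification
\[T[\Delta f] = \nabla^\perp \Lambda^{-1}\Delta f = -\nabla^\perp \Lambda f,\]
which uses $\Lambda^{-1}\Delta = -\Lambda$. This gives
\[T[\Delta f]\,g + T[g]\,\Delta f = -g\,\nabla^\perp \Lambda f + \Delta f\,\nabla^\perp\Lambda^{-1} g.\]

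The crucial observation is the identity $\div(a \nabla^\perp b + b\nabla^\perp a) = 0$, valid for any smooth scalars $a,b$, which follows from $\div\nabla^\perp = 0$ together with the antisymmetry $\nabla a \cdot \nabla^\perp b = -\nabla b \cdot \nabla^\perp a$ (a direct computation in coordinates). Specializing to $a = g$, $b = \Lambda f$ yields $\div(-g\,\nabla^\perp \Lambda f) = \div(\Lambda f\,\nabla^\perp g)$, so that
\[\div\bigl(T[\Delta f]\,g + T[g]\,\Delta f\bigr) = \div\bigl(\Lambda f\,\nabla^\perp g + \Delta f\,\nabla^\perp\Lambda^{-1} g\bigr).\]
Writing $T_L$ for the bilinear operator of Lemma~\ref{lem.bilin}, and using the defining identity $\Lambda T_L[\phi,\psi] = (\Lambda\phi)\psi - \phi(\Lambda\psi)$ with $\phi = \Lambda f$ and $\psi = (\nabla^\perp\Lambda^{-1} g)_a$, together with $\Lambda\,\nabla^\perp\Lambda^{-1}g = \nabla^\perp g$, one computes
\[\Lambda T_L[\Lambda f,(\nabla^\perp\Lambda^{-1} g)_a] = -\Delta f\,(\nabla^\perp\Lambda^{-1} g)_a - \Lambda f\,(\nabla^\perp g)_a.\]
Hence the vector identity $\Lambda f\,\nabla^\perp g + \Delta f\,\nabla^\perp\Lambda^{-1} g = -\Lambda\, T_L[\Lambda f,\nabla^\perp\Lambda^{-1} g]$, where $T_L$ acts componentwise on its vector argument.

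Substituting back and using $-\Delta^{-1}\Lambda = \Lambda^{-1}$, one obtains the clean representation
\[S[f,g] = \Lambda^{-1}\,\div\, T_L[\Lambda f,\, \nabla^\perp\Lambda^{-1} g].\]
Since $\Lambda^{-1}\div$ is a vector of Riesz transforms, it is bounded on $C^{N+\alpha}$ for $\alpha \in (0,1)$, so that $\|S[f,g]\|_{N+\alpha} \lesssim \|T_L[\Lambda f,\nabla^\perp\Lambda^{-1}g]\|_{N+\alpha}$. Applying Lemma~\ref{lem.bilin} componentwise, together with the order-one bound $\|\Lambda f\|_{N+\alpha} \lesssim \|f\|_{N+1+\alpha}$ and the order-zero bound $\|\nabla^\perp\Lambda^{-1} g\|_{N+\alpha} \lesssim \|g\|_{N+\alpha}$, then yields the stated estimate. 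The essential work is in finding the above algebraic identity; once the representation $S[f,g] = \Lambda^{-1}\div T_L[\Lambda f,\nabla^\perp\Lambda^{-1} g]$ is in hand, the bound is immediate.
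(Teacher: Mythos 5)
Your proof is correct and arrives at exactly the paper's key representation \eqref{eqn.S}, namely $S[f,g]=\mathcal R_i T[\Lambda f,\mathcal R_i^\perp g]=\Lambda^{-1}\div T[\Lambda f,\nabla^\perp\Lambda^{-1}g]$, after which both arguments conclude identically by Lemma~\ref{lem.bilin} and the $C^{N+\alpha}$-boundedness of zero-order multipliers. The only difference is cosmetic: you derive the identity in physical space via $\div(a\nabla^\perp b+b\nabla^\perp a)=0$, whereas the paper performs the equivalent cancellation $\xi\cdot(\xi-\eta)^\perp=-\xi\cdot\eta^\perp$ on the Fourier side.
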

\begin{proof}
    We expand the Fourier transform of $S$ as
    \begin{align*}
        \widehat{S[f,g]}(\xi) &= \frac{i \xi}{|\xi|^2} \cdot \sum_{\eta\in \Z^2} |\xi-\eta|^2 \left( \frac{i (\xi-\eta)^\perp}{|\xi-\eta|} + \frac{i \eta^\perp}{|\eta|}\right) \hat f(\xi-\eta) \hat g(\eta) \\
        &= -\frac1{|\xi|^2} \sum_{\eta\in \Z^2} |\xi-\eta|^2 \xi\cdot\eta^\perp \left( -\frac{1}{|\xi-\eta|} + \frac{1}{|\eta|}\right) \hat f(\xi-\eta) \hat g(\eta)  \\
        &= -\frac{\xi}{|\xi|^2}\cdot \sum_{\eta\in \Z^2} |\xi-\eta| \frac{\eta^\perp}{|\eta|} \left({|\xi-\eta|-|\eta|}\right) \hat f(\xi-\eta) \hat g(\eta)  \\
        &= \frac{i \xi}{|\xi|^2}\cdot \sum_{\eta\in \Z^2}  \left({|\xi-\eta|-|\eta|}\right) \widehat{\Lambda f}(\xi-\eta) \widehat{\mathcal{R}_i^\perp g}(\eta)  \\
        &= \widehat{\mathcal{R}_i T[\Lambda f, \mathcal{R}_i^\perp g]}(\xi)
    \end{align*}
to conclude that
\begin{equation}\label{eqn.S}
    S[f,g] = \mathcal{R}_i T[\Lambda f, \mathcal{R}_i^\perp g]
\end{equation}
where $\mathcal{R}_i$ is the Riesz transform, corresponding to the Fourier multiplier $i \xi_i / |\xi|$, and $T$ is the bilinear Fourier multiplier operator analyzed in the previous lemma~\ref{lem.bilin}.

Now we can use lemma~\ref{lem.bilin} to estimate
\begin{align*}
    \|S[f,g]\|_{N+\al} &= \|\mathcal{R}_i T[\Lambda f, \mathcal{R}_i^\perp g]\|_{N+\al}\\
    &\lesssim \|T[\Lambda f, \mathcal{R}_i^\perp g]\|_{N+\al}\\
    &\lesssim \|\Lambda f\|_{N+\al}\|\mathcal{R}_i^\perp g\|_\al + \|\Lambda f\|_{\al}\|\mathcal{R}_i^\perp g\|_{N+\al}\\
    &\lesssim \|f\|_{N+1+\al}\|g\|_\al + \|f\|_{1+\al}\|g\|_{N+\al},
\end{align*}
as desired.
\end{proof}

\section{The main inductive proposition}
\label{sec-induct} 

Let $\lambda_q$ be a frequency parameter, defined as
\begin{equation}\label{def.laq}
    \la_q :=  \ceil{a^{b^q}},
\end{equation}
where $a$ is a large real number and $b$ is such that $0<b-1\ll 1$. Define an amplitude parameter $\de_q$, which is defined as
\begin{equation}\label{def.deq}
    \de_q := \la_q^{-2\beta}
\end{equation}
where $\beta$ is the coefficient which will determine the regularity of the constructed solution.

Let $L_\theta, L_R, L_t \in \mathbb{N}_{\neq 0}$, $M>0$, and $0<\alpha\ll 1$. We assume the following inductive estimates: 
\begin{align} 
    \|\theta_q\|_N + \|u_q\|_N \leq M \de_q^{\frac12} \la_q^{N+\frac{1}{2}}, &\qquad \forall N \in \{0, 1, ... ,L_\theta\},\label{induct-u}\\
    \|R_q\|_N \leq \de_{q+1} \la_q^{N-2\alpha}, &\qquad \forall N \in \{0,1,...,L_R\}, \label{induct-R}\\
    \|D_t R_q\|_N \leq  \de_{q+1} \de_q^{\frac12} \la_q^{N-2\alpha+\frac{3}{2}}, &\qquad \forall N \in \{0,1,..., L_t\}. \label{induct-DR}
\end{align}
We further assume that the temporal support of $R_q$ satisfies
\begin{equation}\label{induct-sup}
    \supp_t R_q \subset  [-2 +(\de_q^{\frac12}\la_q^{\frac{3}2})^{-1}, -1 -(\de_q^{\frac12}\la_q^{\frac{3}2})^{-1} ]  
     \cup [1 + (\de_q^{\frac12}\la_q^{\frac{3}2})^{-1} , 2 - (\de_q^{\frac12}\la_q^{\frac{3}2})^{-1} ].  
\end{equation}

We can now state the main inductive proposition.
\begin{prop}[Main inductive proposition]\label{prop.main}
    Let $L_\theta=30$, $L_R=20$, $L_t=10$, $0<\beta<1/2$, $1<b<\frac{1+2\beta}{4\beta}$. There exist $M_0 > 0$ depending only on $\beta$ and $L_\theta, L_R, L_t$, and a coefficient $0<\alpha_0<1$ depending on $\beta$ and $b$, such that for any $M > M_0$ and $0 < \alpha < \alpha_0$, there exists $a_0 > 1$ depending on $\beta$, $b$, $\alpha$, $M$ and $L_\theta$, $L_R$, $L_t$, such that for any $a > a_0$ the following holds: Given a smooth solution $(\theta_q, u_q, R_q)$ of \eqref{ASE-q} and the inductive assumptions \eqref{induct-u}- \eqref{induct-sup}, there exists another smooth solution $(\theta_{q+1},u_{q+1},R_{q+1})$ again satisfying \eqref{induct-u}- \eqref{induct-sup} with $q$ replaced by $q+1$. Moreover, it holds that
    \begin{equation}\label{eq.prop.main}
        \la_{q+1} \|\La^{-1} (\theta_{q+1}-\theta_q)\|_0 + \|\theta_{q+1}-\theta_q\|_0 \leq 2M\delta_{q+1}^\frac12 \la_{q+1}^\frac12\,
    \end{equation}
    and
    \begin{equation}\label{eq.prop.main-2}
        \supp_t \theta_{q+1} \cup \supp_t u_{q+1} \subset (-2,-1)\cup(1,2)\,.
    \end{equation}
\end{prop}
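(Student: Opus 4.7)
The plan is to construct $(\theta_{q+1}, u_{q+1}, R_{q+1})$ via the two-step Newton--Nash procedure announced in the introduction. First I would produce an intermediate triple $(\theta_{q+\frac12}, u_{q+\frac12}, R_{q+\frac12})$ by a Newton step whose purpose is to force $R_{q+\frac12}$ into a directionally simple form, i.e.\ to be (up to negligible terms) a modulated tensor of the type $a^2\,\xi\otimes\xi$ at each point in spacetime. Then I would apply a Nash perturbation at frequency $\la_{q+1}$ to cancel this simple stress and define $(\theta_{q+1}, u_{q+1}, R_{q+1})$.

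\emph{Newton step.} Following the linear decoupling method of \cite{GR23}, I would partition the temporal support \eqref{induct-sup} of $R_q$ into sub-intervals of length $\sim (\de_q^{1/2}\la_q^{3/2})^{-1}$. After mollification, decompose $R_q$ on each sub-interval via the geometric lemma \ref{le-geo} as $-R_q = \sum_k a_k^2\,\xi_k\otimes\xi_k$, and build the Newton increment $\vartheta_{q+\frac12} = \sum_k \vartheta^{(k)}$, where each $\vartheta^{(k)}$ solves a linearized SQG equation
\begin{equation*}
    \pa_t \vartheta^{(k)} + u_q\cdot\na\vartheta^{(k)} + T[\vartheta^{(k)}]\cdot\na\theta_q = f_k
\end{equation*}
with a temporally oscillating forcing $f_k$ chosen so that the time-averaged nonlinear cross-stress associated to $\vartheta^{(k)}$---controlled through the identity \eqref{eqn.S} and Lemma \ref{le-bilinear-S}---cancels $\na^\perp\cdot\div(a_k^2\,\xi_k\otimes\xi_k)$ modulo a high-frequency-in-time remainder. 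After summing, $R_{q+\frac12}$ contains only a single active direction at each time plus controlled cross terms and transport/commutator errors.

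\emph{Nash step.} Apply Lemma \ref{le-geo} to $-R_{q+\frac12}$ to get a decomposition $\sum_\xi a_\xi(x,t)^2\,\xi\otimes\xi$ which is effectively a sum over one direction per time, and define the principal perturbation
\begin{equation*}
    w_{q+1}^{(p)}(x,t) = \sum_\xi a_\xi(x,t)\, e^{i\la_{q+1}\xi\cdot x},
\end{equation*}
together with a low-frequency corrector $w_{q+1}^{(c)}$ ensuring mean-zero. Set $\theta_{q+1} = \theta_{q+\frac12} + w_{q+1}$ with $w_{q+1} = w_{q+1}^{(p)} + w_{q+1}^{(c)}$. The amplitudes are arranged so that, after a stationary-phase expansion around the resonance set $\xi+\eta=0$, the leading low-frequency contribution of the self-nonlinearity $T[w_{q+1}]\cdot\na w_{q+1}$ cancels $\na^\perp\cdot\div R_{q+\frac12}$; the fact that $\xi\otimes\xi$ is killed by the symbol of $\na^\perp\cdot\div$ along $\xi$ is what allows the residual part to lie transverse to $\xi$ and be absorbed. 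The new Reynolds stress $R_{q+1}$ is then obtained by applying $(\na^\perp\cdot\div)^{-1}$ to the remaining transport, Nash, oscillation, and Newton-residual errors, and $u_{q+1} = T[\theta_{q+1}]$.

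The main technical challenge will be to verify that every summand of $R_{q+1}$ satisfies the sharp bound $\|R_{q+1}\|_N \lesssim \de_{q+2}\la_{q+1}^{N-2\alpha}$ (and its $D_t$-analogue) in the critical regime $b\to (1+2\beta)/(4\beta)$. Unlike in 2D Euler, the operator $T = \na^\perp\La^{-1}$ is non-local, and naïve Hölder bounds on products such as $T[w_{q+1}]\,w_{q+1}$ lose a full power of $\la_{q+1}$; recovering this loss is precisely the role of the commutator estimate of Lemma \ref{lem.bilin} and the representation \eqref{eqn.S}, and most of sections \ref{sec-Newton}--\ref{sec-Nash} will be devoted to carrying out these gains for every error term at each required level of differentiation. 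The difference bound \eqref{eq.prop.main} then follows from the explicit construction of $\vartheta_{q+\frac12}$ and $w_{q+1}$, while the temporal support \eqref{eq.prop.main-2} is enforced by slightly shrinking the cutoffs inherited from \eqref{induct-sup} in both steps.
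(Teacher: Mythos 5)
Your high-level architecture (a Newton step to decouple directions in time followed by a Nash step at frequency $\la_{q+1}$) matches the paper, but the mechanism you assign to the Newton step is wrong, and this is a genuine gap rather than a presentational difference. You propose to choose the forcing $f_k$ so that the \emph{time-averaged nonlinear cross-stress of $\vartheta^{(k)}$} cancels $\na^\perp\cdot\div(a_k^2\,\xi_k\otimes\xi_k)$. The Newton perturbation has amplitude $\sim \de_{q+1}\la_q^2/\mu_{q+1}\ll\de_{q+1}^{1/2}$, so any quadratic expression in it is far too small to cancel a stress of size $\de_{q+1}$; in the actual scheme the quadratic self-interaction of the Newton perturbation is an \emph{error} term (the Newton error), and the choice of $\mu_{q+1}$ is made precisely to render it admissible. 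The cancellation of $R_q$ in the Newton step is purely linear: the forcing is $\sum_\xi f_\xi(\mu_{q+1}t)\,\na^\perp\cdot\div A_\xi$ with $f_\xi=1-g_\xi^2$, and the algebraic identity $\sum_\xi f_\xi\,\na^\perp\cdot\div A_\xi=-\na^\perp\cdot\div R_q-\sum_\xi g_\xi^2\,\na^\perp\cdot\div A_\xi$ removes $R_q$ outright and leaves the temporally decoupled remainder $-\sum_\xi g_\xi^2\,\na^\perp\cdot\div A_\xi$ for the Nash step. Relatedly, a single Newton step does not suffice: the gluing error produced by the temporal cutoffs is only improved by a factor $\tau_q^{-1}/\mu_{q+1}$ relative to $\de_{q+1}$, which does not reach $\de_{q+2}$ near the critical exponent, so the linear step must be iterated $\Gamma=\lceil(\tfrac12-\beta)^{-1}\rceil$ times (this is the system \eqref{steps} and proposition \ref{NewIter}).

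In the Nash step you take Eulerian phases $e^{i\la_{q+1}\xi\cdot x}$. At the critical regularity this fails: the material derivative of such a phase produces a term of size $\la_{q+1}\|u_q\|_0\,\de_{q+1}^{1/2}\la_{q+1}^{1/2}$, and even after applying the order $-2$ operator $\div^{-1}\na^\perp\De^{-1}$ the resulting transport error $\sim\de_q^{1/2}\de_{q+1}^{1/2}\la_q^{1/2}\la_{q+1}^{-1/2}$ exceeds $\de_{q+2}$ for $b>1$ when $\beta$ is close to $1/2$. The construction must use flow-adapted phases $\cos(\la_{q+1}\widetilde\Phi_k\cdot\xi)$, with $\widetilde\Phi_k$ the backward flow of the \emph{perturbed} velocity $\tilde u_{q,\Gamma}$ (so that the Newton perturbation does not reintroduce a large flow error), together with the frequency projection $P_{\approx\la_{q+1}}$ and the bilinear microlocal lemma \ref{le-bilinear-odd} to extract the resonant part of $T[\theta_{q+1}^{(p)}]\cdot\na\theta_{q+1}^{(p)}$. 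Finally, your explanation of why the residual is harmless ("the residual part lies transverse to $\xi$") misstates the role of the operator: the point of working with $\na^\perp\cdot\div$ is that it annihilates constant-coefficient tensors $\xi\otimes\xi$ modulated along $\xi$, which is what permits the scalar geometric decomposition of lemma \ref{le-geo}; the lower-order remainder $\delta B_\xi$ of the microlocal expansion is controlled instead by the gain $\la_q/\la_{q+1}$ coming from the smoothness of the amplitudes and of $\na\widetilde\Phi_k$ relative to $\la_{q+1}$.
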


\begin{rem}
    In contrast with \cite{GR23}, we do not control the number of propagated derivatives by only one parameter $L$, but by three such parameters $L_\theta$, $L_R$, $L_t$. There is great freedom in the choice of values for these parameters. For instance, any choice of the form $1 \ll L_t \ll L_R \ll L_\theta$ will work.
\end{rem}

\subsection{Proof of the main theorem}
Here we give a proof of theorem~\ref{thm.main} assuming proposition~\ref{prop.main}. The proof is essentially the same as the analogous ones in most other Nash iterative constructions of fluid flows (see, for instance,~\cite{GR23}). The rest of the paper will then be devoted to proving proposition~\ref{prop.main}.
\begin{proof}

Let $\beta < 1/2$ such that $\gamma < \beta +1/2$, where $\gamma$ is the H\"older coefficient in the statement of the theorem. Fix $b$ so that it satisfies 
\begin{equation*}
    1 < b <  \frac{1+2\beta}{4\beta} \,,
\end{equation*}
and let $M_0$ and $\alpha_0$ be the constants given by proposition \ref{prop.main}. We fix also $M > \max\{M_0, 2\}$ and $\alpha < \min \{\alpha_0, 1/2\}$. Then, let $a_0$ be given by proposition \ref{prop.main} in terms of these fixed parameters. We do not fix $a > a_0$ until the end of the proof.

We now aim to construct the base case for the inductive proposition \ref{prop.main}. Let $f : \R \to [0,1]$ be a smooth function supported in $[-7/4, 7/4]$, such that $f = 1$ on $[-5/4, 5/4]$. Consider
\begin{align*}
    \theta_0 (x,t) = f(t) \delta_0^{1/2} \lambda_0^{1/2}\cos ( \la_0 x_1)\,, \,\, u_0(x,t) = f(t) \delta_0^{1/2} \lambda_0^{1/2}\sin ( \la_0 x_1) e_2,\\
    R_0(x,t) = -f'(t) \frac{\delta_0^{1/2}}{\la_0^{3/2}}  \begin{pmatrix}
        0 & \cos ( \la_0 x_1)\\ \cos(\la_0 x_1) & 0
    \end{pmatrix},
\end{align*}
where $(x_1, x_2)$ denote the standard coordinates on $\T^2$ and $(e_1,e_2)$ are the associated unit vectors. Note that $R_0$ is symmetric and traceless.
It can be checked directly that the tuple $(\theta_0,u_0,R_0)$ solves the relaxed SQG system~\eqref{ASE-q}. 

We have for any $N \geq 0$, 
\begin{equation*}
    \|\theta_0\|_N + \|u_0\|_N \leq M\delta_0^{1/2} \lambda_0^{N+1/2},
\end{equation*}
and, so, \eqref{induct-u} also holds. Moreover, for any $N \geq 0$,
\begin{equation*}
    \|R_0\|_N \leq 2 \sup_t |f'(t)| \frac{\delta_0^{1/2}}{\lambda_0^{3/2}} \lambda_0^N.
\end{equation*}
Since it holds that $\beta(2b-1) < 1/2$, we can ensure that 
\begin{equation*}
    2 \sup_t |f'(t)| < \delta_1 \delta_0^{-1/2} \lambda_0^{1/2},
\end{equation*}
by choosing $a$ sufficiently large. Then, 
\begin{equation*}
    \|R_0\|_N \leq \delta_1 \lambda_0^{-1} \lambda_0^N,
\end{equation*}
and it follows that \eqref{induct-R} holds, since we have chosen $\alpha < 1/2$. For the estimate concerning the material derivative, we calculate
\begin{equation*}
    \partial_t R_0 + u_0 \cdot \nabla R_0 = -f''(t) \frac{\delta_0^{1/2}}{\lambda_0^{3/2}} \begin{pmatrix}
        0 & \cos ( \la_0 x_1)\\ \cos(\la_0 x_1) & 0
    \end{pmatrix}.
\end{equation*}
In order to ensure that \eqref{induct-DR} is satisfied, it suffices to choose $a$ large enough so that 
\begin{equation*}
    2\sup_t |f''(t)| < \delta_1 \delta_0^{-1/2} \lambda_0^{1/2} (\delta_0^{1/2} \lambda_0^{3/2}) = \delta_1 \lambda_0^{2}.
\end{equation*}
Finally, we note that $\supp_t R_0 \subset [-7/4, 7/4] \setminus (-5/4, 5/4)$, and, thus, the condition \eqref{induct-sup} is satisfied provided 
\begin{equation*}
    (\delta_0^{1/2} \lambda_0^{3/2})^{-1} < \frac{1}{4},
\end{equation*}
which, once again, can be guaranteed by the choice of $a$.

We now finally fix $a$ so that all of the wanted inequalities are satisfied, and conclude that the triple $(u_0, p_0, R_0)$ satisfies all the requirements to be the base case for the inductive proposition \ref{prop.main}. Let, $\{(\theta_q, u_q, R_q)\}$ be the sequence of solutions to the  system \eqref{ASE-q} given by the iterative application of the proposition. Equation \eqref{eq.prop.main} implies that 
\begin{equation*}
    \|\La^{-1}(\theta_{q+1} - \theta_q)\|_\gamma \lesssim \|\La^{-1}(\theta_{q+1} - \theta_q)\|_0^{1-\gamma} \|\theta_{q+1} - \theta_q\|_0^{\gamma} \lesssim \delta_{q+1}^{1/2} \lambda_{q+1}^{\gamma-1/2} \lesssim \lambda_{q+1}^{\gamma - \beta-1/2}.
\end{equation*}
Since $\gamma<\beta+1/2$, $\{\Lambda^{-1}\theta_q\}$ is a Cauchy sequence in $C_tC^{\gamma}_x$ and, thus, it converges in this space to a scalar $\Lambda^{-1}\theta$. Moreover, 
% \begin{equation*}
%     \|R_q\|_\gamma \lesssim \|R_q\|_0^{1-\gamma}\|R_q\|_1^{\gamma} \lesssim \delta_{q+1}\lambda_q^{\gamma} \lesssim \lambda_{q+1}^{\gamma - 2 \beta},
% \end{equation*}
% and, thus, 
$R_q$ converges to zero in $C_{t,x}^0$. To verify that $\theta$ is a weak solution, it suffices to verify convergence of $\{\Lambda^{-\frac{1}{2}} \theta_q\}$ in $C_{t,x}^0$, which is enforced by the condition $\gamma \geq 1/2$. In view of \eqref{eq.prop.main-2}, the constructed solution moreover satisfies $\supp_t \theta \subset [-2, 2]$ and 
\begin{equation*}
    \theta(x,t) = \delta_0^{1/2} \lambda_0^{1/2}\cos(\lambda_0 x_1),
\end{equation*}
whenever $t \in [-1,1]$. 
\end{proof}

\subsection{Heuristic outline of the Newton-Nash scheme}
\label{sec-heuristics}

We now present the main ideas of the proof of proposition \ref{prop.main} at the level of heuristics. Before we begin, however, let us caution the reader that the values given below for the various parameters ($\tau_q$, $\mu_{q+1}$, $\Gamma$, etc.), as well as the definitions of the perturbations and the generated errors will not exactly match those which we will use in the proof. The reasons for these discrepancies are essentially of technical nature.

\subsubsection{Temporal localization and stress decomposition}
Assume that $R_q$ has temporal support over a time interval of length $\tau_q=\left(\de_q^{\frac{1}{2}} \la_q^{\frac{3}{2}}\right)^{-1}$ centered at some time $t_0$. We point out that such a localization is consistent with the inductively assumed estimates on the material derivative of $R_q$. Let $\Phi$ be the backward flow of $u_q$ with origin at $t_0$,
\begin{equation}
\begin{cases}
\partial_t\Phi+u_q\cdot\nabla \Phi=0,\\
\Phi_{t=t_0}=x.
\end{cases}
\end{equation}
Denote by $X$ the Lagrangian flow of $u_q$ starting at $t=t_0$.
Applying the geometric lemma \ref{le-geo}, we can find a finite set of directions $F\subset\mathbb Z^2$ and corresponding smooth functions $\gamma_\xi$ ($\xi \in F$) such that the amplitude functions $a_{\xi}$ defined as
\[a_\xi=2\lambda_{q+1}^{\frac{1}{2}}\delta_{q+1}^{\frac12}|(\nabla \Phi)^{T}\xi|^{\frac{3}2}\gamma_\xi\Big((\nabla \Phi)^{-T} (\nabla \Phi)^{-1} - (\nabla \Phi)^{-T} \frac{R_{q}}{\delta_{q+1}} (\nabla \Phi)^{-1} \Big)\]
satisfy
\[\nabla^{\perp}\cdot\div\sum_{\xi\in F}\underbrace{\frac14\frac{1}{\lambda_{q+1}|(\nabla\Phi)^{T}\xi|^{3}}a_{\xi}^2(\nabla\Phi)^{T}\xi\otimes\xi(\nabla\Phi)}_{A_\xi}=-\nabla^{\perp}\cdot\div R_q.\]
In this way, we achieve a decomposition of $R_q$ into simple tensors $A_\xi$.

% With the help of the bilinear microlocal lemma \ref{le-bilinear-odd}, we have 
% \begin{align*}
%     T[\theta_{q+1}]\cdot\nabla\theta_{q+1} = \nabla^{\perp}\cdot \div B[\theta_{q+1},\theta_{q+1}] \approx \nabla^{\perp}\cdot \div \sum_\xi A_\xi
% \end{align*}
% up to an error term,
% and hence the main cancellation in Nash step occurs in 
% \[\nabla^{\perp}\cdot \div \left(B[\theta_{q+1},\theta_{q+1}]+R_q\right)\]
% which will become apparent in a later section.

\subsubsection{The Newton steps}
Let $\{g_{\xi}\}_{\xi\in F}$ be a set of 1-periodic functions of time with unit norm in $L^2(0,1)$ satisfying
\[\supp_tg_{\xi}\cap \supp_t g_{\xi'}=0, \ \ \mbox{for} \ \xi\neq\xi'.\]
These profiles will be utilized to divert temporal supports for the Nash perturbations corresponding to different directions in the set $F$. We further define
\begin{equation}\notag
\begin{split}
f_{\xi}&=1-g_{\xi}^2,\\
f_{\xi}^{[1]}&=\int_0^tf_{\xi}(s)\, ds.
\end{split}
\end{equation}
Let $\mu_{q+1}\gg \tau_q^{-1}$ be a temporal frequency parameter to be fixed later. Define the first Newton perturbation $\theta_{q+1,1}^{(t)}$ as the solution to the linearization of the SQG equations around $u_q$ starting from the initial time $t_0$:
\begin{equation}\label{newton-1}
\begin{cases}
\partial_t \theta_{q+1,1}^{(t)}+u_q\cdot\nabla \theta_{q+1,1}^{(t)}+T[\theta_{q+1,1}^{(t)}]\cdot\nabla \theta_q=\sum_{\xi\in F}f_{\xi}(\mu_{q+1}t)\nabla^{\perp}\cdot\div A_{\xi},\\
\theta_{q+1,1}^{(t)}|_{t=t_0}=\frac{1}{\mu_{q+1}}\sum_{\xi\in  F}f_{\xi}^{[1]}(\mu_{q+1}t_0)\nabla^{\perp}\cdot\div A_{\xi}|_{t=t_0}.
\end{cases}
\end{equation}
Note 
\[
\begin{split}
\sum_{\xi\in F}f_{\xi}(\mu_{q+1}t)\nabla^{\perp}\cdot\div A_{\xi}&=\sum_{\xi\in F}\nabla^{\perp}\cdot\div A_{\xi}-\sum_{\xi\in F}g_{\xi}^2(\mu_{q+1}t)\nabla^{\perp}\cdot\div A_{\xi}\\
&=-\nabla^{\perp}\cdot\div R_q-\sum_{\xi\in F}g_{\xi}^2(\mu_{q+1}t)\nabla^{\perp}\cdot\div A_{\xi}.
\end{split}
\]
Thus it follows
\begin{equation}\label{newton-1-divert}
\partial_t \theta_{q+1,1}^{(t)}+u_q\cdot\nabla \theta_{q+1,1}^{(t)}+T[\theta_{q+1,1}^{(t)}]\cdot\nabla \theta_q+\nabla^{\perp}\cdot\div R_q=-\sum_{\xi\in F}g_{\xi}^2(\mu_{q+1}t)\nabla^{\perp}\cdot\div A_{\xi}
\end{equation}
Treating the first equation of (\ref{newton-1}) as a transport equation with lower-order perturbation $T[\theta_{q+1,1}^{(t)}]\cdot\nabla \theta_q$, we expect that
\begin{equation}\notag
\begin{split}
\theta_{q+1,1}^{(t)}(X,t)&\approx\frac{1}{\mu_{q+1}}\sum_{\xi\in F}f_{\xi}^{[1]}(\mu_{q+1}t_0)\nabla^{\perp}\cdot\div A_{\xi}|_{t=t_0}+\int_{t_0}^t \sum_{\xi\in F} f_{\xi}(\mu_{q+1}s)\nabla^{\perp}\cdot\div A_{\xi}(X(\cdot,s),s)\,ds\\
&=\frac{1}{\mu_{q+1}}\sum_{\xi\in F}f_{\xi}^{[1]}(\mu_{q+1}t)\nabla^{\perp}\cdot\div A_{\xi}(X,t)-\int_{t_0}^t \sum_{\xi\in F} f_{\xi}^{[1]}(\mu_{q+1}s)\frac{D_t\nabla^{\perp}\cdot\div A_{\xi}}{\mu_{q+1}}(X(\cdot,s),s)\,ds.
\end{split}
\end{equation}
Thanks to $\tau_q^{-1}\ll \mu_{q+1}$ we argue that the second term on the right hand in the above equation is negligible. Then we have 
\begin{equation}\label{w-t-size}
\theta_{q+1,1}^{(t)}\approx \frac{1}{\mu_{q+1}}\sum_{\xi\in F}f_{\xi}^{[1]}\nabla^{\perp}\cdot\div A_{\xi}
\end{equation}
and can infer the estimate
\begin{equation}\notag
\|\theta_{q+1,1}^{(t)}\|_0\lesssim \frac{\lambda_q^2 \delta_{q+1}}{\mu_{q+1}}.
\end{equation}
Since the SQG structure law is zero-order, the same estimate is expected for the induced velocity field $w_{q+1,1}^{(t)}=\nabla^{\perp}\Lambda^{-1}\theta_{q+1,1}^{(t)}$.
The fundamental error generated in this step is the Newton error which appears due to the nonlinearity of the SQG equations:
\[R_{q+1}^{\text{Newton}}=\div^{-1}\Delta^{-1}\nabla^{\perp}\cdot\left(w_{q+1,1}^{(t)}\cdot\nabla \theta_{q+1,1}^{(t)} \right)=\div^{-1}\Delta^{-1}\nabla^{\perp}\cdot \div \left(w_{q+1,1}^{(t)} \theta_{q+1,1}^{(t)}\, \right).\]
Here and throughout the paper we use the anti-divergence operator described in appendix \ref{sec-geo}. Since the operator above is $(-1)$-order, we hope to gain a factor of the spatial frequency $\la_q^{-1}$ and, thus, have the estimate
\[\|R_{q+1}^{\text{Newton}}\|_0\lesssim \lambda_{q}^{-1}\frac{\lambda_q^{4} \delta_{q+1}^2}{\mu_{q+1}^2}. \]
Making this precise will require passing to a double potential formulation of the equations, which we analyze using the Fourier analytic estimates of section~\ref{sec-tech}.

We observe that, since we are essentially solving a transport equation, $\theta_{q+1,1}^{(t)}$ does not have the precise form \eqref{w-t-size} and, thus, does not satisfy the desired estimates globally in time. In order to restrict the perturbation to a time scale which we can control, we glue together temporally localized perturbations. Let $\widetilde \chi$ be a standard smooth cut-off function satisfying $\widetilde \chi=1$ on $\cup_{\xi} \supp A_{\xi}$ and $|\partial_t\widetilde \chi| \lesssim \tau_q^{-1}$. Defining the localized perturbation by $\widetilde \chi \theta_{q+1,1}^{(t)}$ results in a gluing error
\[R_q^{\text{glue}}=\div^{-1}\Delta^{-1}\nabla^{\perp}\partial_t\widetilde \chi\theta_{q+1,1}^{(t)}.\]
In view of (\ref{w-t-size}) we have
\[\div^{-1}\Delta^{-1}\nabla^{\perp}\theta_{q+1,1}^{(t)}\approx \frac{1}{\mu_{q+1}}\sum_{\xi\in F}f_{\xi}^{[1]}\div^{-1}\Delta^{-1}\nabla^{\perp}\cdot\nabla^{\perp}\cdot\div A_{\xi}
=\frac{1}{\mu_{q+1}}\sum_{\xi\in F}f_{\xi}^{[1]} A_{\xi}.\]
Hence we expect the estimate
\[\|R_q^{\text{glue}}\|_0\lesssim \frac{1}{\mu_{q+1}}|\partial_t\widetilde \chi|\|A_{\xi}\|_0\lesssim \frac{\tau_q^{-1}\delta_{q+1}}{\mu_{q+1}}.\]
Using~\eqref{newton-1} and the fact that the forcing term vanishes on the support of $\partial_t \tilde \chi$ to estimate the material derivative of $\theta_{q+1,1}^{(t)}$, we expect the material derivative cost of the gluing error to be $\tau_q^{-1}$ and, thus, 
\[\|D_tR_q^{\text{glue}}\|_0\lesssim  \frac{\tau_q^{-2}\delta_{q+1}}{\mu_{q+1}}.\]
We note that, compared to $R_q$, $R_q^{\text{glue}}$ has improved estimates by a factor of $\tau_q^{-1}/\mu_{q+1}$. However the improvement is not good enough to place $R_q^{\text{glue}}$ into the new error $R_{q+1}$ which is $\lesssim \delta_{q+2}$. For this reason, it is necessary to repeat the procedure inductively until the remaining gluing error is smaller than $\delta_{q+2}$. The number $\Gamma$ of iterations will only depend on $\beta$ and is thus fixed independent of $q$.

After $\Gamma$ iteration of this linear procedure, we obtain a new solution 
\[\theta_{q,\Gamma}=\theta_q+\theta_{q+1}^{(t)}=\theta_q+\sum_{n=1}^\Gamma\theta_{q+1,n}^{(t)}\]
to the SQG-Reynolds system, for which the remaining errors are: (1) the remaining temporally decoupled errors corresponding to the chosen basis of simple tensors:
\begin{equation*}
    R_q^{\text{rem}} = - \sum_{\xi \in F} g_\xi^2 \nabla^\perp \cdot \div A_{\xi};
\end{equation*}
and (2) the non-linear Newton error highlighted above.

\subsubsection{The Nash step}
Let $u_{q,\Gamma} = \nabla^\perp \Lambda^{-1}(\theta_q + \theta_{q+1}^{(t)})$ be the velocity field of the newly obtained solution and $\widetilde \Phi$ be its backward flow starting at time $t=t_0$. To avoid significant interactions between $\theta_{q+1}^{(t)}$ and $\theta_{q+1}^{(p)}$, we define the Nash perturbation in terms of $\widetilde \Phi$ instead of $\Phi$. Analogously to the amplitude functions $a_\xi$ defined previously, we let
\[\bar a_\xi=2\lambda_{q+1}^{\frac{1}{2}}\delta_{q+1}^{\frac12}|(\nabla \widetilde \Phi)^{T}\xi|^{\frac{3}2}\gamma_\xi\Big((\nabla \widetilde \Phi)^{-T} (\nabla \widetilde \Phi)^{-1} - (\nabla \widetilde \Phi)^{-T} \frac{R_{q}}{\delta_{q+1}} (\nabla \widetilde \Phi)^{-1} \Big),\]
which determines the Nash perturbation via
\[\theta_{q+1}^{(p)}= P_{\approx \lambda_{q+1 }}\sum_{\xi\in F}g_\xi \bar a_\xi \cos\left(\lambda_{q+1}\widetilde \Phi\cdot \xi\right),\]
where $P_{\approx \lambda_{q+1}}$ localizes in Fourier space to an annulus of radii $\approx \lambda_{q+1}$. The bilinear microlocal lemma~\ref{le-bilinear-odd} has the purpose of writing the quadratic self-interaction in the form 
$u_{q+1}^{(p)}\cdot\nabla \theta_{q+1}^{(p)} = \nabla^\perp \cdot \div B$,
and to identify the first order behaviour of the symmetric $2$-tensor B. Since by construction the profiles $\{g_{\xi}\}$ have disjoint temporal supports, we have 
\begin{equation*}
    u_{q+1}^{(p)}\cdot \nabla \theta_{q+1}^{(p)} = \sum_{\xi \in F} g_\xi^2  \nabla^\perp \Lambda^{-1} P_{\approx \lambda_{q+1}}\big(\bar a_\xi \cos(\lambda_{q+1} \widetilde \Phi \cdot \xi)\big) \cdot \nabla P_{\approx \lambda_{q+1}}\big(\bar a_\xi \cos(\lambda_{q+1} \widetilde \Phi \cdot \xi)\big)).
\end{equation*}
By a series of manipulations in Fourier space, it can be shown that 
\begin{eqnarray*}
     &&u_{q+1}^{(p)}\cdot \nabla \theta_{q+1}^{(p)}=\\
     && \,\,\, \sum_{\xi \in F} g_\xi^2 \nabla^\perp \cdot \div \sum_{\eta, \zeta \in \{-\xi, \xi\}} \int_{\mathbb R^2 \times \mathbb R^2}K_{\lambda_{q+1}} (h_1, h_2) \bar a_\xi (x-h_1) \bar a_\xi(x-h_2) e^{i\lambda_{q+1} \widetilde \Phi(x-h_1) \cdot \eta} e^{i\lambda_{q+1} \widetilde \Phi(x-h_2) \cdot \zeta} dh_1 dh_2,
\end{eqnarray*}
where the kernel $K_{\la_{q+1}}$, which takes values in the space of $2$-tensors, has the Fourier transform
\begin{equation*}
    \widehat K_{\lambda_{q+1}}(\nu_1, \nu_2) = \frac{1}{16} \frac{(\nu_1 - \nu_2)\otimes(\nu_1 - \nu_2)}{|\nu_1||\nu_2|(|\nu_1| + |\nu_2|)} \varphi(\lambda_{q+1}^{-1} \nu_1)\varphi(\lambda_{q+1}^{-1} \nu_2),
\end{equation*}
where $\varphi(\lambda_{q+1}^{-1}\cdot)$ is the multiplier of $P_{\approx \lambda_{q+1}}$. Since $K_{\lambda_{q+1}}$ is localized at frequencies $\approx \lambda_{q+1}$ and $\bar a_\xi$ and $\widetilde \Phi$ (heuristically) have frequencies $\lesssim \lambda_q$, we are justified to perform the approximations 
\begin{equation*}
    \bar a_\xi(x - h_1) \approx a_\xi(x),
\end{equation*}
\begin{equation*}
    \widetilde \Phi(x - h_1) \approx \widetilde \Phi(x) - \nabla \widetilde \Phi(x) h_1.
\end{equation*}
This determines the first order of the quadratic interaction:
\begin{eqnarray*}
    u_{q+1}^{(p)} \cdot \nabla \theta_{q+1}^{(p)} &=& \nabla^\perp \cdot \div \left(\sum_{\xi \in F} \sum_{\eta, \zeta \in \{-\xi, \xi\}}  g_\xi^2 \bar a_\xi^2 \widehat K_{\la_{q+1}}(\lambda_{q+1} (\na \widetilde \Phi)^T \eta, \lambda_{q+1} (\na \widetilde \Phi)^T \zeta) e^{i \la_{q+1} \widetilde \Phi \cdot (\eta + \zeta)} + g_\xi^2 \delta B_\xi \right) \\ 
    &=& \nabla^\perp \cdot \div \left(\sum_{\xi \in F} g_{\xi}^2 \underbrace{\frac14\frac{1}{\lambda_{q+1}}\bar a_\xi^2\frac{(\nabla\widetilde \Phi)^{T}\xi\otimes \xi (\nabla\widetilde \Phi)}{|(\nabla\widetilde \Phi)^{T}\xi|^{3}}}_{\bar A_\xi} + g_\xi^2 \delta B_\xi \right),
\end{eqnarray*}
where $\delta B_\xi$ are lower order terms. The following cancellation is, therefore, achieved
\begin{equation*}
    \nabla^\perp \cdot \div R_q^{\text{rem}} +  u_{q+1}^{(p)}\cdot \nabla \theta_{q+1}^{(p)} = \nabla^\perp \cdot \div \sum_{\xi \in F} g_\xi^2\left( \underbrace{\bar A_\xi - A_\xi}_{R_{q+1}^{\text{flow}}} +  \delta B_\xi \right).
\end{equation*}

The flow error $R_{q+1}^{\text{flow}}$ is due to the discrepancy between the flow maps $\widetilde \Phi$ and $\Phi$. We expect, then, to have the estimate
\[\|R_{q+1}^{\text{flow}}\|_0\lesssim \delta_{q+1}\|\nabla\Phi-\nabla\widetilde \Phi\|_0.\]
The difference of the flow maps satisfies
\begin{equation}\notag
\begin{cases}
\partial_t(\Phi-\widetilde \Phi)+u_q\cdot\nabla (\Phi-\widetilde \Phi)=w_{q+1}^{(t)}\cdot\nabla\widetilde \Phi,\\
(\Phi-\widetilde \Phi)|_{t=t_0}=0.
\end{cases}
\end{equation}
It follows that, on time-scales of size $\tau_q$,
\[\|\nabla\Phi-\nabla\widetilde \Phi\|_0\lesssim \lambda_q\tau_q\|w_{q+1}^{(t)}\|_0,\]
and, hence,
\[\|R_{q+1}^{\text{flow}}\|_0\lesssim \frac{\delta_{q+1}^2 \lambda_q^3 \tau_q}{\mu_{q+1}}.\]

On the other hand, the lower order term $\delta B_\xi$ will satisfy estimates which have a $\lambda_q/\lambda_{q+1}$ improvement over the first order term. Consequently, we expect the estimate 
\begin{equation*}
    \|\delta B_\xi\|_0 \lesssim \delta_{q+1}\frac{\lambda_q}{\lambda_{q+1}}.
\end{equation*}

The main drawback of using temporal oscillations to decouple directions can be seen in the transport error:
\[R_{q+1}^{\text{transport}}=\div^{-1}\Delta^{-1}\nabla^{\perp}\cdot\left((\partial_t+u_{q,\Gamma}\cdot\nabla)\theta_{q+1}^{(p)}\right),\]
which involves a term in which the material derivative falls on the temporally oscillatory profiles $g_\xi$. Given that the differential operator above is of order $-2$, we expect the estimate
\[\|R_{q+1}^{\text{transport}}\|_0\lesssim \lambda_{q+1}^{-2}\mu_{q+1}\lambda_{q+1}^{\frac{1}{2}}\delta_{q+1}^{\frac12}.\]

\subsubsection{Optimizing the errors and choosing $\mu_{q+1}$}

The estimates for the transport error $R_{q+1}^{\text{transport}}$ become better when the temporal frequency $\mu_{q+1}$ is smaller, whereas the estimates for the Newton error $R_{q+1}^{\text{Newton}}$ are improved when $\mu_{q+1}$ is large. This leads us to optimizing the size of $\mu_{q+1}$ by enforcing the same bounds for both of these errors. This results in the following choice:
\[\mu_{q+1}=\delta_{q+1}^{\frac12}\lambda_{q+1}^{\frac12}\lambda_q.\]
The necessary condition $\mu_{q+1} > \tau_q^{-1}$ is, thus, satisfied.

\subsubsection{Critical regularity threshold}
We let $R_{q+1}$ be the sum of the errors remaining upon adding the Nash perturbation. In order to propagate the inductive estimates on the Reynolds stress, we require $R_{q+1} \lesssim \delta_{q+2}$. By the choice of $\mu_{q+1}$, we have 
\begin{equation*}
    \|R_{q+1}^{\text{transport}}\|_0 + \|R_{q+1}^{\text{Newton}}\|_0 \lesssim \delta_{q+1} \frac{\lambda_q}{\lambda_{q+1}},
\end{equation*}
which leads to the condition 
\begin{eqnarray*}
    \delta_{q+1} \frac{\lambda_q}{\lambda_{q+1}} \leq \delta_{q+2} \iff \beta < \frac{1}{2b}.
\end{eqnarray*}
Since 
\begin{equation*}
    \|\theta_{q+1}^{(p)}\|_0 \lesssim \delta_{q+1}^{1/2} \lambda_{q+1}^{1/2},
\end{equation*}
and $b = 1^{+}$, this is consistent with solutions $\theta \in C_t^0 C_x^{0-}$. We remark that the error given by the lower order terms $\delta B_\xi$ from the microlocal lemma satisfy the same estimate. 

Requiring $\|R_{q+1}^{\text{flow}}\|_0\leq \delta_{q+2}$ yields the condition
\begin{equation*}
    \delta_{q+1} \left(\frac{\delta_{q+1} \lambda_q}{\delta_q \lambda_{q+1}} \right)^{1/2} \leq \delta_{q+1} \iff \beta < \frac{1}{2(2b-1)},
\end{equation*}
which, once again, allows $\beta = 1/2^{-}$ and is consistent with the sharp Onsager regularity.

\section{The Newton steps}
\label{sec-Newton}
\subsection{Spatial mollification} As by now standard in Nash iteration schemes, the construction starts with a mollification of the SQG-Reynolds system, whose purpose is to deal with the problem of loss of derivatives. This subsection is the same as the analogous one in \cite{GR23} and follows ideas used in all Nash iterations.

We define the mollification length scale as
\begin{equation*}
    \ell_q = (\lambda_q \lambda_{q+1})^{-1/2},
\end{equation*}
and let $\zeta:\mathbb R^2 \rightarrow \mathbb R$ be a smooth function such that its Fourier transform 
\begin{equation*}
    \hat \zeta(\xi) = \int_{\mathbb R^2} \zeta(x) e^{-ix\cdot \xi} dx
\end{equation*}
satisfies $\hat \zeta(\xi) = 1$ when $|\xi| \leq 1$ and $\hat \zeta (\xi) = 0$ when $|\xi| \geq 2$. We, moreover, assume $\hat \zeta$ (and, thus, $\zeta$) is symmetric. Given a periodic function $f:\mathbb T^2 \rightarrow \mathbb R$, let 
\begin{eqnarray*}
    P_{\lesssim \ell_q^{-1}} f(x) = \sum_{k \in \mathbb Z^2} \hat f(k) \hat \zeta (k \ell_q) e^{i k \cdot x}.
\end{eqnarray*}
Equivalently, 
\begin{equation*}
    P_{\lesssim \ell_q^{-1}} f(x) = \int_{\mathbb R^2} f(x-y) \zeta_{\ell_q}(y) dy,
\end{equation*}
where $f$ is identified with its periodic extension, and 
\begin{equation*}
    \zeta_{\ell_q}(x) = \ell_q^{-2} \zeta(\ell_q^{-1}x).
\end{equation*}
The definition is easily extended to vector fields, tensor fields, etc. 

We denote
\begin{equation*}
    \bar \theta_q = P_{\lesssim \ell_q^{-1}} \theta_q,
\end{equation*}
\begin{equation*}
    \bar u_q= T[\bar \theta_q] = P_{\lesssim \ell_q^{-1}} u_q , 
\end{equation*}
\begin{equation*}
    R_{q, 0} = P_{\lesssim \ell_q^{-1}} R_q .
\end{equation*} 
and record the relevant estimates in the following lemma. To ease notation, we write
\begin{equation*}
    \bar D_t = \partial_t + \bar u_q \cdot \nabla
\end{equation*}
for the material derivative associated to the mollified velocity field $\bar u_q$. We remark that $\zeta$ satisfies all of the required properties of proposition \ref{prop-molli}, and, thus, the proof of the following lemma, detailed in \cite{GR23}, goes through.

\begin{lem} \label{smoli_estim}
    The following estimates 
\begin{equation} \label{smoli_1}
    \|\bar \theta_q\|_N+\|\bar u_q\|_N \lesssim \delta_q^{\frac12} \lambda_q^{\frac{1}2+N}, \,\,\, \forall N \in \{0,1,2,...,L_\theta\},
\end{equation}
\begin{equation} \label{smoli_2}
    \|R_{q,0}\|_N \lesssim \delta_{q+1} \lambda_q^{N-2\alpha}, \,\,\, \forall N \in \{0, 1,..., L_R\},
\end{equation}
\begin{equation} \label{smoli_3}
    \|\bar D_t R_{q, 0}\|_N \lesssim \delta_{q+1} \delta_q^{\frac12} \lambda_q^{N+\frac{3}{2}-2\alpha}, \,\,\, \forall N \in \{0, 1,..., L_t\}
\end{equation}
\begin{equation} \label{smoli_4}
    \|\bar \theta_q\|_{N + L_\theta}+\|\bar u_q\|_{N + L_\theta} \lesssim \delta_q^{\frac12} \lambda_q^{\frac{1}2+L_\theta} \ell_q^{-N}, \,\,\, \forall N \geq 0,
\end{equation}
\begin{equation} \label{smoli_5}
    \|R_{q,0}\|_{N+L_R} \lesssim \delta_{q+1} \lambda_q^{L_R-2\alpha} \ell_q^{-N}, \,\,\, \forall N \geq 0,
\end{equation}
\begin{equation} \label{smoli_6}
    \|\bar D_t R_{q,0}\|_{N+L_t} \lesssim \delta_{q+1} \delta_q^{\frac12}\lambda_q^{L_t +\frac{3}2-2\alpha} \ell_q^{-N}, \forall N \geq 0
\end{equation}
hold with implicit constants depending on $M$ and $N$ \footnote{Here, and throughout, by dependence on $N$, we mean dependence on the norm being estimated. Strictly speaking, the constant in the estimate of $\|\cdot\|_{N+L}$ will depend on $N+L$.}. 
\end{lem}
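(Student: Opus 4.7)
The strategy is the standard mollification estimate used throughout Nash iteration, following the template of the analogous lemma in \cite{GR23}. It rests on four ingredients: (i) the uniform-in-$\ell_q$ bound $\|\zeta_{\ell_q}\|_{L^1}=\|\zeta\|_{L^1}$, so that convolution with $\zeta_{\ell_q}$ preserves the relevant norms; (ii) the commutation $\bar u_q = T[\bar\theta_q] = P_{\lesssim \ell_q^{-1}} u_q$ together with boundedness of the zero-order multiplier $T=\nabla^\perp\Lambda^{-1}$; (iii) a Constantin--E--Titi type commutator identity for the material derivative; and (iv) a Bernstein-type trade of derivatives for factors of $\ell_q^{-1}$ for functions with Fourier support in $\{|\xi|\lesssim \ell_q^{-1}\}$.

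The low-order estimates \eqref{smoli_1} and \eqref{smoli_2} will follow by applying (i) and (ii) directly to the inductive assumptions \eqref{induct-u} and \eqref{induct-R}. For \eqref{smoli_3}, the plan is to decompose
\begin{equation*}
    \bar D_t R_{q,0} = \overline{D_t R_q} + C_q, \qquad C_q := \bar u_q \cdot \nabla R_{q,0} - \overline{u_q \cdot \nabla R_q}.
\end{equation*}
The mollified term $\overline{D_t R_q}$ is controlled via (i) and \eqref{induct-DR}, so the main issue is the commutator $C_q$, for which I will use the symmetric representation
\begin{equation*}
    C_q(x) = \tfrac{1}{2}\iint \bigl(u_q(x-h_1)-u_q(x-h_2)\bigr)\cdot\bigl(\nabla R_q(x-h_2)-\nabla R_q(x-h_1)\bigr)\,\zeta_{\ell_q}(h_1)\,\zeta_{\ell_q}(h_2)\,dh_1\,dh_2,
\end{equation*}
obtained from the usual double-integral expansion together with $\int\zeta_{\ell_q}=1$ and a swap of dummy variables. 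Using the Schwartz decay of $\zeta$ to control the second moment of $\zeta_{\ell_q}$, this yields $\|C_q\|_0 \lesssim \ell_q^2 \|\nabla u_q\|_0\|\nabla^2 R_q\|_0$, a contribution strictly smaller than the main term in view of the choice $\ell_q^2 = (\lambda_q\lambda_{q+1})^{-1}$.

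For \eqref{smoli_4}--\eqref{smoli_6}, the key observation is that $\bar\theta_q$, $\bar u_q$, $R_{q,0}$, and $C_q$ are all Fourier-localized to $\{|\xi|\lesssim \ell_q^{-1}\}$. Thus each derivative beyond $L_\theta$, $L_R$, or $L_t$ costs only $\ell_q^{-1}$ by Bernstein; equivalently, the representation $\nabla^{N+L}\bar f = \nabla^L f * \nabla^N \zeta_{\ell_q}$ together with $\|\nabla^N\zeta_{\ell_q}\|_{L^1}\lesssim_N \ell_q^{-N}$ does the job. Combining this with the low-order estimates already established yields \eqref{smoli_4}, \eqref{smoli_5}, and \eqref{smoli_6}. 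The main technical point is the commutator identity and bound for $C_q$; the remainder is routine and proceeds exactly as in \cite{GR23}.
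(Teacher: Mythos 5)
Your proposal is correct and follows essentially the same route as the paper, which itself defers to the analogous lemma in \cite{GR23} and notes that everything reduces to standard mollification estimates plus a Constantin--E--Titi commutator bound for the material derivative (using $L_t \leq \min\{L_\theta,L_R\}-1$). Your symmetric double-integral identity for $C_q$ is a correct rewriting of the CET commutator, and the $\ell_q^2$ gain together with the Bernstein argument for the high-order norms is exactly what is needed.
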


For a detailed proof, we refer the reader to \cite{GR23}. We point out only that all but the statements on the material derivative are standard mollification estimates. For the material derivative, the Constantin-E-Titi commutator estimate is employed, and we use implicitly that $L_t \leq \min\{L_\theta, L_R\} - 1$. 

\subsection{Flow map estimates} 

We define the backward flow $\Phi_t : \T^2 \times \R \to \T^2$ starting at time $t \in \mathbb R$ as 
\begin{equation} \label{Flow_t}
    \begin{cases}
        \partial_s \Phi_t(x,s) + \bar u_q(x,s) \cdot \nabla \Phi_t(x,s) = 0 \\ 
        \Phi_t \big|_{s = t}(x) = x,
    \end{cases}
\end{equation}
and the corresponding Lagrangian flow $X_t$ by 
\begin{equation} \label{Lagr_t}
    \begin{cases}
        \frac{d}{ds}X_t(\alpha, s) = \bar u_q(X_t(\alpha, s), s) \\ 
        X_t(\alpha, t) = \alpha.
    \end{cases}
\end{equation}
The following contains standard estimates on the Lagrangian and backward flows of $\bar u_q$. The reader is referred to \cite{GR23} for the proof.

\begin{lem} \label{Flow_estim}
    Let $\tau \leq  \|\bar u_q\|_1^{-1}$. For $\Phi_t$ defined in  \eqref{Flow_t} and $X_t$ the Lagrangian flow \eqref{Lagr_t}, we have for any $|s - t| < \tau$, 
    \begin{equation} \label{Flow_estim_1}
        \|(\nabla \Phi_t)^{-1}(\cdot, s)\|_N + \|\nabla \Phi_t (\cdot, s)\|_N \lesssim \lambda_q^N, \,\,\, \forall N \in\{0,1,..., L_\theta-1\},
    \end{equation}
    \begin{equation} \label{Flow_estim_2}
        \|\bar D_t (\nabla \Phi_t)^{-1}(\cdot, s)\|_N + \|\bar D_t \nabla \Phi_t (\cdot, s)\|_N \lesssim \delta_q^{1/2} \lambda_q^{N+3/2}, \,\,\, \forall N \in\{0,1,..., L_\theta-1\},
    \end{equation}
    \begin{equation} \label{Lagr_estim_1}
         \|D X_t(\cdot, s)\|_N  \lesssim \lambda_q^N, \,\,\, \forall N \in \{0,1,..., L_\theta-1\},
    \end{equation}
    \begin{equation} \label{Flow_estim_3}
        \|(\nabla \Phi_t)^{-1}(\cdot, s)\|_{N+L_\theta-1} + \|\nabla \Phi_t(\cdot, s)\|_{N+L_\theta-1}  \lesssim \lambda_q^{L_\theta-1} \ell_q^{-N}, \, \, \, \forall N \geq 0,
    \end{equation}
    \begin{equation} \label{Flow_estim_4}
        \|\bar D_t (\nabla \Phi_t)^{-1}(\cdot, s)\|_{N+L_\theta-1} + \|\bar D_t \nabla \Phi_t(\cdot, s)\|_{N+L_\theta-1}  \lesssim \delta_q^{1/2} \lambda_q^{L_\theta+1/2} \ell_q^{-N}, \, \, \, \forall N \geq 0,
    \end{equation}
    \begin{equation} \label{Lagr_estim_2}
        \|D X_t(\cdot, s)\|_{N+L_\theta-1} \lesssim \lambda_q^{L_\theta-1} \ell_q^{-N}, \, \, \, \forall N \geq 0,
    \end{equation}
    where the implicit constants depend on $M$ and $N$.
\end{lem}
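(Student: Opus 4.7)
The plan is to execute the standard Grönwall--based argument along particle trajectories that underlies all such flow estimates in Nash iteration schemes; the mollified velocity $\bar u_q$ enters only through the bounds of lemma~\ref{smoli_estim}. I would prove (\ref{Lagr_estim_1}) first, deduce (\ref{Flow_estim_1}) from it, then turn to the material derivative estimate (\ref{Flow_estim_2}) via a direct pointwise identity, and finally redo the entire argument with the higher derivative bounds (\ref{smoli_4}) in place of (\ref{smoli_1}) to obtain (\ref{Flow_estim_3})--(\ref{Lagr_estim_2}).

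For (\ref{Lagr_estim_1}), I would proceed by induction on $N$. The base case is immediate since $X_t(\cdot,t) = \I$. Differentiating the Lagrangian ODE (\ref{Lagr_t}) once in the initial position variable gives $\frac{d}{ds} D X_t = (\nabla \bar u_q)(X_t,s)\,D X_t$, a linear ODE whose coefficient satisfies $\|\nabla \bar u_q\|_0 \lesssim \|\bar u_q\|_1$, so that by the hypothesis $\tau \leq \|\bar u_q\|_1^{-1}$ Grönwall yields $\|DX_t\|_0 \lesssim 1$. At order $N \geq 1$, differentiating $N$ times produces a linear ODE for $\nabla^N (DX_t)$ whose forcing is a polynomial expression in $\nabla^k \bar u_q$ ($k\le N+1$) and $\nabla^j (DX_t)$ ($j < N$), all previously controlled. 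Integrating against $\tau$ and using (\ref{smoli_1}) gives a contribution bounded by $\tau \|\bar u_q\|_{N+1} \lesssim \delta_q^{-1/2}\lambda_q^{-3/2} \cdot \delta_q^{1/2}\lambda_q^{N+3/2} = \lambda_q^N$, as desired. For (\ref{Flow_estim_1}), differentiating the transport equation for $\Phi_t$ gives $\bar D_t(\nabla \Phi_t) = -(\nabla \bar u_q)^T \nabla \Phi_t$, and the same inductive Grönwall closes; alternatively, use $\nabla \Phi_t \circ X_t = (D X_t)^{-1}$ with Faà di Bruno/composition estimates in Hölder norms (available via the standard material in the appendix). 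The bound on $(\nabla \Phi_t)^{-1}$ follows from the identity $\bar D_t((\nabla \Phi_t)^{-1}) = (\nabla \Phi_t)^{-1} (\nabla \bar u_q)^T$ or directly from $DX_t = (\nabla \Phi_t)^{-1}\circ X_t$.

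The material derivative estimate (\ref{Flow_estim_2}) is then essentially a free consequence: the pointwise identity $\bar D_t(\nabla \Phi_t) = -(\nabla \bar u_q)^T \nabla \Phi_t$ immediately gives $\|\bar D_t \nabla \Phi_t\|_0 \lesssim \|\nabla \bar u_q\|_0 \|\nabla \Phi_t\|_0 \lesssim \delta_q^{1/2}\lambda_q^{3/2}$, and higher Hölder norms follow from the product-rule type bound $\|fg\|_N \lesssim \|f\|_N \|g\|_0 + \|f\|_0 \|g\|_N$ combined with the already-established (\ref{Flow_estim_1}) and (\ref{smoli_1}).

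For the high-derivative estimates (\ref{Flow_estim_3})--(\ref{Lagr_estim_2}), I would repeat the same inductive Grönwall argument but now with the loss-of-derivative bounds (\ref{smoli_4})--(\ref{smoli_6}) replacing the top-order usage of (\ref{smoli_1})--(\ref{smoli_3}). The key arithmetic is
\[
\tau \|\bar u_q\|_{N+L_\theta} \lesssim \delta_q^{-1/2}\lambda_q^{-3/2} \cdot \delta_q^{1/2} \lambda_q^{L_\theta + 1/2} \ell_q^{-N} = \lambda_q^{L_\theta - 1}\ell_q^{-N},
\]
exactly the claimed bound, with the extra loss absorbed into the mollification length scale. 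The main technical obstacle is not any single step but the bookkeeping: at each induction step one must verify that the forcing produced by applying $\nabla^k$ to the flow ODE is controlled by quantities that are either already bounded inductively or are controlled by (\ref{smoli_1})/(\ref{smoli_4}), and that the interpolation chains close without exceeding $L_\theta - 1$ derivatives in the low-derivative regime (which is why the range stops at $L_\theta - 1$ in (\ref{Flow_estim_1})). Since this is by now routine and identical to the argument in \cite{GR23}, I would at this point refer to that paper for the detailed combinatorics rather than reproduce them.
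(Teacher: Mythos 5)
Your proposal is correct and follows essentially the same route as the paper, which itself defers to \cite{GR23}: there the estimates are obtained from the transport/Gr\"onwall bounds of proposition~\ref{transport_estim} applied to the ODEs $\bar D_t\nabla\Phi_t=-(\nabla\bar u_q)^T\nabla\Phi_t$ and $\tfrac{d}{ds}DX_t=(\nabla\bar u_q)DX_t$, combined with lemma~\ref{smoli_estim} and interpolation, exactly as you outline. The only (inherited) imprecision is your use of $\tau\lesssim\delta_q^{-1/2}\lambda_q^{-3/2}$, which the hypothesis $\tau\leq\|\bar u_q\|_1^{-1}$ alone does not give since \eqref{smoli_1} is only an upper bound; in practice the lemma is applied with $\tau=\tau_q\leq\delta_q^{-1/2}\lambda_q^{-3/2}$, so this does not affect the argument.
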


\subsection{Toolbox for temporal oscillation and localization}\label{sec-prep}
To prepare for the construction of the iterative Newton perturbations, we need to define several time-dependent functions: partition of unity, cut-offs, temporally oscillatory profiles. This subsection is also the same as the analogous one in \cite{GR23}. We start by introducing the natural time scale
\begin{equation*}
    \tau_q = \delta_q^{-\frac12} \lambda_q^{-\frac{3}2} \lambda_{q+1}^{-\alpha},
\end{equation*}
which is chosen so as to satisfy
\begin{equation*}
    \|\bar u_q\|_{1+\alpha} \tau_q \lesssim \bigg(\frac{\lambda_q}{\lambda_{q+1}}\bigg)^\alpha \lesssim 1,
\end{equation*}
and 
\begin{equation*}
     \|\bar u_q\|_1 \tau_q \leq  C \lambda_{q+1}^{-\alpha},
\end{equation*}
with $C>0$ depending only on $M$. In particular, for sufficiently large $a_0$ depending on $M$ and $\alpha$ such that 
\begin{equation*}
    C \lambda_{q+1}^{-\alpha} \leq 1,
\end{equation*}
lemma \ref{Flow_estim} holds with $\tau$ replaced with $\tau_q$. 

Let $t_k = k \tau_q$, for $k \in \mathbb{Z}$. We first define a partition of unity in time. To this end, we choose the cut-off functions $\{\chi_k\}_{k \in \mathbb Z}$ which satisfy: 
\begin{itemize}
    \item The squared cut-offs form a partition of unity: 
    \begin{equation*}
        \sum_{k \in \mathbb Z} \chi_k^2(t) = 1;
    \end{equation*}
    \item $\supp \chi_k \subset (t_k - \frac{2}{3} \tau_q, t_k + \frac{2}{3} \tau_q)$. Consequently, 
    \begin{equation*}
        \supp \chi_{k-1} \cap \supp \chi_{k+1} = \varnothing, \forall k \in \mathbb Z;
    \end{equation*}
    \item For $N \geq 0$ and $k \in \mathbb Z$,
    \begin{equation*}
        |\partial_t^N \chi_k| \lesssim \tau_q^{-N},
    \end{equation*}
    where the implicit constant depends only on $N$.
\end{itemize}

We also need another set of cut-off functions $\{\tilde \chi_k \}_{k \in \mathbb Z}$, which will be used to temporally localize the Newton perturbations. We require: 
\begin{itemize}
    \item $\supp \tilde \chi_k \subset (t_k - \tau_q, t_k + \tau_q)$ and $\tilde \chi_k = 1$ on $(t_k - \frac{2}{3} \tau_q, t_k + \frac{2}{3} \tau_q)$. Note in particular that 
    \begin{equation*}
        \chi_k \Tilde \chi_k = \chi_k, 
        \,\,\, \forall k \in \mathbb Z.
    \end{equation*}
    \item For any $N \geq 0$ and $k \in \mathbb Z$, 
    \begin{equation*}
        |\partial_t^N \tilde \chi_k| \lesssim \tau_q^{-N},
    \end{equation*}
    where the implicit constant depends only on $N$.
\end{itemize}
Let the number of implemented Newton steps be given by
\begin{equation}\label{def:ga}
    \Gamma := \bigg\lceil \frac{1}{\frac12 - \beta} \bigg \rceil.
\end{equation}
This determines the temporally oscillatory profiles through the following simple lemma, which is proved in \cite{GR23}.

\begin{lem} \label{osc_prof}
Let $F \subset \mathbb Z^2$ be the set given by lemma \ref{le-geo}, and $\Gamma \in \mathbb N$. For any $\xi \in F$, there exist $2\Gamma$ smooth $1$-periodic functions $g_{\xi, e, n}, g_{\xi, o, n}:\mathbb R \rightarrow \mathbb R$ with $n \in \{1, 2,..., \Gamma\}$ such that
\begin{equation*}
    \int_0^1 g_{\xi, p, n}^2 = 1, \ \ \forall \xi \in F \text{, } p \in \{e, o\} \text{, and } n \in \{1,2,..., \Gamma\} ;
\end{equation*}
and 
\begin{equation*}
    \supp g_{\xi, p, n} \cap \supp g_{\eta, q, m} = \varnothing,
\end{equation*}
whenever $(\xi, p, n) \neq (\eta, q, m) \in F \times \{e, o\} \times \{1, 2,..., \Gamma\} $. 
\end{lem}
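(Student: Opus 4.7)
The plan is to carry out an entirely elementary construction: since the claim asserts only the existence of $2\Gamma|F|$ smooth $1$-periodic functions with unit $L^2(0,1)$-norm and pairwise disjoint supports in the circle $\R/\Z$, it suffices to partition the unit interval into that many disjoint sub-intervals and place a suitably normalized smooth bump inside each one.

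Concretely, set $N := 2\Gamma |F|$ and fix any bijective enumeration
\[
\iota : F \times \{e,o\} \times \{1,\ldots,\Gamma\} \longrightarrow \{1,2,\ldots,N\}.
\]
Partition $[0,1)$ into the closed sub-intervals $I_j := [(j-1)/N,\, j/N]$ for $j=1,\ldots,N$, and choose a smooth non-negative bump $\phi \in C^\infty_c((0,1/N))$ which is not identically zero. For each $j$, let
\[
\tilde g_j(t) := \phi\!\left(t - \tfrac{j-1}{N}\right), \qquad t \in [0,1),
\]
extended $1$-periodically to $\R$. By construction, $\supp \tilde g_j \subset \bigcup_{k \in \Z}\bigl((j-1)/N + k,\, j/N + k\bigr)$, so the supports of $\tilde g_j$ and $\tilde g_{j'}$ inside any period are disjoint whenever $j \neq j'$. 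Finally, normalize by setting
\[
g_{\xi,p,n}(t) := \Bigl(\textstyle\int_0^1 \tilde g_{\iota(\xi,p,n)}^2\,ds\Bigr)^{-1/2}\, \tilde g_{\iota(\xi,p,n)}(t),
\]
which is smooth, $1$-periodic, has unit $L^2(0,1)$-norm, and retains the disjoint-support property.

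This construction gives all the conclusions claimed in the lemma immediately and does not encounter any real obstacle; the only point requiring attention is to make sure the bump is placed strictly inside $(0,1/N)$ so that, after periodization, the supports of distinct profiles remain disjoint not only on $[0,1)$ but also across period boundaries. This is automatic from the choice $\phi \in C^\infty_c((0,1/N))$.
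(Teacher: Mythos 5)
Your construction is correct, and it is essentially the same elementary partition-and-bump argument that the paper defers to (the lemma is only cited from \cite{GR23}, where it is proved in exactly this way: subdivide the period into $2\Gamma|F|$ disjoint subintervals, place a smooth compactly supported bump in each, and normalize in $L^2(0,1)$). Your remark about keeping the bump strictly inside the open subinterval so that supports stay disjoint after periodization is the right point of care, and nothing further is needed.
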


\subsection{The SQG-Reynolds system after \texorpdfstring{$n$}{nnn} Newton steps}
Let $n \in \{0, 1, ..., \Gamma - 1\}$. The iterative system after $n$ perturbations will be in the form 
\begin{equation} \label{steps}
    \begin{cases}
        \partial_t \theta_{q, n} + u_{q, n}\cdot\na \theta_{q, n} = \nabla^{\perp}\cdot\div R_{q, n} + \nabla^{\perp}\cdot\div S_{q, n} + \nabla^{\perp}\cdot\div P_{q + 1, n}, \\
        u_{q, n} = T[\theta_{q,n}],
    \end{cases}
\end{equation}
where 
\begin{itemize}
    \item $\theta_{q, n}$ is the SQG scalar to be defined starting from $\theta_{q, 0} = \theta_q$ by adding $n$ perturbations; 
    \item $R_{q, n}$ is the gluing error of the $n^{\text{th}}$ perturbation for $n \geq 1$, while $R_{q, 0}$ is the already defined mollified stress;
    \item $S_{q, n}$ is the error to be erased by the non-interacting highly-oscillatory Nash perturbations. It will be inductively defined starting from $S_{q, 0} = 0$;
    \item $P_{q + 1, n}$ is a small residue error which will be also inductively defined starting from $P_{q+1, 0} = R_q - R_{q, 0}$.
\end{itemize}
Note that \eqref{steps} with $n=0$ is simply the relaxed system \eqref{ASE-q}.

\subsection{Construction of the Newton perturbations} 

We now construct the solution of (\ref{steps}) at the level $(q,n+1)$. As a first step, we use the geometric lemma \ref{le-geo} to decompose the stress $R_{q, n}$ into simple tensors which are adapted to the coordinates imposed by the coarse grain flow of $\bar u_q$. Let $\Phi_k$ be the backward flow satisfying 
\begin{equation*}
    \begin{cases}
        \partial_t \Phi_k + \bar u_q \cdot \nabla \Phi_k = 0 \\ 
        \Phi_k \big|_{t = t_k} = x.
    \end{cases}
\end{equation*}
For $n \in \{0,1,...,\Gamma-1\}$, $k \in \mathbb Z$ and $\xi \in F$, we define the amplitude functions
\begin{equation}\label{def.a.coeff}
    a_{\xi, k, n} = 2 \delta_{q+1, n}^{\frac12}\la_{q+1}^{\frac{1}2}|(\nabla\Phi_k)^{T}\xi|^{\frac{3}2} \chi_k \gamma_\xi \Big((\nabla \Phi_k)^{-T} (\nabla \Phi_k)^{-1} - (\nabla \Phi_k)^{-T} \frac{R_{q, n}}{\delta_{q+1,n}} (\nabla \Phi_k)^{-1} \Big),
\end{equation}
with $\gamma_\xi$ given by lemma \ref{le-geo}, and the amplitude parameters $\delta_{q+1, n}$ defined by 
\begin{equation*}
    \delta_{q+1, n} := \delta_{q+1} \bigg(\frac{\lambda_q}{\lambda_{q+1}}\bigg)^{n(\frac12 - \beta)}.
\end{equation*}
The functions $a_{\xi, k, n}$ are well-defined since 
\begin{equation*}
(\nabla \Phi_k)^{-T} (\nabla \Phi_k)^{-1} - (\nabla \Phi_k)^{-T} \frac{R_{q, n}}{\delta_{q+1,n}} (\nabla \Phi_k)^{-1} \lesssim \lambda_q^{-\alpha},
\end{equation*}
and, therefore, $a_0$ can be chosen sufficiently large so that 
\begin{equation*}
    (\nabla \Phi_k)^{-T} (\nabla \Phi_k)^{-1} - (\nabla \Phi_k)^{-T} \frac{R_{q, n}}{\delta_{q+1,n}} (\nabla \Phi_k)^{-1} \in B_{1/2}(\I).
\end{equation*}

Let $\mathcal N_{\tau}(A)$ denote the neighbourhood of size $\tau$ of the set $A$, and define
\begin{equation*}
    \mathbb Z_{q,n} := \big\{ k \in \mathbb Z \mid k \tau_q \in \mathcal{N}_{\tau_q}(\supp_t R_{q,n})\big\}.
\end{equation*}
We observe that 
\begin{equation*}
    \sum_{k \in \mathbb Z_{q,n}} \chi_k^2(t) = 1 , \ \ \mbox{for}\ \ t \in \supp_t R_{q,n}.
\end{equation*}
Invoking lemma \ref{le-geo}, we find that 
\begin{equation}\label{decomp}
    \nabla^{\perp}\cdot\div \left[ \sum_{k \in \mathbb Z_{q,n}} \sum_{\xi \in F} \frac{a^2_{\xi, k, n}}{4 \lambda_{q+1} |(\nabla \Phi_k)^T \xi|^3} (\nabla \Phi_k)^T \xi \otimes \xi \nabla \Phi_k   \right] =-\nabla^{\perp}\cdot \div R_{q,n}.
\end{equation}
To ease notation, we denote
\begin{equation*}
    A_{\xi, k, n} := \frac{a^2_{\xi, k, n}}{4 \lambda_{q+1} |(\nabla \Phi_k)^T \xi|^3} (\nabla \Phi_k)^T \xi \otimes \xi \nabla \Phi_k .
\end{equation*}

The temporal oscillations will be characterized by the frequency parameter
\begin{equation*}
    \mu_{q+1} = \delta_{q+1}^{\frac12} \lambda_q \lambda_{q+1}^{\frac12} \lambda_{q+1}^{4\alpha},
\end{equation*}
which satisfies
\begin{equation*}
    \mu_{q+1} \tau_q = \bigg( \frac{\lambda_{q+1}}{\lambda_q} \bigg)^{\frac12 - \beta} \lambda_{q+1}^{3\alpha} > 1,
\end{equation*}
since $\beta < 1/2$. Consider $f_{\xi, k, n+1}:\mathbb R \rightarrow \mathbb R$ defined by
\begin{equation}\label{def-temporal-f}
    f_{\xi, k, n+1} := 1 - g^2_{\xi, k, n+1},
\end{equation}
with
\begin{equation*}
    g_{\xi, k, n+1} = 
    \begin{cases}
        g_{\xi, e, n+1} & \text{if } k \text{ is even}, \\ 
        g_{\xi, o, n+1}  & \text{if } k \text{ is odd}.
    \end{cases}
\end{equation*}
The primitive of $f_{\xi, k, n+1}$ is given by
\begin{equation*}
    f^{[1]}_{\xi, k, n+1} (t) = \int_0^t  f_{\xi, k, n+1} (s) ds,
\end{equation*}
which is a well-defined $1$-periodic function, since $g_{\xi, e, n+1}$ and $g_{\xi, o, n+1}$ are normalized in $L^2$. We note that the functions $g_{\xi,k,n+1}$, $f_{\xi, k, n+1}$ and $f^{[1]}_{\xi, k, n+1}$ do not depend on the iteration stage $q$. In addition, the number of these functions is finite, depending only on $\Ga$ and the size of $F$.

We finally consider the solution of the forced Newtonian linearization of the SQG equations,
\begin{equation} \label{LocalNewt}
\begin{cases}
    \partial_t \theta_{k, n + 1} + \bar u_q \cdot \nabla \theta_{k, n + 1} + T[\theta_{k, n + 1}] \cdot \nabla \bar \theta_q = \sum_{\xi\in F} f_{\xi, k, n+1}(\mu_{q + 1} t) \nabla^{\perp}\cdot \div A_{\xi, k, n}(x,t), \\
    % \div w_{k, n + 1} = 0, \\
    \theta_{k, n + 1} \big |_{t = t_k}(x) = \frac{1}{\mu_{q+1}}\sum_{\xi\in F} f^{[1]}_{\xi, k, n+1} (\mu_{q+1} t_k) \nabla^{\perp}\cdot \div A_{\xi, k, n}(x, t_k).
    \end{cases}
\end{equation}
The well-posedness theory for smooth solutions of (\ref{LocalNewt}) can be obtained by standard arguments. The reader could, for instance, slightly modify the proof given in appendix E of \cite{GR23}.

We can now define the $(n+1)^{\text{th}}$ Newton perturbation by the superposition of temporal localizations of the scalars $\theta_{k, n+1}$:
\begin{equation*}
    \theta^{(t)}_{q + 1 , n + 1} (x, t) = \sum_{k \in \mathbb Z_{q,n}} \tilde \chi_k (t) \theta_{k, n + 1}(x, t)\,.
\end{equation*}

\subsection{The errors after the \texorpdfstring{$(n+1)^{\text{th}}$}{thth} step and the inductive proposition}
With the perturbation $\theta_{q+1, n+1}^{(t)}$ at hand, we can compute the new error terms $R_{q,n+1}$, $S_{q,n+1}$, and $P_{q+1,n+1}$. Applying the linearized SQG operator to $\theta_{q,n+1} = \theta_{q, n} + \theta_{q+1, n+1}^{(t)}$ yields
\begin{align*}
    \partial_t \theta^{(t)}_{q+1,n+1} + \bar u_q \cdot \nabla \theta^{(t)}_{q+1,n+1} + T[\theta^{(t)}_{q+1,n+1}] \cdot \nabla \bar \theta_q &= \sum_{k \in \mathbb Z_{q,n}} \sum_{\xi\in F} \tilde \chi_k (t) f_{\xi, k,n+1}(\mu_{q + 1} t) \nabla^{\perp}\cdot\div A_{\xi, k, n}\\
    &\qquad + \sum_{k \in \mathbb Z_{q,n}} \partial_t \tilde \chi_k \theta_{k, n+1}.
\end{align*}
 We note that $\tilde \chi_k A_{\xi, k, n} = A_{\xi, k, n}$ for $\forall k \in \mathbb Z$, since $\supp A_{\xi, k, n} \subset \supp a_{\xi, k, n} \subset \supp \chi_k \times \mathbb{T}^2$. Therefore, it follows from \eqref{decomp} and \eqref{def-temporal-f} that
\begin{eqnarray*}
    \sum_{k \in \mathbb Z_{q,n}} \sum_{\xi\in F} \tilde \chi_k (t) f_{\xi, k, n+1}(\mu_{q + 1} t) \nabla^{\perp}\cdot\div A_{\xi, k, n}
    & = & \sum_{k \in \mathbb Z_{q,n}} \sum_{\xi\in F} \nabla^{\perp}\cdot\div A_{\xi, k, n} - \sum_{k \in \mathbb Z_{q,n}} \sum_{\xi \in F}  g_{\xi, k, n+1}^2  \nabla^{\perp}\cdot\div A_{\xi, k, n} \\ 
    & = & - \nabla^{\perp}\cdot\div R_{q,n} - \sum_{k \in \mathbb Z_{q,n}} \sum_{\xi \in F}  g_{\xi, k, n+1}^2  \nabla^{\perp}\cdot\div A_{\xi, k, n}.
\end{eqnarray*}
As a consequence, we see that the system \eqref{steps} at the $(n+1)^{\text{th}}$ step is satisfied with 
\begin{equation} \label{Newvelo}
    \theta_{q, n + 1} = \theta_{q, n} + \theta_{q+1, n + 1}^{(t)} = \theta_q + \sum_{m =1}^{n+1} \theta_{q+1, m}^{(t)},
\end{equation}
\begin{equation} \label{NewStr}
    R_{q, n + 1} = \div^{-1} \Delta^{-1}\nabla^{\perp}\sum_{k \in \mathbb Z_{q,n}} \partial_t \tilde \chi_k \theta_{k, n+1},
\end{equation}
\begin{equation} \label{NewNash}
     S_{q, n + 1} = S_{q, n} - \sum_{k \in \mathbb Z_{q,n}} \sum_{\xi \in F}   g_{\xi, k, n+1}^2  A_{\xi, k, n}
\end{equation}
\begin{eqnarray} \label{SmalStr}
    P_{q + 1, n + 1} &= & P_{q + 1, n} + \div^{-1}\Delta^{-1}\nabla^{\perp}\left(T[\theta_{q+1,n+1}^{(t)}]\cdot\nabla \theta_{q+1, n+1}^{(t)}\right)\notag \\
    &&+ \div^{-1}\Delta^{-1}\nabla^{\perp}\sum_{m = 1}^{n} \big (T[\theta_{q+1, n+1}^{(t)}]\cdot\nabla\theta_{q+1, m}^{(t)} + T[ \theta_{q+1, m}^{(t)}]\cdot\nabla\theta_{q+1, n+1}^{(t)} \big ) \notag \\ 
    && + \div^{-1}\Delta^{-1}\nabla^{\perp}\left(T[(\theta_q - \bar \theta_q)]\cdot\nabla \theta_{q+1, n+1}^{(t)} + T[\theta_{q+1, n+1}^{(t)}]\cdot\nabla(\theta_q - \bar \theta_q)\right).
\end{eqnarray}
The operator $\div^{-1}$ used above and throughout the paper is described in appendix \ref{sec-geo}. The following is the main inductive proposition of this section.

\begin{prop} \label{NewIter}
Assume $R_{q, n}$ satisfies 
\begin{equation} \label{NewIter_1}
    \|R_{q, n}\|_{N} \leq \delta_{q+1, n} \lambda_q^{N - \alpha}, \,\,\, \forall N \in \{0, 1,..., L_t\},
\end{equation} 
\begin{equation} \label{NewIter_2}
    \|\bar D_t R_{q, n}\|_N \leq \delta_{q+1,n} \tau_q^{-1} \lambda_q^{N - \alpha}, \,\,\, \forall N \in \{0, 1,..., L_t\},
\end{equation}
\begin{equation} \label{NewIter_3}
    \|R_{q, n}\|_{N+L_t} \lesssim \delta_{q+1, n} \lambda_q^{L_t -\alpha} \ell_q^{-N}, \,\,\, \forall N \geq 0
\end{equation}
\begin{equation} \label{NewIter_4}
    \|\bar D_t R_{q,n}\|_{N + L_t} \lesssim \delta_{q+1, n} \tau_q^{-1} \lambda_q^{L_t - \alpha} \ell_{q}^{-N}, \,\,\, \forall N \geq 0
\end{equation}
with implicit constants depending on $n$, $\Gamma$, $M$, $\alpha$ and $N$. In addition, suppose that 
\begin{eqnarray} \label{NewIter_5}
    \supp_t R_{q,n} &\subset& [-2 +(\de_q^{\frac12}\la_q^{\frac{3}2})^{-1} - 2n\tau_q, -1 -(\de_q^{\frac12}\la_q^{\frac{3}2})^{-1} + 2n\tau_q] \\ 
    && \cup [1 + (\de_q^{\frac12}\la_q^{\frac{3}2})^{-1} - 2n \tau_q, 2 - (\de_q^{\frac12}\la_q^{\frac{3}2})^{-1} + 2n \tau_q], \nonumber
\end{eqnarray}
Then, the new stress $R_{q, n+1}$ satisfies \eqref{NewIter_1}-\eqref{NewIter_5} with $n$ replaced by $n+1$. 
\end{prop}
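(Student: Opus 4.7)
The plan is to extract from each local Newton solution $\theta_{k,n+1}$ of \eqref{LocalNewt} a clean principal term whose size can be read off directly, control the remainder by a transport/Gr\"onwall argument on the short interval $|t-t_k|\le\tau_q$, and then pass the resulting bounds through the operator $\div^{-1}\Delta^{-1}\nabla^\perp$ (of order $-2$) and the factor $\partial_t\tilde\chi_k$ (which costs $\tau_q^{-1}$) to bound $R_{q,n+1}$ via \eqref{NewStr}.

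As a first step, I would use the hypotheses \eqref{NewIter_1}--\eqref{NewIter_4} on $R_{q,n}$, the smoothness of the functions $\gamma_\xi$ granted by lemma~\ref{le-geo}, and the flow map estimates of lemma~\ref{Flow_estim} to derive Leibniz-type bounds of the form
\begin{equation*}
    \|A_{\xi,k,n}\|_N \lesssim \delta_{q+1,n}\lambda_q^N, \qquad \|\bar D_t A_{\xi,k,n}\|_N\lesssim \delta_{q+1,n}\tau_q^{-1}\lambda_q^N,
\end{equation*}
on $\supp\chi_k$ for $0\le N\le L_t$, together with the companion high-derivative versions losing $\ell_q^{-N'}$ for $N>L_t$. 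The factor $\tau_q^{-1}$ in the material derivative is picked up when $\bar D_t$ hits $\chi_k$, $\nabla\Phi_k$, or $R_{q,n}$, using that $\tau_q=\delta_q^{-1/2}\lambda_q^{-3/2}\lambda_{q+1}^{-\alpha}$ is precisely the natural transport scale.

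Next, I would introduce the principal ansatz
\begin{equation*}
    \theta^{pr}_{k,n+1}(x,t) := \frac{1}{\mu_{q+1}}\sum_{\xi\in F} f^{[1]}_{\xi,k,n+1}(\mu_{q+1} t)\,\nabla^\perp\!\cdot\!\div A_{\xi,k,n}(x,t),
\end{equation*}
which matches the initial datum of \eqref{LocalNewt} at $t=t_k$ and whose material derivative recovers the prescribed forcing modulo a term of size $\mu_{q+1}^{-1}\bar D_t\nabla^\perp\!\cdot\!\div A_{\xi,k,n}$. The difference $\delta\theta_{k,n+1}:=\theta_{k,n+1}-\theta^{pr}_{k,n+1}$ then solves the linearized SQG equation with zero initial datum and forcing that is $\tau_q^{-1}/\mu_{q+1}$ smaller than $\theta^{pr}_{k,n+1}$. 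Since $\|\bar u_q\|_1\tau_q\lesssim\lambda_{q+1}^{-\alpha}\ll 1$, a standard transport estimate on $[t_k-\tau_q,t_k+\tau_q]$, in which the Newtonian term $T[\delta\theta_{k,n+1}]\cdot\nabla\bar\theta_q$ is absorbed as a bounded perturbation using the $C^\alpha$-boundedness of $T$ and \eqref{smoli_1}, yields
\begin{equation*}
    \|\theta_{k,n+1}\|_N\lesssim \frac{\delta_{q+1,n}\lambda_q^{N+2}}{\mu_{q+1}},
\end{equation*}
together with analogous bounds for $\bar D_t\theta_{k,n+1}$ (noting that on $\supp\partial_t\tilde\chi_k$ the forcing vanishes, so $\bar D_t\theta_{k,n+1}=-T[\theta_{k,n+1}]\cdot\nabla\bar\theta_q$) and for higher derivatives.

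Inserting these estimates into \eqref{NewStr} and exploiting the order $-2$ of $\div^{-1}\Delta^{-1}\nabla^\perp$ gives
\begin{equation*}
    \|R_{q,n+1}\|_N \lesssim \frac{\delta_{q+1,n}\tau_q^{-1}}{\mu_{q+1}}\lambda_q^N = \delta_{q+1,n}\!\left(\frac{\lambda_q}{\lambda_{q+1}}\right)^{\!\!1/2-\beta}\!\lambda_{q+1}^{-3\alpha}\lambda_q^N,
\end{equation*}
comfortably below $\delta_{q+1,n+1}\lambda_q^{N-\alpha}$ thanks to the spare factor $\lambda_{q+1}^{-3\alpha}$. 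The bound \eqref{NewIter_2} follows from the same computation applied to $\bar D_t R_{q,n+1}$; \eqref{NewIter_3}--\eqref{NewIter_4} follow from the high-derivative estimates of lemmas~\ref{smoli_estim} and~\ref{Flow_estim}; and the support property \eqref{NewIter_5} is immediate from $\supp\tilde\chi_k\subset(t_k-\tau_q,t_k+\tau_q)$ and the definition of $\mathbb{Z}_{q,n}$. The main technical obstacle lies in the Newtonian correction $T[\theta_{k,n+1}]\cdot\nabla\bar\theta_q$ in \eqref{LocalNewt}: it couples $\theta_{k,n+1}$ to its induced velocity through a nonlocal operator, ruling out a direct method of characteristics, and must be treated as a bounded perturbation of a pure transport equation on the short scale $\tau_q$ via a contraction/bootstrap argument.
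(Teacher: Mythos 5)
Your overall architecture (principal ansatz matching the initial datum, Gr\"onwall on the scale $\tau_q$, a factor $\tau_q^{-1}$ from $\partial_t\tilde\chi_k$, and the support bookkeeping) mirrors the paper's, and your final numerology $\tau_q^{-1}\mu_{q+1}^{-1}\delta_{q+1,n}\lambda_q^N=\delta_{q+1,n+1}\lambda_{q+1}^{-3\alpha}\lambda_q^N$ is correct. But there is a genuine gap at the step where you ``exploit the order $-2$ of $\div^{-1}\Delta^{-1}\nabla^\perp$'' to convert $\|\theta_{k,n+1}\|_N\lesssim\delta_{q+1,n}\lambda_q^{N+2}/\mu_{q+1}$ into a bound gaining two factors of $\lambda_q^{-1}$. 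A negative-order operator does not gain inverse powers of frequency on an arbitrary function: Schauder gives $\|\Delta^{-1}f\|_{N+2}\lesssim\|f\|_{N+\alpha}$, which for $N=0,1$ would require control of $f$ in negative H\"older norms that your transport estimate does not provide. The gain is automatic only for the principal part $\frac1{\mu_{q+1}}f^{[1]}_{\xi,k,n+1}\nabla^\perp\cdot\div A_{\xi,k,n}$, which has the explicit double-divergence structure; the remainder $\delta\theta_{k,n+1}$ is the same size as the principal part but carries no such structure, and transport by $\bar u_q$ does not preserve frequency localization in any way you have justified.

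The paper closes exactly this gap by working with the potential $\psi_{k,n+1}=\Delta^{-1}\theta_{k,n+1}$ from the outset: it writes $R_{q,n+1}=\div^{-1}\nabla^\perp\sum_k\partial_t\tilde\chi_k\,\psi_{k,n+1}$ (so only a \emph{zero}-order Calder\'on--Zygmund operator remains, cf.\ \eqref{R-q-potential}), derives the evolution equation \eqref{psi_eqn} satisfied by $\psi_{k,n+1}$, and runs the Gr\"onwall argument at the level of $\psi$. The price of commuting $\Delta^{-1}$ through the linearized SQG operator is the commutator terms $\Delta^{-1}\div([\bar u_q,\Delta]\psi_{k,n+1}+[\bar\theta_q,\Delta]T[\psi_{k,n+1}])$, which after the algebraic rearrangement \eqref{eqn.simpl} reduce to zero-order contributions plus the bilinear operator $S[\bar\theta_q,\psi_{k,n+1}]$; bounding the latter on H\"older spaces is precisely the content of lemmas \ref{lem.bilin} and \ref{le-bilinear-S}, i.e.\ the harmonic-analysis input that is the main technical novelty of this proposition relative to the 2D Euler case. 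Your proposal never introduces the potential, the commutators, or the operator $S$, so as written the argument cannot establish \eqref{NewIter_1} for small $N$. To repair it you would need to reproduce this potential-level analysis (or supply an independent proof that $\theta_{k,n+1}$ stays frequency-localized near $\lambda_q$, which is not easier).
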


We first claim that, since $\tau_q < (\delta_q^{1/2} \lambda_q^{3/2})^{-1}$, lemma \ref{smoli_estim} implies that the assumptions of proposition \ref{NewIter} are satisfied at $n = 0$. Indeed, there exists a constant $C>0$ depending only on $L_t \leq L_R$ such that 
\begin{equation*}
    \|R_{q, 0}\|_N \leq C \delta_{q+1} \lambda_q^{N-2\alpha}, \,\,\, \forall N \leq L_R,
\end{equation*}
and
\begin{equation*}
    \|\bar D_t R_{q, 0}\|_N \leq C \delta_{q+1} \tau_q^{-1}\lambda_q^{N-2\alpha}, \,\,\, \forall N \leq L_t.
\end{equation*}
Thus, the assumptions are verified provided
\begin{equation*}
    C_L \lambda_q^{- \alpha} \leq 1,
\end{equation*}
which can be ensured for any $\alpha>0$ by choosing $a_0$ sufficiently large. 

Moreover, it follows from the definitions of $\tilde \chi_k$ and $\mathbb Z_{q,n}$ that
\begin{equation*}
    \supp_t R_{q, n+1} \subset \overline{\mathcal{N}_{2\tau_q} (\supp_t R_{q, n})},
\end{equation*}
which immediately shows that the assumptions on the support propagate. 

It remains to show the estimates at the $(n+1)$-th step in proposition \ref{NewIter}. This is the content of the next subsection, along with the estimates for the perturbation $\theta_{q+1, n+1}^{(t)}$. 

\subsection{Proof of the inductive proposition}
Let $\psi_{k, n+1}$ be the Poisson potential of the density $\theta_{k, n+1}$,
\begin{eqnarray*}
    \begin{cases}
        \Delta \psi_{k, n+1} = \theta_{k, n+1} \\
        \fint_{\mathbb T^2} \psi_{k, n+1} = 0,
    \end{cases}
\end{eqnarray*}
which is well-defined since $\theta_{k, n+1}$ has zero-mean. We have that
\begin{equation}\label{R-q-potential}
    R_{q, n+1} = \div^{-1}\nabla^\perp \sum_{k \in \mathbb Z_{q,n}} \partial_t \tilde \chi_k \psi_{k, n+1}. 
\end{equation} 
Proposition \ref{NewIter} will, therefore, follow from the estimates for the potential functions. By applying $\Delta^{-1}$ to \eqref{LocalNewt}, we find that $\psi_{k, n+1}$ satisfies:
\begin{equation} \label{psi_eqn}
    \begin{cases}
        \partial_t \psi_{k, n + 1} + \bar u_q \cdot \nabla \psi_{k, n + 1} + T[\psi_{k,n+1}]\cdot\na \bar \theta_q\\
        \qquad\qquad + \Delta^{-1} \div([\bar u_q,\Delta]\psi_{k, n + 1} + [\bar \theta_q,\Delta]T[\psi_{k,n+1}]) = \sum_{\xi \in F} f_{\xi, k, n}(\mu_{q+1} t) \Delta^{-1} \nabla^\perp \cdot \div A_{\xi, k, n} \\ 
        \psi_{k, n+1} \big|_{t = t_k} = \frac{1}{\mu_{q+1}}\sum_{\xi \in F} f_{\xi, k, n}^{[1]}(\mu_{q+1} t_k) \Delta^{-1} \nabla^\perp \cdot \div A_{\xi, k, n}\big|_{t = t_k},
    \end{cases}
\end{equation}

We now recast the term 
$$ \Delta^{-1} \div([\bar u_q,\Delta]\psi_{k, n + 1} + [\bar \theta_q,\Delta]T[\psi_{k,n+1}]) $$
on the left-hand side of ~\eqref{psi_eqn} into a more amenable form. In the following, we use the convention of summation over repeated indices. Note that
    \begin{align*}
        [\bar u_q,\Delta]\psi_{k, n + 1} &= \bar u_q \pa_{jj} \psi_{k, n + 1} - \pa_{jj}(\bar u_q \psi_{k, n + 1}) = \pa_{jj} \bar u_q \psi_{k, n + 1} -2 \pa_j(\pa_j \bar u_q \psi_{k, n + 1})\,,
    \end{align*}
    and, similarly,
    \begin{align*}
        [\bar \theta_q,\Delta]T[\psi_{k,n+1}] &= \pa_{jj}\bar \theta_q T[\psi_{k, n + 1}] - 2\pa_j (\pa_j \bar \theta_q T[\psi_{k, n + 1}])\,.
    \end{align*}
    Consider $S[\theta, \psi] := \Delta^{-1}\div(\psi \Delta T[\theta] +  T[\psi] \Delta \theta)$ and note that
    \begin{align}\label{eqn.simpl}
        \Delta^{-1} \div([\bar u_q,\Delta]\psi_{k, n + 1} + [\bar \theta_q,\Delta]T[\psi_{k,n+1}]) = -2 \Delta^{-1} \div(\pa_j(\pa_j &\bar u_q \psi_{k, n + 1}) + \pa_j (\pa_j \bar \theta_q T[\psi_{k, n + 1}])) \notag\\ &+ S[\bar \theta_q, \psi_{k,n+1}]\,.
    \end{align}
    The advantage of this form is that $\Delta^{-1} \div \partial_j$ is a zero-order operator. The bilinear Fourier multiplier operator $S$ requires a more careful analysis which is carried out in section 
    \ref{sec-tech}.
    
%     We expand the Fourier transform of $S$ as
%     \begin{align*}
%         \widehat{S[\theta,\psi]}(\xi) &= \frac{i \xi}{|\xi|^2} \cdot \sum_{\eta\in \Z^2} |\xi-\eta|^2 \left( \frac{i (\xi-\eta)^\perp}{|\xi-\eta|} + \frac{i \eta^\perp}{|\eta|}\right) \hat\theta(\xi-\eta) \hat\psi(\eta) \\
%         &= -\frac1{|\xi|^2} \sum_{\eta\in \Z^2} |\xi-\eta|^2 \xi\cdot\eta^\perp \left( -\frac{1}{|\xi-\eta|} + \frac{1}{|\eta|}\right) \hat\theta(\xi-\eta) \hat\psi(\eta)  \\
%         &= -\frac{\xi}{|\xi|^2}\cdot \sum_{\eta\in \Z^2} |\xi-\eta| \frac{\eta^\perp}{|\eta|} \left({|\xi-\eta|-|\eta|}\right) \hat\theta(\xi-\eta) \hat\psi(\eta)  \\
%         &= \frac{i \xi}{|\xi|^2}\cdot \sum_{\eta\in \Z^2}  \left({|\xi-\eta|-|\eta|}\right) \widehat{\Lambda \theta}(\xi-\eta) \widehat{\mathcal{R}_i^\perp \psi}(\eta)  \\
%         &= \widehat{\mathcal{R}_i T[\Lambda \theta, \mathcal{R}_i^\perp \psi]}(\xi)
%     \end{align*}
% to conclude that
% \begin{equation}\label{eqn.S}
%     S[\theta,\psi] = \mathcal{R}_i T[\Lambda \theta, \mathcal{R}_i^\perp \psi]
% \end{equation}
% where $\mathcal{R}_i$ is the Riesz transform, corresponding to the Fourier multiplier $i \xi_i / |\xi|$, and $T$ is the bilinear Fourier multiplier operator analyzed in lemma \ref{lem.bilin}.

To proceed, we collect estimates for $a_{\xi, k, n}$ and $A_{\xi, k, n}$. These are by now standard and follow from the assumed estimates of proposition \ref{NewIter} and lemmas \ref{smoli_estim} and \ref{Flow_estim}, together with basic interpolation of $C^k$ spaces. We refer the reader to lemma 3.5 and corollary 3.6 in \cite{GR23} for complete proofs of very similar results. The only difference here is that, unlike \cite{GR23} where only one parameter is used in controlling the number of controlled derivatives, here we use three parameters $L_\theta$, $L_R$, $L_t$. It is not difficult to see that the estimates below can be obtained by following the proof of the corresponding results in \cite{GR23}, while keeping in mind that $L_t \leq L_R \leq L_\theta -1$.
\begin{lem} \label{a_estim}
With the same assumptions of proposition \ref{NewIter}, the estimates 
\begin{equation} \label{a_estim_1}
    \|a_{\xi, k, n}\|_N \lesssim \delta_{q+1, n}^{1/2} \la_{q+1}^{1/2} \lambda_q^N, \,\,\, \|A_{\xi, k, n}\|_N \lesssim \delta_{q+1, n} \lambda_q^N, \,\,\, \forall N \in \{0,1,..., L_t\}
\end{equation}
\begin{equation} \label{a_estim_2}
     \|\bar D_t a_{\xi, k, n}\|_N  \lesssim \delta_{q+1, n}^{1/2} \la_{q+1}^{1/2} \tau_q^{-1} \lambda_q^N, \,\,\,\|\bar D_t A_{\xi, k, n} \|_N  \lesssim \delta_{q+1, n} \tau_q^{-1} \lambda_q^N, \,\,\,  \forall N \in \{0,1,..., L_t\}
\end{equation}
\begin{equation} \label{a_estim_3}
    \|a_{\xi, k, n}\|_{N+L_t} \lesssim \delta_{q+1, n}^{1/2} \la_{q+1}^{1/2} \lambda_q^{L_t} \ell_q^{-N}, \,\,\,\|A_{\xi, k, n}\|_{N+L_t} \lesssim \delta_{q+1, n} \lambda_q^{L_t} \ell_q^{-N},  \,\,\,  \forall N \geq 0
\end{equation}
\begin{equation} \label{a_estim_4}
    \|\bar D_t a_{\xi, k, n}\|_{N+L_t} \lesssim \delta_{q+1, n}^{1/2} \la_{q+1}^{1/2} \lambda_q^{L_t}  \tau_q^{-1} \ell_q^{-N}, \,\,\,\|\bar D_t A_{\xi, k, n} \|_{N+ L_t} \lesssim \delta_{q+1, n} \lambda_q^{L_t} \tau_q^{-1} \ell_q^{-N}, \,\,\, \forall N \geq 0,
\end{equation}
hold true where the implicit constants depend on $n$, $\Gamma$, $M$, $\alpha$, and $N$.
\end{lem}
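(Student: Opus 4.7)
The plan is to derive all four estimates in the lemma by combining (i) the flow map bounds from Lemma \ref{Flow_estim}, (ii) the inductive assumptions \eqref{NewIter_1}--\eqref{NewIter_4} on $R_{q,n}$, (iii) Moser-type composition estimates applied to the smooth function $\gamma_\xi$, and (iv) the Leibniz rule. The whole argument parallels lemma 3.5 and corollary 3.6 of \cite{GR23}; the only modification is the need to track derivative thresholds with respect to three parameters $L_t \leq L_R \leq L_\theta-1$ rather than one, which, given the nesting, introduces no real difficulty.

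The first step is to control the matrix-valued argument
\[
M_{k,n} := (\nabla\Phi_k)^{-T}(\nabla\Phi_k)^{-1} - (\nabla\Phi_k)^{-T}\tfrac{R_{q,n}}{\delta_{q+1,n}}(\nabla\Phi_k)^{-1}.
\]
Applying Leibniz to the product, using \eqref{Flow_estim_1} for the flow factors and \eqref{NewIter_1} for the rescaled stress, we get $\|M_{k,n}\|_N \lesssim \lambda_q^N$ for $N\leq L_t$, and similarly $\|M_{k,n}\|_{N+L_t}\lesssim \lambda_q^{L_t}\ell_q^{-N}$ for $N \geq 0$ from \eqref{Flow_estim_3} and \eqref{NewIter_3}. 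The material derivative versions follow in the same way from \eqref{Flow_estim_2}, \eqref{Flow_estim_4}, \eqref{NewIter_2}, \eqref{NewIter_4} together with the identity $\tau_q^{-1}=\delta_q^{1/2}\lambda_q^{3/2}\lambda_{q+1}^\alpha$, which shows that the $\tau_q^{-1}\lambda_q^N$ bound dominates the $\delta_q^{1/2}\lambda_q^{N+3/2}$ bound coming from $\bar D_t \nabla\Phi_k$. Since $M_{k,n}$ takes values in the ball $B_{1/2}(\I)$ on which $\gamma_\xi$ is smooth, standard Moser composition estimates transfer the same bounds (with $\gamma_\xi$-dependent implicit constants) to $\gamma_\xi(M_{k,n})$.

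Multiplying by the scalar $|(\nabla\Phi_k)^T\xi|^{3/2}$ (whose derivatives are again controlled by Lemma \ref{Flow_estim}), by the temporal cutoff $\chi_k$ (which contributes nothing to spatial norms and costs $\tau_q^{-1}$ per temporal derivative), and by the prefactor $2\delta_{q+1,n}^{1/2}\lambda_{q+1}^{1/2}$, we obtain the four estimates \eqref{a_estim_1}--\eqref{a_estim_4} for $a_{\xi,k,n}$ via one last application of Leibniz. The estimates for $A_{\xi,k,n}$ then follow by writing it as the product of $a_{\xi,k,n}^2$ with the tensor-valued factor $(4\lambda_{q+1}|(\nabla\Phi_k)^T\xi|^3)^{-1}(\nabla\Phi_k)^T\xi\otimes\xi\,\nabla\Phi_k$, whose derivatives at the relevant orders are again controlled by Lemma \ref{Flow_estim}; the size factor $(\delta_{q+1,n}^{1/2}\lambda_{q+1}^{1/2})^2/\lambda_{q+1}=\delta_{q+1,n}$ produces the correct amplitude.

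The only point that requires genuine bookkeeping is the Fa\`a di Bruno expansion of $\gamma_\xi(M_{k,n})$ at high derivative orders: one has to check that every term in the expansion either falls into the low-order regime $N\leq L_t$, for which only $\lambda_q^N$ losses appear, or is absorbed into a single mixed bound $\lambda_q^{L_t}\ell_q^{-N}$. This combinatorial check, together with the corresponding one for $\bar D_t$ acting on such compositions, is the main technical step, but it is entirely identical to the argument carried out in \cite{GR23} and the choice $L_t\leq L_R\leq L_\theta-1$ ensures that every auxiliary quantity used along the way has enough controlled derivatives.
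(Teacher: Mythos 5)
Your proposal is correct and follows essentially the same route as the paper, which does not write out a proof but defers to lemma 3.5 and corollary 3.6 of \cite{GR23}: the argument there is exactly the combination of the flow estimates, the inductive bounds on $R_{q,n}$, Moser/Fa\`a di Bruno composition estimates for $\gamma_\xi$, and the Leibniz rule that you describe, with the same bookkeeping for the low-order ($\lambda_q^N$) versus mixed ($\lambda_q^{L_t}\ell_q^{-N}$) regimes and the same use of $\tau_q^{-1}\geq\delta_q^{1/2}\lambda_q^{3/2}$ to absorb the material derivative of the flow factors.
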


We are now in a position to prove the main technical lemma concerning the Newton perturbations. 

\begin{lem} \label{psi_estim}
   With the same assumptions of proposition \ref{NewIter}, we have the following estimates on $\supp \tilde \chi_k$:
   \begin{equation} \label{psi_estim_1}
       \|\psi_{k, n+1}\|_{N+\alpha} \lesssim \frac{\delta_{q+1, n} \lambda_q^{N} \ell_q^{-\alpha}}{\mu_{q+1}}, \,\,\, \forall N \in \{0,1,..., L_t\},
   \end{equation}
   \begin{equation} \label{psi_estim_2}
        \|\bar D_t \psi_{k, n+1}\|_{N+\alpha} \lesssim  \delta_{q+1, n} \lambda_q^N \ell_q^{-\alpha}, \,\,\, \forall N \in \{0, 1,..., L_t\},
   \end{equation}
   \begin{equation} \label{psi_estim_3}
       \|\psi_{k, n+1}\|_{N + L_t + \alpha} \lesssim \frac{\delta_{q+1, n} \lambda_q^{L_t} \ell_q^{-N-\alpha}}{\mu_{q+1}},  \,\,\, \forall N \geq 0,
   \end{equation}
   \begin{equation} \label{psi_estim_4}
       \|\bar D_t \psi_{k, n+1}\|_{N+ L_t + \alpha} \lesssim \delta_{q+1, n}  \lambda_q^{L_t} \ell_q^{-N-\alpha}, \,\,\, \forall N \geq 0;
   \end{equation}
   and on $\supp \partial_t \tilde \chi_k$:
   \begin{equation} \label{psi_estim_5}
       \|\bar D_t \psi_{k, n+1}\|_{N+\alpha} \lesssim \frac{\delta_{q+1, n} \lambda_q^N \ell_q^{-\alpha}}{\mu_{q+1}\tau_q}  , \,\,\, \forall N \in \{0, 1,..., L_t\},
   \end{equation}
   \begin{equation} \label{psi_estim_6}
       \|\bar D_t \psi_{k, n+1}\|_{N+L_t + \alpha} \lesssim \frac{\delta_{q+1, n}  \lambda_q^{L_t} \ell_q^{-N-\alpha}}{\mu_{q+1} \tau_q}, \,\,\, \forall N \geq 0,
   \end{equation}
  where all the implicit constants depend on $n$, $\Gamma$, $M$, $\alpha$, and $N$.
\end{lem}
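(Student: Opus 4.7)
The strategy is to reduce the problem to estimating the discrepancy between $\psi_{k,n+1}$ and an explicit ansatz that captures its leading behaviour. Define
\[
\tilde\psi_{k,n+1}(x,t) := \frac{1}{\mu_{q+1}} \sum_{\xi \in F} f^{[1]}_{\xi,k,n+1}(\mu_{q+1}t)\, \Delta^{-1} \nabla^\perp \cdot \div A_{\xi,k,n}(x,t).
\]
Since $\Delta^{-1}\nabla^\perp \cdot \div$ is a zero-order Fourier multiplier, bounded on every H\"older space $C^{N+\alpha}$, lemma \ref{a_estim} immediately yields the bounds \eqref{psi_estim_1}--\eqref{psi_estim_4} for $\tilde\psi_{k,n+1}$ without loss (in fact with $\lambda_q^\alpha$ in place of $\ell_q^{-\alpha}$). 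It therefore suffices to control the remainder $\Theta_{k,n+1} := \psi_{k,n+1} - \tilde\psi_{k,n+1}$, which, by the choice of initial data in \eqref{LocalNewt}, vanishes identically at $t = t_k$.

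Next I would derive the equation satisfied by $\Theta_{k,n+1}$. Schematically write \eqref{psi_eqn} as $\bar D_t \psi_{k,n+1} + \mathcal L[\psi_{k,n+1}] = F$, where $\mathcal L$ collects $T[\,\cdot\,]\cdot\nabla\bar\theta_q$ together with the zero-order operators in \eqref{eqn.simpl}---in particular the bilinear form $S[\bar\theta_q,\,\cdot\,]$ analyzed in lemma \ref{le-bilinear-S}. Differentiating $\tilde\psi_{k,n+1}$ in time recovers $F$ plus the remainder $\mu_{q+1}^{-1}\sum_\xi f^{[1]}_{\xi,k,n+1}(\mu_{q+1}t)\, \partial_t(\Delta^{-1}\nabla^\perp\cdot\div A_{\xi,k,n})$, so subtracting the two equations gives
\[
\bar D_t \Theta_{k,n+1} + \mathcal L[\Theta_{k,n+1}] = G, \qquad \Theta_{k,n+1}\big|_{t=t_k} = 0,
\]
with
\[
G = -\frac{1}{\mu_{q+1}}\sum_{\xi\in F} f^{[1]}_{\xi,k,n+1}(\mu_{q+1}t)\,\bar D_t\big(\Delta^{-1}\nabla^\perp\cdot\div A_{\xi,k,n}\big) - \mathcal L[\tilde\psi_{k,n+1}].
\]

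The main step is the standard $C^{N+\alpha}$ transport estimate on the interval $|t-t_k|\leq \tau_q$. The bound $\|\bar u_q\|_1 \tau_q \lesssim 1$ keeps the Gronwall prefactors of order one. The zero-order perturbation $\mathcal L[\Theta_{k,n+1}]$ on the left-hand side is controlled in $C^{N+\alpha}$ by $\lesssim \tau_q^{-1}\|\Theta_{k,n+1}\|_{N+\alpha}$ via the H\"older product rule and lemmas \ref{lem.bilin}, \ref{le-bilinear-S}, and so is absorbed by Gronwall. The forcing $G$ is of the expected size: its first term is handled by lemma \ref{a_estim}, with the factor $\mu_{q+1}^{-1}$ offsetting the $\tau_q^{-1}$ cost of $\bar D_t$, while $\mathcal L[\tilde\psi_{k,n+1}]$, bounded by the same bilinear H\"older estimates applied to the already-controlled ansatz, contributes at comparable order $\lesssim \delta_{q+1,n}\tau_q^{-1}\lambda_q^N \ell_q^{-\alpha}/\mu_{q+1}$. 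Integrating over the interval of length $\lesssim \tau_q$ cancels the $\tau_q^{-1}$ and yields $\|\Theta_{k,n+1}\|_{N+\alpha}\lesssim \delta_{q+1,n}\lambda_q^N \ell_q^{-\alpha}/\mu_{q+1}$, which combined with the bounds on the ansatz delivers \eqref{psi_estim_1} and \eqref{psi_estim_3}.

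For \eqref{psi_estim_2} and \eqref{psi_estim_4}, I would use \eqref{psi_eqn} directly in the form $\bar D_t\psi_{k,n+1} = F - \mathcal L[\psi_{k,n+1}]$: the leading term $F$ has $C^{N+\alpha}$ norm $\lesssim \delta_{q+1,n}\lambda_q^N \ell_q^{-\alpha}$ by lemma \ref{a_estim}, and the rest is dominated by combining lemmas \ref{lem.bilin} and \ref{le-bilinear-S} with the just-proven \eqref{psi_estim_1}, \eqref{psi_estim_3}. For \eqref{psi_estim_5} and \eqref{psi_estim_6}, the decisive observation is that $\supp A_{\xi,k,n}\subset \supp\chi_k$ while $\supp\partial_t\tilde\chi_k \cap \supp\chi_k = \emptyset$ by the construction of $\chi_k,\tilde\chi_k$; hence $F\equiv 0$ on $\supp\partial_t\tilde\chi_k$, only $\mathcal L[\psi_{k,n+1}]$ survives in $\bar D_t\psi_{k,n+1}$ there, and the additional factor $(\mu_{q+1}\tau_q)^{-1}$ follows directly from \eqref{psi_estim_1}, \eqref{psi_estim_3}. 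The principal technical obstacle is the careful treatment of the non-local bilinear form $S[\bar\theta_q,\,\cdot\,]$ inside $\mathcal L$: its H\"older boundedness has to be extracted from the sharp estimates of section \ref{sec-tech} and shown to act as a bona fide zero-order perturbation on the time-scale $\tau_q$ so that the Gronwall argument closes within a single time step.
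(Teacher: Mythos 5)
Your proposal is correct and follows essentially the same route as the paper: the paper likewise isolates the explicit ansatz $\Xi=\mu_{q+1}^{-1}\sum_\xi f^{[1]}_{\xi,k,n}(\mu_{q+1}\cdot)\Delta^{-1}\nabla^\perp\cdot\div A_{\xi,k,n}$, estimates it via the zero-order boundedness of $\Delta^{-1}\nabla^\perp\cdot\div$ and lemma~\ref{a_estim}, treats the remainder by the transport estimate of proposition~\ref{transport_estim} plus Gr\"onwall (with the bilinear form $S[\bar\theta_q,\cdot]$ controlled by lemma~\ref{le-bilinear-S} and the commutator $[\bar u_q\cdot\nabla,\Delta^{-1}\nabla^\perp\div]$ by proposition~\ref{prop-commu}), and obtains \eqref{psi_estim_5}--\eqref{psi_estim_6} from the vanishing of $A_{\xi,k,n}$ on $\supp\partial_t\tilde\chi_k$. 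The only difference is packaging: the paper splits the remainder further into $\tilde\psi+\tilde\Xi$ and closes Gr\"onwall on $\|\psi_{k,n+1}\|_{N+\alpha}$ itself, first at $N=0$ and then for $N\geq 1$ with the term $\|\bar\theta_q\|_{N+1+\alpha}\|\psi_{k,n+1}\|_\alpha$ fed back as known forcing --- a two-step bootstrap your single Gr\"onwall sweep should make explicit, since that term is not absorbed by $\tau_q^{-1}\|\Theta\|_{N+\alpha}$ alone.
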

\begin{proof}
    We first decompose $\psi_{k, n+1}$ into 
    \begin{equation} \label{psidecomp}
        \psi_{k, n+1}  = \tilde \psi + \tilde \Xi + \Xi,
    \end{equation}
    where $\tilde \psi$ is the unique solution to the transport equation
    \begin{equation*}
        \begin{cases}
            \bar D_t \tilde \psi = - T[\psi_{k,n+1}] \cdot \nabla \bar\theta_q -\Delta^{-1} \div([\bar u_q,\Delta]\psi_{k, n + 1} + [\bar \theta_q,\Delta]T[\psi_{k,n+1}]) \\
            \tilde \psi \big|_{t = t_k} = 0,
        \end{cases}
    \end{equation*}
    $\tilde \Xi$ solves
    \begin{equation*}
    \begin{cases}
        \bar D_t \tilde \Xi = - \frac{1}{\mu_{q+1}}  \sum_{\xi \in F} f^{[1]}_{\xi, k, n}(\mu_{q+1} \cdot ) \bar D_t \Delta^{-1} \nabla^\perp \cdot \div A_{\xi, k, n} \\ 
        \tilde \Xi \big |_{t = t_k} = 0,
        \end{cases}
    \end{equation*}
    and 
    \begin{equation*}
        \Xi = \frac{1}{\mu_{q+1}} \sum_{\xi \in F} f_{\xi, k, n}^{[1]}(\mu_{q+1} \cdot) \Delta^{-1} \nabla^\perp \cdot \div A_{\xi, k, n}. 
    \end{equation*}
     %This decomposition is verified once we apply the material derivative to \eqref{psidecomp}. 

    \textit{Estimates for $\tilde \psi$ when $N=0$.} Since $T$ is an operator of zero order, we have 
    \begin{equation*}
        \|T[\psi_{k,n+1}] \cdot \nabla \bar\theta_q\|_\alpha \lesssim \|\psi\|_\alpha \|\bar \theta_q\|_{1 + \alpha},
    \end{equation*}
    and similarly we estimate
    \begin{equation*}
        \|\Delta^{-1} \div(\pa_j(\pa_j \bar u_q \psi_{k, n + 1}) + \pa_j (\pa_j \bar \theta_q T[\psi_{k, n + 1}]))\|_\alpha \lesssim \|\psi_{k,n+1}\|_\alpha (\|\bar u_q\|_{1+\alpha} + \|\bar \theta_q\|_{1+\alpha})\,.
    \end{equation*}
    Furthermore, lemma \ref{le-bilinear-S} yields
    \begin{equation*}
        \|S[\bar \theta_q,\psi_{k, n+1}]\|_\alpha \lesssim \|\bar \theta_q\|_{1+\alpha}\|\psi_{k, n+1}\|_\alpha\,,
    \end{equation*}
    and, thus, it follows from \eqref{eqn.simpl} that
    \begin{equation*}
        \| T[\psi_{k,n+1}] \cdot \nabla \bar\theta_q +\Delta^{-1} \div([\bar u_q,\Delta]\psi_{k, n + 1} + [\bar \theta_q,\Delta]T[\psi_{k,n+1}])\|_\alpha \lesssim \|\psi_{k, n+1}\|_\alpha \|\bar \theta_q\|_{1 + \alpha}.
    \end{equation*}
    Consequently, proposition \ref{transport_estim} implies
    \begin{equation*}
        \|\tilde \psi(\cdot, t)\|_\alpha \lesssim \tau_q^{-1} \int_{t_k}^t \|\psi_{k, n+1}(\cdot, s) \|_\alpha ds.
    \end{equation*}

    \textit{Estimates for $\tilde \Xi$ when $N=0$.} Using the commutator estimate of proposition \ref{prop-commu}, we have 
    \begin{eqnarray*}
        \|\bar D_t \tilde \Xi\|_{\alpha} &\lesssim & \frac{1}{\mu_{q+1}}\sup_\xi \|\bar D_t A_{\xi, k, n}\|_{\alpha} + \frac{1}{\mu_{q+1}}\sup_\xi\|[\bar u_q \cdot \nabla, \Delta^{-1}\nabla^\perp \div] A_{\xi, k, n}\|_{\alpha} \\ 
        & \lesssim & \frac{1}{\mu_{q+1}}\sup_\xi\|\bar D_t A_{\xi, k, n}\|_{\alpha} + \frac{1}{\mu_{q+1}} \|\bar u_q\|_{1+\alpha} \sup_\xi\|A_{\xi, k, n}\|_\alpha \lesssim  \frac{\delta_{q+1, n}  \lambda_q^\alpha}{\mu_{q+1} \tau_q}, 
    \end{eqnarray*}
    where interpolation from the estimates of lemma \ref{a_estim} was used in the last inequality. It follows again from  proposition \ref{transport_estim} that
    \begin{equation*}
        \|\tilde \Xi\|_\alpha \lesssim \frac{1}{\mu_{q+1}} \delta_{q+1, n} \lambda_q^{\alpha}, \ \ \mbox{on}\ \ \supp \tilde \chi_k.
    \end{equation*}
    
    \textit{Estimates for $\Xi$ when $N=0$.} Finally, 
    \begin{equation*}
        \|\Xi\|_{\alpha} \lesssim \frac{1}{\mu_{q+1}} \sup_\xi\|A_{\xi, k, n}\|_{\alpha} \lesssim \frac{\delta_{q+1, n} \lambda_q^{\alpha}}{\mu_{q+1}}.
    \end{equation*}

    Going back to \eqref{psidecomp}, we obtain
    \begin{equation*}
        \|\psi_{k, n+1}(\cdot, t)\|_\alpha \lesssim \frac{\delta_{q+1, n} \lambda_q^{\alpha}}{\mu_{q+1}} + \tau_q^{-1} \int_{t_k}^t \|\psi_{k, n+1}(\cdot, s)\|_\alpha ds. 
    \end{equation*}
    It thus follows from Gr\"onwall's inequality that
    \begin{equation*}
        \|\psi_{k, n+1}\|_\alpha \lesssim \frac{\delta_{q+1, n} \lambda_q^{\alpha}}{\mu_{q+1}}, \ \ \mbox{on}\ \ \supp \tilde \chi_k.
    \end{equation*}

    \textit{Estimates for $\tilde \psi$ when $N\geq1$.} For a multi-index $\sigma$ with $|\sigma| = N$, we have 
    \begin{align*}
        \|\bar D_t \partial^\sigma \tilde \psi\|_\alpha \lesssim \|\partial^\sigma \bar D_t  \tilde \psi\|_\alpha + \|[\bar u_q \cdot \nabla, \partial^\sigma] \tilde \psi\|_\alpha 
    \end{align*}
    On the one hand, applying lemma~\ref{le-bilinear-S}, we have
    \begin{align*}
        \|\partial^\sigma \bar D_t  \tilde \psi\|_\alpha &\lesssim \|\partial^\sigma(T[\psi_{k, n+1}]\cdot \nabla \bar \theta_q)\|_\alpha + \|\partial^\sigma(\Delta^{-1} \div(\pa_j(\pa_j \bar u_q \psi_{k, n + 1}) + \pa_j (\pa_j \bar \theta_q T[\psi_{k, n + 1}])))\|_\alpha \\
        &\qquad + \|\partial^\sigma S[\bar \theta_q, \psi_{k,n+1}]\|_\alpha \\
        &\lesssim (\|\bar \theta_q\|_{1+\alpha} + \|\bar u_q\|_{1+\alpha}) \|\psi_{k, n+1}\|_{N+\alpha} + (\|\bar \theta_q\|_{N+ 1+\alpha} + \|\bar u_q\|_{N+ 1+\alpha}) \|\psi_{k, n+1}\|_{\alpha} \\
        &\qquad + \|S[\bar \theta_q, \psi_{k,n+1}]\|_{N+\alpha} \\
        &\lesssim (\|\bar \theta_q\|_{1+\alpha} + \|\bar u_q\|_{1+\alpha}) \|\psi_{k, n+1}\|_{N+\alpha} + (\|\bar \theta_q\|_{N+ 1+\alpha} + \|\bar u_q\|_{N+ 1+\alpha}) \|\psi_{k, n+1}\|_{\alpha} \,,
    \end{align*}
    while on the other hand, using interpolation and Young's inequality for products, we deduce
    \begin{eqnarray*}
        \|[\bar u_q \cdot \nabla, \partial^\sigma] \tilde \psi\|_\alpha &\lesssim& \|\bar u_q\|_{N+\alpha}\|\tilde \psi\|_{1+\alpha} + \|\bar u_q\|_{1+\alpha}\|\tilde \psi\|_{N+\alpha} \\ 
        &\lesssim & \|\bar u_q\|_{1+\alpha}\|\tilde \psi\|_{N+\alpha} + \|\bar u_q\|_{N+ 1+\alpha}\|\tilde \psi\|_{\alpha}.
    \end{eqnarray*}
    Again, proposition \ref{transport_estim} implies
    \begin{eqnarray*}
        \|\tilde \psi (\cdot, t)\|_{N+ \alpha} \lesssim && \frac{\delta_{q+1, n} \lambda_q^{\alpha}\tau_q }{\mu_{q+1}} (\|\bar u_q\|_{N+1+\alpha} + \|\bar \theta_q\|_{N+1+\alpha}) + (\|\bar u_q\|_{1+\alpha}+\|\bar \theta_q\|_{1+\alpha}) \int_{t_k}^t \|\psi_{k, n+1}(\cdot, s) \|_{N+\alpha}ds \\ 
        && + \|\bar u_q\|_{1+\alpha} \int_{t_k}^t \|\tilde \psi(\cdot, s) \|_{N+\alpha}ds.
    \end{eqnarray*}
    Hence, an application of Gr\"onwall's inequality gives 
    \begin{equation*}
        \|\tilde \psi (\cdot, t)\|_{N+ \alpha} \lesssim \frac{\delta_{q+1, n} \lambda_q^{\alpha} \tau_q}{\mu_{q+1}} (\|\bar u_q\|_{N+1+\alpha} + \|\bar \theta_q\|_{N+1+\alpha})  + \tau_q^{-1} \int_{t_k}^t \|\psi_{k, n+1}(\cdot, s) \|_{N+\alpha}ds.
    \end{equation*}

    \textit{Estimates for $\tilde \Xi$ when $N \geq 1$.} Using  proposition \ref{prop-commu} and the commutator estimate for $[\bar u_q \cdot \nabla, \partial^\sigma]$, we have
    \begin{eqnarray*}
        \|\bar D_t \partial^\sigma \tilde \Xi\|_\alpha &\lesssim& \|\bar D_t \tilde \Xi\|_{N+\alpha} + \|[\bar u_q \cdot \nabla, \partial^\sigma] \tilde \Xi\|_\alpha  \\ 
        &\lesssim & \frac{1}{\mu_{q+1}}\sup_\xi \big( \|\bar D_t A_{\xi, k, n}\|_{N+\alpha} + \|[\bar u_q \cdot \nabla, \Delta^{-1} \nabla^\perp \div] A_{\xi, k, n}\|_{N+\alpha} \big) + \|[\bar u_q \cdot \nabla, \partial^\sigma] \tilde \Xi\|_\alpha \\ 
         &\lesssim & \frac{1}{\mu_{q+1}}\sup_\xi \big( \|\bar D_t A_{\xi, k, n}\|_{N+\alpha} + \|\bar u_q\|_{1+\alpha} \|A_{\xi, k, n}\|_{N+\alpha} + \|\bar u_q\|_{N+1 + \alpha} \|A_{\xi, k, n}\|_\alpha \big) \\ 
         && + \|\bar u_q\|_{N+1+\alpha} \|\tilde \Xi\|_\alpha + \|\bar u_q\|_{1+\alpha} \|\tilde \Xi\|_{N+ \alpha}.
    \end{eqnarray*}
    A similar application of proposition \ref{transport_estim} and Gr\"onwall's inequality yields
    \begin{equation*}
        \|\tilde \Xi\|_{N+\alpha} \lesssim \frac{1}{\mu_{q+1}} \sup_\xi \big( \tau_q \|\bar D_t A_{\xi, k, n} \|_{N+\alpha} + \|A_{\xi, k, n}\|_{N+\alpha}\big) + \frac{\delta_{q+1, n} \lambda_q^\alpha \tau_q}{\mu_{q+1}} \|\bar u_q\|_{N+1+\alpha}.
    \end{equation*}

    \textit{Estimates for $\Xi$ when $N \geq 1$.} Finally,
    \begin{equation*}
        \|\Xi\|_{N+\alpha} \lesssim \frac{1}{\mu_{q+1}} \sup_\xi \|A_{\xi, k, n}\|_{N+\alpha}. 
    \end{equation*}

    Therefore, we conclude 
    \begin{eqnarray*}
        \|\psi_{k, n+1}(\cdot, t)\|_{N+\alpha} & \lesssim &  \frac{1}{\mu_{q+1}}\sup_\xi \big( \tau_q \|\bar D_t A_{\xi, k, n}\|_{N+\alpha} + \|A_{\xi, k, n}\|_{N+\alpha} + \delta_{q+1, n} \lambda_q^\alpha \tau_q \|\bar u_q\|_{N+1+\alpha} \big) \\ 
        && + \tau_q^{-1} \int_{t_k}^t \|\psi_{k, n+1}(\cdot, s)\|_{N+ \alpha} ds,
    \end{eqnarray*}
    and once again, by Gr\"onwall's inequality, 
    \begin{equation*}
        \|\psi_{k, n+1}\|_{N+\alpha} \lesssim \frac{1}{\mu_{q+1}}\sup_\xi\big( \tau_q \|\bar D_t A_{\xi, k, n}\|_{N+\alpha} + \|A_{\xi, k, n}\|_{N+\alpha} + \delta_{q+1, n} \lambda_q^\alpha \tau_q \|\bar u_q\|_{N+1+\alpha} \big).
    \end{equation*}
    The estimates \eqref{psi_estim_1} and \eqref{psi_estim_3} then follow from the last estimate together with lemma \ref{smoli_1} and lemma \ref{a_estim}. Here, we use the relation $L_t \leq L_R \leq L_{\theta} - 1$.

    Regarding the material derivative estimates, it follows from the equation \eqref{psi_eqn} that
    \begin{eqnarray*}
        \|\bar D_t \psi_{k, n+1}(\cdot, t)\|_{N+\alpha}  &\lesssim & \|T[\psi_{k, n+1}]\cdot \nabla \bar \theta_q\|_{N+\alpha} + \sup_\xi\|A_{\xi, k, n}(\cdot, t)\|_{N+\alpha} \\ 
        & &+ \|\Delta^{-1} \div([\bar u_q,\Delta]\psi_{k, n + 1} + [\bar \theta_q,\Delta]T[\psi_{k,n+1}])\|_{N+\alpha} \\
         &\lesssim & \|\psi_{k, n+1}\|_{N+\alpha}( \| \bar u_q\|_{1+\alpha} + \|\bar \theta_q\|_{1+\alpha}) + \|\psi_{k, n+1}\|_{\alpha} (\|\bar u_q\|_{N+1+\alpha} + \|\bar \theta_q\|_{N+1+\alpha})\\
         && + \sup_\xi \|A_{\xi, k, n}(\cdot, t)\|_{N+\alpha}.
    \end{eqnarray*}
    Thus, the estimates \eqref{psi_estim_1} and \eqref{psi_estim_3} together with lemma \ref{a_estim} imply \eqref{psi_estim_2} and \eqref{psi_estim_4}. Moreover, since $A_{\xi, k, n} = 0$ on $\supp \partial_t \tilde \chi_k$, the final term in the estimate above does not appear, and \eqref{psi_estim_5} and \eqref{psi_estim_6} follow.
\end{proof}

\begin{proof} [Proof of proposition \ref{NewIter}]
Since the set $\{\supp \tilde \chi_k\}$ is locally finite and $\div^{-1} \nabla^\perp$ is of Calder\'on-Zygmund type, we conclude from \eqref{R-q-potential} that
\begin{equation*}
    \|R_{q, n+1}\|_N \lesssim \|R_{q, n+1}\|_{N+\alpha} \lesssim \tau_q^{-1} \sup_{k \in \mathbb Z_{q,n}} \|\psi_{k, n+1}\|_{N+\alpha}.
\end{equation*}
It follows from lemma \ref{psi_estim} that 
\begin{equation*}
    \|R_{q, n+1}\|_N \leq C \delta_{q+1, n}\bigg(\frac{\lambda_q}{\lambda_{q+1}}\bigg)^{\frac12 - \beta} (\lambda_{q+1} \ell_q)^{-\alpha} \lambda_{q+1}^{-2\alpha} \lambda_q^N \leq (C \lambda_{q+1}^{-\alpha })\delta_{q+1, n+1} \lambda_q^{-\alpha} \lambda_q^N, \ \ N \in \{0,1,..., L_t\}
\end{equation*}
for a constant $C>0$ independent of $a > a_0$ and $q$.  
As before one can choose $a_0$ sufficiently large such that 
\begin{equation*}
    C \lambda_{q+1}^{-\alpha} \leq 1, \ \ \forall \alpha > 0
\end{equation*}
in order for \eqref{NewIter_1} to hold. Analogously, we obtain \eqref{NewIter_3} from lemma \ref{psi_estim}.

Furthermore, applying proposition \ref{prop-commu} we have
\begin{eqnarray*}
    \|\bar D_t R_{q, n+1}\|_{N+\alpha} &\lesssim& \sup_{k \in \mathbb Z_{q,n}} \big( \|\bar D_t(\partial_t \tilde \chi_k \psi_{k, n+1})\|_{N+\alpha} + \|[\bar u_q \cdot \nabla, \div^{-1} \nabla^\perp] \partial_t \tilde \chi_k \psi_{k, n+1}\|_{N+\alpha} \big) \\ 
    &\lesssim & \sup_{k \in \mathbb Z_{q,n}} \big( \tau_q^{-2} \|\psi_{k, n+1}\|_{N+\alpha} + \tau_q^{-1} \|\bar D_t \psi_{k, n+1}\|_{N+\alpha, \,\, \supp \partial_t \tilde \chi_k} \\ 
    && + \tau_q^{-1} \|\bar u_q\|_{1+\alpha}\|\psi_{k, n+1}\|_{N+\alpha} + \tau_q^{-1} \|\bar u_q\|_{N+1+\alpha} \|\psi_{k,n+1}\|_\alpha \big).
\end{eqnarray*}
 It thus follows from lemmas \ref{smoli_estim} and \ref{psi_estim}, and the choice of a sufficiently large $a_0>0$ that
\begin{equation*}
    \|\bar D_t R_{q, n+1}\|_{N} \leq C \tau_q^{-1} \delta_{q+1, n} \bigg(\frac{\lambda_q}{\lambda_{q+1}}\bigg)^{\frac12 - \beta} (\lambda_{q+1} \ell_q)^{-\alpha} \lambda_{q+1}^{-2\alpha} \lambda_q^N\leq \delta_{q+1, n+1} \tau_q^{-1} \lambda_q^{N - \alpha}, \ \ N\geq \{0,1,...,L_t\},
\end{equation*}
and the estimates propagate.
\end{proof}

\subsection{Estimates for the total Newton perturbation}

We now obtain estimates for the total Newton perturbation: 
\begin{equation*}
    \theta_{q+1}^{(t)} = \sum_{n = 1}^\Gamma \theta_{q+1, n}^{(t)}\,,\qquad w_{q+1}^{(t)} = T[\theta_{q+1}^{(t)}]\,.
\end{equation*}
In light of proposition \ref{NewIter} and lemma \ref{psi_estim}, we have the following estimates.

\begin{lem} \label{w_t_estim}
    The estimates: 
    \begin{equation} \label{w_t_estim_0}
        \|\Lambda^{-1}\theta_{q+1}^{(t)}\|_0 \lesssim \frac{\delta_{q+1} \lambda_q \ell_q^{-\alpha}}{\mu_{q+1}},
    \end{equation}
    \begin{equation} \label{w_t_estim_1}
        \|\theta_{q+1}^{(t)}\|_N \lesssim \frac{\delta_{q+1} \lambda_q^{N + 2} \ell_q^{-\alpha}}{\mu_{q+1}}, \,\|w_{q+1}^{(t)}\|_N \lesssim \frac{\delta_{q+1} \lambda_q^{N + 2} \ell_q^{-\alpha}}{\mu_{q+1}}, \,\,\, \forall N \in \{0,1,...,L_t-2\},
    \end{equation}
    \begin{equation} \label{w_t_estim_2}
        \|\bar D_t \theta_{q+1}^{(t)}\|_N \lesssim \delta_{q+1} \lambda_q^{N+2} \ell_q^{-\alpha}, \, \|\bar D_t w_{q+1}^{(t)}\|_{N} \lesssim \delta_{q+1} \lambda_q^{N+2} \ell_q^{-\alpha}, \,\,\, \forall N \in  \{0,1,...,L_t-2\},
    \end{equation}
    \begin{equation} \label{w_t_estim_3}
        \|\theta_{q+1}^{(t)}\|_{N+L_t-2} \lesssim \frac{\delta_{q+1} \lambda_q^{L_t} \ell_q^{-N-\alpha}}{\mu_{q+1}}, \, \|w_{q+1}^{(t)}\|_{N+L_t-2 } \lesssim \frac{\delta_{q+1} \lambda_q^{L_t} \ell_q^{-N-\alpha}}{\mu_{q+1}}, \,\,\, \forall N \geq 0,
    \end{equation}
    \begin{equation} \label{w_t_estim_4}
        \|\bar D_t \theta_{q+1}^{(t)}\|_{N+L_t-2} \lesssim \delta_{q+1} \lambda_q^{L_t} \ell_q^{-N - \alpha}, \, \|\bar D_t w_{q+1}^{(t)}\|_{N+L_t-2} \lesssim \delta_{q+1} \lambda_q^{L_t} \ell_q^{-N - \alpha}, \,\,\, \forall N \geq 0,
    \end{equation}
    hold true for implicit constants which depend on $\Gamma$, $M$, $\alpha$ and $N$. 
    In addition, the temporal support satisfies 
    \begin{eqnarray}\label{temporal-qplus1}
        \supp_t \theta_{q+1}^{(t)} \cup \supp_t w_{q+1}^{(t)} &\subset& [-2 + (\delta_q^{\frac12} \lambda_q^{\frac{3}2})^{-1} - 2\Gamma \tau_q, -1 - (\delta_q^{\frac12} \lambda_q^{\frac{3}2})^{-1} + 2\Gamma \tau_q] \\ 
        && \cup [1 + (\delta_q^{\frac12} \lambda_q^{\frac{3}2})^{-1} - 2 \Gamma \tau_q, 2 - (\delta_q^{\frac12} \lambda_q^{\frac{3}2})^{-1} + 2 \Gamma \tau_q]. \nonumber
    \end{eqnarray}
\end{lem}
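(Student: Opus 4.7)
The plan is to derive all bounds directly from the potential estimates in Lemma~\ref{psi_estim}, the amplitude bounds in Lemma~\ref{a_estim}, and the fact that $T = \nabla^\perp\Lambda^{-1}$ is a zero-order H\"older-bounded operator. The key observation is that $\theta_{k,n} = \Delta\psi_{k,n}$, so since $\theta_{q+1,n}^{(t)} = \sum_{k\in\mathbb{Z}_{q,n-1}}\tilde\chi_k \theta_{k,n}$ and the cutoffs $\{\tilde\chi_k\}$ have locally finite overlap, for every fixed $t$ only boundedly many $k$ contribute and
\begin{equation*}
    \|\theta_{q+1,n}^{(t)}(\cdot,t)\|_{N+\alpha} \;\lesssim\; \sup_k \|\psi_{k,n}\|_{N+2+\alpha}.
\end{equation*}
Plugging in \eqref{psi_estim_1} when $N+2\leq L_t$ (i.e.\ $N\leq L_t-2$) and \eqref{psi_estim_3} in the high-frequency regime, and summing over $n=1,\ldots,\Gamma$ using $\delta_{q+1,n-1}\leq\delta_{q+1}$, I would obtain \eqref{w_t_estim_1} and \eqref{w_t_estim_3}. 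For \eqref{w_t_estim_0}, the identity $\Lambda^{-1}\Delta = -\Lambda$ reduces the bound to $\sup_k\|\psi_{k,n}\|_{1+\alpha}$ which is controlled by \eqref{psi_estim_1} with $N=1$. The bounds on $w_{q+1}^{(t)} = T[\theta_{q+1}^{(t)}]$ follow directly from the zero-order nature of $T$ on H\"older spaces of positive index, working throughout at the $N+\alpha$ level.

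For the material derivative estimates, I would use equation~\eqref{LocalNewt} to compute
\begin{equation*}
    \bar D_t\theta_{q+1,n}^{(t)} = \sum_{k}(\partial_t\tilde\chi_k)\theta_{k,n} + \sum_k \tilde\chi_k\left(-T[\theta_{k,n}]\cdot\nabla\bar\theta_q + \sum_{\xi} f_{\xi,k,n}(\mu_{q+1}\,\cdot\,)\nabla^\perp\!\cdot\!\div A_{\xi,k,n-1}\right).
\end{equation*}
The first sum is bounded by $\tau_q^{-1}\sup_k\|\theta_{k,n}\|_{N+\alpha}$, and since $\mu_{q+1}\tau_q\gtrsim 1$ this contributes at most $\delta_{q+1,n-1}\lambda_q^{N+2}\ell_q^{-\alpha}$. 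The dominant piece of the second sum is the forcing term, which is controlled via Lemma~\ref{a_estim} by $\sup_\xi\|A_{\xi,k,n-1}\|_{N+2}\lesssim\delta_{q+1,n-1}\lambda_q^{N+2}$; the transport correction $T[\theta_{k,n}]\cdot\nabla\bar\theta_q$ is smaller by a factor $\|\bar\theta_q\|_{1+\alpha}/\mu_{q+1}\lesssim\delta_q^{1/2}\lambda_q^{3/2}/\mu_{q+1}\lesssim 1$ thanks to the choice of $\mu_{q+1}$. For $\bar D_t w_{q+1}^{(t)}$, I would write $\bar D_t T[\theta_{q+1}^{(t)}] = T[\bar D_t\theta_{q+1}^{(t)}] + [\bar u_q\cdot\nabla, T]\theta_{q+1}^{(t)}$ and invoke Proposition~\ref{prop-commu} to absorb the commutator into the main term via an $\|\bar u_q\|_{1+\alpha}$ factor.

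For the temporal support, I would observe that $\supp_t\theta_{q+1,n}^{(t)}\subset\bigcup_{k\in\mathbb{Z}_{q,n-1}}\supp\tilde\chi_k\subset\mathcal{N}_{2\tau_q}(\supp_t R_{q,n-1})$ by the definition of $\mathbb{Z}_{q,n-1}$ and the properties of $\tilde\chi_k$. Combined with the support bound~\eqref{NewIter_5} of Proposition~\ref{NewIter} applied at index $n-1\leq\Gamma-1$, and taking the union over $n\in\{1,\ldots,\Gamma\}$, this yields the temporal support claim~\eqref{temporal-qplus1}; the same support statement carries over to $w_{q+1}^{(t)}$ since $T$ acts only in space.

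The principal technical obstacle I anticipate is the bookkeeping required to pass the $\ell_q^{-\alpha}$ loss cleanly through the zero-order operator $T$ and through the commutator $[\bar u_q\cdot\nabla, T]$ in the material derivative estimate for $w_{q+1}^{(t)}$, without degrading the power of $\lambda_q$. This is resolved by working uniformly at the $N+\alpha$ level (where $T$ is bounded) and applying Proposition~\ref{prop-commu} in the commutator estimate, so that the $\|\bar u_q\|_{1+\alpha}\lesssim\delta_q^{1/2}\lambda_q^{3/2}$ factor is absorbed using $\delta_q^{1/2}\lambda_q^{3/2}\ll\mu_{q+1}$.
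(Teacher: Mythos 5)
Your proposal is correct and follows essentially the same route as the paper: reduce every bound to $\sup_k\|\psi_{k,n}\|_{N+2+\alpha}$ via $\theta_{k,n}=\Delta\psi_{k,n}$ and the local finiteness of the cutoffs, invoke Lemma~\ref{psi_estim}, use the zero-order boundedness of $T$ for $w_{q+1}^{(t)}$, and read off the temporal support from Proposition~\ref{NewIter}. The only deviation is in the material-derivative step: the paper expands $\bar D_t(\tilde\chi_k\Delta\psi_{k,n+1})$ into $\partial_t\tilde\chi_k\,\Delta\psi_{k,n+1}+\tilde\chi_k\Delta\bar D_t\psi_{k,n+1}+\tilde\chi_k[\bar u_q\cdot\nabla,\Delta]\psi_{k,n+1}$ and reuses the bounds \eqref{psi_estim_2}, \eqref{psi_estim_4}, whereas you substitute the equation \eqref{LocalNewt} for $\bar D_t\theta_{k,n}$ directly and bound the forcing $\nabla^\perp\cdot\div A_{\xi,k,n-1}$ and the linear term separately; the two computations are equivalent and your observation that $\delta_q^{1/2}\lambda_q^{3/2}/\mu_{q+1}\lesssim1$ is exactly the fact already used inside the proof of Lemma~\ref{psi_estim}.
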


\begin{proof}
Recalling that $\delta_{q+1, n} \leq \delta_{q+1}$ for all $n$, we only need to analyze $\theta_{q+1, n+1}^{(t)}$ and $w_{q+1, n+1}^{(t)}$. Since $\Lambda^{-1} \nabla$ is an operator of zero order, the estimate \eqref{w_t_estim_0} follows from
    \begin{equation*}
        \|\Lambda^{-1} \theta_{q+1, n+1}^{(t)}\|_0 \lesssim \sup_{k \in \mathbb Z_{q,n}} \|\div (\Lambda^{-1} \nabla \psi_{k, n+1})\|_\alpha \lesssim \sup_{k \in \mathbb Z_{q,n}} \|\psi_{k,n+1}\|_{1+\alpha}
    \end{equation*}
   and lemma \ref{psi_estim}.
    
   On the other hand, \eqref{w_t_estim_1} and \eqref{w_t_estim_3} also follow from  
    \begin{equation*}
        \|\theta_{q+1, n+1}^{(t)}\|_N \lesssim \sup_{k \in \mathbb Z_{q,n}} \|\psi_{k, n+1}\|_{N+2+\alpha},\qquad \|w_{q+1, n+1}^{(t)}\|_N \lesssim \sup_{k \in \mathbb Z_{q,n}} \|\psi_{k, n+1}\|_{N+2+\alpha},  \ \ N \geq 0
    \end{equation*}
    and lemma \ref{psi_estim}. 

    Regarding the material derivative estimates, we write 
    \begin{equation*}
        \bar D_t \theta_{q+1, n+1}^{(t)} = \sum_{k \in \mathbb Z_{q,n}} \big( \partial_t \tilde \chi_k \Delta \psi_{k, n+1} + \tilde \chi_k \Delta \bar D_t \psi_{k, n+1} + \tilde \chi_k [\bar u_q\cdot,\Delta]\nabla \psi_{k, n+1} \big).
    \end{equation*}
   Thus we have
    \begin{eqnarray*}
        \|\bar D_t \theta_{q+1, n+1}^{(t)}\|_{N+\alpha} \lesssim && \sup_{k \in \mathbb Z_{q,n}} \big( \tau_q^{-1} \|\psi_{k, n+1}\|_{N+2+\alpha} + \|\bar D_t \psi_{k, n+1}\|_{N + 2 + \alpha} \\ 
        && + \|\bar u_q\|_{N+2+\alpha} \|\psi_{k, n+1}\|_{1+\alpha} + \|\bar u_q\|_{1+\alpha} \|\psi_{k, n+1}\|_{N+2 + \alpha} \big),
    \end{eqnarray*}
    with the second term on the right-hand-side being the dominant one. 
    The estimates in \eqref{w_t_estim_2} and \eqref{w_t_estim_4} for $\theta_{q+1}^{(t)}$ follow from the estimates of lemmas \ref{smoli_estim} and \ref{psi_estim}.
    For the remaining estimates in \eqref{w_t_estim_2} and \eqref{w_t_estim_4}, we have 
    \begin{equation*}
        \|\bar D_t w_{q+1}^{(t)}\|_{N+\alpha} \lesssim \|\bar D_t \theta_{q+1}^{(t)}\|_{N + \alpha} + \|\bar u_q\|_{N+1+\alpha} \|\theta_{q+1}^{(t)}\|_\alpha + \|\bar u_q\|_{1+\alpha} \|\theta_{q+1}^{(t)}\|_{N+\alpha},
    \end{equation*}
    where the first term is the dominant one.

    The claimed property on the temporal support \eqref{temporal-qplus1} follows straightforwardly from the definition of $\theta_{q+1}^{(t)}$ and proposition \ref{NewIter}.
\end{proof}

In the construction of the Nash perturbation, we will make use of the the backward flow of the velocity field 
\begin{equation*}
    \tilde u_{q, \Gamma} := \bar u_q + P_{\lesssim \ell_q^{-1}} w_{q+1}^{(t)} = P_{\lesssim \ell_q^{-1}}(u_q + w_{q+1}^{(t)}) = P_{\lesssim \ell_q^{-1}} u_{q, \Gamma}.
\end{equation*}
We define also 
\begin{equation*}
    \tilde \theta_{q, \Gamma} := \bar \theta_q + P_{\lesssim \ell_q^{-1}} \theta_{q+1}^{(t)} = P_{\lesssim \ell_q^{-1}}(\theta_q + \theta_{q+1}^{(t)}) = P_{\lesssim \ell_q^{-1}}\theta_{q, \Gamma}.
\end{equation*}
We remark that this is a point of departure from the corresponding definition in \cite{GR23}, where the extra mollification operator $P_{\lesssim \ell_q^{-1}}$ is omitted. It is preferable from the point of view of the estimates of the following section to have good control over the localization in Fourier space -- this justifies the definition. 

As a direct consequence of lemma~\ref{w_t_estim} we have the following corollary. We note that $L_t - 2 \leq L_\theta$.

\begin{cor} \label{Gamma_velo_estim}
    The estimates: 
    \begin{equation}\notag
        \|\tilde \theta_{q, \Gamma} \|_N +\|\tilde u_{q, \Gamma} \|_N \lesssim \delta_{q}^{\frac12} \lambda_q^{\frac{1}2+N}, \,\,\, \forall N \in \{1,2,...,L_t-2\},
    \end{equation}
    \begin{equation}\notag
        \|\tilde \theta_{q, \Gamma}\|_{N + L_t-2} + \|\tilde u_{q, \Gamma}\|_{N + L_t-2} \lesssim \delta_{q}^{\frac12} \lambda_q^{\frac{1}2+L_t-2} \ell_q^{-N}, \,\,\, \forall N \geq 0,
    \end{equation}
    % \begin{equation}\notag
    %     \|\bar u_{q, \Gamma} \|_N \lesssim \delta_{q}^{\frac12} \lambda_q^{\frac{1}2+N}, \,\,\, \forall N \in \{1,2,...,L-2\},
    % \end{equation}
    % \begin{equation}\notag
    %     \|\bar u_{q, \Gamma}\|_{N + L-2} \lesssim \delta_{q}^{\frac12} \lambda_q^{\frac{1}2+L-2} \ell_q^{-N}, \,\,\, \forall N \geq 0,
    % \end{equation}
    are satisfied with implicit constants depending on $\Gamma$, $M$, $\alpha$, and $N$.
\end{cor}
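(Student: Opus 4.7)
The plan is to reduce both estimates to applications of the triangle inequality on
\[
\tilde u_{q,\Gamma} = \bar u_q + P_{\lesssim \ell_q^{-1}} w_{q+1}^{(t)}, \qquad \tilde \theta_{q,\Gamma} = \bar \theta_q + P_{\lesssim \ell_q^{-1}} \theta_{q+1}^{(t)},
\]
together with the fact that the Littlewood--Paley projector $P_{\lesssim \ell_q^{-1}}$ is bounded on every H\"older class uniformly in $\ell_q$. The contributions from $\bar\theta_q,\bar u_q$ already have exactly the right form by lemma~\ref{smoli_estim}, so the only actual work is to show that the Newton contributions $P_{\lesssim \ell_q^{-1}}\theta_{q+1}^{(t)}$ and $P_{\lesssim \ell_q^{-1}}w_{q+1}^{(t)}$ are at most of the same order, and this is done by inserting the definitions of $\mu_{q+1}$ and $\ell_q$ and invoking the inductive inequality $b<(1+2\beta)/(4\beta)$.

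For the first inequality, with $1\leq N\leq L_t-2$, lemma~\ref{smoli_estim} gives $\|\bar\theta_q\|_N+\|\bar u_q\|_N\lesssim \delta_q^{1/2}\lambda_q^{1/2+N}$, which already matches the claim. For the perturbation piece I will use lemma~\ref{w_t_estim} to get
\[
\|P_{\lesssim\ell_q^{-1}}\theta_{q+1}^{(t)}\|_N+\|P_{\lesssim\ell_q^{-1}}w_{q+1}^{(t)}\|_N \lesssim \frac{\delta_{q+1}\lambda_q^{N+2}\ell_q^{-\alpha}}{\mu_{q+1}}.
\]
Plugging in $\mu_{q+1}=\delta_{q+1}^{1/2}\lambda_q\lambda_{q+1}^{1/2+4\alpha}$ and $\ell_q^{-\alpha}=(\lambda_q\lambda_{q+1})^{\alpha/2}$, the right-hand side becomes $\delta_{q+1}^{1/2}\lambda_q^{N+1/2}\cdot(\lambda_q/\lambda_{q+1})^{1/2}\cdot\lambda_{q+1}^{-7\alpha/2}\lambda_q^{\alpha/2}$. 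Since $(\delta_{q+1}/\delta_q)^{1/2}=(\lambda_q/\lambda_{q+1})^{\beta}$ and $\beta>0$, the factor $(\lambda_q/\lambda_{q+1})^{1/2}(\delta_{q+1}/\delta_q)^{1/2}\lambda_{q+1}^{-7\alpha/2}\lambda_q^{\alpha/2}$ is at most $1$ for $a_0$ large enough, giving the desired bound by $\delta_q^{1/2}\lambda_q^{1/2+N}$.

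For the high-derivative estimate, write $N+L_t-2$ derivatives and split into two cases depending on whether this exceeds $L_\theta$. If $N+L_t-2\leq L_\theta$, the first estimate of lemma~\ref{smoli_estim} gives $\|\bar u_q\|_{N+L_t-2}\lesssim \delta_q^{1/2}\lambda_q^{1/2+N+L_t-2}$ and this is dominated by $\delta_q^{1/2}\lambda_q^{1/2+L_t-2}\ell_q^{-N}$ since $\lambda_q^N\leq \ell_q^{-N}$. If instead $N+L_t-2>L_\theta$, writing $N+L_t-2=L_\theta+M$ with $M\geq 0$, the high-derivative estimate of lemma~\ref{smoli_estim} yields $\|\bar u_q\|_{N+L_t-2}\lesssim \delta_q^{1/2}\lambda_q^{1/2+L_\theta}\ell_q^{-M}$, and this is dominated by the target after observing $\lambda_q^{L_\theta-L_t+2}\ell_q^{-M}\leq \lambda_q^{L_\theta-L_t+2}(\lambda_q\lambda_{q+1})^{(N-(L_\theta-L_t+2))/2}\cdot\ell_q^{-(L_\theta-L_t+2)}\cdot \ell_q^{L_\theta-L_t+2-N}$, which simplifies to $\ell_q^{-N}$ times a factor $\leq 1$. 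The Newton piece is handled similarly: lemma~\ref{w_t_estim} provides $\|P_{\lesssim\ell_q^{-1}}w_{q+1}^{(t)}\|_{N+L_t-2}\lesssim \delta_{q+1}\lambda_q^{L_t}\ell_q^{-N-\alpha}/\mu_{q+1}$, and the same algebraic manipulation as in the low-derivative case yields the bound $\delta_q^{1/2}\lambda_q^{1/2+L_t-2}\ell_q^{-N}$. The same arguments apply verbatim to $\tilde\theta_{q,\Gamma}$. The only subtle point throughout is the bookkeeping of the three regularity parameters $L_t\leq L_R\leq L_\theta-1$, which guarantees we never run out of controlled derivatives.
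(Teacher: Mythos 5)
Your proposal is correct and follows exactly the route the paper intends (the paper states the corollary as a "direct consequence" of lemma \ref{w_t_estim} without detail): split $\tilde u_{q,\Gamma}=\bar u_q+P_{\lesssim\ell_q^{-1}}w_{q+1}^{(t)}$ and $\tilde\theta_{q,\Gamma}=\bar\theta_q+P_{\lesssim\ell_q^{-1}}\theta_{q+1}^{(t)}$, bound the mollified pieces by lemma \ref{smoli_estim} and the Newton pieces by lemma \ref{w_t_estim}, and check via the definitions of $\mu_{q+1}$ and $\ell_q$ that the latter are dominated. The parameter verification is sound (indeed the domination of the Newton piece holds without appealing to $b<\tfrac{1+2\beta}{4\beta}$, since each ratio factor is already at most one), and the case split on whether $N+L_t-2$ exceeds $L_\theta$ is handled correctly.
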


% {\color{red}For some of the estimates in the next section (see~\ref{}), we also need control on the time derivatives of the Newton perturbations. We record them here as
% \begin{lem}\label{le-wt-dt}
%     The following estimates hold
%     \begin{equation} \label{w_t_estim_2-dt}
%         \|\pa_t \theta_{q+1}^{(t)}\|_N \lesssim \delta_{q+1} \lambda_q^{N+2} \ell_q^{-\alpha}, \, \|\pa_t w_{q+1}^{(t)}\|_{N} \lesssim \delta_{q+1} \lambda_q^{N+2} \ell_q^{-\alpha}, \,\,\, \forall N \in  \{0,1,...,L-4\},
%     \end{equation}
%     \begin{equation} \label{w_t_estim_4-dt}
%         \|\pa_t \theta_{q+1}^{(t)}\|_{N+L-4} \lesssim \delta_{q+1} \lambda_q^{L-1} \ell_q^{-N - \alpha}, \, \|\pa_t w_{q+1}^{(t)}\|_{N+L-4} \lesssim \delta_{q+1} \lambda_q^{L-1} \ell_q^{-N - \alpha}, \,\,\, \forall N \geq 0,
%     \end{equation}
% \end{lem}
% \begin{proof}
    
% \end{proof}

% }

\subsection{The perturbed flow} \label{Pert.Flow.Sec} 
Let $\tilde \Phi_t$ be the backward flow generated by $\tilde u_{q, \Gamma}$:
\begin{equation} \label{Flow_t_gam}
    \begin{cases}
        \partial_s \tilde \Phi_t(x,s) + \tilde u_{q, \Gamma}(x,s) \cdot \nabla \tilde \Phi_t (x,s) = 0, \\ 
        \tilde \Phi_t(x, t) = x.
    \end{cases}
\end{equation}
The corresponding Lagraingian flow $\tilde X_t$ is given by
\begin{equation} \label{Lagr_t_gam}
    \begin{cases}
        \frac{d}{ds} \tilde X_t(\alpha, s) = \tilde u_{q, \Gamma}(\tilde X_t(\alpha, s),s), \\ 
        \tilde X_t(\alpha, t) = \alpha.
    \end{cases}
\end{equation}

% We note that for $N \in \{1, 2,..., L-2\}$, 
% \begin{equation*}
%     \|w_{q+1}^{(t)}\|_N \lesssim \delta_{q}^{\frac12} \lambda_q^{\frac{1}2+N} \frac{\delta_{q+1} \lambda_q^{\frac{3}2} \ell_q^{-\alpha}}{\mu_{q+1} \delta_q^{1/2}} \lesssim \delta_q^{\frac12} \lambda_q^{\frac{1}2+N} \bigg( \frac{\delta_{q+1}}{\delta_q} \bigg)^{\frac12} \bigg( \frac{\lambda_q}{\lambda_{q+1}}\bigg)^{\frac12}. 
% \end{equation*}
% This shows that $\bar u_{q, \Gamma}$ is a small perturbation of $\bar u_q$. This observation, together with similar considerations for $N \geq L-1$, 

Consistent with previous notation, $\tilde D_{t, \Gamma}$ is used for the material derivative 
\begin{equation*}
    \tilde D_{t, \Gamma} = \partial_t + \tilde u_{q, \Gamma} \cdot \nabla.
\end{equation*}
%The following corollary holds true (cf. lemma \ref{Flow_estim}).
\begin{cor} \label{Flow_gam_estim}
Let $\tilde \Phi_t$ and $ \tilde X_t$ be defined by \eqref{Flow_t_gam} and \eqref{Lagr_t_gam} for $t \in \mathbb R$, respectively. We have for any $|s - t| < \tau \leq  \|\tilde u_{q, \Gamma}\|_1^{-1}$, 
    \begin{equation} \label{Flow_gam_estim_1}
        \|(\nabla \tilde \Phi_t)^{-1}(\cdot, s)\|_N + \|\nabla \tilde \Phi_t (\cdot, s)\|_N \lesssim \lambda_q^N, \,\,\, \forall N \in\{0,1,..., L_t-3\},
    \end{equation}
    \begin{equation} \label{Flow_gam_estim_2}
        \|\tilde D_{t, \Gamma} (\nabla \tilde \Phi_t)^{-1}(\cdot, s)\|_N + \|\tilde D_{t, \Gamma} \nabla \tilde \Phi_t (\cdot, s)\|_N \lesssim \delta_q^{\frac12} \lambda_q^{\frac{3}2+N}, \,\,\, \forall N \in\{0,1,..., L_t-3\},
    \end{equation}
    \begin{equation} \label{Flow_gam_estim_3}
         \|D \tilde X_t(\cdot, s)\|_N  \lesssim \lambda_q^N, \,\,\, \forall N \in \{0,1,..., L_t-3\},
    \end{equation}
    \begin{equation} \label{Flow_gam_estim_4}
        \|(\nabla \tilde \Phi_t)^{-1}(\cdot, s)\|_{N+L_t-3} + \|\nabla \tilde \Phi_t(\cdot, s)\|_{N+L_t-3}  \lesssim \lambda_q^{L_t-3} \ell_q^{-N}, \, \, \, \forall N \geq 0,
    \end{equation}
    \begin{equation} \label{Flow_gam_estim_5}
        \|\tilde D_{t, \Gamma} (\nabla \tilde \Phi_t)^{-1}(\cdot, s)\|_{N+L_t-3} + \|\tilde D_{t, \Gamma} \nabla \tilde \Phi_t(\cdot, s)\|_{N+L_t-3}  \lesssim \delta_q^{\frac12} \lambda_q^{\frac{3}2+L_t-3} \ell_q^{-N}, \, \, \, \forall N \geq 0,
    \end{equation}
    \begin{equation} \label{Flow_gam_estim_6}
        \|D \tilde X_t(\cdot, s)\|_{N+L_t-3} \lesssim \lambda_q^{L_t-3} \ell_q^{-N}, \, \, \, \forall N \geq 0,
    \end{equation}
   with the implicit constants depending only on $\Gamma$, $M$, $\alpha$, and $N$.   
\end{cor}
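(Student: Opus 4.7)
The plan is to observe that the statement is exactly the analogue of lemma~\ref{Flow_estim} with $\bar u_q$ replaced by $\tilde u_{q,\Gamma}$, and to apply the same argument essentially verbatim. The input estimates we need on the transporting velocity are supplied by corollary~\ref{Gamma_velo_estim}, which shows that $\tilde u_{q,\Gamma}$ satisfies the very same bounds as $\bar u_q$ in lemma~\ref{smoli_estim}, with the single caveat that the number of controlled spatial derivatives is reduced from $L_\theta$ down to $L_t-2$. This is precisely the reason why every estimate in corollary~\ref{Flow_gam_estim} loses two derivatives compared to lemma~\ref{Flow_estim} (i.e.~$L_t-3$ in place of $L_\theta-1$).

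In concrete terms, the first step is to differentiate the flow equation \eqref{Flow_t_gam} to obtain
\[
\tilde D_{t,\Gamma} \nabla \tilde\Phi_t = -(\nabla \tilde u_{q,\Gamma})^T \nabla \tilde \Phi_t,
\]
and likewise for $(\nabla\tilde\Phi_t)^{-1}$. Composing with $\tilde X_t$ and iterating $N$ spatial derivatives through the material derivative using the commutator $[\tilde u_{q,\Gamma}\cdot\nabla,\partial^\sigma]$, we apply proposition~\ref{transport_estim} together with Gr\"onwall's inequality on the time interval $|s-t|<\tau$. The smallness condition $\tau\|\tilde u_{q,\Gamma}\|_1\lesssim 1$ keeps the exponential Gr\"onwall factor bounded and allows the standard Faà di Bruno/induction argument to close. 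The bounds \eqref{Flow_gam_estim_1} and \eqref{Flow_gam_estim_4} on $\nabla\tilde\Phi_t$ and $(\nabla\tilde\Phi_t)^{-1}$ follow directly, with the $\ell_q^{-N}$ factor appearing exactly as in lemma~\ref{Flow_estim} once $N$ exceeds the sharp derivative count $L_t-3$. The Lagrangian bounds \eqref{Flow_gam_estim_3} and \eqref{Flow_gam_estim_6} follow by the inverse function theorem applied to $\tilde X_t=\tilde\Phi_t^{-1}(\cdot,s)$, using the bounds just obtained on $\nabla\tilde\Phi_t$.

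The material derivative estimates \eqref{Flow_gam_estim_2} and \eqref{Flow_gam_estim_5} then fall out of the differentiated flow equation: one applies $\partial^\sigma$ to both sides of $\tilde D_{t,\Gamma}\nabla\tilde\Phi_t = -(\nabla\tilde u_{q,\Gamma})^T \nabla\tilde\Phi_t$ and uses the already proved bounds on $\nabla\tilde\Phi_t$ together with the bounds on $\nabla\tilde u_{q,\Gamma}$ (of size $\delta_q^{1/2}\lambda_q^{3/2}$ at top order, and $\ell_q^{-N}$ in the high-derivative regime) coming from corollary~\ref{Gamma_velo_estim}. No Gr\"onwall iteration is needed at this stage; the bound is pointwise in $s$.

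The only real bookkeeping obstacle is confirming that all invocations fit inside the available derivative budget. Since corollary~\ref{Gamma_velo_estim} provides control of $\tilde u_{q,\Gamma}$ up to order $L_t-2$ (and up to $L_t-2+N$ at the cost of $\ell_q^{-N}$), and the flow estimates are forced to surrender one more derivative to the differentiation of the flow equation, the output regularity threshold is $L_t-3$. This matches the statement of the corollary, so no further adjustment of the parameters is required, and the proof concludes exactly as in \cite{GR23}.
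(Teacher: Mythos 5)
Your proposal is correct and follows exactly the route the paper takes: the paper's proof of this corollary consists of the single remark that it is the same as the proof of lemma~\ref{Flow_estim} (deferring to lemma 3.2 of \cite{GR23}), with corollary~\ref{Gamma_velo_estim} supplying the velocity bounds and the derivative count dropping from $L_\theta-1$ to $L_t-3$ for precisely the bookkeeping reason you identify. Your write-up simply makes explicit the steps the paper leaves implicit.
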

The proof is the same as that of lemma \ref{Flow_estim}; see, for instance, lemma 3.2 in \cite{GR23}.

\begin{rem} \label{remark_flow_bd}
Corollary \ref{Flow_gam_estim} holds with $\tau = \tau_q$, since
\begin{equation*}
    \tau_q \|\tilde u_{q, \Gamma}\|_1 \lesssim \lambda_{q+1}^{-\alpha}.
\end{equation*}
Moreover, proposition \ref{transport_estim} shows that
\begin{equation*}
    \|\I - \nabla \tilde \Phi\|_0 \lesssim \lambda_{q+1}^{-\alpha}, \ \ \mbox{for} \ \tau = \tau_q.
\end{equation*}
Therefore, $\|\nabla \tilde \Phi\|_0$ has an upper bound which is independent of the parameters in the construction, since for any constant $C > 0$ independent of $a > a_0$ and $q$, $a_0$ can be chosen sufficiently large so that 
\begin{equation*}
    C \lambda_{q+1}^{- \alpha} \leq 1.
\end{equation*}
\end{rem}

We conclude this section with a simple stability estimate on the perturbed flow, which will be used in the following section.

\begin{lem} \label{flow_stabil}
    For $t \in \mathbb R$, let $\tilde \Phi_t$ and $\Phi_t$
    be, respectively, the backward flows of $\tilde u_{q, \Gamma}$ and $\bar u_q$, as defined in \eqref{Flow_t_gam} and \eqref{Flow_t}. If $|s-t| < \tau \leq (\|\tilde u_{q, \Gamma}\|_1 + \|\bar u_q\|_1)^{-1}$ and $N \in \{0,1,...,L_t-4\}$, 
    \begin{equation}
        \|\nabla \Phi_t(\cdot, s) - \nabla \tilde \Phi_t(\cdot, s)\|_N + \|(\nabla \Phi_t(\cdot, s))^{-1} - (\nabla \tilde \Phi_t(\cdot, s))^{-1}\|_N \lesssim \tau \frac{\delta_{q+1} \lambda_q^{3} \ell_q^{-\alpha}}{\mu_{q+1}}\lambda_q^{N}, 
    \end{equation}
    while if $N \geq 0$,
    \begin{equation}
        \|\nabla \Phi_t(\cdot, s) - \nabla \tilde \Phi_t(\cdot, s)\|_{N + L_t-4} + \|(\nabla \Phi_t(\cdot, s))^{-1} - (\nabla \tilde \Phi_t(\cdot, s))^{-1}\|_{N+ L_t-4} \lesssim \tau \frac{\delta_{q+1} \lambda_q^{3} \ell_q^{-\alpha}}{\mu_{q+1}}\lambda_q^{L_t-4} \ell_q^{-N},
    \end{equation}
   with implicit constants depending on $\Gamma$, $M$, $\al$ and $N$. 
\end{lem}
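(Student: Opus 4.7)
The plan is to derive a transport equation for the difference $\Psi := \Phi_t - \tilde\Phi_t$, estimate it by means of proposition~\ref{transport_estim}, and then obtain the estimate for the inverse via the algebraic identity $A^{-1} - B^{-1} = -A^{-1}(A-B)B^{-1}$.

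Since $\bar D_t \Phi_t = 0$ by definition of $\Phi_t$, subtracting the defining ODEs \eqref{Flow_t} and \eqref{Flow_t_gam} yields
\begin{equation*}
    \bar D_t \Psi = (\tilde u_{q,\Gamma} - \bar u_q)\cdot\nabla \tilde \Phi_t = \bigl(P_{\lesssim \ell_q^{-1}}w_{q+1}^{(t)}\bigr)\cdot \nabla \tilde \Phi_t =: F,\qquad \Psi\big|_{s=t}=0.
\end{equation*}
Apply $\partial^\sigma$ with $|\sigma| = N+1$ (so as to bound $\nabla \Psi$ in $C^N$). Commuting with $\bar u_q\cdot\nabla$ one finds
\begin{equation*}
    \bar D_t \partial^\sigma \Psi = \partial^\sigma F + [\bar u_q\cdot\nabla, \partial^\sigma]\Psi,\qquad \partial^\sigma \Psi\big|_{s=t} = 0.
\end{equation*}
Invoking proposition~\ref{transport_estim} on an interval of length $\tau$ and absorbing the commutator via Gr\"onwall (using $\tau\|\bar u_q\|_{1+\alpha} \lesssim 1$, cf.~section~\ref{sec-prep}), the estimates reduce to controlling $\|F\|_{N+1}$. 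From lemma~\ref{w_t_estim} we have $\|P_{\lesssim\ell_q^{-1}}w_{q+1}^{(t)}\|_1 \lesssim \delta_{q+1}\lambda_q^{3}\ell_q^{-\alpha}/\mu_{q+1}$, while corollary~\ref{Flow_gam_estim} gives $\|\nabla\tilde\Phi_t\|_N\lesssim \lambda_q^N$ for $N\leq L_t-3$. By Leibniz and standard interpolation (as used repeatedly in the paper) one obtains
\begin{equation*}
    \|F\|_{N+1} \lesssim \frac{\delta_{q+1}\lambda_q^{3}\ell_q^{-\alpha}}{\mu_{q+1}}\lambda_q^{N},\qquad N\in\{0,1,\ldots, L_t-4\},
\end{equation*}
which proves the bound for $\|\nabla\Psi\|_N$. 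The high-frequency estimate (at order $N+L_t-4$) follows by the same argument, trading derivatives above $L_t-3$ for factors of $\ell_q^{-1}$: the mollifier $P_{\lesssim \ell_q^{-1}}$ on $w_{q+1}^{(t)}$ gives this gain for free, and corollary~\ref{Flow_gam_estim} (estimates \eqref{Flow_gam_estim_4}--\eqref{Flow_gam_estim_5}) provides it for $\tilde\Phi_t$.

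For the inverse, write
\begin{equation*}
    (\nabla\Phi_t)^{-1} - (\nabla\tilde\Phi_t)^{-1} = -(\nabla\Phi_t)^{-1}\bigl(\nabla\Phi_t - \nabla\tilde\Phi_t\bigr)(\nabla\tilde\Phi_t)^{-1}.
\end{equation*}
Combining the already proved estimate on $\nabla\Phi_t - \nabla\tilde\Phi_t$ with the $\|(\nabla\Phi_t)^{-1}\|_N \lesssim \lambda_q^N$ bound from lemma~\ref{Flow_estim} and the corresponding bound $\|(\nabla\tilde\Phi_t)^{-1}\|_N \lesssim \lambda_q^N$ from corollary~\ref{Flow_gam_estim} via Leibniz, the desired inequality follows; here it is convenient to use remark~\ref{remark_flow_bd} to keep $\|\nabla\tilde\Phi_t\|_0$ uniformly bounded, and likewise for $(\nabla\Phi_t)^{-1}$, so that the constants in the inverse estimates are independent of the choice of parameters.

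The principal bookkeeping hurdle is to carry the mollification gain cleanly through the highest-derivative estimate $N+L_t-4$, since differentiating the defining equation for $\Psi$ costs one derivative off the gradient (hence the drop from $L_t-3$ in corollary~\ref{Flow_gam_estim} to $L_t-4$ here), and the commutator $[\bar u_q\cdot\nabla,\partial^\sigma]$ must be controlled by Young's inequality for products exactly as in the proofs of lemmas~\ref{psi_estim} and~\ref{w_t_estim}; this is mechanical but is the only nontrivial bookkeeping step.
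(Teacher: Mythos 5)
Your proposal is correct and follows exactly the standard route that the paper itself defers to (lemma 3.12 of \cite{GR23}): subtract the two flow ODEs to get $\bar D_t(\Phi_t-\tilde\Phi_t)=(\tilde u_{q,\Gamma}-\bar u_q)\cdot\nabla\tilde\Phi_t$ with zero data, estimate the forcing using $\tilde u_{q,\Gamma}-\bar u_q = P_{\lesssim\ell_q^{-1}}w_{q+1}^{(t)}$ together with lemma~\ref{w_t_estim} and corollary~\ref{Flow_gam_estim}, close with proposition~\ref{transport_estim} and Gr\"onwall, and pass to the inverses via $A^{-1}-B^{-1}=-A^{-1}(A-B)B^{-1}$. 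The derivative counts ($N\le L_t-4$ from losing one derivative on $\nabla\tilde\Phi_t$) and the $\ell_q^{-1}$ bookkeeping at high order are handled correctly.
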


We refer the reader to lemma 3.12 in \cite{GR23} for a proof.

\section{The Nash step}
\label{sec-Nash}

In this section we perform the main spatially oscillatory perturbation, the Nash perturbation, to conclude that the inductive assumptions (\ref{induct-u}), (\ref{induct-R}), (\ref{induct-DR}) and (\ref{induct-sup}) with $q$ replaced by $q+1$ also hold. 

\subsection{Mollification along the flow}
\label{sec-molli}
To propagate the material derivative estimate (\ref{induct-DR}) for the new stress error, we require estimates on the second material derivative of the previous stress error. As it is already standard in Nash iteration schemes, we use the mollification along the flow, which was first introduced and analyzed in \cite{Isett}.

Let $\widetilde X_t$ be the Lagrangian flow defined through (\ref{Lagr_t_gam}) and $\rho$ be a standard temporal mollifier. Fix the material mollification scale 
\[\ell_{t,q}=\delta_q^{-\frac12}\lambda_q^{-\frac12}\lambda_{q+1}^{-1}.\]
We thus have 
\[\delta_q^{\frac12}\lambda_q^{\frac{3}2}<\ell_{t,q}^{-1}<\delta_{q+1}^{\frac12}\lambda_{q+1}^{\frac{3}2}\]
and 
\[\ell_{t,q}<\mu_{q+1}^{-1}<\tau_q, \ \ \mbox{for sufficiently small} \ \ \alpha>0.\]
Define the regularized stresses
\[\bar R_{q,n}=\int_{-\ell_{t,q}}^{\ell_{t,q}}R_{q,n}(\widetilde X_t(x,t+s), t+s) \rho_{\ell_{t,q}}(s)\, ds.\]
The proof of the lemma below in a very similar setting is given in \cite{GR23}.

\begin{lem} \label{le-bar-R}
For the regularized stress $\bar R_{q,n}$ we have
\begin{equation} \label{bar_R_1}
    \|\bar R_{q, n}\|_{N} \lesssim \delta_{q+1, n} \lambda_q^{N - \alpha}, \,\,\, \forall N \in \{0, 1,..., L_t-2\},
\end{equation} 
\begin{equation} \label{bar_R_2}
    \|\tilde D_{t,\Gamma} \bar R_{q, n}\|_N \lesssim \delta_{q+1,n} \tau_q^{-1} \lambda_q^{N - \alpha}, \,\,\, \forall N \in \{0, 1,..., L_t-2\},
\end{equation}
\begin{equation} \label{bar_R_3}
    \|\tilde D_{t,\Gamma}^2 \bar R_{q, n}\|_N \lesssim \delta_{q+1,n} \tau_q^{-1} \ell_{t,q}^{-1}\lambda_q^{N - \alpha}, \,\,\, \forall N \in \{0, 1,..., L_t-2\},
\end{equation}
\begin{equation} \label{bar_R_4}
    \|\bar R_{q, n}\|_{N+L_t-2} \lesssim \delta_{q+1, n} \lambda_q^{L_t-2 -\alpha} \ell_q^{-N}, \,\,\, \forall N \geq 0
\end{equation}
\begin{equation} \label{bar_R_5}
    \|\tilde D_{t,\Gamma} \bar R_{q,n}\|_{N + L_t-2} \lesssim \delta_{q+1, n} \tau_q^{-1} \lambda_q^{L_t - 2 - \alpha} \ell_{q}^{-N}, \,\,\, \forall N \geq 0,
\end{equation}
\begin{equation} \label{bar_R_6}
    \|\tilde D_{t,\Gamma}^2 \bar R_{q,n}\|_{N + L_t-2} \lesssim \delta_{q+1, n} \tau_q^{-1}\ell_{t,q}^{-1} \lambda_q^{L_t - 2 - \alpha} \ell_{q}^{-N}, \,\,\, \forall N \geq 0,
\end{equation}
where the implicit constants depend on $\Gamma$, $M$, $\alpha$ and $N$.
\end{lem}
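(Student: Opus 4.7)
\textbf{Plan for Lemma \ref{le-bar-R}.} The backbone of the argument is the commutation identity that makes mollification along the flow so useful. Using the flow composition law $\widetilde X_{t'\to t'+s}\circ \widetilde X_{t\to t'}=\widetilde X_{t\to t'+s}$, I would first verify that for any sufficiently regular $F$,
\begin{equation*}
\tilde D_{t,\Gamma}\Big[\int F(\widetilde X_{t\to t+s}(x),t+s)\rho_{\ell_{t,q}}(s)\,ds\Big]=\int (\tilde D_{t,\Gamma} F)(\widetilde X_{t\to t+s}(x),t+s)\rho_{\ell_{t,q}}(s)\,ds,
\end{equation*}
i.e.\ the material derivative $\tilde D_{t,\Gamma}$ passes inside the flow mollification. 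Since $\partial_s[F(\widetilde X_{t\to t+s}(x),t+s)]=(\tilde D_{t,\Gamma} F)(\widetilde X_{t\to t+s}(x),t+s)$, integrating by parts in $s$ also yields
\begin{equation*}
\tilde D_{t,\Gamma}\bar R_{q,n}(x,t)=-\int R_{q,n}(\widetilde X_{t\to t+s}(x),t+s)\,\rho'_{\ell_{t,q}}(s)\,ds,
\end{equation*}
and, iterating,
\begin{equation*}
\tilde D_{t,\Gamma}^{2}\bar R_{q,n}(x,t)=-\int (\tilde D_{t,\Gamma} R_{q,n})(\widetilde X_{t\to t+s}(x),t+s)\,\rho'_{\ell_{t,q}}(s)\,ds.
\end{equation*}
This is the crucial point: a second material derivative costs only one factor of $\ell_{t,q}^{-1}$ rather than $\ell_{t,q}^{-2}$, because one derivative is absorbed against the (already controlled) first material derivative of $R_{q,n}$.

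Next I would control $\tilde D_{t,\Gamma} R_{q,n}$ in terms of the inductively available quantity $\bar D_t R_{q,n}$. Since $\tilde u_{q,\Gamma}-\bar u_q=P_{\lesssim\ell_q^{-1}}w_{q+1}^{(t)}$, we have $\tilde D_{t,\Gamma} R_{q,n}=\bar D_t R_{q,n}+(P_{\lesssim\ell_q^{-1}}w_{q+1}^{(t)})\cdot\nabla R_{q,n}$, so \eqref{NewIter_2} together with the estimate $\|w_{q+1}^{(t)}\|_0\lesssim\tau_q^{-1}\lambda_q^{-1}\cdot(\text{powers of }\lambda_{q+1}^{-\alpha})$ (which follows by comparing $\delta_{q+1}^{1/2}\lambda_q\lambda_{q+1}^{-1/2}\lesssim\delta_q^{1/2}$ against the definitions of $\mu_{q+1}$ and $\tau_q^{-1}$) yields
\begin{equation*}
\|\tilde D_{t,\Gamma} R_{q,n}\|_N\lesssim \delta_{q+1,n}\tau_q^{-1}\lambda_q^{N-\alpha},\qquad N\in\{0,\dots,L_t\},
\end{equation*}
with the analogous high-derivative version using \eqref{NewIter_4} and \eqref{w_t_estim_3}. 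Plugging this into the second identity and the subsequent integration-by-parts identity immediately gives \eqref{bar_R_2}--\eqref{bar_R_3} and their high-derivative counterparts \eqref{bar_R_5}--\eqref{bar_R_6}, once we handle composition with $\widetilde X_{t\to t+s}$.

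That composition is the routine but technical step. Since $|s|\leq \ell_{t,q}\ll\tau_q$, Corollary \ref{Flow_gam_estim} gives $\|\nabla\widetilde X_{t\to t+s}\|_N\lesssim\lambda_q^N$ for $N\in\{0,\dots,L_t-3\}$ and $\|\nabla\widetilde X_{t\to t+s}\|_{N+L_t-3}\lesssim\lambda_q^{L_t-3}\ell_q^{-N}$ for $N\geq 0$. The standard Fa\`a di Bruno/chain-rule estimate $\|f\circ\Phi\|_N\lesssim \|f\|_N\|\nabla\Phi\|_0^N+\|f\|_1\|\nabla\Phi\|_{N-1}$ together with interpolation then shows $\|R_{q,n}\circ\widetilde X_{t\to t+s}\|_N$ obeys the same bounds as $\|R_{q,n}\|_N$ (and similarly for the material derivative), which, combined with Minkowski's inequality in $s$ against $\|\rho_{\ell_{t,q}}\|_{L^1}\lesssim 1$ or $\|\rho'_{\ell_{t,q}}\|_{L^1}\lesssim\ell_{t,q}^{-1}$, proves \eqref{bar_R_1}, \eqref{bar_R_4} and completes the argument.

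The main technical obstacle is twofold: first, justifying the commutation identity rigorously (I expect this to be a short computation once the flow-composition law is invoked, but it must be written carefully); and second, ensuring that the parameter balance $\|w_{q+1}^{(t)}\|_0\lesssim\tau_q^{-1}\lambda_q^{-1}$ really closes up with a favourable power of $\lambda_{q+1}^{-\alpha}$ so that no constants accumulate over $q$. Everything else reduces to careful bookkeeping of chain-rule losses inside the mollification, which is entirely parallel to the analogous lemma in \cite{GR23}, with the exponent $L_t-2$ here playing the role of $L-2$ there.
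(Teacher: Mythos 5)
Your plan is correct and is essentially the argument the paper defers to \cite{GR23}: commuting $\tilde D_{t,\Gamma}$ through the mollification along the flow, integrating by parts in $s$ to convert each extra material derivative into a factor $\|\rho'_{\ell_{t,q}}\|_{L^1}\lesssim \ell_{t,q}^{-1}$ acting on the already-controlled $\tilde D_{t,\Gamma}R_{q,n}$, and handling the composition with $\widetilde X$ via corollary \ref{Flow_gam_estim} and proposition \ref{comp_estim}. (The only slip is the parenthetical comparison ``$\delta_{q+1}^{1/2}\lambda_q\lambda_{q+1}^{-1/2}\lesssim \delta_q^{1/2}$'', which should be $\delta_{q+1}^{1/2}\lambda_q^{1/2}\lambda_{q+1}^{-1/2}\lesssim \delta_q^{1/2}$; the bound you actually need, $\|w_{q+1}^{(t)}\|_0\lesssim \tau_q^{-1}\lambda_q^{-1}$ up to $\lambda_{q+1}^{-\alpha}$ factors, is equivalent to the inequality $\delta_{q+1}\lambda_q^{3}\ell_q^{-\alpha}\mu_{q+1}^{-1}\leq \tau_q^{-1}$ that the paper uses elsewhere, so the substance is unaffected.)
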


\subsection{Construction of the Nash perturbation and the new solution to the SQG-Reynolds system}
\label{sec-construct-Nash}
Let $\widetilde \Phi_k$ be the backward flow of $\tilde u_{q,\Gamma}$ starting from time $t=t_k$, i.e. $\widetilde \Phi_k$ satisfies
\begin{equation}\notag
\begin{cases}
\partial_t\widetilde \Phi_k+\tilde u_{q,\Gamma}\cdot\nabla \widetilde \Phi_k=0,\\
\widetilde \Phi_k|_{t=t_k}=x.
\end{cases}
\end{equation}
Define the amplitude functions (cf. \eqref{def.a.coeff}) 
\begin{equation}\label{def-bar-a}
\bar a_{\xi, k, n}=2\lambda_{q+1}^{\frac{1}2}\delta_{q+1,n}^{\frac12}|(\nabla\widetilde \Phi_k)^{T}\xi|^{\frac{3}2}\chi_k\gamma_{\xi}\Big((\nabla\widetilde \Phi_k)^{-T}(\nabla\widetilde\Phi_k)^{-1}-(\nabla\widetilde \Phi_k)^{-T}\frac{\bar R_{q,n}}{\delta_{q+1,n}} (\nabla\widetilde \Phi_k)^{-1}\Big),
\end{equation} 
and the Nash perturbation
\begin{equation}\label{def-theta-Nash}
\theta_{q+1}^{(p)}=\sum_{n=0}^{\Gamma-1}\sum_{k\in \mathbb Z_{q,n}}\sum_{\xi\in F}g_{\xi, k, n+1}(\mu_{q+1}\cdot)P_{\approx \lambda_{q+1}}\left(\bar a_{\xi, k, n}\cos(\lambda_{q+1}\widetilde \Phi_k\cdot\xi) \right).
\end{equation}
In the expression above, $P_{\approx \lambda_{q+1}}$ is a projection on an annulus in Fourier space with radii $\approx \lambda_{q+1}$. More precisely, let $A \subset \mathbb R^2$ be an annulus centered at the origin such that all of the vectors $2\xi$ and $\xi/2$ ($\xi \in F$) are contained in $A$. Let, then, $\chi:\mathbb R^2 \rightarrow \mathbb R$ be a smooth function with support in a slightly larger annulus $A'$ which, moreover, satisfies $\chi(x) = 1$ for all $\xi \in A$. Then, for any function $f:\mathbb T^2 \rightarrow \mathbb R$, we let 
\begin{equation*}
    P_{\approx \lambda_{q+1}} := \sum_{k \in \mathbb Z^2} \chi(\lambda_{q+1}^{-1}k) \hat f(k) e^{i k \cdot x}.
\end{equation*}
We will also make use of a projection on a slightly enlarged annulus. Let $A''$ and $A'''$ be two annuli centered at the origin such that $A' \subset A'' \subset A'''$; and $\tilde \chi:\mathbb R^2 \rightarrow \mathbb R$ such that $\tilde \chi = 1$ on $A''$ and which vanishes outside $A'''$. For $f:\mathbb T^2 \rightarrow \mathbb R$, let 
\begin{equation*}
    \tilde P_{\approx \lambda_{q+1}} := \sum_{k\in \mathbb Z^2}\tilde \chi(\lambda_{q+1}^{-1}k) \hat f(k) e^{i k \cdot x}.
\end{equation*}
These definitions readily generalize to vector fields, tensor fields, etc. Moreover, we remark that 
\begin{equation*}
    \tilde P_{\approx \lambda_{q+1}} f = f,
\end{equation*}
for any $f:\mathbb T^2 \rightarrow \mathbb R$ such that $\supp \hat f \subset \lambda_{q+1} A''$. By choosing $a_0$ sufficiently large, we can ensure that the latter holds in particular for any function whose Fourier series is supported in a neighbourhood of radius $10 \ell_q^{-1}$ around $\lambda_{q+1}A'$.

In view of lemma \ref{osc_prof}, the terms in (\ref{def-theta-Nash}) have pair-wise disjoint temporal supports.  Indeed, if $j\neq k$, we have that either $|j-k| > 2$, in which case 
\begin{equation*}
    \supp_t \bar a_{\xi,k,n} \cap \supp_t \bar a_{\eta, j, m} = \emptyset,
\end{equation*}
or $j$ and $k$ have distinct parities, in which case
\[\supp_t g_{\xi,k,n+1}\cap \supp_t g_{\eta, j, m+1}=\emptyset.\]
On the other hand, if $j=k$ and 
\[\supp_t g_{\xi,k,n+1}\cap \supp_t g_{\eta, j, m+1}\neq\emptyset,\]
it follows from lemma \ref{osc_prof} that $(\xi, n)=(\eta, m)$.

We define $\theta_{q+1}$ and $u_{q+1}$:
\begin{equation}\label{theta-q1}
\begin{split}
\theta_{q+1}& =\theta_q+\theta_{q+1}^{(t)} +\theta_{q+1}^{(p)},\\
u_{q+1}&=u_q+w_{q+1}^{(t)} +w_{q+1}^{(p)},
\end{split} 
\end{equation}
with 
\[w_{q+1}^{(t)}=\nabla^{\perp}\Lambda^{-1}\theta_{q+1}^{(t)}, \ \ w_{q+1}^{(p)}=\nabla^{\perp}\Lambda^{-1}\theta_{q+1}^{(p)}.\]
The SQG-Reynolds system at the $(q+1)$-th stage is, thus, 
\begin{equation}\notag
\partial_t\theta_{q+1}+u_{q+1}\cdot\nabla \theta_{q+1}=\nabla^{\perp}\cdot\div R_{q+1},
\end{equation}
with the new stress $R_{q+1}$ decomposed as
\begin{equation}\label{R-q1-decom}
R_{q+1}=R_{q+1,L}+R_{q+1,O}+R_{q+1,R},
\end{equation}
where the linear error $R_{q+1,L}$ and residual error $R_{q+1,R}$ are respectively given by
\begin{equation}\notag
\begin{split}
R_{q+1, L}&:=\div^{-1}\nabla^{\perp}\Delta^{-1}\left( \tilde D_{t,\Gamma} \theta_{q+1}^{(p)}+T[\theta_{q+1}^{(p)}]\cdot \nabla \tilde \theta_{q,\Gamma}\right),\\
R_{q+1,R}&:=R_{q,\Gamma}+P_{q+1,\Gamma}+\div^{-1}\nabla^{\perp}\cdot\Delta^{-1} \left(T[\theta_{q+1}^{(p)}]\cdot\nabla(\theta_{q,\Ga}-\tilde\theta_{q,\Ga})+\left(T[\theta_{q,\Ga}]-T[\tilde\theta_{q,\Ga}]\right)\cdot\nabla\theta_{q+1}^{(p)}\right).
\end{split}
\end{equation}
The oscillation error $R_{q+1, O}$ will be precisely defined in subsection~\ref{sec.est.osc} and satisfies
\begin{equation*}
    \na^\perp \cdot \div R_{q+1, O}=\nabla^{\perp}\cdot \div S_{q,\Gamma}+T[\theta_{q+1}^{(p)}]\cdot\nabla\theta_{q+1}^{(p)}.
\end{equation*}

Recall that the temporal supports of $\theta_{q+1}^{(t)}$, $u_{q+1}^{(t)}$, $R_{q,\Gamma}$ and $S_{q,\Gamma}$ are contained in 
\[[-2+(\delta_q^{\frac12}\lambda_q^{\frac{3}2})^{-1}-2\Gamma\tau_q, -1-(\delta_q^{\frac12}\lambda_q^{\frac{3}2})^{-1}+2\Gamma\tau_q] \cup [1+(\delta_q^{\frac12}\lambda_q^{\frac{3}2})^{-1}-2\Gamma\tau_q, 2-(\delta_q^{\frac12}\lambda_q^{\frac{3}2})^{-1}+2\Gamma\tau_q]. \]
In view of the definition (\ref{def-theta-Nash}), $\theta_{q+1}^{(p)}$ and $w_{q+1}^{(p)}$ also have temporal supports contained in the set above. Hence, it holds for $\theta_{q+1}$, $u_{q+1}$ and $R_{q+1}$. Moreover, for any $\alpha>0$, there exists a large enough constant $a_0>0$ depending on $\alpha$, $b$ and $\beta$ such that 
\[(\delta_{q+1}^{\frac12}\lambda_{q+1}^{\frac{3}2})^{-1}+2\Gamma (\delta_q^{\frac12}\lambda_q^{\frac{3}2})^{-1}\lambda_{q+1}^{-\alpha}<(\delta_q^{\frac12}\lambda_q^{\frac{3}2})^{-1}.\]
Therefore it follows that (\ref{induct-sup}) is valid with $q$ replaced by $q+1$.

\subsection{Estimates of the Nash perturbation}
\label{sec-error-Nash}
We first collect estimates the amplitude functions of the Nash perturbation.

\begin{lem}\label{le-amp-reg}
For $\bar a_{\xi, k, n}$ defined in (\ref{def-bar-a}) we have the estimates
\begin{equation} \label{bar_a_estim_1}
    \|\bar a_{\xi, k, n}\|_N \lesssim \delta_{q+1, n}^{\frac12} \la_{q+1}^{\frac{1}2} \lambda_q^N, \,\,\, \forall N \in \{0,1,..., L_t-3\}
\end{equation}
\begin{equation} \label{bar_a_estim_2}
     \|\tilde D_{t,\Gamma} \bar a_{\xi, k, n}\|_N  \lesssim \delta_{q+1, n}^{\frac12} \la_{q+1}^{\frac{1}2} \tau_q^{-1} \lambda_q^N, \,\,\, \forall N \in \{0,1,..., L_t-3\}
\end{equation}
\begin{equation} \label{bar_a_estim_3}
    \|\bar a_{\xi, k, n}\|_{N+L_t-3} \lesssim \delta_{q+1, n}^{\frac12} \la_{q+1}^{\frac{1}2} \lambda_q^{L_t-3} \ell_q^{-N}, \,\,\, \forall N \geq 0
\end{equation}
\begin{equation} \label{bar_a_estim_4}
    \|\tilde D_{t,\Gamma} \bar a_{\xi, k, n}\|_{N+L_t-3} \lesssim \delta_{q+1, n}^{\frac12} \la_{q+1}^{\frac{1}2} \lambda_q^{L_t-3}  \tau_q^{-1} \ell_q^{-N}, \,\,\, \forall N \geq 0,
\end{equation}
where the implicit constants depend on $\Gamma$, $M$, $\alpha$, and $N$.
\end{lem}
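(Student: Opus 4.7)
The plan is to mimic the proof of Lemma \ref{a_estim}, replacing $\Phi_k$ with $\widetilde\Phi_k$ and $R_{q,n}$ with $\bar R_{q,n}$. The required inputs are Corollary \ref{Flow_gam_estim} (flow estimates for $\widetilde\Phi_k$, available up to order $L_t-3$) and Lemma \ref{le-bar-R} (regularized stress estimates, available up to order $L_t-2$). The whole argument is a careful application of the Leibniz rule combined with Fa\`a di Bruno for the composition with $\gamma_\xi$.

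First I would verify that the argument
\[
M_{k,n} := (\nabla\widetilde\Phi_k)^{-T}(\nabla\widetilde\Phi_k)^{-1} - (\nabla\widetilde\Phi_k)^{-T}\tfrac{\bar R_{q,n}}{\delta_{q+1,n}}(\nabla\widetilde\Phi_k)^{-1}
\]
of $\gamma_\xi$ takes values in the neighborhood $B_{1/2}(\I)$ on which $\gamma_\xi$ is smooth. By Remark \ref{remark_flow_bd}, the first piece is $O(\lambda_{q+1}^{-\alpha})$-close to $\I$ in $C^0$, and by \eqref{bar_R_1} $\|\bar R_{q,n}/\delta_{q+1,n}\|_0\lesssim \lambda_q^{-\alpha}$; so choosing $a_0$ large enough places $M_{k,n}$ well inside $B_{1/2}(\I)$. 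Moreover, $|(\nabla\widetilde\Phi_k)^T\xi|$ stays bounded away from zero on $\supp\chi_k$, so the factor $|(\nabla\widetilde\Phi_k)^T\xi|^{3/2}$ is also smoothly defined on the relevant set.

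For the $C^N$ estimates \eqref{bar_a_estim_1} and \eqref{bar_a_estim_3}, I would expand $\bar a_{\xi,k,n}$ via the Leibniz rule and apply Fa\`a di Bruno to the composition $\gamma_\xi(M_{k,n})$. Each factor is then controlled by its $C^N$ norm: $\|\nabla\widetilde\Phi_k\|_N + \|(\nabla\widetilde\Phi_k)^{-1}\|_N \lesssim \lambda_q^N$ from \eqref{Flow_gam_estim_1}, and $\|\bar R_{q,n}/\delta_{q+1,n}\|_N \lesssim \lambda_q^N$ from \eqref{bar_R_1}. Multiplying by the overall prefactor $\delta_{q+1,n}^{1/2}\lambda_{q+1}^{1/2}$ and $\chi_k$ gives the claimed bound, and the range $N\leq L_t-3$ is imposed by the flow estimate. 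The high-order version \eqref{bar_a_estim_3} is identical using \eqref{Flow_gam_estim_4} and \eqref{bar_R_4}, with $\ell_q^{-N}$ losses arising above $L_t-3$.

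For the material derivative bounds \eqref{bar_a_estim_2} and \eqref{bar_a_estim_4}, I apply $\tilde D_{t,\Gamma}$ and distribute it via Leibniz. It either hits $\chi_k$ (contributing $\tau_q^{-1}$), or $\nabla\widetilde\Phi_k$/its inverse (contributing $\delta_q^{1/2}\lambda_q^{3/2}\lesssim \tau_q^{-1}\lambda_{q+1}^{-\alpha}$ via \eqref{Flow_gam_estim_2}), or the $\bar R_{q,n}$ factor inside $\gamma_\xi$ (contributing $\tau_q^{-1}$ via \eqref{bar_R_2}, after another Fa\`a di Bruno expansion). All contributions are therefore $\lesssim \tau_q^{-1}$ times the $C^N$ bound, yielding the claim. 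The main obstacle is purely bookkeeping of derivative counts: the flow estimates cap out at $L_t-3$ whereas the stress estimates reach $L_t-2$, so the statement is limited by the former. No genuinely new ideas beyond those already used for Lemma \ref{a_estim} in \cite{GR23} are required.
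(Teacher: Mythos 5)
Your proposal is correct and follows exactly the route the paper intends: the paper gives no separate proof of this lemma but relies on repeating the argument for lemma \ref{a_estim} (itself deferred to lemma 3.5 and corollary 3.6 of \cite{GR23}) with $\Phi_k$, $R_{q,n}$ replaced by $\widetilde\Phi_k$, $\bar R_{q,n}$ and the inputs replaced by corollary \ref{Flow_gam_estim} and lemma \ref{le-bar-R}. Your bookkeeping of the derivative cap at $L_t-3$ (imposed by the flow estimates) and of the material derivative contributions, including $\delta_q^{1/2}\lambda_q^{3/2}\lesssim\tau_q^{-1}$, matches the paper's conventions.
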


\begin{lem}\label{le-est-Nash}
There exists a constant $M_0>0$ depending only on $\beta$ and $L_\theta$ such that
\begin{equation}
    \|\Lambda^{-1} \theta_{q+1}^{(p)}\|_0 \leq \frac{1}{2} M_0 \delta_{q+1}^{\frac{1}{2}} \lambda_{q+1}^{-\frac{1}{2}},
\end{equation}
\begin{equation}\label{est-u-q-p}
\|\theta_{q+1}^{(p)}\|_{N}+ \|w_{q+1}^{(p)}\|_{N}\leq \frac12M_0\delta_{q+1}^{\frac12}\lambda_{q+1}^{\frac{1}2+N}, \ \ \forall \ N\in\{0, 1, ..., L_\theta\},
\end{equation}
and, thus, 
\begin{equation}
    \| \Lambda^{-1} (\theta_{q+1}^{(p)} + \theta_{q+1}^{(t)})\|_0 \leq M_0 \delta_{q+1}^{\frac{1}{2}} \lambda_{q+1}^{-\frac{1}{2}},
\end{equation}
\begin{equation}\label{est-u-q-total}
\|\theta_{q+1}^{(t)}+\theta_{q+1}^{(p)}\|_{N}+\|w_{q+1}^{(t)}+w_{q+1}^{(p)}\|_{N}\leq M_0\delta_{q+1}^{\frac12}\lambda_{q+1}^{\frac{1}2+N}, \ \ \forall \ N\in\{0, 1, ..., L_\theta\}.
\end{equation}
\end{lem}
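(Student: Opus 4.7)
My plan is to bound each summand in the definition~\eqref{def-theta-Nash} of $\theta_{q+1}^{(p)}$ individually, then use the pairwise disjointness of the temporal supports of the summands (already noted in Section~\ref{sec-construct-Nash}, as a consequence of Lemma~\ref{osc_prof} together with the temporal separation of the $\chi_k$) to reduce the estimate on $\theta_{q+1}^{(p)}$ to a supremum over $(\xi, k, n)$. Explicitly,
\begin{equation*}
\|\theta_{q+1}^{(p)}\|_N \lesssim \sup_{\xi, k, n}\Big\|P_{\approx\lambda_{q+1}}\Big(\bar a_{\xi, k, n}\cos(\lambda_{q+1}\widetilde\Phi_k\cdot\xi)\Big)\Big\|_N,
\end{equation*}
with implicit constant depending only on $|F|$ and $\Ga$.

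For each individual summand I plan to invoke the boundedness of $P_{\approx\lambda_{q+1}}$ on $C^N$, then expand the product $\bar a_{\xi,k,n}\cos(\lambda_{q+1}\widetilde\Phi_k\cdot\xi)$ via the Leibniz rule, estimating derivatives of the oscillatory factor by Fa\`a di Bruno combined with the flow estimates of Corollary~\ref{Flow_gam_estim} (using also Remark~\ref{remark_flow_bd}, which guarantees $\|\nabla\widetilde\Phi_k\|_0 \lesssim 1$ uniformly in the parameters). The amplitude estimates from Lemma~\ref{le-amp-reg} then give, for $j$ derivatives falling on the amplitude and $N-j$ on the phase, a contribution bounded by $\de_{q+1,n}^{1/2}\la_{q+1}^{1/2}\la_q^j\la_{q+1}^{N-j}$ when $j\leq L_t-3$, with $\ell_q^{-1}$ replacing $\la_q$ in the amplitude bound for $j > L_t-3$ and an analogous switch on the phase side. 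Since $\de_{q+1,n}\leq\de_{q+1}$ and $\la_q \leq \ell_q^{-1} = (\la_q\la_{q+1})^{1/2} \leq \la_{q+1}$, every such contribution is dominated by $\de_{q+1}^{1/2}\la_{q+1}^{1/2+N}$, and the estimate on $\theta_{q+1}^{(p)}$ follows for all $N\leq L_\theta$. The bound on $\|w_{q+1}^{(p)}\|_N$ then follows because $\na^\perp\Lambda^{-1}$ is a Fourier multiplier of order zero and $\theta_{q+1}^{(p)}$ is localized at frequency $\approx\la_{q+1}$ (via the projection $P_{\approx\lambda_{q+1}}$). The same frequency localization yields $\|\La^{-1}\theta_{q+1}^{(p)}\|_0 \lesssim \la_{q+1}^{-1}\|\theta_{q+1}^{(p)}\|_0 \lesssim \de_{q+1}^{1/2}\la_{q+1}^{-1/2}$.

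With the Nash-only bounds in hand, the combined estimates will follow by the triangle inequality and the Newton bounds of Lemma~\ref{w_t_estim}. A direct calculation, using $\mu_{q+1} = \de_{q+1}^{1/2}\la_q\la_{q+1}^{1/2}\la_{q+1}^{4\al}$ and $\ell_q^{-1} = (\la_q\la_{q+1})^{1/2}$, shows
\begin{equation*}
\frac{\|\theta_{q+1}^{(t)}\|_N}{\de_{q+1}^{1/2}\la_{q+1}^{1/2+N}} \lesssim \bigl(\la_q/\la_{q+1}\bigr)^{N+1}\la_{q+1}^{-4\al}\ell_q^{-\al},
\end{equation*}
which is arbitrarily small upon choosing $a_0$ large in terms of $\al$, $\be$, $b$; the corresponding ratio for $\La^{-1}\theta_{q+1}^{(t)}$ is similarly small by~\eqref{w_t_estim_0}. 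The constant $M_0$, depending on $\be$ (through $\Ga$) and $L_\theta$ (through the range of $N$), will be chosen large enough to absorb the implicit constants in the Nash estimate for the half-$M_0$ bound on the pure Nash perturbation, leaving the Newton contribution to fit comfortably in the remaining slack for the combined estimate.

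The main technical point will be handling the high-derivative regime $L_t-3 < N \leq L_\theta$, where the amplitude and flow estimates degrade from $\la_q$ to $\ell_q^{-1}$ per extra derivative. The whole construction is tuned precisely so that $\ell_q^{-1} \leq \la_{q+1}$, which lets the $\ell_q^{-1}$ losses be absorbed into the $\la_{q+1}^{N}$ scaling; the remaining bookkeeping in the Leibniz and Fa\`a di Bruno expansions is routine.
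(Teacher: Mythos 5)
Your overall architecture matches the paper's: reduce to a supremum over $(\xi,k,n)$ via the disjoint temporal supports, use the frequency localization at $\approx\la_{q+1}$ to handle $w_{q+1}^{(p)}=\na^\perp\La^{-1}\theta_{q+1}^{(p)}$ and $\La^{-1}\theta_{q+1}^{(p)}$, and then get the combined bounds from the triangle inequality together with lemma~\ref{w_t_estim} and the definition of $\mu_{q+1}$ (your ratio computation for the Newton part is correct in the low-$N$ regime and, as you note, survives the switch to $\ell_q^{-1}$ losses for $N>L_t-2$). Where you diverge is in how the $C^N$ bounds on $\theta_{q+1}^{(p)}$ itself are produced: the paper proves only the $C^0$ bound directly (whose constant involves nothing beyond $|F|$, the universal functions $\gamma_\xi$, the profiles $g_{\xi,k,n}$, and the parameter-free bound $\|\na\widetilde\Phi_k\|_0\lesssim 1$ of remark~\ref{remark_flow_bd}) and then obtains every $C^N$ bound by Bernstein from the localization of $P_{\approx\la_{q+1}}$, whereas you differentiate the product via Leibniz and Fa\`a di Bruno using lemma~\ref{le-amp-reg} and corollary~\ref{Flow_gam_estim}.

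This difference creates a genuine gap in your argument: the implicit constants in lemma~\ref{le-amp-reg} and corollary~\ref{Flow_gam_estim} for $N\geq 1$ depend on $M$ (and $\al$), so the constant you propose to "absorb into $M_0$" depends on $M$. But the lemma asserts $M_0=M_0(\be,L_\theta)$, and proposition~\ref{prop.main} fixes $M_0$ \emph{before} requiring $M>M_0$; letting $M_0$ depend on $M$ makes the induction circular. The estimate itself is true by your method, but to recover the correct parameter dependence you must split the Leibniz/Fa\`a di Bruno expansion: the single leading term, in which every derivative produces a factor $\la_{q+1}\na\widetilde\Phi_k^T\xi$ and none falls on $\bar a_{\xi,k,n}$ or on $\na^2\widetilde\Phi_k$, has a constant controlled only by $\|\na\widetilde\Phi_k\|_0\lesssim 1$, $\|\gamma_\xi\|_{C^0}$ and $N\leq L_\theta$, hence is admissible for $M_0$; every other term carries at least one factor $\la_q/\la_{q+1}$ (from $\|\bar a_{\xi,k,n}\|_j\lesssim_M \de_{q+1,n}^{1/2}\la_{q+1}^{1/2}\la_q^{j}$ with $j\geq1$, or from $\|\na\widetilde\Phi_k\|_{j}\lesssim_M\la_q^{j}$ with $j\geq1$), so its $M$- and $\al$-dependent constant is killed by choosing $a_0$ large, exactly as you already do for the Newton contribution — these subordinate terms must be absorbed into the slack via $a_0$, not into $M_0$. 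Alternatively, adopting the paper's Bernstein route from the $C^0$ bound avoids the bookkeeping entirely.
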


\begin{proof}

We argue first for $\theta_{q+1}^{(p)}$. In view of the definition (\ref{def-theta-Nash}), we deduce
\begin{equation*}
    \|\theta_{q+1}^{(p)}\|_0 \lesssim \sup_{\xi, k, n} |g_{\xi, k, n+1}| \|\bar a_{\xi, k, n}\|_0 \lesssim \delta_{q+1}^{1/2} \lambda_{q+1}^{1/2},
\end{equation*}
where the implicit constant depends only on the functions $\gamma_\xi$, and the temporal profiles $g_{\xi, k, n}$. Since $\theta_{q+1}^{(p)}$ is localized at frequencies $\approx \lambda_{q+1}$, there exists a constant, which depends only on $\beta$ (through its dependence on the profiles $g_{\xi, k, n}$) and the number $L_\theta$ of controlled derivatives, such that
\begin{equation*}
    \|\Lambda^{-1} \theta_{q+1}^{(p)}\|_0 \leq C \delta_{q+1}^{1/2} \lambda_{q+1}^{-1/2},
\end{equation*}
and, for $0 \leq N \leq L_\theta $,
\begin{equation*}
    \|\theta_{q+1}^{(p)}\|_N + \|w_{q+1}^{(p)}\|_N \leq C \delta_{q+1}^{1/2} \lambda_{q+1}^{1/2 + N}.
\end{equation*}
We can define, then, $M_0 = 2 C$.

Regarding the Newton perturbation, it follows from lemma \ref{w_t_estim} and the definition of $\mu_{q+1}$ that
\begin{equation*}
    \|\Lambda^{-1} \theta_{q+1}^{(t)}\|_0 \lesssim \delta_{q+1}^{1/2} \lambda_{q+1}^{-1/2} \lambda_{q+1}^{-3\alpha} \leq \frac12M_0\delta_{q+1}^{\frac12}\lambda_{q+1}^{-\frac{1}2},
\end{equation*}
and
\begin{equation}\notag
\|\theta_{q+1}^{(t)}\|_{N} + \|w_{q+1}^{(t)}\|_N \lesssim \frac{\delta_{q+1} \lambda_q^{2} \ell_q^{-N-\alpha}}{\mu_{q+1}}\lesssim \delta_{q+1}^{\frac12}\lambda_{q+1}^{\frac{1}2}\left(\frac{\lambda_q}{\lambda_{q+1}}\right)(\lambda_q\lambda_{q+1})^{\frac12N}
\leq \frac12M_0\delta_{q+1}^{\frac12}\lambda_{q+1}^{\frac{1}2}\lambda_{q+1}^N
\end{equation}
for a sufficiently large choice of $a_0$.  
\end{proof}

\begin{lem}\label{le-est-u-0}
We have the estimates
\begin{equation}\label{est-u-N}
\|\theta_{q+1}\|_N+\|u_{q+1}\|_N\leq M\delta_{q+1}^{\frac12}\lambda_{q+1}^{\frac{1}2+N}, \ \ \forall \ N\in\{0, 1, ..., L_\theta\}
\end{equation}
\begin{equation}\label{est-u-incre}
\|\theta_{q+1}-\theta_q\|_0 + \lambda_{q+1} \|\Lambda^{-1}(\theta_{q+1} - \theta_q)\|_0
\leq 2M\delta_{q+1}^{\frac12}\lambda_{q+1}^{\frac12}.
\end{equation}
\end{lem}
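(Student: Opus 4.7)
The plan is to derive both estimates directly from the triangle inequality by combining the inductive bounds on the previous iterate $(\theta_q, u_q)$ with the new bounds for $\theta_{q+1}^{(p)} + \theta_{q+1}^{(t)}$ and $w_{q+1}^{(p)} + w_{q+1}^{(t)}$ that were just established in lemma \ref{le-est-Nash}. The decomposition \eqref{theta-q1} makes $\theta_{q+1} - \theta_q = \theta_{q+1}^{(t)} + \theta_{q+1}^{(p)}$ and $u_{q+1} - u_q = w_{q+1}^{(t)} + w_{q+1}^{(p)}$, so no structural computation is needed.

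For the increment bound \eqref{est-u-incre} I would just read off both terms from \eqref{est-u-q-total} at $N=0$ and from the stated zero-order $\Lambda^{-1}$ estimate in lemma \ref{le-est-Nash}, which gives $\|\theta_{q+1}-\theta_q\|_0 \leq M_0 \delta_{q+1}^{1/2}\lambda_{q+1}^{1/2}$ and $\lambda_{q+1}\|\Lambda^{-1}(\theta_{q+1}-\theta_q)\|_0 \leq M_0 \delta_{q+1}^{1/2}\lambda_{q+1}^{1/2}$. Summing and using $M \geq M_0$ closes this estimate with constant $2M$, as claimed.

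For the propagated bound \eqref{est-u-N}, the triangle inequality together with the inductive hypothesis \eqref{induct-u} and \eqref{est-u-total} gives
\begin{equation*}
\|\theta_{q+1}\|_N + \|u_{q+1}\|_N \leq M\delta_q^{1/2}\lambda_q^{N+1/2} + M_0 \delta_{q+1}^{1/2}\lambda_{q+1}^{N+1/2}.
\end{equation*}
The target is $M\delta_{q+1}^{1/2}\lambda_{q+1}^{N+1/2}$, so it suffices to show that $M \delta_q^{1/2}\lambda_q^{N+1/2} \leq (M-M_0)\delta_{q+1}^{1/2}\lambda_{q+1}^{N+1/2}$. The key observation is the frequency/amplitude gain
\begin{equation*}
\frac{\delta_q^{1/2}\lambda_q^{N+1/2}}{\delta_{q+1}^{1/2}\lambda_{q+1}^{N+1/2}} = \left(\frac{\lambda_q}{\lambda_{q+1}}\right)^{N+\frac12-\beta},
\end{equation*}
whose exponent is positive because $\beta < 1/2$ and $N \geq 0$. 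Since $\lambda_{q+1}/\lambda_q \geq a^{b^q(b-1)} \to \infty$ as $a_0$ is chosen sufficiently large (and $M > M_0$), this ratio can be made smaller than $(M-M_0)/M$ uniformly in $q$, absorbing the inductive contribution.

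There is no genuine obstacle here; the work was already done in lemmas \ref{w_t_estim} and \ref{le-est-Nash}. The only point requiring care is the choice of $M_0$ and $a_0$: $M_0$ is fixed in lemma \ref{le-est-Nash} depending on $\beta$ and $L_\theta$ (coming from $P_{\approx \lambda_{q+1}}$, $\gamma_\xi$ and the temporal profiles), so the requirement $M > M_0$ from proposition \ref{prop.main} suffices, and the gap between $M$ and $M_0$ is the budget used to absorb $\delta_q^{1/2}\lambda_q^{N+1/2}$ into $\delta_{q+1}^{1/2}\lambda_{q+1}^{N+1/2}$ for every $N \in \{0,\dots,L_\theta\}$ after taking $a_0$ large.
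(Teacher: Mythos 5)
Your proof is correct and follows essentially the same route as the paper: the increment bound is read off from lemma~\ref{le-est-Nash} at $N=0$, and the propagated bound comes from the triangle inequality plus absorbing $M\delta_q^{1/2}\lambda_q^{N+1/2}$ into $(M-M_0)\delta_{q+1}^{1/2}\lambda_{q+1}^{N+1/2}$ via the gain $(\lambda_q/\lambda_{q+1})^{N+1/2-\beta}$ with $\beta<1/2$, for $a_0$ large. The only blemish is the stray reference \eqref{est-u-total}, which should read \eqref{est-u-q-total}.
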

\begin{proof}
Recall $\theta_{q+1}=\theta_q+\theta_{q+1}^{(t)}+\theta_{q+1}^{(p)}$ and $u_{q+1}=u_q+w_{q+1}^{(t)}+w_{q+1}^{(p)}$ and, hence, (\ref{est-u-incre}) follows from (\ref{est-u-q-total}). Moreover we deduce from (\ref{est-u-q-total})
\begin{eqnarray*}
\|\theta_{q+1}\|_N + \|u_{q+1}\|_N &\leq& \|\theta_{q}\|_N + \|u_q\|_N +\|\theta_{q+1}^{(t)}+\theta_{q+1}^{(p)}\|_{N} + \|w_{q+1}^{(t)} + w_{q+1}^{(p)}\|_N \\ 
&\leq& M\delta_q^{\frac12}\lambda_q^{\frac{1}2+N}+M_0\delta_{q+1}^{\frac12}\lambda_{q+1}^{\frac{1}2+N} \\ 
&\leq& M\delta_{q+1}^{\frac12}\lambda_{q+1}^{\frac{1}2+N},
\end{eqnarray*}
provided $a_0$ is chosen sufficiently large.
\end{proof}

The previous two lemmas fix the constant $M_0$ in the statement of the main proposition, and shows the propagation of estimates \eqref{induct-u}, as well as the validity of \eqref{eq.prop.main}.

\subsection{Estimates for the new stress \texorpdfstring{$R_{q+1}$}{rqpo}}
\subsubsection{Estimates for the linear error \texorpdfstring{$R_{q+1, L}$}{rqpol}}
Recall 
\[
\begin{split}
R_{q+1, L}
&=\underbrace{\div^{-1}\nabla^{\perp}\Delta^{-1} \tilde D_{t,\Gamma} \theta_{q+1}^{(p)}}_{\text{transport error}}+\underbrace{\div^{-1}\nabla^{\perp}\Delta^{-1} \left(T[\theta_{q+1}^{(p)}]\cdot \nabla \tilde\theta_{q,\Gamma}\right)}_{\text{Nash error}},
\end{split}
\]
We begin by presenting estimates on material derivatives. 

\begin{lem}\label{le-mat-d}
We have the estimates on $\tilde \theta_{q,\Gamma}$, $\tilde u_{q,\Gamma}$ and $\bar a_{\xi, k, n}$,
\begin{equation}\label{est-mat-bar-th1}
\|\tilde D_{t,\Gamma}\nabla \tilde \theta_{q,\Gamma}\|_N\lesssim \delta_q\lambda_q^{N+3}, \ \ \ \forall \ N\in \{0, 1, ..., L_t-4\},
\end{equation}
\begin{equation}\label{est-mat-bar-th2}
\|\tilde D_{t,\Gamma}\nabla \tilde \theta_{q,\Gamma}\|_{N+L_t-4}\lesssim \delta_q\lambda_q^{L_t-1}\ell_q^{-N}, \ \ \ \forall \ N\geq 0,
\end{equation}
%\begin{equation}\label{est-mat-bar-u1}
%\|\tilde D_{t,\Gamma}\nabla \tilde u_{q,\Gamma}\|_N\lesssim \delta_q\lambda_q^{N+3} \ell_q^{-\alpha}, \ \ \ \forall \ N\in \{0, 1, ..., L_t-3\},
%\end{equation}
%\begin{equation}\label{est-mat-bar-u2}
%\|\tilde D_{t,\Gamma}\nabla \tilde u_{q,\Gamma}\|_{N+L_t-3}\lesssim \delta_q\lambda_q^{L_t}\ell_q^{-N-\alpha}, \ \ \ \forall \ N\geq 0,
%\end{equation}
\begin{equation}\label{est-mat-bar-a1}
\|\tilde D^2_{t,\Gamma} \bar a_{\xi, k, n}\|_N\lesssim \delta_{q+1,n}^{\frac12}\lambda_{q+1}^{\frac{1}2}\tau_q^{-1}\ell_{t,q}^{-1}\lambda_q^N, \ \ \ \forall \ N\in \{0, 1, ..., L_t-4\},
\end{equation}
\begin{equation}\label{est-mat-bar-a2}
\|\tilde D^2_{t,\Gamma} \bar a_{\xi, k, n}\|_{N+L_t-4}\lesssim \delta_{q+1,n}^{\frac12}\lambda_{q+1}^{\frac{1}2}\lambda_q^{L_t-4}\tau_q^{-1}\ell_{t,q}^{-1}\ell_q^{-N}, \ \ \ \forall \ N\geq 0
\end{equation}
with implicit constants depending on $\Gamma, M, \alpha$ and $N$.
\end{lem}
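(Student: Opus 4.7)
The plan is to treat the two pairs of estimates separately, using the commutator identity that converts material derivatives of gradients into gradients of material derivatives plus lower-order flow terms, and then exploit the mollification estimates already at our disposal.

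For \eqref{est-mat-bar-th1}--\eqref{est-mat-bar-th2}, I will start from the identity
\[
\tilde D_{t,\Gamma}\nabla\tilde\theta_{q,\Gamma}=\nabla\bigl(\tilde D_{t,\Gamma}\tilde\theta_{q,\Gamma}\bigr)-(\nabla\tilde u_{q,\Gamma})^T\nabla\tilde\theta_{q,\Gamma},
\]
and argue that the second, ``flow-commutator,'' term gives the dominant scaling $\delta_q\lambda_q^3$: by Corollary~\ref{Gamma_velo_estim} both $\|\nabla\tilde u_{q,\Gamma}\|_N$ and $\|\nabla\tilde\theta_{q,\Gamma}\|_N$ are controlled by $\delta_q^{1/2}\lambda_q^{3/2+N}$ (with the expected $\ell_q^{-N}$ loss above $L_t-3$), and the product of these bounds is exactly what is claimed. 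The remaining term $\nabla\tilde D_{t,\Gamma}\tilde\theta_{q,\Gamma}$ is estimated by writing $\tilde\theta_{q,\Gamma}=P_{\lesssim\ell_q^{-1}}\theta_{q,\Gamma}$ and exploiting the equation \eqref{steps} satisfied by $\theta_{q,\Gamma}$ at stage $n=\Gamma$: after mollification one gets
\[
\tilde D_{t,\Gamma}\tilde\theta_{q,\Gamma}=\bigl[\tilde u_{q,\Gamma}\!\cdot\!\nabla,\,P_{\lesssim\ell_q^{-1}}\bigr]\theta_{q,\Gamma}+P_{\lesssim\ell_q^{-1}}\!\bigl(\nabla^{\perp}\!\cdot\!\div(R_{q,\Gamma}+S_{q,\Gamma}+P_{q+1,\Gamma})\bigr),
\]
and the Constantin-E-Titi commutator estimate (as used already in Lemma~\ref{smoli_estim}) together with the $\delta_{q+1}$-size bounds on $R_{q,\Gamma}$, $S_{q,\Gamma}$, $P_{q+1,\Gamma}$ shows this contribution is smaller than $\delta_q\lambda_q^{3+N}$ by a power of $\lambda_q/\lambda_{q+1}$. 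The higher-derivative version \eqref{est-mat-bar-th2} is then obtained by the same decomposition, paired with the $\ell_q^{-N}$ derivative-loss from the mollification.

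For \eqref{est-mat-bar-a1}--\eqref{est-mat-bar-a2}, I will expand $\tilde D_{t,\Gamma}^2$ directly through the definition \eqref{def-bar-a} of $\bar a_{\xi,k,n}$ and track where two material derivatives can fall: on $\chi_k$ (producing $\tau_q^{-2}$), on $\nabla\tilde\Phi_k$ (via the identity $\tilde D_{t,\Gamma}\nabla\tilde\Phi_k=-(\nabla\tilde u_{q,\Gamma})^T\nabla\tilde\Phi_k$ iterated once to obtain $\tilde D_{t,\Gamma}^2\nabla\tilde\Phi_k$), on $\bar R_{q,n}/\delta_{q+1,n}$ through the smooth function $\gamma_\xi$, or on mixed pairings. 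Lemma~\ref{le-bar-R} furnishes the key bound $\|\tilde D_{t,\Gamma}^2\bar R_{q,n}\|_N\lesssim\delta_{q+1,n}\tau_q^{-1}\ell_{t,q}^{-1}\lambda_q^{N-\alpha}$, and this is what produces the dominant scale $\tau_q^{-1}\ell_{t,q}^{-1}$; the other contributions involve either $\tau_q^{-2}$ (from $\partial_t^2\chi_k$) or a factor of $\delta_q\lambda_q^3$ (from $\tilde D_{t,\Gamma}\nabla\tilde u_{q,\Gamma}$ used inside $\tilde D_{t,\Gamma}^2\nabla\tilde\Phi_k$, which can be estimated exactly as in the first part of the proof), and the parameter inequality $\tau_q^{-1}\ll\ell_{t,q}^{-1}$ along with $\delta_q\lambda_q^3\ll\tau_q^{-1}\ell_{t,q}^{-1}$ shows these are absorbed. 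Combining via Leibniz, and using the standard $C^k$ interpolation together with Corollary~\ref{Flow_gam_estim} for $\nabla\tilde\Phi_k$, yields \eqref{est-mat-bar-a1}; passing above $L_t-4$ then costs the usual $\ell_q^{-N}$ factor and gives \eqref{est-mat-bar-a2}.

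The main obstacle is purely bookkeeping: correctly accounting for which factor in $\bar a_{\xi,k,n}$ absorbs the two material derivatives, and verifying case-by-case that the $\tau_q^{-1}\ell_{t,q}^{-1}$ scale indeed dominates the alternatives. There is no genuinely new analytic input; the two-parameter comparison $\delta_q\lambda_q^3,\,\tau_q^{-2}\ll\tau_q^{-1}\ell_{t,q}^{-1}$, guaranteed by the definitions $\tau_q=\delta_q^{-1/2}\lambda_q^{-3/2}\lambda_{q+1}^{-\alpha}$ and $\ell_{t,q}=\delta_q^{-1/2}\lambda_q^{-1/2}\lambda_{q+1}^{-1}$ for small $\alpha>0$, does all the work.
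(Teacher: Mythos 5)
Your plan is essentially sound, and for the amplitude estimates \eqref{est-mat-bar-a1}--\eqref{est-mat-bar-a2} it coincides with what the paper does (which is to defer to the Leibniz/chain-rule expansion of lemma 4.7 in \cite{GR23}, with $\tilde D_{t,\Gamma}^2\bar R_{q,n}$ from lemma~\ref{le-bar-R} supplying the dominant scale $\tau_q^{-1}\ell_{t,q}^{-1}$ and the checks $\tau_q^{-2}\leq\tau_q^{-1}\ell_{t,q}^{-1}$, $\delta_q\lambda_q^3\leq\tau_q^{-1}\ell_{t,q}^{-1}$ absorbing the other pairings exactly as you describe).

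For \eqref{est-mat-bar-th1}--\eqref{est-mat-bar-th2} your decomposition differs from the paper's in one respect that deserves a warning. The paper writes $\tilde D_{t,\Gamma}\nabla\tilde\theta_{q,\Gamma}=\tilde u_{q,\Gamma}\cdot\nabla\nabla\tilde\theta_{q,\Gamma}+\nabla P_{\lesssim\ell_q^{-1}}(\partial_t\theta_q+\partial_t\theta_{q+1}^{(t)})$ and then substitutes the \emph{original} system for $\partial_t\theta_q$ (producing $R_q$ and $u_q\cdot\nabla\theta_q$) and the \emph{Newton} system for $\partial_t\theta_{q+1}^{(t)}$ (producing $\bar D_t\theta_{q+1}^{(t)}$, controlled by lemma~\ref{w_t_estim}); all source terms then carry $\lambda_q$ per derivative by the inductive assumptions. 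You instead invoke the relaxed system \eqref{steps} at stage $n=\Gamma$, so your forcing is $\nabla^\perp\cdot\div(R_{q,\Gamma}+S_{q,\Gamma}+P_{q+1,\Gamma})$ and you need it in $C^{N+1}$, i.e.\ $P_{q+1,\Gamma}$ in $C^{N+3}$. The bounds on $P_{q+1,\Gamma}$ recorded in the paper (subsection on $R_{q+1,R}$) cost $\lambda_{q+1}$ per derivative, and with those one gets $\|\nabla^\perp\cdot\div P_{q+1,\Gamma}\|_{N+1}\lesssim\delta_{q+1}\lambda_q\lambda_{q+1}^{N+2}$, which is \emph{not} $\lesssim\delta_q\lambda_q^{N+3}$ (the ratio is $(\lambda_{q+1}/\lambda_q)^{2-2\beta}\gg1$). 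The underlying quantities do satisfy sharper bounds with $\lambda_q$ per derivative in the controlled range (they are built from $\psi_{q+1}^{(t)}$ and mollifications of $\theta_q$, cf.\ lemmas~\ref{psi_estim} and \ref{le-bilinear-S}), so your route can be repaired, but you would have to derive those sharper estimates yourself since they appear nowhere in the paper. The paper's choice of source terms avoids this entirely, and I would recommend adopting it. Your commutator identity $\tilde D_{t,\Gamma}\nabla f=\nabla\tilde D_{t,\Gamma}f-(\nabla\tilde u_{q,\Gamma})^T\nabla f$ versus the paper's retention of $\tilde u_{q,\Gamma}\cdot\nabla\nabla\tilde\theta_{q,\Gamma}$ is immaterial: both produce the dominant $\delta_q\lambda_q^{3+N}$ contribution.
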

\begin{proof}
To estimate $\tilde D_{t,\Gamma}\nabla\tilde\theta_{q,\Gamma}$ we first write
\begin{eqnarray*}
    \tilde D_{t, \Gamma} \nabla \tilde \theta_{q, \Gamma} &=& \tilde u_{q, \Gamma} \cdot \nabla \nabla \tilde \theta_{q, \Gamma} + \nabla P_{\lesssim \ell_q^{-1}}(\partial_t \theta_q + \partial_t \theta_{q+1}^{(t)}) \\ 
    &=& \tilde u_{q, \Gamma} \cdot \nabla \nabla \tilde \theta_{q, \Gamma} + \nabla P_{\lesssim \ell_q^{-1}}(\nabla^{\perp} \div R_{q} - u_q \cdot \nabla \theta_q + \bar D_{t} \theta_{q+1}^{(t)} - \bar u_q \cdot \nabla \theta_{q+1}^{(t)}),
\end{eqnarray*}
from which we obtain 
\begin{eqnarray*}
    \|\tilde D_{t, \Gamma} \na \tilde \theta_{q, \Gamma}\|_N &\lesssim& \|\tilde u_{q, \Gamma} \cdot \nabla \nabla \tilde \theta_{q, \Gamma}\|_N + \|R_{q, 0}\|_{N+3} + \|\bar D_t \theta_{q+1}^{(t)}\|_{N+1} \\
    &&+ \|P_{\lesssim \ell_q^{-1}}( u_q \cdot \nabla \theta_q)\|_{N+1} + \|\bar u_q \cdot \nabla \theta_{q+1}^{(t)}\|_{N+1}.
\end{eqnarray*}
Estimates \eqref{est-mat-bar-th1} and \eqref{est-mat-bar-th2} follow, then, from the inductive assumptions \eqref{induct-u}, lemmas \ref{smoli_estim} and \ref{w_t_estim}, and corollary \ref{Gamma_velo_estim}. To aid the reader, we point out that the first and fourth terms are the dominant ones.

%For \eqref{est-mat-bar-u1} and \eqref{est-mat-bar-u2}, we have, similarly, 
%\begin{eqnarray*}
    %\tilde D_{t, \Gamma} \nabla \tilde u_{q, \Gamma} &=& \tilde u_{q, \Gamma} \cdot \nabla \nabla \tilde u_{q, \Gamma} + \nabla P_{\lesssim \ell_q^{-1}} T[\nabla^\perp \div R_q] + \nabla P_{\ell_q^{-1}} \bar D_t w_{q+1}^{(t)} \\
    %&& - \nabla P_{\ell_q^{-1}} T[u_q \cdot \nabla \theta_q] - \nabla P_{\lesssim \ell_q^{-1}} \bar u_q \cdot \nabla w_{q+1}^{(t)}, 
%\end{eqnarray*}
%from which it follows that 
%\begin{eqnarray*}
    %\|\tilde D_{t, \Gamma} \nabla \tilde u_{q, \Gamma}\|_N &\lesssim& \|\tilde u_{q, \Gamma} \cdot \nabla \nabla \tilde u_{q, \Gamma}\|_N + \|R_{q, 0}\|_{N+3+\alpha} + \|\bar D_t w_{q+1}^{(t)}\|_{N+1} \\ 
    %&&+ \|P_{\lesssim \ell_q^{-1}} (u_q \cdot \nabla \theta_q)\|_{N+1+\alpha}+\|\bar u_q \cdot \nabla \theta_{q+1}^{(t)}\|_{N+1}.
%\end{eqnarray*}
%The conclusion follows. 

The estimates on $\bar D^2_{t,\Gamma} \bar a_{\xi, k, n}$ can be established analogously to the proof of lemma 4.7 from \cite{GR23}.
\end{proof}

We are ready to establish the estimates for the transport term.
\begin{lem}\label{le-tran}
We have
\begin{equation}\label{est-tran1}
\|\div^{-1}\nabla^{\perp}\Delta^{-1} \tilde D_{t,\Gamma} \theta_{q+1}^{(p)}\|_N\lesssim  \delta_{q+1}\frac{\lambda_q}{\lambda_{q+1}^{1-4\alpha}}\lambda_{q+1}^{N}, \ \ \forall \ N\geq 0, 
\end{equation}
\begin{equation}\label{est-tran2}
\|\tilde D_{t,\Gamma}\div^{-1}\nabla^{\perp}\Delta^{-1} \tilde D_{t,\Gamma} \theta_{q+1}^{(p)}\|_N\lesssim  \ell_{t,q}^{-1}\delta_{q+1}\frac{\lambda_q}{\lambda_{q+1}^{1-4\alpha}}\lambda_{q+1}^{N}, \ \ \forall \ N\geq 0
\end{equation}
with implicit constants depending on $\Gamma, \alpha, M$ and $N$.
\end{lem}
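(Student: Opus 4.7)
The plan is to exploit the identity $\tilde D_{t,\Gamma}\widetilde\Phi_k=0$ coming from~\eqref{Flow_t_gam}, which shows that the material derivative kills the high-frequency phase $\cos(\lambda_{q+1}\widetilde\Phi_k\cdot\xi)$. Commuting $\tilde D_{t,\Gamma}$ through the Fourier projector $P_{\approx\lambda_{q+1}}$ then yields the three-term decomposition
\begin{align*}
\tilde D_{t,\Gamma}\theta_{q+1}^{(p)}=\sum_{n,k,\xi}\Bigl[&\mu_{q+1}\,g'_{\xi,k,n+1}(\mu_{q+1}t)\,P_{\approx\lambda_{q+1}}\bigl(\bar a_{\xi,k,n}\cos(\lambda_{q+1}\widetilde\Phi_k\cdot\xi)\bigr)\\
&+g_{\xi,k,n+1}(\mu_{q+1}t)\,P_{\approx\lambda_{q+1}}\bigl(\tilde D_{t,\Gamma}\bar a_{\xi,k,n}\,\cos(\lambda_{q+1}\widetilde\Phi_k\cdot\xi)\bigr)\\
&+g_{\xi,k,n+1}(\mu_{q+1}t)\,[\tilde u_{q,\Gamma}\cdot\nabla,P_{\approx\lambda_{q+1}}]\bigl(\bar a_{\xi,k,n}\cos(\lambda_{q+1}\widetilde\Phi_k\cdot\xi)\bigr)\Bigr].
\end{align*}
Every summand has Fourier support contained in the enlarged annulus where $\tilde P_{\approx\lambda_{q+1}}=1$, so applying $\div^{-1}\nabla^\perp\Delta^{-1}$, which is of order $-2$, gains a factor $\lambda_{q+1}^{-2}$.

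I would estimate the resulting contributions one by one. The dominant one is the $g'$ term: using $\|\bar a_{\xi,k,n}\|_0\lesssim\delta_{q+1}^{1/2}\lambda_{q+1}^{1/2}$ from lemma~\ref{le-amp-reg} and the definition of $\mu_{q+1}$, this gives a contribution of size $\lambda_{q+1}^{-2}\mu_{q+1}\delta_{q+1}^{1/2}\lambda_{q+1}^{1/2}=\delta_{q+1}\lambda_q\lambda_{q+1}^{-1+4\alpha}$, matching~\eqref{est-tran1}. The $\tilde D_{t,\Gamma}\bar a$ term is smaller by a factor of $\tau_q^{-1}/\mu_{q+1}\leq 1$, by lemma~\ref{le-amp-reg}. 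The commutator term is controlled by a standard paradifferential estimate which, exploiting the scale separation $\|\tilde u_{q,\Gamma}\|_1\lesssim \tau_q^{-1}\lambda_{q+1}^{-\alpha}\ll\lambda_{q+1}$ from corollary~\ref{Gamma_velo_estim}, yields a bound of order $\tau_q^{-1}\lambda_{q+1}^{-\alpha}\cdot\delta_{q+1}^{1/2}\lambda_{q+1}^{1/2}$, again subdominant. Higher-order derivatives are handled by Leibniz and interpolation, with each derivative costing at most a factor of $\lambda_{q+1}$ because the expression is frequency-localized at $\approx\lambda_{q+1}$.

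For the second material derivative bound~\eqref{est-tran2}, I would apply $\tilde D_{t,\Gamma}$ once more, commuting it through $\div^{-1}\nabla^\perp\Delta^{-1}$ at a cost of $\|\tilde u_{q,\Gamma}\|_1\leq \ell_{t,q}^{-1}$ via proposition~\ref{prop-commu}. The worst case is when both material derivatives fall on the temporal profile, producing a factor $\mu_{q+1}^2$; since $\mu_{q+1}\leq \ell_{t,q}^{-1}$ for sufficiently small $\alpha$, this gives exactly the $\ell_{t,q}^{-1}\cdot\mu_{q+1}$ scaling required by~\eqref{est-tran2}. Mixed contributions of type $\mu_{q+1}\tau_q^{-1}$ and $\tau_q^{-2}$, as well as the terms in which one derivative falls on $\tilde D_{t,\Gamma}^2\bar a_{\xi,k,n}$ (controlled by lemma~\ref{le-mat-d}), are all dominated. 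The principal technical subtlety throughout is the handling of the commutator with $P_{\approx\lambda_{q+1}}$: this relies on the scale separation between $\tilde u_{q,\Gamma}$, localized at frequencies $\lesssim\ell_q^{-1}$, and $\lambda_{q+1}\gg\ell_q^{-1}$, which is precisely what makes the introduction of the mollifier in the definition of $\tilde u_{q,\Gamma}$ (absent in~\cite{GR23}) necessary for keeping the Fourier localization clean.
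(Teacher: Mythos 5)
Your proposal follows essentially the same route as the paper: use that $\tilde D_{t,\Gamma}\widetilde\Phi_k=0$ to kill the phase, split off the commutator $[\tilde D_{t,\Gamma},P_{\approx\lambda_{q+1}}]$, gain $\lambda_{q+1}^{-2}$ from the order $-2$ operator via the enlarged projector $\tilde P_{\approx\lambda_{q+1}}$, and identify the $\mu_{q+1}$ (resp. $\mu_{q+1}^2\leq\ell_{t,q}^{-1}\mu_{q+1}$) contribution from the temporal profiles as dominant. The only slight imprecision is citing proposition~\ref{prop-commu} for the commutator with $\div^{-1}\nabla^{\perp}\Delta^{-1}$; since that operator is of order $-2$ rather than Calder\'on--Zygmund, the correct tool is the frequency-localized estimate of proposition~\ref{prop-commu-loc}, which is exactly the scale-separation mechanism you emphasize elsewhere.
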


\begin{proof}
We begin by writing 
\begin{equation}\notag
\begin{split}
\div^{-1}\nabla^{\perp}\Delta^{-1} &\tilde D_{t,\Gamma} \theta_{q+1}^{(p)}\\
=&\div^{-1}\nabla^{\perp}\Delta^{-1} \tilde D_{t,\Gamma} \sum_{\xi,k,n}g_{\xi, k, n+1}P_{\approx \lambda_{q+1}}\left(\bar a_{\xi, k, n}\cos(\lambda_{q+1}\widetilde \Phi_k\cdot\xi) \right)\\
=& \underbrace{\div^{-1}\nabla^{\perp}\Delta^{-1} \sum_{\xi,k,n}P_{\approx \lambda_{q+1}}\left(\tilde D_{t,\Gamma}(g_{\xi, k, n+1} \bar a_{\xi, k, n})\cos(\lambda_{q+1}\widetilde \Phi_k\cdot\xi) \right)}_{T_1}\\
&\qquad + \underbrace{\div^{-1}\nabla^{\perp}\Delta^{-1} \sum_{\xi,k,n}[\tilde D_{t,\Gamma}, P_{\approx \lambda_{q+1}}]\left(g_{\xi, k, n+1}\bar a_{\xi, k, n}\cos(\lambda_{q+1}\widetilde \Phi_k\cdot\xi) \right)}_{T_2},
\end{split}
\end{equation}
where, in $T_1$, we use the fact that $\widetilde \Phi_k$ is the flow of $\tilde u_{q, \Gamma}$. For the first term, since the operator $\div^{-1} \nabla^\perp \Delta^{-1}$ is of order $-2$, we obtain 
\begin{equation*}
    \|T_1\|_N \lesssim \lambda_{q+1}^{N-2}\sup_{\xi, k, n}\big( \mu_{q+1} \|\bar a_{\xi, k, n}\|_0 + \|\tilde D_{t, \Gamma} \bar a_{\xi, k, n}\|_0\big).
\end{equation*}
For the second term, since $\tilde u_{q, \Gamma}$ is localized to frequencies $\lesssim \ell_q^{-1}$, we can write 
\begin{equation*}
    T_2 = \div^{-1}\nabla^{\perp}\Delta^{-1} \sum_{\xi,k,n} \tilde P_{\approx \lambda_{q+1}}[\tilde D_{t,\Gamma}, P_{\approx \lambda_{q+1}}]\left(g_{\xi, k, n+1}\bar a_{\xi, k, n}\cos(\lambda_{q+1}\widetilde \Phi_k\cdot\xi) \right),
\end{equation*}
where $\tilde P_{\approx \lambda_{q+1}}$ is the previously defined frequency projection on a slightly larger annulus of radii $\approx \lambda_{q+1}$. It follows, then, using proposition \ref{prop-commu-loc} that 
\begin{equation*}
    \|T_2\|_N \lesssim \lambda_{q+1}^{N-2} \|\nabla \tilde u_{q, \Gamma}\|_0 \sup_{\xi, k, n}\|\bar a_{\xi, k, n}\|_0.
\end{equation*}
We conclude that 
\begin{equation*}
    \|\div^{-1} \nabla^{\perp} \Delta^{-1} \tilde D_{t, \Gamma} \theta_{q+1}^{(p)}\|_N \lesssim \lambda_{q+1}^{N-2} \sup_{\xi, k, n}\big(\mu_{q+1} \|\bar a_{\xi, k, n}\|_0 + \|\tilde D_{t, \Gamma} \bar a_{\xi, k, n}\|_0 + \|\nabla \tilde u_{q, \Gamma}\|_0 \|\bar a_{\xi, k, n}\|_0\big) \lesssim \lambda_{q+1}^{N-3/2} \delta_{q+1}^{1/2} \mu_{q+1}, 
\end{equation*}
from which the first claimed estimate follows. 

We turn, now, to estimating the material derivative of the transport term. We write 
\begin{eqnarray*}
    \tilde D_{t,\Gamma}\div^{-1}\nabla^{\perp}\Delta^{-1} \tilde D_{t,\Gamma} \theta_{q+1}^{(p)} &=& \underbrace{[\tilde D_{t,\Gamma},\div^{-1}\nabla^{\perp}\Delta^{-1} P_{\approx \lambda_{q+1}}]\sum_{\xi, k, n} \tilde D_{t, \Gamma}(g_{\xi, k, n+1} \bar a_{\xi, k, n}) \cos(\lambda_{q+1} \widetilde \Phi_k\cdot\xi)}_{T_{11}} \\
    && + \underbrace{\div^{-1}\nabla^{\perp}\Delta^{-1} P_{\approx \lambda_{q+1}} \sum_{\xi, k, n} \tilde D^2_{t, \Gamma}(g_{\xi, k, n+1} \bar a_{\xi, k, n}) \cos(\lambda_{q+1} \widetilde \Phi_k\cdot\xi)}_{T_{12}} \\ 
    && + \underbrace{[\tilde D_{t,\Gamma},\div^{-1}\nabla^{\perp}\Delta^{-1} \tilde P_{\approx \lambda_{q+1}}] [\tilde D_{t, \Gamma}, P_{\approx \lambda_{q+1}}] \sum_{\xi, k, n} g_{\xi, k, n+1} \bar a _{\xi, k, n} \cos(\lambda_{q+1} \widetilde \Phi_k\cdot\xi)}_{T_{21}} \\ 
    && + \underbrace{\div^{-1}\nabla^{\perp}\Delta^{-1} \tilde P_{\approx \lambda_{q+1}} \tilde D_{t, \Gamma} [\tilde D_{t, \Gamma}, P_{\approx \lambda_{q+1}}]\sum_{\xi, k, n} g_{\xi, k, n+1} \bar a _{\xi, k, n} \cos(\lambda_{q+1} \widetilde \Phi_k\cdot\xi)}_{T_{22}}
\end{eqnarray*}
Using proposition \ref{prop-commu-loc}, we estimate 
\begin{equation*}
    \|T_{11}\|_N \lesssim \lambda_{q+1}^{N-2} \|\nabla \tilde u_{q, \Gamma}\|_0 \sup_{\xi, k, n} \big( \mu_{q+1} \|\bar a_{\xi, k, n}\|_0 + \|\tilde D_{t, \Gamma} \bar a_{\xi, k, n}\|_0 \big),
\end{equation*}
\begin{equation*}
    \|T_{12}\|_N \lesssim \lambda_{q+1}^{N-2} \sup_{\xi, k, n} \big( \mu_{q+1}^{2} \|\bar a_{\xi, k, n}\|_0 + \mu_{q+1} \|\tilde D_{t, \Gamma} \bar a_{\xi, k, n}\|_0 + \|\tilde D_{t, \Gamma}^2 \bar a_{\xi, k, n}\|_0\big),
\end{equation*}
\begin{equation*}
    \|T_{21}\|_N \lesssim \lambda_{q+1}^{N-2} \|\nabla \tilde u_{q, \Gamma}\|_0^2 \sup_{\xi, k, n}\|\bar a_{\xi, k, n}\|_0.
\end{equation*}
For the remaining term, we write 
\begin{eqnarray*}
    T_{22}&=& \div^{-1} \nabla^\perp \Delta^{-1} \tilde P_{\approx \lambda_{q+1}} \bigg[ [\partial_t \tilde u_{q, \Gamma} \cdot \nabla, P_{\approx \lambda_{q+1}}] \sum_{\xi, k, n} g_{\xi, k, n+1} \bar a_{\xi, k, n} \cos (\lambda_{q+1} \widetilde \Phi_k\cdot\xi) \\  
    && + [\tilde u_{q, \Gamma} \cdot \nabla, P_{\approx \lambda_{q+1}}]  \sum_{\xi, k, n}\tilde D_{t, \Gamma}( g_{\xi, k, n+1} \bar a_{\xi, k, n}) \cos (\lambda_{q+1} \widetilde \Phi_k\cdot\xi) \\ 
    && + \left[\tilde u_{q, \Gamma} \cdot \nabla, [\tilde D_{t, \Gamma}, P_{\approx \lambda_{q+1}}]\right]  \sum_{\xi, k, n} g_{\xi, k, n+1} \bar a_{\xi, k, n} \cos (\lambda_{q+1} \widetilde \Phi_k\cdot\xi) \bigg].
\end{eqnarray*}
The first two terms above can be estimated by using proposition \ref{prop-commu-loc} as before, while for the final term, we can simply estimate each term in the commutator. Using, then, 
\begin{equation*}
    \|\partial_t \tilde u_{q, \Gamma}\|_0 \lesssim \|\tilde D_{t, \Gamma} \tilde u_{q,\Gamma}\|_0 + \|\tilde u_{q, \Gamma} \cdot \nabla \tilde u_{q, \Gamma}\|_0,
\end{equation*}
we obtain
\begin{eqnarray*}
    \|T_{22}\|_N &\lesssim& \lambda_{q+1}^{N-2} \sup_{\xi, k, n} \big( (\|\tilde D_{t, \Gamma} \tilde u_{q, \Gamma}\|_1 + \|\tilde u_{q, \Gamma}\|_0 \|\tilde u_{q, \Gamma}\|_2 +\mu_{q+1}\|\tilde u_{q, \Gamma}\|_1 +\lambda_{q+1} \|\tilde u_{q, \Gamma}\|_0 \|\tilde u_{q, \Gamma}\|_1  ) \|\bar a_{\xi, k, n}\|_0 \\
    && + \|\tilde u_{q, \Gamma}\|_1\|\tilde D_{t,\Gamma} \bar a_{\xi, k, n}\|_0\big).
\end{eqnarray*}
It is clear by inspection that the largest term is the one involving two material derivatives which appears in the estimate for $T_{12}$. The conclusion follows.
\end{proof}

We obtain the estimates for the Nash error term as follows.
\begin{lem}\label{le-Nash}
The estimates
\begin{equation}\label{est-Nash1}
\left\|\div^{-1}\nabla^{\perp}\Delta^{-1} \left(T[\theta_{q+1}^{(p)}]\cdot \nabla \tilde\theta_{q,\Gamma}\right)\right\|_N\lesssim \frac{\delta_q^{\frac12}\delta_{q+1}^{\frac12}\lambda_q^{\frac{3}2}}{\lambda_{q+1}^{\frac{3}2}}\lambda_{q+1}^{N}, \ \ \forall \ N\geq 0, 
\end{equation}
\begin{equation}\label{est-Nash2}
\left\|\tilde D_{t,\Gamma}\div^{-1}\nabla^{\perp}\Delta^{-1} \left(T[\theta_{q+1}^{(p)}]\cdot \nabla \tilde\theta_{q,\Gamma}\right)\right\|_N\lesssim \frac{\mu_{q+1}\delta_q^{\frac12}\delta_{q+1}^{\frac12}\lambda_q^{\frac{3}2}}{\lambda_{q+1}^{\frac{3}2}}\lambda_{q+1}^{N}, \ \ \forall \ N\geq 0
\end{equation}
hold for implicit constants depending on $\Gamma, \alpha, M$ and $N$.
\end{lem}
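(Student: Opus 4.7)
\textbf{Proof plan for Lemma \ref{le-Nash}.} The strategy is completely parallel to that of Lemma \ref{le-tran}, exploiting frequency localization and the $-2$ order of the operator $\div^{-1}\nabla^\perp \Delta^{-1}$. The key observation is that $T[\theta_{q+1}^{(p)}]$ is supported at spatial frequencies $\approx \lambda_{q+1}$, while $\nabla \tilde\theta_{q,\Gamma}$ is supported at frequencies $\lesssim \ell_q^{-1}\ll \lambda_{q+1}$. Consequently, the product $T[\theta_{q+1}^{(p)}]\cdot \nabla \tilde\theta_{q,\Gamma}$ is supported in a neighbourhood of radius $\lesssim \ell_q^{-1}$ around the annulus $\lambda_{q+1} A'$, so choosing $a_0$ sufficiently large as in section~\ref{sec-construct-Nash} we can insert the fattened projector for free:
\[
\div^{-1}\nabla^\perp \Delta^{-1}\bigl(T[\theta_{q+1}^{(p)}]\cdot \nabla \tilde\theta_{q,\Gamma}\bigr) = \div^{-1}\nabla^\perp \Delta^{-1}\tilde P_{\approx \lambda_{q+1}}\bigl(T[\theta_{q+1}^{(p)}]\cdot \nabla \tilde\theta_{q,\Gamma}\bigr).
\]
Since $\div^{-1}\nabla^\perp \Delta^{-1} \tilde P_{\approx \lambda_{q+1}}$ is bounded $C^0\to C^N$ with norm $\lesssim \lambda_{q+1}^{N-2}$, combining the pointwise bounds $\|w_{q+1}^{(p)}\|_0\lesssim \delta_{q+1}^{1/2}\lambda_{q+1}^{1/2}$ from Lemma~\ref{le-est-Nash} and $\|\nabla \tilde\theta_{q,\Gamma}\|_0 \lesssim \delta_q^{1/2}\lambda_q^{3/2}$ from Corollary~\ref{Gamma_velo_estim} yields \eqref{est-Nash1}.

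For the material derivative estimate, I would split
\[
\tilde D_{t,\Gamma} \div^{-1}\nabla^\perp \Delta^{-1}\tilde P_{\approx \lambda_{q+1}}(T[\theta_{q+1}^{(p)}]\cdot \nabla \tilde\theta_{q,\Gamma}) = U_1 + U_2,
\]
where $U_1 = [\tilde D_{t,\Gamma},\,\div^{-1}\nabla^\perp \Delta^{-1}\tilde P_{\approx \lambda_{q+1}}]\bigl(T[\theta_{q+1}^{(p)}]\cdot \nabla \tilde\theta_{q,\Gamma}\bigr)$ and $U_2$ is the remaining term with $\tilde D_{t,\Gamma}$ inside. The commutator $U_1$ is handled by Proposition~\ref{prop-commu-loc}, which produces a factor $\|\nabla \tilde u_{q,\Gamma}\|_0 \lesssim \delta_q^{1/2}\lambda_q^{3/2}$; this is smaller than $\mu_{q+1}$ by the definition of $\mu_{q+1}$, so $U_1$ is dominated by the claimed bound.

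For $U_2$, I would distribute the material derivative via
\[
\tilde D_{t,\Gamma}(T[\theta_{q+1}^{(p)}]\cdot \nabla \tilde\theta_{q,\Gamma}) = T[\tilde D_{t,\Gamma}\theta_{q+1}^{(p)}]\cdot \nabla \tilde\theta_{q,\Gamma} + [\tilde u_{q,\Gamma}\cdot\nabla,T]\theta_{q+1}^{(p)}\cdot \nabla \tilde\theta_{q,\Gamma} + T[\theta_{q+1}^{(p)}]\cdot \tilde D_{t,\Gamma}\nabla \tilde\theta_{q,\Gamma}.
\]
The second term is a zero-order commutator of $\tilde u_{q,\Gamma}\cdot\nabla$ with $T=\nabla^\perp \Lambda^{-1}$, and so is controlled by $\|\nabla \tilde u_{q,\Gamma}\|_0 \|\theta_{q+1}^{(p)}\|_0$ (via proposition~\ref{prop-commu}, after localizing in Fourier with $\tilde P_{\approx \lambda_{q+1}}$ to use the zero-order gain). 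The third term is controlled using \eqref{est-mat-bar-th1} from Lemma~\ref{le-mat-d}, giving a factor $\delta_q \lambda_q^3$, which is again dominated by $\mu_{q+1}\cdot\delta_q^{1/2}\lambda_q^{3/2}$. The dominant term is the first, where writing $\tilde D_{t,\Gamma}\theta_{q+1}^{(p)}$ exactly as in the expansion at the beginning of Lemma~\ref{le-tran} produces a factor $\mu_{q+1}\delta_{q+1}^{1/2}\lambda_{q+1}^{1/2}$ (from the time derivative of $g_{\xi,k,n+1}(\mu_{q+1}\cdot)$), leading to the claimed factor $\mu_{q+1}$ in \eqref{est-Nash2}.

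The main obstacle is bookkeeping: ensuring that every term generated by distributing $\tilde D_{t,\Gamma}$ (and commuting it past $T$, $\nabla$, and the various Fourier projectors) is bounded by $\mu_{q+1}\delta_q^{1/2}\delta_{q+1}^{1/2}\lambda_q^{3/2}\lambda_{q+1}^{-3/2+N}$. Since $\mu_{q+1} = \delta_{q+1}^{1/2}\lambda_q \lambda_{q+1}^{1/2}\lambda_{q+1}^{4\alpha}$ dominates both $\tau_q^{-1}$ and $\|\nabla \tilde u_{q,\Gamma}\|_0$ by a factor of at least $\lambda_{q+1}^{\alpha}$, the commutator and product-rule terms are subdominant, and the estimate will close.
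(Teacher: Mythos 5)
Your proposal is correct and follows essentially the same route as the paper: insert the fattened projector $\tilde P_{\approx \lambda_{q+1}}$ (justified by the frequency separation between $\theta_{q+1}^{(p)}$ and $\tilde\theta_{q,\Gamma}$), exploit the $-2$ order of $\div^{-1}\nabla^\perp\Delta^{-1}\tilde P_{\approx\lambda_{q+1}}$, and for the material derivative split into the outer commutator plus the Leibniz expansion, with the dominant contribution coming from $\mu_{q+1}$ hitting the temporal profiles. The only (cosmetic) difference is that you commute $\tilde D_{t,\Gamma}$ past $T$ alone rather than past $TP_{\approx\lambda_{q+1}}$ as a single localized operator, which the paper does; both are handled by proposition~\ref{prop-commu-loc} and yield the same bounds.
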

\begin{proof}
As in the proof of the previous lemma, since $\tilde \theta_{q,\Ga}$ is localized to frequencies $\lesssim \ell_q^{-1} \ll \lambda_{q+1}$, we can write 
\begin{eqnarray*}
    \left\|\div^{-1}\nabla^{\perp}\Delta^{-1} \left(T[\theta_{q+1}^{(p)}]\cdot \nabla \tilde \theta_{q,\Gamma}\right)\right\|_N &=& \left\|\div^{-1}\nabla^{\perp}\Delta^{-1} \tilde P_{\approx \lambda_{q+1}} \left(T[\theta_{q+1}^{(p)}]\cdot \nabla \tilde \theta_{q,\Gamma}\right)\right\|_N \\ 
    &\lesssim& \lambda_{q+1}^{N-2} \sup_{\xi, k, n}\|\bar a_{\xi, k, n}\|_0 \|\nabla \tilde \theta_{q,\Gamma}\|_0,
\end{eqnarray*}
from which the first claimed estimate follows.

To estimate the material derivative, we write 
\begin{eqnarray*}
    \tilde D_{t, \Gamma} \div^{-1}\nabla^{\perp}\Delta^{-1} \left(T[\theta_{q+1}^{(p)}]\cdot \nabla \tilde \theta_{q,\Gamma}\right) &=& \underbrace{[\tilde D_{t, \Gamma},\div^{-1}\nabla^{\perp}\Delta^{-1} \tilde P_{\approx \lambda_{q+1}}]T[\theta_{q+1}^{(p)}] \cdot \nabla \tilde \theta_{q, \Gamma}}_{T_1} \\ 
    &&+ \underbrace{\div^{-1}\nabla^{\perp}\Delta^{-1} \tilde P_{\approx \lambda_{q+1}}\tilde D_{t, \Gamma} (T[\theta_{q+1}^{(p)}] \cdot  \nabla \tilde \theta_{q, \Gamma})}_{T_2},
\end{eqnarray*}
from which we can further expand 
\begin{eqnarray*}
    T_2 &=& \div^{-1}\nabla^{\perp}\Delta^{-1} \tilde P_{\approx \lambda_{q+1}} T[\theta_{q+1}^{(p)}] \cdot \tilde D_{t, \Gamma} \nabla \tilde \theta_{q, \Gamma}\\
    &&+\div^{-1}\nabla^{\perp}\Delta^{-1} \tilde P_{\approx \lambda_{q+1}}\nabla \tilde \theta_{q, \Gamma} \cdot \nabla [\tilde D_{t, \Gamma}, T P_{\approx \lambda_{q+1}}] \sum_{\xi, k, n} g_{\xi, k, n+1} \bar a_{\xi,k,n} \cos (\lambda_{q+1} \widetilde \Phi_k\cdot\xi) \\
    &&+ \div^{-1}\nabla^{\perp}\Delta^{-1} \tilde P_{\approx \lambda_{q+1}}\nabla \tilde \theta_{q, \Gamma} \cdot T P_{\approx \lambda_{q+1}} \sum_{\xi, k, n} \tilde D_{t, \Gamma}(g_{\xi, k, n+1} \bar a_{\xi,k,n}) \cos (\lambda_{q+1} \widetilde \Phi_k\cdot\xi).
\end{eqnarray*}
We can now employ proposition \ref{prop-commu-loc} to obtain 
\begin{eqnarray*}
    \bigg\|\tilde D_{t, \Gamma} \div^{-1}\nabla^{\perp}\Delta^{-1} \left(T[\theta_{q+1}^{(p)}]\cdot \nabla \tilde \theta_{q,\Gamma}\right)\bigg\|_N &\lesssim& \lambda_{q+1}^{N-2} \sup_{\xi, k, n} \big( \|\bar a_{\xi, k, n}\|_0(\|\tilde u_{q, \Gamma}\|_1 \| \tilde \theta_{q,\Gamma}\|_1 + \|\tilde D_{t, \Gamma} \nabla \tilde \theta_{q,\Gamma}\|_0 \\
    && + \mu_{q+1} \|\tilde \theta_{q,\Gamma}\|_1) + \| \tilde \theta_{q,\Gamma}\|_1 \|\tilde D_{t, \Gamma} \bar a_{\xi, k, n}\|_0\big).
\end{eqnarray*}
The wanted estimate follows.
\end{proof}

\subsubsection{Estimates of $R_{q+1, O}$}\label{sec.est.osc}
Recall the oscillation error satisfies
\[\na^\perp \cdot \div R_{q+1, O}=\nabla^{\perp}\cdot \div S_{q,\Gamma}+T[\theta_{q+1}^{(p)}]\cdot\nabla\theta_{q+1}^{(p)}.\]
The goal of this subsection is to apply the bilinear microlocal lemma \ref{le-bilinear-odd} and show how the first order of the quadratic self-interaction cancels $S_{q,\Gamma}$ up to a small error. 
Recall 
\[A_{\xi, k, n}=\frac14\frac{1}{\lambda_{q+1}|(\nabla\Phi_k)^{T}\xi|^{3}}a_{\xi,k,n}^2(\nabla\Phi_k)^{T}\xi\otimes\xi(\nabla\Phi_k)^{}.\]
Denote similarly
\[\bar A_{\xi, k, n}=\frac14\frac{1}{\lambda_{q+1}|(\nabla\widetilde\Phi_k)^{T}\xi|^{3}}\bar a_{\xi,k,n}^2(\nabla\widetilde\Phi_k)^{T}\xi\otimes\xi(\nabla\widetilde\Phi_k)^{}.\]
The advantage of introducing the Newton step is to avoid interactions among different directions by diverting them using temporal oscillations. The crucial outcome is that $\{g_{\xi, k, n+1}\bar a_{\xi, k,n}\}_{\xi, k,n}$ have pair-wise disjoint supports. 
Denote
\[\theta_{\xi,k,n}^{(p)}=g_{\xi, k, n+1}P_{\approx \lambda_{q+1}}\left(\bar a_{\xi, k, n}\cos(\lambda_{q+1}\widetilde \Phi_k\cdot\xi) \right).\]
We then have, applying the bilinear microlocal lemma \ref{le-bilinear-odd},
\begin{equation}\notag
\begin{split}
T[\theta_{q+1}^{(p)}]\cdot\nabla\theta_{q+1}^{(p)}=&\sum_{\xi, k,n}T[\theta_{\xi,k,n}^{(p)}]\cdot\nabla \theta_{\xi,k,n}^{(p)}\\
=& \, \nabla^{\perp}\cdot\div \left(\sum_{\xi, k,n}g^2_{\xi,k,n+1}\bar A_{\xi,k,n}+g^2_{\xi,k,n+1}\delta B_{\xi,k,n}\right),
\end{split}
\end{equation}
where the error term $\delta B_{\xi,k,n}$ is explicit and is given in the proof of lemma~\ref{le-bilinear-odd}.

It follows that
\begin{equation}\notag
T[\theta_{q+1}^{(p)}]\cdot\nabla\theta_{q+1}^{(p)}+\nabla^{\perp}\cdot \div S_{q,\Gamma}=\nabla^{\perp}\cdot\div \sum_{\xi, k,n}g^2_{\xi,k,n+1}\left(\bar A_{\xi,k,n}-A_{\xi,k,n}\right)+\nabla^{\perp}\cdot\div \sum_{\xi, k,n}g^2_{\xi,k,n+1}\delta B_{\xi,k,n},
\end{equation}
and, hence, we can define
\begin{equation}\notag
\begin{split}
R_{q+1, O}:=
% &\div^{-1}\nabla^{\perp}\cdot\Delta^{-1}[T[\theta_{q+1}^{(p)}]\cdot\nabla\theta_{q+1}^{(p)}+\nabla^{\perp}\cdot \div S_{q,\Gamma}]\\
&\underbrace{\sum_{\xi, k,n}g^2_{\xi,k,n+1}\left(\bar A_{\xi,k,n}-A_{\xi,k,n}\right)}_{\text{flow error}}
+\underbrace{\sum_{\xi, k,n}g^2_{\xi,k,n+1}\delta B_{\xi,k,n}}_{\text{main oscillation error}}.
\end{split}
\end{equation}

\begin{lem} \label{le_est_A}
We have the estimates for $\bar A_{\xi,k,n}$, 
\begin{equation}\notag
    \|\bar A_{\xi, k, n}\|_N \lesssim \delta_{q+1, n} \lambda_q^N, \,\,\, \forall N \in \{0,1,..., L_t-3\},
\end{equation}
\begin{equation}\notag
    \|\tilde D_t \bar A_{\xi, k, n} \|_N  \lesssim \delta_{q+1, n} \tau_q^{-1} \lambda_q^N, \,\,\, \forall N \in \{0,1,..., L_t-3\},
\end{equation}
\begin{equation}\notag
    \|\bar A_{\xi, k, n}\|_{N+L_t-3} \lesssim \delta_{q+1, n} \lambda_q^{L_t-3} \ell_q^{-N},  \,\,\, \forall N \geq 0,
\end{equation}
\begin{equation}\notag
    \|\tilde D_t \bar A_{\xi, k, n} \|_{N+ L_t-3} \lesssim \delta_{q+1, n} \lambda_q^{L_t-3} \tau_q^{-1} \ell_q^{-N}, \,\,\, \forall N \geq 0,
\end{equation}
with implicit constants depending on $\Gamma$, $M$, $\alpha$, and $N$. Moreover the difference $\bar A_{\xi,k,n}-A_{\xi,k,n}$ satisfies
\begin{equation}\label{A-diff}
\|\bar A_{\xi,k,n}-A_{\xi,k,n}\|_0\lesssim \delta_{q+1,n}\frac{\delta_{q+1}^{\frac12}\lambda_q^{\frac12}}{\delta_q^{\frac12}\lambda_{q+1}^{\frac12}}.
\end{equation}
\end{lem}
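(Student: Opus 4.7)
\textbf{Estimates on $\bar A_{\xi,k,n}$.} The plan is to view
\[
\bar A_{\xi,k,n} = \frac{1}{4\lambda_{q+1}}\,\bar a_{\xi,k,n}^2 \cdot \Psi_k\big(\nabla \widetilde\Phi_k\big),
\]
where $\Psi_k(M) := |M^T\xi|^{-3}(M^T\xi\otimes\xi)\,M$ is smooth and bounded, uniformly in a ball around the identity (which contains $\nabla \widetilde\Phi_k$ by remark~\ref{remark_flow_bd}). The factor $1/|\,\cdot\,|^3$ is handled via the chain rule using that $|(\nabla\widetilde\Phi_k)^T\xi|$ is bounded below uniformly. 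I will combine the $C^N$ estimates for $\bar a_{\xi,k,n}$ from lemma~\ref{le-amp-reg} with the $C^N$ estimates for $\nabla\widetilde\Phi_k$ from corollary~\ref{Flow_gam_estim} using the standard product/composition rules in H\"older spaces (appendix). Because the loss happens on $\widetilde\Phi_k$ at order $L_t-3$, this dictates the derivative threshold $L_t-3$. The material derivative estimates follow the same scheme: $\tilde D_{t,\Gamma}$ falls either on $\bar a_{\xi,k,n}^2$ (gaining $\tau_q^{-1}$ by lemma~\ref{le-amp-reg}) or on $\nabla \widetilde\Phi_k$ (gaining $\delta_q^{1/2}\la_q^{3/2} \lesssim \tau_q^{-1}$ by corollary~\ref{Flow_gam_estim}). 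At each step I will interpolate between low and high-derivative inequalities to obtain the claimed polynomial form.

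\textbf{The difference estimate \eqref{A-diff}.} The plan is to write
\[
\bar A_{\xi,k,n} - A_{\xi,k,n} = \underbrace{\big(\bar a_{\xi,k,n}^2 - a_{\xi,k,n}^2\big)\,\Psi_k(\nabla\widetilde\Phi_k)/(4\lambda_{q+1})}_{\text{amplitude difference}} + \underbrace{a_{\xi,k,n}^2\,\big(\Psi_k(\nabla\widetilde\Phi_k)-\Psi_k(\nabla\Phi_k)\big)/(4\lambda_{q+1})}_{\text{geometric difference}}.
\]
Using that $\Psi_k$ is smooth on a neighbourhood of $\mathrm{Id}$, the geometric difference is controlled by $\la_{q+1}^{-1}\|a_{\xi,k,n}\|_0^2\,\|\nabla\widetilde\Phi_k - \nabla\Phi_k\|_0$, and the flow stability lemma~\ref{flow_stabil} (applied on the natural timescale $\tau_q$) delivers
\[
\|\nabla\widetilde\Phi_k - \nabla\Phi_k\|_0 \lesssim \tau_q\,\frac{\delta_{q+1}\lambda_q^{3}\ell_q^{-\alpha}}{\mu_{q+1}} \lesssim \lambda_{q+1}^{-\alpha}\,\frac{\delta_{q+1}^{1/2}\lambda_q^{1/2}}{\delta_q^{1/2}\lambda_{q+1}^{1/2}},
\]
after substituting the choices of $\tau_q$ and $\mu_{q+1}$. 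Combined with $\|a_{\xi,k,n}\|_0^2\lesssim \delta_{q+1,n}\la_{q+1}$ this gives the desired bound. For the amplitude difference I factor $\bar a - a$ via the Lipschitz dependence of $\gamma_\xi$, which produces two contributions: one from $\nabla\widetilde\Phi_k - \nabla\Phi_k$, already bounded above; and one from $\bar R_{q,n} - R_{q,n}$, whose $C^0$ norm is controlled using the flow-mollified definition of $\bar R_{q,n}$ via the fundamental theorem of calculus, giving $\|\bar R_{q,n}-R_{q,n}\|_0 \lesssim \ell_{t,q}\,\sup_{|s|\le \ell_{t,q}}\|\tilde D_{t,\Gamma} R_{q,n}(\cdot,s)\|_0$. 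Comparing $\tilde D_{t,\Gamma}$ to $\bar D_t$ through the perturbation $\tilde u_{q,\Gamma} - \bar u_q = P_{\lesssim\ell_q^{-1}}w_{q+1}^{(t)}$ controlled by lemma~\ref{w_t_estim}, and using the choices of $\ell_{t,q}$, $\tau_q$, $\mu_{q+1}$, this contribution is shown to be dominated by the geometric contribution above.

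\textbf{Main obstacle.} The only non-routine piece is the bookkeeping in the difference estimate: one must keep track of which powers of $\lambda_{q+1}^\alpha$, $\ell_q^{-\alpha}$ appear and verify that, after substituting the definitions of $\tau_q$, $\mu_{q+1}$, $\ell_{t,q}$, both the flow-difference and the mollification-difference contributions fit under the advertised bound $\delta_{q+1,n}\,\delta_{q+1}^{1/2}\lambda_q^{1/2}/(\delta_q^{1/2}\lambda_{q+1}^{1/2})$, with a small power of $\lambda_{q+1}^{-\alpha}$ to spare (absorbed by choosing $a_0$ large). Once one sees that $\lambda_q/\lambda_{q+1} < (\lambda_q/\lambda_{q+1})^{\beta+1/2}$ because $\beta<1/2$, the geometric (flow) term is seen to dominate and the estimate closes.
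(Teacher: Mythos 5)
Your proposal is correct and follows essentially the same route as the paper: the same product/composition argument (via lemma~\ref{le-amp-reg} and corollary~\ref{Flow_gam_estim}) for the first four bounds, and for \eqref{A-diff} the same telescoping into an amplitude difference and a flow (geometric) difference, controlled respectively by the mean-value inequality on $\gamma_\xi$ together with the mollification bound $\|\bar R_{q,n}-R_{q,n}\|_0\lesssim \ell_{t,q}\|\tilde D_{t,\Gamma}R_{q,n}\|_0$, and by lemma~\ref{flow_stabil}, closing with the observation that $\lambda_q/\lambda_{q+1}<(\lambda_q/\lambda_{q+1})^{\beta+1/2}$ for $\beta<1/2$. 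Your explicit comparison of $\tilde D_{t,\Gamma}$ with $\bar D_t$ via $P_{\lesssim\ell_q^{-1}}w_{q+1}^{(t)}$ (using $\delta_{q+1}\lambda_q^{3}\ell_q^{-\alpha}/\mu_{q+1}\lesssim\tau_q^{-1}$) is a point the paper passes over silently, and is a welcome clarification rather than a deviation.
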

\begin{proof}
The first four estimates on $\bar A_{\xi,k,n}$ can be proven analogously to the estimates in lemma \ref{a_estim}. In the process we apply corollary \ref{Flow_gam_estim} and lemma \ref{le-amp-reg}.

We only show the details for \eqref{A-diff}. In view of the definitions of $A_{\xi,k,n}$ and $\bar A_{\xi,k,n}$ we can write
\begin{equation}\notag
\begin{split}
\bar A_{\xi,k,n}- A_{\xi,k,n}=&\ \frac{1}{4\lambda_{q+1}}(\bar a_{\xi,k,n}^2-a_{\xi,k,n}^2)\frac{(\nabla\widetilde \Phi_k)^{T}}{|(\nabla\widetilde \Phi_k)^{T}\xi|^{\frac{3}2}}\xi\otimes \xi \frac{(\nabla\widetilde \Phi_k)}{|(\nabla\widetilde \Phi_k)^{T}\xi|^{\frac{3}2}}\\
&+\frac{1}{4\lambda_{q+1}}a_{\xi,k,n}^2\left[\frac{(\nabla\widetilde \Phi_k)^{T}}{|(\nabla\widetilde \Phi_k)^{T}\xi|^{\frac{3}2}}-\frac{(\nabla \Phi_k)^{T}}{|(\nabla \Phi_k)^{T}\xi|^{\frac{3}2}} \right]
\xi\otimes\xi \frac{(\nabla\widetilde \Phi_k)}{|(\nabla\widetilde \Phi_k)^{T}\xi|^{\frac{3}2}}\\
&+\frac{1}{4\lambda_{q+1}}a_{\xi,k,n}^2\frac{(\nabla \Phi_k)^{T}}{|(\nabla \Phi_k)^{T}\xi|^{\frac{3}2}}\xi\otimes\xi \left[\frac{(\nabla\widetilde \Phi_k)}{|(\nabla\widetilde \Phi_k)^{T}\xi|^{\frac{3}2}}-\frac{(\nabla \Phi_k)}{|(\nabla \Phi_k)^{T}\xi|^{\frac{3}2}}\right].
\end{split}
\end{equation}
Thus it follows that
\begin{equation}\notag
\begin{split}
\|\bar A_{\xi,k,n}- A_{\xi,k,n}\|_0\lesssim&\ \lambda_{q+1}^{-1}\|\bar a_{\xi,k,n}^2-a_{\xi,k,n}^2 \|_0+\lambda_{q+1}^{-1}\|a_{\xi,k,n}^2 \|_0\|(\nabla\widetilde \Phi_k)-(\nabla \Phi_k)\|_0\\
&+\lambda_{q+1}^{-1}\|a_{\xi,k,n}^2 \|_0\left\||(\nabla\widetilde\Phi_k)^{T}\xi|^{\frac{3}2}-|(\nabla\Phi_k)^{T}\xi|^{\frac{3}2}\right\|_0\\
\lesssim&\ \lambda_{q+1}^{-1}\|\bar a_{\xi,k,n}^2-a_{\xi,k,n}^2 \|_0+\lambda_{q+1}^{-1}\|a_{\xi,k,n}^2 \|_0\|(\nabla\widetilde \Phi_k)-(\nabla \Phi_k)\|_0\\
\end{split}
\end{equation}
where we used the mean value inequality in the last step. According to the definitions (\ref{def.a.coeff}) and (\ref{def-bar-a}), applying the mean value inequality again gives 
\begin{equation}\notag
\begin{split}
\|\bar a_{\xi,k,n}^2-a_{\xi,k,n}^2 \|_0\lesssim&\ \lambda_{q+1}\delta_{q+1,n}\left\|(\nabla\widetilde \Phi_k)^{-T} \frac{\delta_{q+1,n}\mbox{Id}-\bar R_{q,n}}{\delta_{q+1,n}}(\nabla\widetilde \Phi_k)^{-1}-(\nabla \Phi_k)^{-T}\frac{\delta_{q+1,n}\mbox{Id}- R_{q,n}}{\delta_{q+1,n}}(\nabla \Phi_k)^{-1}\right\|_0\\
&+\lambda_{q+1}\delta_{q+1,n}\left\||(\nabla\widetilde\Phi_k)^{T}\xi|^{\frac{3}2}-|(\nabla\Phi_k)^{T}\xi|^{\frac{3}2}\right\|_0\\
\lesssim&\ \lambda_{q+1}\|(\nabla\widetilde \Phi_k)^{-T}-(\nabla\Phi_k)^{-T}\|_0\|\delta_{q+1,n}\mbox{Id}-\bar R_{q,n}\|_0+\lambda_{q+1}\|R_{q,n}-\bar R_{q,n}\|_0\\
&+\lambda_{q+1}\|\nabla\widetilde \Phi_k-\nabla\Phi_k\|_0\|\delta_{q+1,n}\mbox{Id}- R_{q,n}\|_0+\lambda_{q+1}\delta_{q+1,n}\|(\nabla\widetilde \Phi_k)^{-1}-(\nabla \Phi_k)^{-1}\|_0.
\end{split}
\end{equation}
On the other hand, we have the standard mollification estimate
\[\|R_{q,n}-\bar R_{q,n}\|_0\lesssim \|\bar D_{t,\Gamma}R_{q,n}\|_0\ell_{t,q}\lesssim \delta_{q+1,n}\tau_q^{-1}\ell_{t,q}.\]
Combining the last three estimates and applying lemmas \ref{a_estim}, \ref{flow_stabil} and \ref{le-bar-R},
% and Lemma \ref{le-amp-reg}, 
we deduce
\begin{equation}\notag
\begin{split}
\|\bar A_{\xi,k,n}- A_{\xi,k,n}\|_0\lesssim&\ \|\nabla\widetilde \Phi_k-\nabla\Phi_k\|_0\|\delta_{q+1,n}\mbox{Id}-\bar R_{q,n}\|_0+\|R_{q,n}-\bar R_{q,n}\|_0\\
&+\|\nabla\widetilde \Phi_k-\nabla\Phi_k\|_0\|\delta_{q+1,n}\mbox{Id}- R_{q,n}\|_0+\delta_{q+1,n}\|(\nabla\widetilde \Phi_k)^{-1}-(\nabla \Phi_k)^{-1}\|_0\\
\lesssim&\ \delta_{q+1,n}\left(\tau_q\frac{\delta_{q+1}\lambda_q^{3}\ell_q^{-\alpha}}{\mu_{q+1}}+\tau_q^{-1}\ell_{t,q} \right)
\lesssim \delta_{q+1,n}\left(\frac{\delta_{q+1}^{\frac12}\lambda_q^{\frac12}}{\delta_q^{\frac12}\lambda_{q+1}^{\frac12}}+\frac{\lambda_q}{\lambda_{q+1}}\lambda_{q+1}^{\alpha} \right)
\end{split}
\end{equation}
where we recalled
\begin{equation}\notag
\begin{split}
\tau_q=\delta_q^{-\frac12}\lambda_q^{-\frac{3}2}\lambda_{q+1}^{-\alpha},\quad
\mu_{q+1}=\delta_{q+1}^{\frac12}\lambda_q\lambda_{q+1}^{\frac12}\lambda_{q+1}^{4\alpha},\quad
\ell_{t,q}=\delta_q^{-\frac12}\lambda_q^{-\frac12}\lambda_{q+1}^{-1}.
\end{split}
\end{equation}
Note 
\[\frac{\lambda_q}{\lambda_{q+1}}< \frac{\delta_{q+1}^{\frac12}\lambda_q^{\frac12}}{\delta_q^{\frac12}\lambda_{q+1}^{\frac12}}, \ \ \mbox{for} \ \ \beta<\frac12.\]
Thus for small enough $\alpha$ depending on $\beta$ and $b$, the following inequality
\[\frac{\lambda_q}{\lambda_{q+1}}\lambda_{q+1}^\alpha\leq \frac{\delta_{q+1}^{\frac12}\lambda_q^{\frac12}}{\delta_q^{\frac12}\lambda_{q+1}^{\frac12}}\]
holds. The inequality (\ref{A-diff}) then follows immediately.
\end{proof}
We are now in a position to estimate the flow error, which we do in the following lemma.

\begin{lem}\label{le-flow-error-est}
The following estimates for the flow error
\begin{equation}\notag
\begin{split}
\left\|\sum_{\xi,k,n}g_{\xi,k,n}^2(\bar A_{\xi,k,n}-A_{\xi,k,n})\right\|_N
\lesssim&\ \delta_{q+1}\frac{\delta_{q+1}^{\frac12}\lambda_q^{\frac12}}{\delta_q^{\frac12}\lambda_{q+1}^{\frac12}}\lambda_{q+1}^N, \ \ \forall \ N\geq 0,\\
\end{split}
\end{equation}
\begin{equation}\notag
\begin{split}
\left\|\tilde D_{t,\Gamma}\sum_{\xi,k,n}g_{\xi,k,n}^2(\bar A_{\xi,k,n}-A_{\xi,k,n})\right\|_N
\lesssim&\ \mu_{q+1}\delta_{q+1}\lambda_{q+1}^N, \ \ \forall \ N\geq 0\\
\end{split}
\end{equation}
hold with implicit constants depending on $\Gamma, M, N$ and $\alpha$. 
\end{lem}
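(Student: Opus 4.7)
The plan is to reduce both estimates to pointwise control of $\|\bar A_{\xi,k,n} - A_{\xi,k,n}\|_N$, using interpolation to transfer the small $C^0$ bound \eqref{A-diff} to all higher spatial norms. Since the temporal profiles $g_{\xi,k,n+1}(\mu_{q+1}\cdot)$ depend only on time and the supports of the tensor fields $g_{\xi,k,n+1}^2 \bar a_{\xi,k,n}$ are pairwise disjoint (as discussed in section \ref{sec-construct-Nash}), spatial derivatives pass through the sum, so the first estimate reduces to bounding $\sup_{\xi,k,n}\|\bar A_{\xi,k,n} - A_{\xi,k,n}\|_N$. For $0 \leq N \leq L_t - 3$, I interpolate between \eqref{A-diff} and the bound $\|\bar A_{\xi,k,n}\|_{L_t-3} + \|A_{\xi,k,n}\|_{L_t-3} \lesssim \delta_{q+1,n} \lambda_q^{L_t-3}$ coming from lemmas \ref{a_estim} and \ref{le_est_A}. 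This yields
\[
\|\bar A_{\xi,k,n} - A_{\xi,k,n}\|_N \lesssim \delta_{q+1,n}\, s^{1-N/(L_t-3)} \lambda_q^N,\qquad s := (\lambda_q/\lambda_{q+1})^{\beta+1/2},
\]
and the target bound follows from the elementary inequality $(\lambda_q/\lambda_{q+1})^N \leq s^{N/(L_t-3)}$, valid since $L_t - 3 \geq \beta + 1/2$. For $N > L_t - 3$, the direct high-derivative bounds from lemmas \ref{a_estim} and \ref{le_est_A} together with $\ell_q^{-1} = (\lambda_q \lambda_{q+1})^{1/2}$ give the claim after a short comparison of exponents on $\lambda_q$ and $\lambda_{q+1}$.

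For the second estimate, the product rule gives
\[
\tilde D_{t,\Gamma}\bigl[g_{\xi,k,n+1}^2(\mu_{q+1}\cdot)(\bar A_{\xi,k,n} - A_{\xi,k,n})\bigr] = 2\mu_{q+1}\,(g_{\xi,k,n+1} g'_{\xi,k,n+1})(\mu_{q+1}\cdot)(\bar A - A) + g_{\xi,k,n+1}^2\, \tilde D_{t,\Gamma}(\bar A - A).
\]
The first contribution is bounded by $\mu_{q+1}$ times the spatial estimate already obtained, which suffices since the smallness factor $s \leq 1$ can be absorbed. For the second, I express $\tilde D_{t,\Gamma} A_{\xi,k,n} = \bar D_t A_{\xi,k,n} + (\tilde u_{q,\Gamma} - \bar u_q) \cdot \nabla A_{\xi,k,n}$ to convert the material derivative bounds of lemma \ref{a_estim} (stated for $\bar D_t$) into bounds for $\tilde D_{t,\Gamma}$. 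The correction involving $\tilde u_{q,\Gamma} - \bar u_q = P_{\lesssim \ell_q^{-1}} w_{q+1}^{(t)}$ is lower order by lemma \ref{w_t_estim} and the definition of $\mu_{q+1}$, so $\|\tilde D_{t,\Gamma} A_{\xi,k,n}\|_N \lesssim \delta_{q+1,n} \tau_q^{-1} \lambda_q^N$; combined with the analogous estimate for $\tilde D_{t,\Gamma}\bar A_{\xi,k,n}$ from lemma \ref{le_est_A}, this gives $\|\tilde D_{t,\Gamma}(\bar A - A)\|_N \lesssim \delta_{q+1,n} \tau_q^{-1} \lambda_q^N$. The required bound then follows from the direct check
\[
\frac{\tau_q^{-1}}{\mu_{q+1}} = \lambda_q^{1/2-\beta} \lambda_{q+1}^{\beta - 1/2 - 3\alpha} \leq (\lambda_{q+1}/\lambda_q)^N \quad \text{for all } N \geq 0,
\]
which holds for any $\alpha > 0$. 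Estimates for derivatives $N > L_t - 3$ are handled by the high-norm versions of the same lemmas, analogously to the spatial case.

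The main obstacle is purely bookkeeping: correctly tracking the relations between $\delta_q$, $\delta_{q+1,n}$, $\lambda_q$, $\lambda_{q+1}$, $\mu_{q+1}$, $\tau_q$, and $\ell_q$ so that the interpolation exponents and powers of the small factor $s = (\lambda_q/\lambda_{q+1})^{\beta + 1/2}$ conspire correctly. The one nontrivial condition that must hold is $L_t - 3 \geq \beta + 1/2$, which is comfortably satisfied by the choice $L_t = 10$ and $\beta < 1/2$. No genuinely new ideas are required beyond interpolation and the estimates already established in this section.
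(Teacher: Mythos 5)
Your proof is correct and follows essentially the same route as the paper: both reduce to single-term bounds via the disjoint temporal supports, use \eqref{A-diff} at $N=0$, absorb higher derivatives into $\lambda_{q+1}^N$ using the frequency gap $\lambda_q\ll\lambda_{q+1}$, and identify the $\mu_{q+1}$ factor from the temporal profiles as the dominant contribution to the material derivative. The only (immaterial) difference is that the paper skips interpolation for $N\geq 1$, observing that the crude bound $\|\bar A_{\xi,k,n}\|_{N}+\|A_{\xi,k,n}\|_{N}\lesssim \delta_{q+1}\lambda_q^{N}$ already suffices because a single derivative's gain $\lambda_q/\lambda_{q+1}$ beats the smallness factor $\delta_{q+1}^{1/2}\lambda_q^{1/2}\delta_q^{-1/2}\lambda_{q+1}^{-1/2}$ when $\beta<1/2$.
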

\begin{proof}
The first estimate of the lemma is true for $N=0$ by \eqref{A-diff}. For any $N\geq 0$, lemmas \ref{a_estim} and \ref{le_est_A} give
\begin{equation}\notag
\begin{split}
\|g^2_{\xi,k,n} (\bar A_{\xi,k,n}-A_{\xi,k,n})\|_{N+1} &\lesssim \|\bar A_{\xi,k,n}\|_{N+1}+\|A_{\xi,k,n}\|_{N+1} \\
&\lesssim \delta_{q+1}\lambda_q^{N+1}
\lesssim\ \delta_{q+1}\lambda_q\lambda_{q+1}^{N}\lesssim \delta_{q+1}\frac{\delta_{q+1}^{\frac12}\lambda_q^{\frac12}}{\delta_q^{\frac12}\lambda_{q+1}^{\frac12}}\lambda_{q+1}\lambda_{q+1}^{N}
\end{split}
\end{equation}
where in the last step we used 
\[\lambda_q\leq \frac{\delta_{q+1}^{\frac12}\lambda_q^{\frac12}}{\delta_q^{\frac12}\lambda_{q+1}^{\frac12}}\lambda_{q+1}.\]
We conclude the proof of the first estimate of the lemma.

The material derivative of the flow error can be estimated similarly to give
\begin{equation}\notag
\|\tilde D_{t,\Gamma}\sum_{\xi,k,n}g_{\xi,k,n}^2(\bar A_{\xi,k,n}-A_{\xi,k,n})\|_{N}\lesssim  \mu_{q+1}\delta_{q+1}\lambda_{q+1}^{N}.
\end{equation}

\end{proof}

\begin{lem} \label{le_est_delta_B}
We have the estimates for the main oscillation error, 
\begin{equation}\notag
    \|g^2_{\xi,k,n+1} \delta B_{\xi, k, n}\|_N \lesssim \delta_{q+1}\frac{\lambda_q}{\lambda_{q+1}}\lambda_{q+1}^N, \,\,\, \forall N\geq 0,
\end{equation}
\begin{equation}\notag
    \|\tilde D_{t,\Gamma} g^2_{\xi,k,n+1} \delta B_{\xi, k, n} \|_N  \lesssim \delta_{q+1}\mu_{q+1}\frac{\lambda_q}{\lambda_{q+1}}\lambda_{q+1}^N , \,\,\, \forall N\geq 0,
\end{equation}
% \begin{equation}\notag
%     \|\delta B_{\xi, k, n}\|_{N+L-4} \lesssim \delta_{q+1}\frac{\lambda_q}{\lambda_{q+1}}\lambda_q^{L-4}\ell_q^{-N}, \,\,\, \forall N\geq 0,
% \end{equation}
% \begin{equation}\notag
%     \|\tilde D_{t,\Gamma} \delta B_{\xi, k, n} \|_{N+L-4}  \lesssim \delta_{q+1}\tau_q^{-1}\frac{\lambda_q}{\lambda_{q+1}}\lambda_q^{L-4}\ell_q^{-N} , \,\,\, \forall N\geq 0
%     \end{equation}
with implicit constants depending on $n$, $\Gamma$, $M$, and $N$. 
\end{lem}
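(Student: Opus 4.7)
The strategy is to exploit the explicit formula for $\delta B_{\xi, k, n}$ provided by the bilinear microlocal lemma~\ref{le-bilinear-odd}. By construction, $\delta B_{\xi,k,n}$ is the remainder obtained after replacing the full quadratic self-interaction of a single spatially-oscillatory building block by its leading-order simple tensor $\bar A_{\xi, k, n}$. As suggested in the heuristic outline, the self-interaction can be written as an integral of the form
\[
\int_{\mathbb{R}^2\times\mathbb{R}^2} K_{\lambda_{q+1}}(h_1,h_2)\,\bar a_{\xi,k,n}(x-h_1)\bar a_{\xi,k,n}(x-h_2)\, e^{i\lambda_{q+1}\widetilde\Phi_k(x-h_1)\cdot\eta}\, e^{i\lambda_{q+1}\widetilde\Phi_k(x-h_2)\cdot\zeta}\,dh_1 dh_2\,,
\]
and $\bar A_{\xi,k,n}$ is extracted by freezing $\bar a_{\xi,k,n}(x-h_j)\approx \bar a_{\xi,k,n}(x)$ and linearizing $\widetilde\Phi_k(x-h_j)\approx \widetilde\Phi_k(x)-\nabla\widetilde\Phi_k(x)h_j$. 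Hence $\delta B_{\xi,k,n}$ is expressible as a sum of analogous integrals in which at least one of these first-order approximations is replaced by its Taylor remainder.

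The first key observation is that $K_{\lambda_{q+1}}$ is localized at frequencies $\approx \lambda_{q+1}$, so after rescaling $h_j\mapsto \lambda_{q+1}^{-1}h_j$ its effective support lives on scale $\lambda_{q+1}^{-1}$ and its $L^1$-norm (and $L^1$-norms against $|h_j|^k$) is uniformly bounded by $\lambda_{q+1}^{-k}$ times a constant depending only on the multiplier of the bilinear microlocal lemma. The second key observation, from lemmas~\ref{le-amp-reg} and corollary~\ref{Flow_gam_estim}, is that $\bar a_{\xi,k,n}$ and $\nabla\widetilde\Phi_k$ have all derivatives bounded at frequency scale $\lesssim \lambda_q$. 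Combining these with the standard Taylor-with-remainder inequality gives, at the level of $N=0$,
\[
\|\delta B_{\xi,k,n}\|_0 \lesssim \frac{\lambda_q}{\lambda_{q+1}} \,\sup_\xi \|\bar A_{\xi,k,n}\|_0 \lesssim \delta_{q+1}\frac{\lambda_q}{\lambda_{q+1}}\,,
\]
the factor $\lambda_q/\lambda_{q+1}$ being the characteristic ratio of the amplitude's and kernel's frequency scales.

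For the higher $N$ estimates, I would observe that $g_{\xi,k,n+1}^2 \delta B_{\xi,k,n}$ is supported (in Fourier) in an annulus at scale $\approx \lambda_{q+1}$ coming from the kernel $K_{\lambda_{q+1}}$ paired with the low-frequency factors. Consequently each additional spatial derivative above the $N=0$ bound costs only $\lambda_{q+1}$, which upgrades the $N=0$ estimate to $\|g_{\xi,k,n+1}^2 \delta B_{\xi,k,n}\|_N \lesssim \delta_{q+1}(\lambda_q/\lambda_{q+1})\lambda_{q+1}^N$. Here I use the inductive input that $\bar a_{\xi,k,n}$ loses only $\lambda_q^N$ derivatives, which means the Taylor remainders do not deteriorate faster than this when we differentiate; so the frequency-localization argument (via $\tilde P_{\approx \lambda_{q+1}}$, which acts as identity on $g^2 \delta B$) seals the estimate. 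For the material derivative, the dominant cost comes from $\tilde D_{t,\Gamma}$ landing on the temporal profile $g^2_{\xi,k,n+1}(\mu_{q+1}\cdot)$, contributing a factor $\mu_{q+1}$; all other terms ($\tilde D_{t,\Gamma}$ on $\bar a_{\xi,k,n}$, on $\widetilde \Phi_k$, or the commutator with $\tilde u_{q,\Gamma}\cdot\nabla$ through $P_{\approx \lambda_{q+1}}$ handled by proposition~\ref{prop-commu-loc}) produce at most the smaller cost $\tau_q^{-1}$ or $\|\tilde u_{q,\Gamma}\|_1\lambda_{q+1}$, both absorbed by $\mu_{q+1}$ since $\mu_{q+1}\tau_q>1$.

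The main obstacle is bookkeeping: the expression for $\delta B_{\xi,k,n}$ contains several structurally similar pieces (pure-amplitude Taylor remainders, pure-phase Taylor remainders, and mixed terms), and each must be shown to enjoy the same frequency localization near $\lambda_{q+1}$ and the same $\lambda_q/\lambda_{q+1}$ gain. The frequency localization, in particular, requires a careful inspection of the bilinear microlocal lemma to confirm that none of the Taylor remainders reintroduce support outside the annulus $\approx \lambda_{q+1}$ (after absorbing the slowly-varying factors). Once this is established, the claimed $N$-derivative and material-derivative estimates follow uniformly by the $\lambda_{q+1}$ cost of spatial derivatives and the $\mu_{q+1}$ cost of material derivatives.
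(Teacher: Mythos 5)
Your $N=0$ argument is essentially the paper's: write $\delta B_{\xi,k,n}$ via the integral representation from lemma~\ref{le-bilinear-odd}, observe that the Taylor remainders of $\bar a_{\xi,k,n}$ and $\widetilde\Phi_k$ carry factors of $|h|\lambda_q$, and absorb the powers of $|h|$ using the kernel moment bound~\eqref{est.K}; this produces exactly the $\lambda_q/\lambda_{q+1}$ gain over $\|\bar A_{\xi,k,n}\|_0\lesssim\delta_{q+1,n}$. Your account of the material derivative (dominant cost $\mu_{q+1}$ from $g^2_{\xi,k,n+1}(\mu_{q+1}\cdot)$, everything else costing at most $\tau_q^{-1}\lesssim\mu_{q+1}$) also matches the paper's conclusion.

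The gap is in the higher-derivative step. You claim that $g^2_{\xi,k,n+1}\delta B_{\xi,k,n}$ is Fourier-supported in an annulus at scale $\approx\lambda_{q+1}$ and deduce the cost $\lambda_{q+1}$ per derivative by a Bernstein-type argument. This localization is false. First, even the full interaction $B_\lambda$ lives at frequencies $j+k$ with $j,k$ in $\lambda_{q+1}A'$, so the $\eta=-\zeta$ contributions sit near frequency zero, not in an annulus. Second, and more seriously, $\delta B_\lambda=B_\lambda-\tfrac14\bar a^2\frac{\nabla\widetilde\Phi^T\xi\otimes\nabla\widetilde\Phi^T\xi}{\lambda_{q+1}|\nabla\widetilde\Phi^T\xi|^3}$, and the subtracted leading term is not band-limited at all (neither $\bar a_{\xi,k,n}$, which involves $\gamma_\xi$ composed with $\nabla\widetilde\Phi_k$ and $\bar R_{q,n}$, nor $\nabla\widetilde\Phi_k$ has compact Fourier support), so $\delta B$ inherits no frequency localization. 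Nor can you split $\partial^N\delta B$ as $\partial^N B$ minus $\partial^N(\text{main term})$ and estimate each piece, because each piece is of size $\delta_{q+1,n}\lambda_{q+1}^N$, larger than the target by the very factor $\lambda_{q+1}/\lambda_q$ you are trying to gain. The correct route, which is what the paper does, is to differentiate the integral representation directly: derivatives landing on the phases $e^{i\lambda_{q+1}\widetilde\Phi_k\cdot(\eta+\zeta)}$, $e^{i\lambda_{q+1}\nabla\widetilde\Phi_k^T\eta\cdot h}$ and $e^{i\lambda_{q+1}R_{\widetilde\Phi_k}(\cdot,h)\cdot\xi}$ cost $\lambda_{q+1}$ each, but (via proposition~\ref{comp_estim}) also generate high powers of $|h|\lambda_{q+1}$, and it is precisely the moment bound~\eqref{est.K}, $\lambda^m\||\bar h|^mK_\lambda\|_{L^1}\lesssim\lambda^{-1}$, applied for $m$ up to roughly $4N$, that absorbs all of these after integration; derivatives on the amplitudes cost only $\lambda_q\leq\lambda_{q+1}$. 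You already have both ingredients in hand (the Taylor structure and the weighted kernel bounds), so the fix is mechanical, but the frequency-localization shortcut as written does not close the argument.
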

\begin{proof}
The proof is similar to that of proposition 4.5 in \cite{IM}.
Recall first the explicit form of the error term as given in the proof of lemma~\ref{le-bilinear-odd}. We note that 
    \begin{align*}
        \left\|R_{\widetilde \Phi_k}(\cdot,h)\right\|_N &\lesssim |h|^2 \|\na^2 \widetilde\Phi_k\|_N\\
        &\lesssim \begin{cases}
            |h|^2 \la_q^{N+1} \,, \qquad \forall N \in \{0,...,L_t-4\}\\ 
            |h|^2 \la_q^{L_t-3} \ell_q^{-N+L_t - 4}\,,\qquad \forall N \geq L_t - 4
        \end{cases}\\
        &\lesssim |h|^2 \la_q \la_{q+1}^N
    \end{align*}
    where we have used the estimates of corollary~\ref{Flow_gam_estim}. Similarly, we have
    \begin{align*}
        \left\|\na_h R_{\widetilde \Phi_k}(\cdot,h)\right\|_N &\lesssim |h| \|\na^2 \widetilde\Phi_k\|_N + |h|^2 \|\na^3 \widetilde\Phi_k\|_N\\
        &\lesssim \begin{cases}
            |h| \la_q^{N+1} + |h|^2 \la_q^{N+2} \,, \qquad \forall N \in \{0,...,L_t-5\}\\ 
            |h| \la_q^{L_t-4} \ell_q^{-N+L_t - 5} + |h|^2 \la_q^{L_t-3} \ell_q^{-N + L_t - 5}\,,\qquad \forall N \geq L_t - 5
        \end{cases}\\
        &\lesssim |h| \la_q (1+|h|\la_{q+1}) \la_{q+1}^N
    \end{align*}
    
    By applying proposition~\ref{comp_estim} with $\Psi(y) = e^{\pm iy}$ and $u(x) = \lambda_{q+1} R_{\widetilde \Phi_k}(x,h)\cdot \xi$, one has
    %\begin{align*}
       % \| e^{i\lambda_{q+1}R_{\widetilde \Phi_k}(x,h)\cdot\eta} \|_N &\lesssim_\eta 1 + \la_{q+1}|h|^2 \|\na^2\widetilde\Phi_k\|_N + (\la_{q+1} |h|^2 \|\na^2\widetilde\Phi_k\|_1)^{N} \\
        %&\lesssim 1 + |h|^2 \la_q \la_{q+1}^{N+1} + |h|^{2N} \la_q^N \la_{q+1}^{2N} \\
        % &\lesssim \begin{cases}
        %     1+\la_{q+1}^N |h|^{2N} \la_q^{N+1} \,, \qquad \forall N \in \{0,...,L-5\}\\ 
        %     1+\la_{q+1}^N |h|^{2N} \la_q^{L-4} \ell_q^{N-L+5}\,,\qquad \forall N > L-5
        % \end{cases}\\
        %&\lesssim (1+ |h|^{2N} \la_{q+1}^{2N})\la_{q+1}^N
    %\end{align*}
    \begin{eqnarray*}
         [e^{\pm i\lambda_{q+1}R_{\widetilde \Phi_k}(\cdot,h)\cdot\xi}]_N &\lesssim& \lambda_{q+1} \|R_{\tilde \Phi_k}(\cdot, h)\|_N + \lambda_{q+1}^N\| \|R_{\tilde \Phi_k}(\cdot,h)\|_1^N \\
         &\lesssim& |h|^2 \lambda_q \lambda_{q+1}^{N+1} + |h|^{2N} \lambda_q^N \lambda_{q+1}^{2N}\\
         &\lesssim& (|h|^2 \lambda_{q+1}^2 + |h|^{2N} \lambda_{q+1}^{2N}) \lambda_{q+1}^{N}\\
         &\lesssim&(1+|h|^{2N}\lambda_{q+1}^{2N})\lambda_{q+1}^N,
    \end{eqnarray*}
    for all $N>0$. In the last line, we use the elementary inequality $|h|^k \lambda_{q+1}^k \leq 1 + |h|^N \lambda_{q+1}^N$, which holds for all $0 \leq k \leq N$. The same estimate trivially holds also for $N=0$.
    Similarly, again applying proposition~\ref{comp_estim} now with $\Psi(y)= e^{\pm i y}$ and $u(x) = \lambda_{q+1}\nabla\widetilde\Phi_k ^T(x)\xi\cdot h$, one estimates 
    \begin{align*}                   [e^{\pm i\lambda_{q+1}\nabla\widetilde\Phi_k ^T\xi\cdot h}]_N 
        \lesssim \lambda_{q+1}|h|\|\nabla \tilde \Phi_k\|_N +(\lambda_{q+1}|h|\|\nabla \tilde \Phi_k\|_1)^N \lesssim \lambda_{q+1}^{N+1}|h| + \lambda_{q+1}^{2N}|h|^N \lesssim (1 + \lambda_{q+1}^N|h|^N)\lambda_{q+1}^N.
    \end{align*}
    In order to continue, we rewrite 
    $$Y^\xi_{\la_{q+1}}(x,h) = \int_0^1 \frac{d}{dr} \left( \bar a_{\xi,k,n}(x-rh)e^{i\la_{q+1} R_{\widetilde \Phi_k}(x,rh)\cdot\xi} \right)\,dr.$$
    Now one can estimate $Y^\xi_{\la_{q+1}}$, using corollary~\ref{Flow_gam_estim} and lemma~\ref{le-amp-reg}, as
    \begin{align*}
        [Y^\xi_{\la_{q+1}}(\cdot,h)]_N &\lesssim \sup_{0\leq r\leq 1} \bigg[ \frac{d}{dr}\left(e^{i\lambda_{q+1}R_{\widetilde \Phi_k}(\cdot,rh) \cdot \xi}\bar a_{\xi,k,n}(\cdot-rh)\right) \bigg]_N\\
        &\lesssim \sup_{0\leq r\leq 1} \bigg[e^{i\lambda_{q+1}R_{\widetilde \Phi_k}(\cdot,rh)\cdot\xi} i \lambda_{q+1}\na_{h}R_{\widetilde \Phi_k}(\cdot,rh)h \xi\,\, \bar a_{\xi,k,n}(\cdot-rh) \bigg]_N\\
        &\qquad + \sup_{0\leq r\leq 1} \bigg[ e^{i\lambda_{q+1}R_{\widetilde \Phi_k}(\cdot,rh)\cdot\xi}\, \na \bar a_{\xi,k,n}(\cdot-rh)\cdot h \bigg]_N\\
        &\lesssim \sum_{N_1+N_2+N_3=N}\lambda_{q+1}|h|\Big(1 +  |h|^{2N_1}\la_{q+1}^{2N_1}\Big)\la_{q+1}^{N_1} \cdot |h|\lambda_q(1+|h|\lambda_{q+1})\lambda_{q+1}^{N_2} \cdot \de_{q+1,n}^\frac12 \la_{q+1}^{\frac12 +N_3}\\
        &\qquad + \sum_{N_1+N_3=N} |h| \Big(1 + |h|^{2N_1} \la_{q+1}^{2N_1}\Big)\la_{q+1}^{N_1} \la_q \de_{q+1,n+1}^\frac12 \la_{q+1}^{\frac12 +N_3}\\
        &\lesssim \lambda_q |h| \delta_{q+1,n}^{\frac12} \lambda_{q+1}^{\frac12}\Big(1+|h|^2\lambda_{q+1}^2\Big)\Big(1 + |h|^{2N}\lambda_{q+1}^{2N}\Big)\lambda_{q+1}^N \\
        & \lesssim \lambda_q|h| \delta_{q+1,n}^{\frac12} \lambda_{q+1}^{\frac12}\Big(1+|h|^{2(N+1)}\lambda_{q+1}^{2(N+1)}\Big)\lambda_{q+1}^N.
    \end{align*}
    Of course, the same holds for $Y_{\lambda_{q+1}}^{-\xi}$. With $\eta, \zeta \in \{\xi,-\xi\}$, we estimate $Y^{\eta,\zeta}_{\la_{q+1}}(x,h_1,h_2)$ as
    \begin{align*}
        [Y^{\eta,\zeta}_{\la_{q+1}}(\cdot,h_1,h_2)]_N &\lesssim \sum_{N_1+N_2=N} ([Y^\eta_{\la_{q+1}}(\cdot,h_1)]_{N_1}+[Y^\zeta_{\la_{q+1}}(\cdot,h_2)]_{N_1})\|\bar a_{\xi,k,n}\|_{N_2} + [Y^\eta_{\la_{q+1}}(\cdot,h_1)]_{N_1} [Y^\zeta_{\la_{q+1}}(\cdot,h_2)]_{N_2}\\
        &\lesssim \delta_{q+1,n} \lambda_{q+1} \lambda_q(|h_1| + |h_2| + \lambda_q |h_1||h_2|)\\
        & \qquad (1+|h_1|^{2(N+1)}\lambda_{q+1}^{2(N+1)} + |h_2|^{2(N+1)}\lambda_{q+1}^{2(N+1)} + |h_1|^{2(N+1)}|h_2|^{2(N+1)}\lambda_{q+1}^{4(N+1)} ) \lambda_{q+1}^N
    \end{align*}
    Finally, we can estimate $\de B^{\eta,\zeta}_{\la_{q+1}}(x)$, using crucially~\eqref{est.K}, as
    \begin{align*}
        \| \de B^{\eta,\zeta}_{\la_{q+1}} \|_N &\lesssim \sum_{N_1+...+N_4=N} \|e^{i\lambda\widetilde\Phi_k \cdot (\eta+\zeta)}\|_{N_1} \int \|e^{i\lambda\nabla\widetilde\Phi_k^T\eta\cdot h_1}\|_{N_2}\|e^{i\lambda\nabla\widetilde\Phi_k^T\zeta\cdot h_2}\|_{N_3} |K_{\lambda_{q+1}}(h_1,h_2)|\\
        & \qquad \qquad \qquad \|Y^{\eta,\zeta}_{\la_{q+1}}(x,h_1,h_2)\|_{N_4}\, dh_1dh_2\\
        &\lesssim \frac{\la_q \de_{q+1,n}}{\la_{q+1}} \la_{q+1}^N
    \end{align*}
    Summing over $\eta,\zeta \in \{\xi,-\xi\}$ proves the first estimate of the lemma.

    The second estimate on the material derivative is proved following similar steps. We note that 
    \begin{eqnarray*}
        \tilde D_{t,\Gamma} R_{\widetilde \Phi_k}(x,h)&=& \sum_{j,k=1}^2 h^j h^k \int_0^1 (1-s)(\tilde D_{t,\Gamma} \partial_i \partial_j \widetilde \Phi_k)(x-sh) ds \\ 
        && \qquad + h^j h^k \int_0^1 (1-s)(\tilde u_{q,\Gamma}(x-sh) - \tilde u_{q, \Gamma}(x))\cdot \nabla \partial_i\partial_j \widetilde \Phi_k(x-sh) ds.
    \end{eqnarray*}
    Therefore, using the mean-value inequality, we obtain the estimate
    \begin{align*}
        \left\|\tilde D_{t, \Gamma}R_{\widetilde \Phi_k}(\cdot,h)\right\|_N 
        &\lesssim |h|^2 \|\tilde D_{t, \Gamma} \na^2 \widetilde\Phi_k\|_N + |h|^3\left(\|\tilde u_{q,\Gamma}\|_1 \|\nabla^3 \widetilde \Phi_k\|_N + \|\tilde u_{q,\Gamma}\|_{N+1} \|\nabla^3 \widetilde \Phi_k\|_0\right) \\
        &\lesssim |h|^2 \left(\|\tilde D_{t, \Gamma} \na\widetilde\Phi_k\|_{N+1} +  \|\tilde u_{q, \Gamma}\|_{N+1}\| \na^2\widetilde\Phi_k\|_{0} +\|\tilde u_{q, \Gamma}\|_{1}\| \na^2\widetilde\Phi_k\|_{N}  \right)\\
        & \qquad +|h|^3 \left(\|\tilde u_{q,\Gamma}\|_1 \|\nabla^3 \widetilde \Phi_k\|_N + \|\tilde u_{q,\Gamma}\|_{N+1} \|\nabla^3 \widetilde \Phi_k\|_0\right)\\
        &\lesssim |h|^2 \de_q^{\frac12} \la_q^{\frac52} (1+|h| \lambda_{q+1})\la_{q+1}^N,
    \end{align*}
    where we have used the estimates of corollary~\ref{Flow_gam_estim}. Similarly, we have
    \begin{align*}
        \left\|\na_h \tilde D_{t, \Gamma} R_{\widetilde \Phi_k}(\cdot,h)\right\|_N &\lesssim |h| \|\tilde D_{t, \Gamma} \na^2 \widetilde\Phi_k\|_N + |h|^2 \| \tilde D_{t, \Gamma} \na^2 \widetilde\Phi_k\|_{N+1}\\
        &\qquad +|h|^2(\|\tilde u_{q,\Gamma}\|_{N+1} \|\na^3 \widetilde \Phi_k\|_0 +\|\tilde u_{q,\Gamma}\|_{1} \|\na^3 \widetilde \Phi_k\|_N )\\
        &\qquad +|h|^3(\|\tilde u_{q,\Gamma}\|_{N+1} \|\na^4 \widetilde \Phi_k\|_0 +\|\tilde u_{q,\Gamma}\|_{1} \|\na^4 \widetilde \Phi_k\|_N )\\
        &\lesssim |h| \de_q^{\frac12} \la_q^{\frac52}(1+|h|^2\la_{q+1}^2) \la_{q+1}^N
    \end{align*}
    %Now, again, by applying proposition~\ref{comp_estim} with $\Psi(y) = e^{\pm iy}$ and $u(x) = \lambda_{q+1} R_{\widetilde \Phi_k}(x,h)\cdot \xi$, one has
    %\begin{align*}
       % \| \tilde D_{t, \Gamma}e^{i\lambda_{q+1}R_{\widetilde \Phi_k}(\cdot,h)\cdot\xi} \|_N &\lesssim  \lambda_{q+1} (\|\tilde D_{t, \Gamma} R_{\tilde \Phi_k}(\cdot, h)\|_N + \|\tilde D_{t, \Gamma} R_{\tilde \Phi_k}(\cdot, h)\|_0 \|e^{i\lambda_{q+1}R_{\tilde \Phi_k}(\cdot,h) \cdot \xi}\|_N) \\
        %&\lesssim \la_{q+1}|h|^2 \de_q^{\frac12} \la_q^{\frac52}(1+ |h|^{2N} \la_{q+1}^{2N})\la_{q+1}^{N}  
    %\end{align*}
Following the same reasoning, we have
\begin{equation*}
    \|\tilde D_{t,\Gamma}\bar a_{\xi,k,n}(\cdot - h)\|_{N} \lesssim \delta_{q+1,n}^{\frac12}\lambda_{q+1}^{\frac12} \delta_q^{\frac12} \lambda_q^{\frac32}(1+|h|\lambda_{q+1})\lambda_{q+1}^N,
\end{equation*}
and 
\begin{equation*}
    \|\nabla_h \tilde D_{t,\Gamma}a(\cdot -h)\|_N \lesssim \delta_{q+1,n}^{\frac12}\lambda_{q+1}^{\frac12} \delta_q^{\frac12} \lambda_q^{\frac52}(1+|h|\lambda_{q+1})\lambda_{q+1}^{N}.
\end{equation*}
We continue, as before, by using Taylor's formula 
\begin{align*}
    \tilde D_{t,\Ga} Y^\xi_{\la_{q+1}}(x,h) &= \tilde D_{t,\Ga} \int_0^1 \frac{d}{dr} \left( \bar a_{\xi,k,n}(x-rh)e^{i\la_{q+1} R_{\widetilde \Phi_k}(x,rh)\cdot\xi} \right)\,dr\\
    &= \int_0^1 \frac{d}{dr} \tilde D_{t,\Ga} \left( \bar a_{\xi,k,n}(x-rh)e^{i\la_{q+1} R_{\widetilde \Phi_k}(x,rh)\cdot\xi} \right)\,dr
\end{align*}
    Now one can estimate $\tilde D_{t,\Ga} Y^\xi_{\la_{q+1}}$, using the above estimates, corollary~\ref{Flow_gam_estim} and lemma~\ref{le-amp-reg}, as
    \begin{align*}
        \|\tilde D_{t,\Ga} Y^\xi_{\la_{q+1}}(\cdot,h)\|_N &\lesssim \sup_{0\leq r\leq 1} \left\|\frac{d}{dr} \tilde D_{t,\Ga}\left(e^{i\lambda_{q+1}R_{\widetilde \Phi_k}(\cdot,rh) \cdot \xi}\bar a_{\xi,k,n}(\cdot-rh)\right) \right\|_N\\
        &\lesssim \sup_{0\leq r\leq 1} \left\| \frac{d}{dr} \left(e^{i\lambda_{q+1}R_{\widetilde \Phi_k}(\cdot,rh)\cdot\xi} i \lambda_{q+1} \tilde D_{t,\Ga}R_{\widetilde \Phi_k}(\cdot,rh)\cdot\xi\,\, \bar a_{\xi,k,n}(\cdot-rh)\right) \right\|_N\\
        &\qquad + \sup_{0\leq r\leq 1} \left\| \frac{d}{dr} \left( e^{i\lambda_{q+1}R_{\widetilde \Phi_k}(\cdot,rh)\cdot\xi}\,  \tilde D_{t,\Ga} \bar a_{\xi,k,n}(\cdot-rh)\right) \right\|_N\\
        &\lesssim \sup_{0\leq r\leq 1} \left\|e^{i\lambda_{q+1}R_{\widetilde \Phi_k}(\cdot,rh)\cdot\xi}\lambda_{q+1}^2|h| \nabla_h R_{\widetilde \Phi_k}(\cdot,rh) \tilde D_{t,\Ga}R_{\widetilde \Phi_k}(\cdot,rh)\cdot\xi\,\, \bar a_{\xi,k,n}(\cdot-rh) \right\|_N \\
        &\qquad + \sup_{0\leq r\leq 1}\left\|e^{i\lambda_{q+1}R_{\widetilde \Phi_k}(\cdot,rh)\cdot\xi} \lambda_{q+1}|h| \nabla_h \tilde D_{t,\Ga}R_{\widetilde \Phi_k}(\cdot,rh)\cdot\xi\,\, \bar a_{\xi,k,n}(\cdot-rh) \right\|_N\\
        &\qquad + \sup_{0\leq r\leq 1}\left\|e^{i\lambda_{q+1}R_{\widetilde \Phi_k}(\cdot,rh)\cdot\xi} \lambda_{q+1} |h| \tilde D_{t,\Ga}R_{\widetilde \Phi_k}(\cdot,rh)\cdot\xi\,\, \na\bar a_{\xi,k,n}(\cdot-rh) \right\|_N \\
        &\qquad + \sup_{0\leq r\leq 1} \left\| e^{i\lambda_{q+1}R_{\widetilde \Phi_k}(\cdot,rh)\cdot\xi}\, \lambda_{q+1}|h| \nabla_h R_{\widetilde \Phi_k}(\cdot,rh) \tilde D_{t,\Ga} \bar a_{\xi,k,n}(\cdot-rh) \right\|_N \\
        &\qquad + \sup_{0\leq r\leq 1} \left\| e^{i\lambda_{q+1}R_{\widetilde \Phi_k}(\cdot,rh)\cdot\xi}\, |h| \nabla_h \tilde D_{t,\Ga} \bar a_{\xi,k,n}(\cdot-rh) \right\|_N \\
        &\lesssim \lambda_{q+1}^2|h|\Big(\left\|e^{i\lambda_{q+1}R_{\widetilde \Phi_k}(\cdot,rh)\cdot\xi}\right\|_N \left\|\nabla_h R_{\widetilde \Phi_k}(\cdot,rh)\right\|_0 \left\|\tilde D_{t,\Ga}R_{\widetilde \Phi_k}(\cdot,rh)\right\|_0\left\|\bar a_{\xi,k,n} \right\|_0 \\
        & \qquad + \left\|e^{i\lambda_{q+1}R_{\widetilde \Phi_k}(\cdot,rh)\cdot\xi}\right\|_0 \left\|\nabla_h R_{\widetilde \Phi_k}(\cdot,rh)\right\|_N \left\|\tilde D_{t,\Ga}R_{\widetilde \Phi_k}(\cdot,rh)\right\|_0\left\|\bar a_{\xi,k,n} \right\|_0 \\
        & \qquad + \left\|e^{i\lambda_{q+1}R_{\widetilde \Phi_k}(\cdot,rh)\cdot\xi}\right\|_0 \left\|\nabla_h R_{\widetilde \Phi_k}(\cdot,rh)\right\|_0 \left\|\tilde D_{t,\Ga}R_{\widetilde \Phi_k}(\cdot,rh)\right\|_N\left\|\bar a_{\xi,k,n} \right\|_0 \\
         & \qquad + \left\|e^{i\lambda_{q+1}R_{\widetilde \Phi_k}(\cdot,rh)\cdot\xi}\right\|_0 \left\|\nabla_h R_{\widetilde \Phi_k}(\cdot,rh)\right\|_0 \left\|\tilde D_{t,\Ga}R_{\widetilde \Phi_k}(\cdot,rh)\right\|_0\left\|\bar a_{\xi,k,n} \right\|_N \Big)\\
         & +\lambda_{q+1}|h| \Big( \left\|e^{i\lambda_{q+1}R_{\widetilde \Phi_k}(\cdot,rh)\cdot\xi}\right\|_N\left\|\na_h\tilde D_{t,\Ga}R_{\widetilde \Phi_k}(\cdot,rh)\right\|_0 \left\|\bar a_{\xi,k,n} \right\|_0\\
         & \qquad + \left\|e^{i\lambda_{q+1}R_{\widetilde \Phi_k}(\cdot,rh)\cdot\xi}\right\|_0\left\|\na_h\tilde D_{t,\Ga}R_{\widetilde \Phi_k}(\cdot,rh)\right\|_N \left\|\bar a_{\xi,k,n} \right\|_0\\
         &\qquad + \left\|e^{i\lambda_{q+1}R_{\widetilde \Phi_k}(\cdot,rh)\cdot\xi}\right\|_0\left\|\na_h\tilde D_{t,\Ga}R_{\widetilde \Phi_k}(\cdot,rh)\right\|_0 \left\|\bar a_{\xi,k,n} \right\|_N \Big) \\ 
         & + \lambda_{q+1}|h|\Big(\left\|e^{i\lambda_{q+1}R_{\widetilde \Phi_k}(\cdot,rh)\cdot\xi}\right\|_N  \left\|\tilde D_{t,\Ga}R_{\widetilde \Phi_k}(\cdot,rh)\right\|_0\left\|\nabla \bar a_{\xi,k,n} \right\|_0 \\ 
         & \qquad + \left\|e^{i\lambda_{q+1}R_{\widetilde \Phi_k}(\cdot,rh)\cdot\xi}\right\|_0  \left\|\tilde D_{t,\Ga}R_{\widetilde \Phi_k}(\cdot,rh)\right\|_N\left\|\nabla \bar a_{\xi,k,n} \right\|_0 \\ 
         & \qquad + \left\|e^{i\lambda_{q+1}R_{\widetilde \Phi_k}(\cdot,rh)\cdot\xi}\right\|_0  \left\|\tilde D_{t,\Ga}R_{\widetilde \Phi_k}(\cdot,rh)\right\|_0\left\|\nabla \bar a_{\xi,k,n} \right\|_N\Big) \\ 
         &+ \lambda_{q+1}|h| \Big(\left\|e^{i\lambda_{q+1}R_{\widetilde \Phi_k}(\cdot,rh)\cdot\xi}\right\|_N \left\|\nabla_h R_{\widetilde \Phi_k}(\cdot,rh)\right\|_0 \left\|\tilde D_{t,\Gamma}\bar a_{\xi,k,n} \right\|_0 \\
         & \qquad + \left\|e^{i\lambda_{q+1}R_{\widetilde \Phi_k}(\cdot,rh)\cdot\xi}\right\|_0 \left\|\nabla_h R_{\widetilde \Phi_k}(\cdot,rh)\right\|_N \left\|\tilde D_{t,\Gamma}\bar a_{\xi,k,n} \right\|_0 \\
         & \qquad + \left\|e^{i\lambda_{q+1}R_{\widetilde \Phi_k}(\cdot,rh)\cdot\xi}\right\|_0 \left\|\nabla_h R_{\widetilde \Phi_k}(\cdot,rh)\right\|_0 \left\|\tilde D_{t,\Gamma}\bar a_{\xi,k,n} \right\|_N\Big) \\
         & + |h| \Big( \left\|e^{i\lambda_{q+1}R_{\widetilde \Phi_k}(\cdot,rh)\cdot\xi}\right\|_N\left\|\na_h \tilde D_{t,\Gamma}\bar a_{\xi,k,n} \right\|_0 + \left\|e^{i\lambda_{q+1}R_{\widetilde \Phi_k}(\cdot,rh)\cdot\xi}\right\|_0\left\|\na_h \tilde D_{t,\Gamma}\bar a_{\xi,k,n} \right\|_N\Big)\\
        &\lesssim \delta_{q+1,n}^{\frac12} \lambda_{q+1}^{\frac12}\delta_q^{\frac12}\lambda_q^{\frac32} |h|\lambda_q (1+ \lambda_{q+1}^{2N+5}|h|^{2N+5})\lambda_{q+1}^N.
    \end{align*}
    This leads to the following estimate $Y^{\eta,\zeta}_{\la_{q+1}}(x,h_1,h_2)$:
    \begin{align*}
        \|\tilde D_{t,\Ga} Y^{\eta,\zeta}_{\la_{q+1}}(\cdot,h_1,h_2)\|_N &\lesssim \delta_{q+1,n}\lambda_{q+1} \delta_q^{\frac12}\lambda_q^{\frac32} \lambda_q(|h_1|+|h_2|+\lambda_q|h_1||h_2|)\\
        &\qquad (1+|h_1|^{2N+5}\lambda_{q+1}^{2N+5}+|h_2|^{2N+5}\lambda_{q+1}^{2N+5} + |h_1|^{2N+5}|h_2|^{2N+5}\lambda_{q+1}^{4N+10}).
    \end{align*}
    In view of  
    \begin{align*}
        \| \tilde D_{t, \Gamma} e^{i\lambda_{q+1}\nabla\widetilde\Phi_k ^T\xi\cdot h} \|_N \lesssim \lambda_{q+1}|h|\left(\|\tilde D_{t,\Gamma} \nabla \widetilde \Phi_k\|_N + \|e^{i\lambda_{q+1}\nabla\widetilde\Phi_k ^T\xi\cdot h}\|_N \|\tilde D_{t,\Gamma} \nabla \widetilde \Phi_k\|_0\right) \lesssim \la_{q+1}|h| \de_q^{\frac12} \la_q^{\frac32}(1+ \la_{q+1}^{N} |h|^N)\la_{q+1}^N  \,,
    \end{align*}
    we finally obtain 
    \begin{equation*}
        \|\tilde D_{t,\Gamma} \de B^{\eta,\zeta}_{\la_{q+1}}\|_N \lesssim \frac{\delta_{q+1,n} \delta_q^{1/2}\lambda_q^{3/2}\lambda_q}{\lambda_{q+1}}. 
    \end{equation*}
    This, together with the fact that the temporal frequency of the profile functions $\{g_{\xi,k,n}\}$ is $\mu_{q+1}> \delta_q^{1/2}\lambda_q^{3/2}$, implies the final claimed estimate.
\end{proof}

\subsubsection{Estimates of $R_{q+1, R}$}
Recall
\[R_{q+1,R}=R_{q,\Gamma}+P_{q+1,\Gamma}+\div^{-1}\nabla^{\perp} \Delta^{-1}\div\left(T[\theta_{q+1}^{(p)}](\theta_{q,\Ga}-\tilde\theta_{q,\Ga})+T[\theta_{q,\Ga}-\tilde\theta_{q,\Ga}]\theta_{q+1}^{(p)}\right)\]
with
\begin{equation}\notag
\begin{split}
P_{q+1,\Gamma}=&\  \div^{-1} \nabla^{\perp} \Delta^{-1}\div\left(T[\theta_{q+1}^{(t)}] \theta_{q+1}^{(t)} \right)+R_q-R_{q,0}\\
&+\div^{-1} \nabla^{\perp} \Delta^{-1}\div\left(T[\theta_{q+1}^{(t)}] (\theta_{q}-\bar\theta_{q})+T[\theta_{q}-\bar\theta_{q}]\theta_{q+1}^{(t)} \right).
\end{split}
\end{equation}

\begin{lem}\label{le-est-glue}
The final gluing error satisfies the estimates
\begin{equation}\notag
\begin{split}
\|R_{q,\Gamma}\|_N\lesssim&\ \frac{\delta_{q+1}\lambda_q}{\lambda_{q+1}}\lambda_{q+1}^N, \ \ \forall N \geq 0,\\
\|\tilde D_{t,\Gamma}R_{q,\Gamma}\|_N\lesssim&\ \tau_q^{-1}\frac{\delta_{q+1}\lambda_q}{\lambda_{q+1}}\lambda_{q+1}^N, \ \ \forall N \geq 0,
\end{split}
\end{equation}
with implicit constants depending on $\Gamma, M,N$ and $\alpha$.
\end{lem}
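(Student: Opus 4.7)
The estimates follow from applying proposition \ref{NewIter} at the index $n=\Ga$, combined with the key observation that the choice $\Ga = \lceil 1/(\tfrac12-\beta) \rceil$ in \eqref{def:ga} forces $\Ga(\tfrac12-\beta)\geq 1$, so that
\begin{equation*}
    \de_{q+1,\Ga} = \de_{q+1}\left(\frac{\la_q}{\la_{q+1}}\right)^{\Ga(\frac12-\beta)} \leq \de_{q+1}\frac{\la_q}{\la_{q+1}}.
\end{equation*}
This is precisely the gain required on the right-hand side of the target estimates.

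For the spatial estimate, I would split into the regimes $N \leq L_t$ and $N > L_t$. In the first, \eqref{NewIter_1} and the displayed inequality above directly give
\begin{equation*}
    \|R_{q,\Ga}\|_N \lesssim \de_{q+1,\Ga}\la_q^{N-\alpha} \lesssim \de_{q+1}\frac{\la_q}{\la_{q+1}}\la_q^N \leq \de_{q+1}\frac{\la_q}{\la_{q+1}}\la_{q+1}^N,
\end{equation*}
since $\la_q\leq \la_{q+1}$. For $N>L_t$, writing $N=L_t+N'$ with $N'\geq 0$ and using the high-frequency estimate \eqref{NewIter_3} together with $\ell_q^{-1}=(\la_q\la_{q+1})^{1/2}$, one computes
\begin{equation*}
    \|R_{q,\Ga}\|_N \lesssim \de_{q+1,\Ga}\la_q^{L_t-\alpha}\ell_q^{-N'} \lesssim \de_{q+1}\frac{\la_q}{\la_{q+1}}\,\la_q^{L_t-\alpha-N'/2}\la_{q+1}^{N'/2} \la_{q+1}^{L_t} \la_{q+1}^{-L_t},
\end{equation*}
and the desired bound $\lesssim \de_{q+1}(\la_q/\la_{q+1})\la_{q+1}^N$ follows from $\la_q\leq \la_{q+1}$ after a short bookkeeping of exponents. (Alternatively, one may interpolate between the bounds at orders $0$ and $L_t+N'$ to reach the same conclusion.)

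For the material derivative estimate, the main point is that $\tilde D_{t,\Ga}$ differs from $\bar D_t$ by the transport term $(P_{\lesssim \ell_q^{-1}}w_{q+1}^{(t)})\cdot\nabla$. The $\bar D_t$ contribution is handled in exact parallel to the spatial case using \eqref{NewIter_2} and \eqref{NewIter_4}, gaining the factor $\tau_q^{-1}$. For the correction term, the Leibniz rule and a standard Kato--Ponce-type estimate give
\begin{equation*}
    \|(P_{\lesssim \ell_q^{-1}}w_{q+1}^{(t)})\cdot \nabla R_{q,\Ga}\|_N \lesssim \|w_{q+1}^{(t)}\|_0\|R_{q,\Ga}\|_{N+1} + \|w_{q+1}^{(t)}\|_N\|\nabla R_{q,\Ga}\|_0,
\end{equation*}
and lemma \ref{w_t_estim} shows $\|w_{q+1}^{(t)}\|_0\ll \tau_q^{-1}$ (this reduces to verifying $\de_{q+1}^{1/2}/(\de_q^{1/2}(\la_q\la_{q+1})^{1/2})\lesssim \la_{q+1}^{5\alpha}$, which holds since $\beta<1/2$), so this contribution is absorbed into the $\bar D_t$ term. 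The main subtlety to track here is the high-derivative regime $N>L_t$ for the correction term, where one has to balance the loss $\ell_q^{-(N-L_t)}$ in $R_{q,\Ga}$ against the frequency localization of $w_{q+1}^{(t)}$ at scale $\ell_q^{-1}$; this is where the margin given by $L_t\leq L_R\leq L_\theta-1$ and the mollifier $P_{\lesssim \ell_q^{-1}}$ is used, and will be the only real technical step. Putting everything together yields both claimed bounds.
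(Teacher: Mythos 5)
Your route is the same as the paper's: invoke proposition \ref{NewIter} at $n=\Gamma$, observe that the choice \eqref{def:ga} gives $\Gamma(\tfrac12-\beta)\geq 1$ and hence $\delta_{q+1,\Gamma}\leq \delta_{q+1}\lambda_q/\lambda_{q+1}$, and split $\tilde D_{t,\Gamma}=\bar D_t+\tilde w_{q+1}^{(t)}\cdot\nabla$. The spatial estimate (in both regimes $N\leq L_t$ and $N>L_t$, using $\ell_q^{-1}=(\lambda_q\lambda_{q+1})^{1/2}\leq\lambda_{q+1}$) and the $\bar D_t$ contribution are handled correctly.

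The gap is in the absorption of the correction term $\tilde w_{q+1}^{(t)}\cdot\nabla R_{q,\Gamma}$. You bound it by $\|w_{q+1}^{(t)}\|_0\|R_{q,\Gamma}\|_{N+1}+\|w_{q+1}^{(t)}\|_N\|\nabla R_{q,\Gamma}\|_0$ and claim absorption because $\|w_{q+1}^{(t)}\|_0\ll\tau_q^{-1}$. But the extra derivative on $R_{q,\Gamma}$ is not free: if $\|R_{q,\Gamma}\|_{N+1}$ is estimated by the conclusion of the first part, i.e.\ $\delta_{q+1}(\lambda_q/\lambda_{q+1})\lambda_{q+1}^{N+1}$, then absorption into $\tau_q^{-1}\delta_{q+1}(\lambda_q/\lambda_{q+1})\lambda_{q+1}^{N}$ requires $\|w_{q+1}^{(t)}\|_0\,\lambda_{q+1}\lesssim\tau_q^{-1}$, not merely $\|w_{q+1}^{(t)}\|_0\lesssim\tau_q^{-1}$. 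The stronger inequality reduces to $(\lambda_{q+1}/\lambda_q)^{\frac12-\beta}\lesssim\lambda_{q+1}^{C\alpha}$, which fails once $\alpha$ is taken small relative to $(b-1)(\tfrac12-\beta)$ — and $\alpha$ must be allowed arbitrarily small. The correct fix, which is what the paper does, is to use the sharper $N$-growth of proposition \ref{NewIter}: $\|R_{q,\Gamma}\|_{N+1}\lesssim\delta_{q+1,\Gamma}\lambda_q^{N+1-\alpha}$ for $N+1\leq L_t$ (and a factor $\ell_q^{-1}$ per derivative beyond $L_t$), so that the extra derivative costs only $\lambda_q$ (resp.\ $\ell_q^{-1}$) rather than $\lambda_{q+1}$; the absorption condition then becomes $\|w_{q+1}^{(t)}\|_0\,\lambda_q\lesssim\tau_q^{-1}$, i.e.\ $\delta_{q+1}\lambda_q^{3}\ell_q^{-\alpha}/\mu_{q+1}\leq\tau_q^{-1}$, which does hold for all admissible parameters. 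With that substitution your argument closes.
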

\begin{proof}
It follows from proposition \ref{NewIter} and the fact that
\[\delta_{q+1,\Gamma}=\delta_{q+1}\left(\frac{\lambda_q}{\lambda_{q+1}}\right)^{(\frac12-\beta)\Gamma}\leq \delta_{q+1}\frac{\lambda_q}{\lambda_{q+1}} \]
that, for all $N \geq 0$,
\[\|R_{q,\Gamma}\|_N \lesssim \frac{\delta_{q+1}\lambda_q}{\lambda_{q+1}}\lambda_{q+1}^N.\]
For the material derivative we write
\[\tilde D_{t,\Gamma}R_{q,\Gamma}=\bar D_t R_{q,\Gamma}+ \tilde w_{q+1}^{(t)}\cdot\nabla R_{q,\Gamma}.\]
It follows from proposition \ref{NewIter} and the definition of $\delta_{q+1,\Gamma}$ that
\[\|\bar D_t R_{q,\Gamma}\|_N \lesssim \tau_q^{-1}\frac{\delta_{q+1}\lambda_q}{\lambda_{q+1}}\lambda_{q+1}^N.\]
On the other hand, applying proposition \ref{NewIter} and lemma \ref{w_t_estim}, we obtain
\begin{equation}\notag
\|\tilde w_{q+1}^{(t)}\cdot\nabla R_{q,\Gamma}\|_N\lesssim \|\tilde w_{q+1}^{(t)}\|_N\|R_{q,\Gamma}\|_1+\|\tilde w_{q+1}^{(t)}\|_0\|R_{q,\Gamma}\|_{N+1} \lesssim \frac{\delta_{q+1}\lambda_q^{2}\ell_q^{-\alpha}}{\mu_{q+1}}\frac{\delta_{q+1}\lambda_q^2}{\lambda_{q+1}}\lambda_{q+1}^N.
\end{equation}
The wanted estimate follows upon noting that
\[\frac{\delta_{q+1}\lambda_q^{3}\ell_q^{-\alpha}}{\mu_{q+1}}\leq \tau_q^{-1}.\]
\end{proof}

\begin{lem}\label{le-Newton-est}
The Newton error satisfies the estimates
\begin{equation}
\|\div^{-1} \nabla^{\perp} \Delta^{-1}\div\left(T[\theta_{q+1}^{(t)}] \theta_{q+1}^{(t)} \right)\|_N\lesssim \frac{\delta_{q+1}\lambda_q}{\lambda_{q+1}}\lambda_{q+1}^N, \ \ \forall \ N\geq 0,
\end{equation}
\begin{equation}
\|\tilde D_{t,\Gamma}\div^{-1} \nabla^{\perp} \Delta^{-1}\div\left(T[\theta_{q+1}^{(t)}]\theta_{q+1}^{(t)} \right)\|_N\lesssim \mu_{q+1}\frac{\delta_{q+1}\lambda_q}{\lambda_{q+1}}\lambda_{q+1}^N, \ \ \forall \ N\geq 0,
\end{equation}
where the implicit constants depend on $\Gamma, M$ and $N$.
\end{lem}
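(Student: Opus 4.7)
\emph{Proof plan.} The key idea is to recast the expression through the bilinear operator $S$ of Lemma~\ref{le-bilinear-S}. Set $\theta := \theta_{q+1}^{(t)}$ and define the Poisson potential $\Psi^{(t)} := \sum_{n,k}\tilde\chi_k \psi_{k,n+1}$, so that $\Delta \Psi^{(t)} = \theta$. Taking $f = \Psi^{(t)}$ and $g = \theta$ in the definition $S[f,g] = \Delta^{-1}\div(T[\Delta f]g + T[g]\Delta f)$ and using $\Delta \Psi^{(t)} = \theta$ yields the algebraic identity
\begin{equation*}
    \div^{-1}\nabla^{\perp}\Delta^{-1}\div\bigl(T[\theta]\,\theta\bigr) = \tfrac{1}{2}\,\div^{-1}\nabla^{\perp} S[\Psi^{(t)},\theta].
\end{equation*}
The outer operator $\div^{-1}\nabla^{\perp}$ is of order zero (Calder\'on-Zygmund type) and therefore bounded on each H\"older space $C^{N+\alpha}$, reducing the problem to bounding $\|S[\Psi^{(t)},\theta]\|_{N+\alpha}$.

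For the spatial estimate, Lemma~\ref{le-bilinear-S} gives
\begin{equation*}
    \|S[\Psi^{(t)},\theta]\|_{N+\alpha} \lesssim \|\Psi^{(t)}\|_{N+1+\alpha}\|\theta\|_\alpha + \|\Psi^{(t)}\|_{1+\alpha}\|\theta\|_{N+\alpha}.
\end{equation*}
The norms of $\Psi^{(t)}$ transfer from those of $\psi_{k,n+1}$ in Lemma~\ref{psi_estim} through the temporal cutoffs $\tilde\chi_k$, while those of $\theta_{q+1}^{(t)}$ come from Lemma~\ref{w_t_estim}. Substituting, together with $\mu_{q+1}=\delta_{q+1}^{1/2}\lambda_q\lambda_{q+1}^{1/2+4\alpha}$ and $\ell_q^{-1}=(\lambda_q\lambda_{q+1})^{1/2}$, reduces the verification of the target $\frac{\delta_{q+1}\lambda_q}{\lambda_{q+1}}\lambda_{q+1}^N$ to an elementary algebraic check, with the required headroom provided by choosing $\alpha$ small (depending on $\beta$ and $b$). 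For $N$ outside the low-derivative range, the high-derivative analogs in Lemmas~\ref{psi_estim} and~\ref{w_t_estim} yield the required bounds after absorbing losses of the form $\ell_q^{-k}\leq \lambda_{q+1}^k$ into $\lambda_{q+1}^N$.

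For the material derivative estimate, we commute $\tilde D_{t,\Gamma}$ through the zero-order operator $\div^{-1}\nabla^{\perp}$ via Proposition~\ref{prop-commu}, then expand
\begin{equation*}
    \tilde D_{t,\Gamma} S[\Psi^{(t)},\theta] = S[\tilde D_{t,\Gamma}\Psi^{(t)},\theta] + S[\Psi^{(t)},\tilde D_{t,\Gamma}\theta] + [\tilde u_{q,\Gamma}\!\cdot\!\nabla,\, S](\Psi^{(t)},\theta).
\end{equation*}
The first two summands are bounded via Lemma~\ref{le-bilinear-S} together with the material derivative estimates of Lemmas~\ref{psi_estim} and~\ref{w_t_estim}, generating the additional $\mu_{q+1}$ factor that appears in the claimed bound. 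The principal technical hurdle is the bilinear commutator $[\tilde u_{q,\Gamma}\!\cdot\!\nabla,\, S]$: to handle it, we exploit the Fourier decomposition $S[f,g] = \mathcal{R}_i T[\Lambda f,\mathcal{R}_i^\perp g]$ from the proof of Lemma~\ref{le-bilinear-S}, reducing to commutators of $\tilde u_{q,\Gamma}\!\cdot\!\nabla$ with zero-order Fourier multipliers (controlled by Proposition~\ref{prop-commu}) and with the scalar bilinear multiplier $T[\cdot,\cdot]$ of Lemma~\ref{lem.bilin}, whose commutator is handled by a paraproduct decomposition that mirrors the proof of Lemma~\ref{lem.bilin}. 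The resulting bound is proportional to $\|\tilde u_{q,\Gamma}\|_{1+\alpha}$, which is negligible compared to the dominant $\mu_{q+1}$.
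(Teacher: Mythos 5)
Your treatment of the first estimate is exactly the paper's: write $\theta_{q+1}^{(t)}=\Delta\psi_{q+1}^{(t)}$, recognize the expression as $\div^{-1}\nabla^\perp$ applied to $S[\psi_{q+1}^{(t)},\theta_{q+1}^{(t)}]$ (up to the harmless factor coming from symmetrizing), invoke lemma~\ref{le-bilinear-S}, and plug in lemmas~\ref{psi_estim} and~\ref{w_t_estim}; the parameter check with $\mu_{q+1}$ and $\ell_q$ is the same. No issues there.

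For the material derivative estimate your route diverges from the paper's and contains a genuine gap: the bilinear commutator $[\tilde u_{q,\Gamma}\cdot\nabla, S]$ is never estimated in the paper, and your proposal only gestures at how it would be controlled ("a paraproduct decomposition that mirrors the proof of Lemma~\ref{lem.bilin}"). This is not a routine reduction. Writing $S[f,g]=\mathcal{R}_i T[\Lambda f,\mathcal{R}_i^\perp g]$ turns the problem into commutators of $u\cdot\nabla$ with the linear Riesz transforms (covered by proposition~\ref{prop-commu}) \emph{and} with the bilinear operator $(f,g)\mapsto T[\Lambda f,\mathcal{R}_i^\perp g]$, which is first order in its first slot; a Constantin--E--Titi-type bound for this bilinear commutator would be a new lemma requiring its own paraproduct analysis, and you would also need to verify that it produces only a $\|\tilde u_{q,\Gamma}\|_{1+\alpha}$ loss rather than something involving $\Lambda$ falling on $\tilde u_{q,\Gamma}$. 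Until that lemma is stated and proved, the argument is incomplete. The paper sidesteps the issue entirely: it writes $\tilde D_{t,\Gamma}=\partial_t+\tilde u_{q,\Gamma}\cdot\nabla$, keeps the term $\tilde u_{q,\Gamma}\cdot\nabla\,\div^{-1}\nabla^\perp\Delta^{-1}\div(T[\theta_{q+1}^{(t)}]\theta_{q+1}^{(t)})$ intact and bounds it by the Leibniz rule from the already-established spatial estimate, and lets $\partial_t$ commute \emph{exactly} with all the (time-independent) Fourier multipliers so that it lands on the arguments, where one substitutes $\partial_t\theta_{q+1}^{(t)}=\bar D_t\theta_{q+1}^{(t)}-\bar u_q\cdot\nabla\theta_{q+1}^{(t)}$ and applies lemma~\ref{le-bilinear-S} to each resulting bilinear term. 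This uses no commutator estimate for $S$ at all. I recommend replacing your commutator step with this decomposition; otherwise you must supply the bilinear commutator lemma in full.
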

\begin{proof}
Recall that
\begin{equation}\notag
\theta_{q+1}^{(t)}=\sum_{n=0}^{\Gamma-1}\theta_{q+1,n+1}^{(t)}=\sum_{n=0}^{\Gamma-1}\sum_{k\in \mathbb Z_{q,n}}\tilde \chi_k(t)\theta_{k,n+1}
=\sum_{n=0}^{\Gamma-1}\sum_{k\in \mathbb Z_{q,n}}\tilde \chi_k(t)\Delta\psi_{k,n+1}=:\Delta \psi_{q+1}^{(t)},
\end{equation}
with 
\[\psi_{q+1}^{(t)}=\sum_{n=0}^{\Gamma-1}\sum_{k\in \mathbb Z_{q,n}}\tilde \chi_k(t)\psi_{k,n+1}.\]
Since $\div^{-1}\nabla^{\perp}$ is a zero order Fourier multiplier, we infer, using lemma \ref{le-bilinear-S}, that
\begin{equation}\notag
\begin{split}
\|\div^{-1} \nabla^{\perp} \Delta^{-1}\div\left(T[\theta_{q+1}^{(t)}] \theta_{q+1}^{(t)} \right)\|_N\lesssim&\ \|\Delta^{-1}\div\left(T[\Delta\psi_{q+1}^{(t)}] \theta_{q+1}^{(t)}+T[\theta_{q+1}^{(t)}] \Delta\psi_{q+1}^{(t)} \right)\|_{N+\alpha}\\
\lesssim&\ \|\psi_{q+1}^{(t)}\|_{N+1+\alpha}\|\theta_{q+1}^{(t)}\|_\alpha+\|\psi_{q+1}^{(t)}\|_{1+\alpha}\|\theta_{q+1}^{(t)}\|_{N+\alpha}\\
\lesssim&\ \frac{\delta_{q+1}\lambda_{q}\ell_q^{-\alpha}}{\mu_{q+1}}\cdot\frac{\delta_{q+1}\lambda_q^2\ell_q^{-2\alpha}}{\mu_{q+1}} \lambda_{q+1}^{N+1}
\lesssim\ \frac{\delta_{q+1}\lambda_q}{\lambda_{q+1}}\lambda_{q+1}^N,
\end{split}
\end{equation}
where, for the final line, we use lemmas \ref{psi_estim} and \ref{w_t_estim}, as well as the definition of $\mu_{q+1}$. 

To estimate the material derivative, we write 
\begin{eqnarray*}
    \tilde D_{t, \Gamma} \div^{-1} \nabla^{\perp} \Delta^{-1}\div\left(T[\theta_{q+1}^{(t)}] \theta_{q+1}^{(t)} \right) &=& \underbrace{\tilde u_{q, \Gamma} \cdot \nabla \div^{-1} \nabla^{\perp} \Delta^{-1}\div\left(T[\theta_{q+1}^{(t)}] \theta_{q+1}^{(t)} \right)}_{T_1}  \\
    && + \underbrace{\div^{-1} \nabla^{\perp} \Delta^{-1}\div \left( T[\bar D_t \theta_{q+1}^{(t)}]\theta_{q+1}^{(t)} + T[\theta_{q+1}^{(t)}] \bar D_t \theta_{q+1}^{(t)} \right)}_{T_2} \\ 
    && -  \underbrace{\div^{-1} \nabla^{\perp} \Delta^{-1}\div \left( T[\bar u_q \cdot \nabla \theta_{q+1}^{(t)}]\theta_{q+1}^{(t)} + T[\theta_{q+1}^{(t)}] \bar u_{q} \cdot \nabla \theta_{q+1}^{(t)} \right)}_{T_3},
\end{eqnarray*}
and estimate each term separately. We have 
\begin{equation*}
    \|T_1\|_N \lesssim \|\tilde u_{q, \Gamma}\|_N \|\div^{-1} \nabla^{\perp} \Delta^{-1}\div\left(T[\theta_{q+1}^{(t)}] \theta_{q+1}^{(t)} \right)\|_1 + \|\tilde u_{q, \Gamma}\|_0 \|\div^{-1} \nabla^{\perp} \Delta^{-1}\div\left(T[\theta_{q+1}^{(t)}] \theta_{q+1}^{(t)} \right)\|_{N+1}. 
\end{equation*}
Using corollary \ref{Gamma_velo_estim}, and arguing as above for the bilinear term, we obtain 
\begin{equation*}
    \|T_1\|_N \lesssim \delta_q^{1/2} \lambda_q^{1/2} \frac{\delta_{q+1} \lambda_q^2}{\lambda_{q+1}} \lambda_{q+1}^N \lesssim \tau_q^{-1}\frac{\delta_{q+1} \lambda_q}{\lambda_{q+1}} \lambda_{q+1}^N.
\end{equation*}
For the second term, we once again use lemma \ref{le-bilinear-S} together with the fact that $\div^{-1} \nabla^\perp$ is a zero order operator: 
\begin{eqnarray*}
    \|T_2\|_N &\lesssim& \|\Delta^{-1}\div\left(T[\Delta\psi_{q+1}^{(t)}] \bar D_t \theta_{q+1}^{(t)}+T[\bar D_t \theta_{q+1}^{(t)}]\Delta\psi_{q+1}^{(t)} \right)\|_{N+\alpha} \\ 
    & \lesssim & \|\psi_{q+1}^{(t)}\|_{N+1+\alpha}\|\bar D_t \theta_{q+1}^{(t)}\|_\alpha+\|\psi_{q+1}^{(t)}\|_{1+\alpha}\|\bar D_t \theta_{q+1}^{(t)}\|_{N+\alpha} \\ 
    & \lesssim & \mu_{q+1} \frac{\delta_{q+1} \lambda_q}{\lambda_{q+1}} \lambda_{q+1}^N.
\end{eqnarray*}
Similarly, 
\begin{eqnarray*}
    \|T_3\|_N &\lesssim& \|\Delta^{-1}\div\left(T[\Delta\psi_{q+1}^{(t)}] \bar u_q \cdot \nabla \theta_{q+1}^{(t)}+T[\bar u_q \cdot \nabla \theta_{q+1}^{(t)}]\Delta\psi_{q+1}^{(t)} \right)\|_{N+\alpha} \\ 
    & \lesssim & \|\psi_{q+1}^{(t)}\|_{N+1+\alpha}\|\bar u_q \cdot \nabla \theta_{q+1}^{(t)}\|_\alpha+\|\psi_{q+1}^{(t)}\|_{1+\alpha}\|\bar u_q \cdot \nabla \theta_{q+1}^{(t)}\|_{N+\alpha} \\ 
    & \lesssim & \tau_q^{-1} \frac{\delta_{q+1} \lambda_q}{\lambda_{q+1}} \lambda_{q+1}^N.
\end{eqnarray*}
The conclusion follows.
\end{proof}

\begin{lem}\label{le-molli-est}
The spatial mollification error has the estimates
\begin{equation}
\|R_q-R_{q,0}\|_N\lesssim \frac{\delta_{q+1}\lambda_q}{\lambda_{q+1}}\lambda_{q+1}^N, \ \ N\in \{0,1, ... , L_R\},
\end{equation}
\begin{equation}
\|\tilde D_{t,\Gamma}(R_q-R_{q,0})\|_N\lesssim \delta_{q+1}\delta_q^{\frac12}\lambda_q^{\frac32}\lambda_{q+1}^N, \ \ N\in \{0,1, ... , L_t\},
\end{equation}
with the implicit constants depending on $\Gamma, M, N$ and $\alpha$.
\end{lem}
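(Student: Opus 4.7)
Both estimates are standard spatial mollification bounds, following the strategy used for the analogous estimate in \cite{GR23}. The proof has two essentially independent parts.

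For the first estimate, I would apply the standard mollification bound
\[
\|(I - P_{\lesssim \ell_q^{-1}})f\|_N \lesssim \ell_q^M \|f\|_{N+M}
\]
to $f = R_q$, for a fixed integer $M$ to be chosen. Combined with the inductive assumption \eqref{induct-R} and the definition $\ell_q = (\lambda_q \lambda_{q+1})^{-1/2}$, this yields
\[
\|R_q - R_{q,0}\|_N \lesssim \ell_q^M \delta_{q+1}\lambda_q^{N+M-2\alpha} = \delta_{q+1}\lambda_q^{N-2\alpha}\left(\frac{\lambda_q}{\lambda_{q+1}}\right)^{M/2}.
\]
Comparing with the target $\delta_{q+1}\lambda_q \lambda_{q+1}^{N-1}$, we need $(\lambda_q/\lambda_{q+1})^{N-1+M/2} \lambda_q^{-2\alpha} \leq 1$; the worst case is $N=0$, which requires $(\lambda_q/\lambda_{q+1})^{M/2 - 1} \leq \lambda_q^{2\alpha}$. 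Taking $M = 3$ (which respects the constraint $N+M \leq L_R$ since $L_R = 20$) and choosing $a_0$ sufficiently large in terms of $\alpha$ and $b$ gives the claimed bound for all $N \in \{0, 1, \ldots, L_R\}$.

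For the material derivative estimate, I would decompose
\[
\tilde D_{t,\Gamma}(R_q - R_{q,0}) = \bigl(D_t R_q - \bar D_t R_{q,0}\bigr) + (\tilde u_{q,\Gamma} - u_q)\cdot\nabla R_q - (\tilde u_{q,\Gamma} - \bar u_q)\cdot\nabla R_{q,0}.
\]
The first term is a Constantin-E-Titi-type commutator, which I would estimate exactly as in the proof of lemma \ref{smoli_estim} to get $\|D_t R_q - \bar D_t R_{q,0}\|_N \lesssim \delta_{q+1}\delta_q^{1/2} \lambda_q^{N+3/2 - 2\alpha} (\lambda_q/\lambda_{q+1})^{M/2}$ for suitable $M$. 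For the remaining two terms, I would use $\tilde u_{q,\Gamma} - u_q = (\bar u_q - u_q) + P_{\lesssim \ell_q^{-1}} w_{q+1}^{(t)}$ and $\tilde u_{q,\Gamma} - \bar u_q = P_{\lesssim \ell_q^{-1}} w_{q+1}^{(t)}$, combined with the mollification bound $\|u_q - \bar u_q\|_N \lesssim \ell_q^M \|u_q\|_{N+M}$, the inductive estimate \eqref{induct-u}, the Newton-perturbation bounds from lemma \ref{w_t_estim}, and the bound \eqref{smoli_2} on $R_{q,0}$. Each resulting term is comfortably bounded by the target $\delta_{q+1}\delta_q^{1/2}\lambda_q^{3/2}\lambda_{q+1}^N$, thanks essentially to the extra factor $\delta_q^{1/2}\lambda_q^{3/2}$ coming either from the inductive material-derivative bound on $R_q$, the Lipschitz norm of $u_q$, or $\mu_{q+1} \leq \delta_{q+1}^{1/2}\lambda_{q+1}^{1/2}\lambda_q \lambda_{q+1}^{4\alpha}$ via the $w_{q+1}^{(t)}$ bounds.

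The main (very minor) obstacle is the bookkeeping of derivative counts: ensuring that every instance of the mollification estimate is applied within the allowed range $N + M \leq L_R$ (or $L_t$, for the material derivative term), and choosing $M$ large enough to beat the frequency gap $\lambda_{q+1}/\lambda_q$ together with the loss $\lambda_q^{2\alpha}$. Since the main inductive proposition allows the generous choice $L_t = 10, L_R = 20, L_\theta = 30$, there is ample room to make these choices. The only nonroutine step is the commutator estimate for $D_t R_q - \bar D_t R_{q,0}$, but this is a direct adaptation of the argument in \cite{GR23} and ultimately of the classical Constantin-E-Titi commutator lemma.
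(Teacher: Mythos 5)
Your approach is the same as the paper's, and most of it goes through, but there is one concrete gap: the claim that taking $M=3$ in the mollification bound covers \emph{all} $N\in\{0,1,\dots,L_R\}$. The estimate $\|R_q-R_{q,0}\|_N\lesssim \ell_q^M\|R_q\|_{N+M}$ requires control of $N+M$ spatial derivatives of $R_q$, which the inductive assumption \eqref{induct-R} provides only up to order $L_R$; so with $M=3$ you lose the top three values of $N$, and with the paper's $M=2$ (which is all that Proposition \ref{prop-molli} actually gives for a general symmetric mollifier, and which already suffices since $\ell_q^2\lambda_q^{N+2}=\lambda_q^{N+1}/\lambda_{q+1}$) you still lose $N=L_R-1,L_R$. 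The paper closes these endpoint cases by the plain triangle inequality $\|R_q-R_{q,0}\|_N\lesssim\|R_q\|_N+\|R_{q,0}\|_N\lesssim\delta_{q+1}\lambda_q^N$, which beats the target $\delta_{q+1}\lambda_q\lambda_{q+1}^{N-1}$ precisely because $N\geq 1$ there. You flag the derivative bookkeeping as a concern but then assert the conclusion for all $N\le L_R$ anyway; the endpoint argument is the missing piece, and the same remark applies to the material-derivative range $N\le L_t$.

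On the material derivative, your decomposition is algebraically equivalent to the paper's, and your treatment of the $(\bar u_q-u_q)\cdot\nabla R_q$ and $P_{\lesssim\ell_q^{-1}}w_{q+1}^{(t)}$ terms matches theirs. However, invoking a Constantin--E--Titi commutator estimate for $D_tR_q-\bar D_tR_{q,0}$ is unnecessary effort: unlike the first estimate, the target $\delta_{q+1}\delta_q^{1/2}\lambda_q^{3/2}\lambda_{q+1}^N$ carries no smallness factor relative to the individual inductive bounds \eqref{induct-DR} and \eqref{smoli_3}, so the paper simply bounds $\|D_tR_q\|_N+\|\bar D_tR_{q,0}\|_N\lesssim\delta_{q+1}\delta_q^{1/2}\lambda_q^{N+3/2-2\alpha}\le\delta_{q+1}\delta_q^{1/2}\lambda_q^{3/2}\lambda_{q+1}^N$ by the triangle inequality. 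Your commutator route would again run into the derivative-count ceiling for $N$ near $L_t$, so you would have to fall back on the triangle inequality there in any case.
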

\begin{proof}
Applying the mollifier estimate in proposition \ref{prop-molli} for $N\leq L_R-2$, we obtain
\[\|R_q-R_{q,0}\|_N\lesssim \ell_q^2\|R_q\|_{N+2}\lesssim \ell_q^2\delta_{q+1}\lambda_q^{N+2}\lesssim \frac{\delta_{q+1}\lambda_q}{\lambda_{q+1}}\lambda_q^N.\]
In the case $N=L_R-1,L_R$, we have
\[\|R_q-R_{q,0}\|_N\lesssim \|R_q\|_{N}+\|R_{q,0}\|_N \lesssim \delta_{q+1}\lambda_q^{N}\lesssim \frac{\delta_{q+1}\lambda_q}{\lambda_{q+1}}\lambda_{q+1}^N,\]
as long as $L_R \geq 2$. Regarding the material derivative, we first write
\begin{equation}\notag
\tilde D_{t,\Gamma}(R_q-R_{q,0})= D_tR_q+(\bar u_q-u_q)\cdot\nabla R_q-\bar D_t R_{q,0}+\tilde w_{q+1}^{(t)}\cdot \nabla(R_q-R_{q,0}).
\end{equation}
Then, for $N \leq L_t$,
\begin{equation}\notag
\|D_tR_q\|_N+\|\bar D_tR_{q,0}\|_N\lesssim \delta_{q+1}\delta_q^{\frac12}\lambda_q^{\frac32}\lambda_{q+1}^N;
\end{equation}
and for $N \leq \min\{L_\theta - 2, L_R -1\}$,
\begin{equation}\notag
\begin{split}
\|(\bar u_q-u_q)\cdot\nabla R_q\|_N\lesssim&\ \|\bar u_q-u_q\|_N\|\nabla R_q\|_0+\|\bar u_q-u_q\|_0\|\nabla R_q\|_N\\
\lesssim&\ \ell_q^2\delta_q^{\frac12}\lambda_q^{2+\frac12}\delta_{q+1}\lambda_q \lambda_{q+1}^N \\
\lesssim&\ \delta_{q+1}\delta_q^{\frac12}\lambda_q^{\frac32}\lambda_{q+1}^N;
\end{split}
\end{equation}
\begin{equation}\notag
\begin{split}
\|\tilde w_{q+1}^{(t)}\cdot \nabla(R_q-R_{q,0})\|_N\lesssim &\ \|\tilde w_{q+1}^{(t)}\|_N \|\nabla(R_q-R_{q,0})\|_0+\|\tilde w_{q+1}^{(t)}\|_0 \|\nabla(R_q-R_{q,0})\|_N\\
\lesssim&\ \frac{\delta_{q+1}\lambda_q^{2}\ell_q^{-\alpha}}{\mu_{q+1}}\frac{\delta_{q+1} \lambda_q}{\lambda_{q+1}}\lambda_{q+1}^{N+1}\\
\lesssim&\ \delta_{q+1}\delta_q^{\frac12}\lambda_q^{\frac32}\lambda_{q+1}^N.
\end{split}
\end{equation}
Since we choose $L_t \leq \min\{L_\theta - 2, L_R -1\}$, the lemma is proven.
\end{proof}

In order to estimate the remaining term in $P_{q+1,\Ga}$, we note that
\[ \Delta^{-1}\div\left(T[\theta_{q+1}^{(t)}](\theta_{q}-\bar\theta_{q})+T[\theta_{q}-\bar\theta_{q}](\theta_{q+1}^{(t)})\right) = S[\psi_{q+1}^{(t)}, \theta_{q}-\bar\theta_{q}]\,. \]
Similarly we have that
\[ \Delta^{-1}\div\left(T[\theta_{q+1}^{(p)}](\theta_{q,\Ga}-\tilde\theta_{q,\Ga})+T[\theta_{q,\Ga}-\tilde\theta_{q,\Ga}](\theta_{q+1}^{(p)})\right) = S[\Delta^{-1}\theta_{q+1}^{(p)}, \theta_{q,\Ga}-\tilde\theta_{q,\Ga}]\,. \]

\begin{lem}
The following estimates 
    \begin{equation}
\begin{split}
\|\div^{-1}\nabla^{\perp} &S[\psi_{q+1}^{(t)}, \theta_{q}-\bar\theta_{q}]\|_N
\lesssim  \ \delta_{q+1}^{\frac12}\delta_q^{\frac12}\bigg(\frac{\lambda_q}{\lambda_{q+1}}\bigg)^{\frac{3}{2}}\lambda_{q+1}^{N}, \ \ N\in \{0,1, ... , L_R\},
\end{split}
\end{equation}
\begin{equation}
\begin{split}
\|\tilde D_{t, \Gamma}\div^{-1}\nabla^{\perp} &S[\psi_{q+1}^{(t)}, \theta_{q}-\bar\theta_{q}]\|_N
\lesssim 
\delta_{q+1}^{\frac32}\lambda_q\lambda_{q+1}^\frac12\lambda_{q+1}^{N+2\alpha},  \ \ N\in \{0,1, ... , L_t\},
\end{split}
\end{equation}
\begin{equation}
\begin{split}
\|\div^{-1}\nabla^{\perp} &S[\Delta^{-1}\theta_{q+1}^{(p)}, \theta_{q,\Ga}-\tilde\theta_{q,\Ga}]\|_N
\lesssim  \ \delta_{q+1}^{\frac12}\delta_q^{\frac12}\bigg(\frac{\lambda_q}{\lambda_{q+1}}\bigg)^{\frac{3}{2}}\lambda_{q+1}^{N+2\alpha}, \ \ N\in \{0,1, ... , L_R\},
\end{split}
\end{equation}
\begin{equation}
\begin{split}
\|\tilde D_{t, \Gamma}\div^{-1}\nabla^{\perp} &S[\Delta^{-1}\theta_{q+1}^{(p)}, \theta_{q,\Ga}-\tilde\theta_{q,\Ga}]\|_N
\lesssim 
\delta_{q+1}^{\frac32}\lambda_q\lambda_{q+1}^\frac12\lambda_{q+1}^{N+6\alpha},  \ \ N\in \{0,1, ... , L_t\}
\end{split}
\end{equation}
hold with implicit constants depending on $\Gamma, M,N$ and $\alpha$.
\end{lem}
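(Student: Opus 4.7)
The plan is to treat all four estimates in a unified way. As indicated immediately before the statement, each integrand can be rewritten as $S[\Delta^{-1}F,G]$ with $S$ the bilinear operator analyzed in Lemma~\ref{le-bilinear-S}. After peeling off the zero-order operator $\div^{-1}\nabla^\perp$ at the cost of a Schauder $\alpha$-loss, the first and third estimates reduce to bounding $\|S[f,g]\|_{N+\alpha}$, which is exactly what Lemma~\ref{le-bilinear-S} delivers:
\[ \|S[f,g]\|_{N+\alpha} \lesssim \|f\|_{N+1+\alpha}\|g\|_\alpha + \|f\|_{1+\alpha}\|g\|_{N+\alpha}. \]
For the material-derivative bounds, the plan is to commute $\tilde D_{t,\Ga}$ past $\div^{-1}\nabla^\perp$ via Proposition~\ref{prop-commu} and to distribute it across $S$ through
\[ \tilde D_{t,\Ga} S[f,g] = S[\tilde D_{t,\Ga} f, g] + S[f, \tilde D_{t,\Ga} g] + [\tilde u_{q,\Ga}\cdot\nabla, S][f,g], \]
the last bilinear commutator being controlled by a variant of Lemma~\ref{le-bilinear-S} with an extra factor $\|\tilde u_{q,\Ga}\|_{1+\alpha}$, obtained by re-running the paraproduct decomposition in the proof of Lemma~\ref{lem.bilin}.

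For the first two estimates, the factor $\psi_{q+1}^{(t)} = \sum_{n,k}\tilde\chi_k \psi_{k,n+1}$ is controlled by Lemma~\ref{psi_estim}: \eqref{psi_estim_1}/\eqref{psi_estim_3} handle its spatial norms, and \eqref{psi_estim_2}/\eqref{psi_estim_4} handle $\bar D_t \psi_{q+1}^{(t)}$ (the piece $\partial_t\tilde\chi_k \psi_{k,n+1}$ from the cut-off is subordinate because $\tau_q^{-1}\ll \mu_{q+1}$, and the extra term $P_{\lesssim\ell_q^{-1}}w_{q+1}^{(t)}\cdot\nabla\psi_{q+1}^{(t)}$ coming from $\tilde D_{t,\Ga} - \bar D_t$ is subleading by Lemma~\ref{w_t_estim}). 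The factor $\theta_q - \bar\theta_q$ is handled by the standard mollification estimate $\|\theta_q-\bar\theta_q\|_{M+\alpha}\lesssim \ell_q^K\|\theta_q\|_{M+K+\alpha}$ combined with \eqref{induct-u}, and its material derivative by the standard Constantin--E--Titi type commutator identity expressing $\bar D_t (\theta_q - \bar \theta_q)$ in terms of $[P_{\lesssim\ell_q^{-1}},\bar u_q\cdot\nabla]\theta_q$ and mollification remainders of $\nabla^\perp\cdot\div R_q$ and $(u_q - \bar u_q)\cdot\nabla\theta_q$. For the third and fourth estimates, $\Delta^{-1}\theta_{q+1}^{(p)}$ plays the role of $\psi_{q+1}^{(t)}$: since $\theta_{q+1}^{(p)}$ is Fourier-localized at $\approx\lambda_{q+1}$, the bound $\|\Delta^{-1}\theta_{q+1}^{(p)}\|_{M+\alpha}\lesssim \delta_{q+1}^{1/2}\lambda_{q+1}^{M-3/2+\alpha}$ follows from Lemma~\ref{le-est-Nash}, and material-derivative estimates follow by exploiting the structure of $\theta_{q+1}^{(p)}$ exactly as in the proof of Lemma~\ref{le-tran}. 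The factor $\theta_{q,\Ga}-\tilde\theta_{q,\Ga}$ is controlled by Proposition~\ref{prop-molli}, Corollary~\ref{Gamma_velo_estim}, and Lemma~\ref{le-mat-d}. Substituting in the definitions $\mu_{q+1} = \delta_{q+1}^{1/2}\lambda_q\lambda_{q+1}^{1/2+4\alpha}$ and $\ell_q = (\lambda_q\lambda_{q+1})^{-1/2}$, each target bound then reduces to routine arithmetic, provided $\alpha$ is sufficiently small in terms of $\beta$ and $b$.

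I expect the only genuine difficulty to be bookkeeping. When $N$ exceeds $L_t - 1$ one must switch from the low-derivative to the high-derivative estimates in Lemma~\ref{psi_estim}, paying extra factors $\ell_q^{-(N-L_t)} = (\lambda_q\lambda_{q+1})^{(N-L_t)/2}$; these are absorbed by taking the mollification gain $K$ in the bound on $\theta_q - \bar\theta_q$ (respectively $\theta_{q,\Ga}-\tilde\theta_{q,\Ga}$) sufficiently large, which is permitted by the spacing $L_\theta\gg L_R\gg L_t$ stipulated in Proposition~\ref{prop.main}. The $\lambda_{q+1}^{O(\alpha)}$ factors appearing on the right-hand sides of the target estimates are precisely what is needed to absorb the $\alpha$-losses from $\div^{-1}\nabla^\perp$, from the bilinear commutator $[\tilde u_{q,\Ga}\cdot\nabla,S]$, and from the $\ell_q^{-\alpha}$ tails inherent in Lemma~\ref{psi_estim}.
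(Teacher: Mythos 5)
Your treatment of the two purely spatial estimates coincides with the paper's: reduce to Lemma~\ref{le-bilinear-S} after peeling off the zero-order operator $\div^{-1}\nabla^\perp$, and feed in Lemma~\ref{psi_estim}, the mollification bound $\|\theta_q-\bar\theta_q\|_N\lesssim \ell_q^2\|\theta_q\|_{N+2}$, the frequency localization of $\theta_{q+1}^{(p)}$, and the improved bound on $\theta_{q+1}^{(t)}-P_{\lesssim\ell_q^{-1}}\theta_{q+1}^{(t)}$.

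For the material-derivative estimates you take a genuinely different route, and it is the one place where your plan quietly imports a result that is neither in the paper nor a mere ``re-run'' of Lemma~\ref{lem.bilin}. You propose to write $\tilde D_{t,\Gamma}S[f,g]=S[\tilde D_{t,\Gamma}f,g]+S[f,\tilde D_{t,\Gamma}g]+[\tilde u_{q,\Gamma}\cdot\nabla,S][f,g]$ and to control the bilinear commutator with an extra factor $\|\tilde u_{q,\Gamma}\|_{1+\alpha}$. That commutator is a genuinely \emph{trilinear} multiplier object (three inputs: $\tilde u_{q,\Gamma}$, $f$, $g$), precisely the kind of estimate the authors state in their ``Further questions'' that they arrange to avoid; proving it requires a full paraproduct analysis of the Leibniz defect of $S=\mathcal{R}_iT[\Lambda\,\cdot\,,\mathcal{R}_i^\perp\,\cdot\,]$, including a commutator of $u\cdot\nabla$ with the inner $\Lambda$, not just a repetition of the $T_{HL}/T_{HH}/T_{LH}$ decomposition. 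The paper sidesteps this entirely: it splits $\tilde D_{t,\Gamma}=\partial_t+\tilde u_{q,\Gamma}\cdot\nabla$, lets $\partial_t$ pass \emph{exactly} through the purely spatial multiplier $\div^{-1}\nabla^\perp S$ (producing $S[\partial_t f,g]+S[f,\partial_t g]$, with $\partial_t\psi=\bar D_t\psi-\bar u_q\cdot\nabla\psi$ and $\partial_t(\theta_q-\bar\theta_q)$ expressed through $D_t\theta_q=\nabla^\perp\cdot\div R_q$ and its mollification remainders), and estimates the outer advective term $\tilde u_{q,\Gamma}\cdot\nabla\bigl(\div^{-1}\nabla^\perp S[f,g]\bigr)$ as a plain product, paying one full derivative $\sim\lambda_{q+1}$ on the already-estimated quantity rather than the smaller commutator gain $\|\nabla\tilde u_{q,\Gamma}\|_0$; since $\beta<1/2$ this cruder bound still lands below the target. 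So your route would buy a sharper bound on that one term at the cost of a substantial new harmonic-analysis lemma, while the paper's route is more elementary and self-contained. If you keep your approach, you must actually state and prove the $[\tilde u_{q,\Gamma}\cdot\nabla,S]$ estimate; as written, that step is asserted rather than established. The remaining bookkeeping (switching to the high-derivative estimates past $L_t$, absorbing $\ell_q^{-\alpha}$ losses into the $\lambda_{q+1}^{O(\alpha)}$ factors, and using $L_t\ll L_R\ll L_\theta$) matches the paper.
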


\begin{proof}
For $N \leq L_\theta - 2$, proposition \ref{prop-molli} implies 
\begin{equation*}
    \|\theta_q - \bar \theta_q\|_N \lesssim \ell_q^2 \|\theta_q\|_{N+2} \lesssim \delta_q^{1/2} \lambda_q^{1/2} \frac{\lambda_q}{\lambda_{q+1}} \lambda_q^N\lesssim \delta_q^{1/2} \lambda_q^{1/2} \frac{\lambda_q}{\lambda_{q+1}} \lambda_{q+1}^N.
\end{equation*}
Using lemma \ref{le-bilinear-S} and the fact that $\div^{-1} \nabla^\perp$ is a zero-order Fourier multiplier, we obtain, for $N \leq L_\theta - 3$,
\begin{eqnarray*}
    \|\div^{-1}\nabla^{\perp} S[\psi_{q+1}^{(t)}, \theta_{q}-\bar\theta_{q}]\|_{N+\alpha} &\lesssim& \|S[\psi_{q+1}^{(t)}, \theta_{q}-\bar\theta_{q}]\|_{N+\alpha} \\ 
    &\lesssim& \|\psi_{q+1}^{(t)}\|_{N+1+\alpha} \|\theta_q - \bar \theta_q\|_{\alpha} + \|\psi_{q+1}^{(t)}\|_{1+\alpha} \|\theta_q - \bar \theta_q\|_{N+\alpha} \\ 
    &\lesssim& \frac{\delta_{q+1} \lambda_q \ell_q^{-\alpha}}{\mu_{q+1}} \delta_q^{1/2}\lambda_q^{1/2} \frac{\lambda_q}{\lambda_{q+1}} \lambda_{q+1}^{N+\alpha} \\ 
    &\lesssim& \delta_{q+1}^{\frac12}\delta_q^{\frac12}\bigg(\frac{\lambda_q}{\lambda_{q+1}}\bigg)^{\frac{3}{2}}\lambda_{q+1}^{N},
\end{eqnarray*}
and the first claimed estimate is proven, in view of the choice $L_\theta \geq L_R + 3$.

For the material derivative estimate, we write 
\begin{eqnarray*}
    \tilde D_{t, \Gamma} \div^{-1}\nabla^{\perp} S[\psi_{q+1}^{(t)}, \theta_{q}-\bar\theta_{q}] &=& \underbrace{\tilde u_{q, \Gamma} \cdot \nabla \div^{-1} \nabla^\perp S[\psi_{q+1}^{(t)}, \theta_q - \bar \theta_q]}_{T_1} \\ 
    && + \underbrace{\div^{-1}\nabla^{\perp} S[\bar D_t \psi_{q+1}^{(t)}, \theta_{q}-\bar\theta_{q}]}_{T_2} \\ 
    && + \underbrace{\div^{-1}\nabla^{\perp} S[\psi_{q+1}^{(t)}, D_t \theta_{q}-P_{\lesssim \ell_q^{-1}}D_t\theta_{q}]}_{T_3} \\ 
    && - \underbrace{\div^{-1}\nabla^{\perp} S[\bar u_q \cdot \nabla \psi_{q+1}^{(t)}, \theta_{q}-\bar\theta_{q}]}_{T_4} \\ 
    && - \underbrace{\div^{-1}\nabla^{\perp} S[\psi_{q+1}^{(t)}, u_q \cdot \nabla \theta_{q}-P_{\lesssim \ell_q^{-1}} (u_q \cdot \nabla \theta_{q})]}_{T_5}.
\end{eqnarray*}
We estimate each term separately: for $N \leq L_\theta - 4$,
\begin{eqnarray*}
    \|T_1\|_{N+\alpha} &\lesssim& \|\tilde u_{q, \Gamma}\|_{N+\alpha} \|S[\psi_{q+1}^{(t)}, \theta_q - \bar \theta_q]\|_{1+\alpha} + |\tilde u_{q, \Gamma}\|_{\alpha} \|S[\psi_{q+1}^{(t)}, \theta_q - \bar \theta_q]\|_{N+1+\alpha} \\ 
    &\lesssim& \delta_q^{1/2} \lambda_q^{1/2} \delta_{q+1}^{1/2} \delta_q^{1/2} \bigg(\frac{\lambda_q}{\lambda_{q+1}}\bigg)^{3/2} \lambda_{q+1}^{N+1+\alpha} \\ 
    & \lesssim & \delta_{q+1}^{1/2} \delta_q \lambda_q^{2} \lambda_{q+1}^{-1/2} \lambda_{q+1}^{N+\alpha}, 
\end{eqnarray*}
where we use the previously obtained estimate together with corollary \ref{Gamma_velo_estim}. Using lemma \ref{psi_estim}, we obtain 
\begin{eqnarray*}
    \|T_2\|_{N+\alpha} &\lesssim& \|\bar D_t \psi_{q+1}^{(t)}\|_{N+1+\alpha} \|\theta_q - \bar \theta_q\|_\alpha + \|\bar D_t \psi_{q+1}^{(t)}\|_{1+\alpha} \|\theta_q - \bar \theta_q\|_{N+\alpha} \\ 
    &\lesssim& \delta_{q+1}\delta_q^{1/2} \lambda_q^{1/2} \frac{\lambda_q}{\lambda_{q+1}} \lambda_{q+1}^{N+1+2\alpha} \\ 
    & \lesssim & \delta_{q+1} \delta_q^{1/2} \lambda_q^{3/2} \lambda_{q+1}^{N+2\alpha},
\end{eqnarray*}
which holds for $N \leq L_\theta - 3$. For the third term, we note that $D_t \theta_q = \nabla^\perp \div R_q$, and, thus, for $N \leq L_R - 3$,
\begin{eqnarray*}
    \|T_3\|_{N+\alpha} &\lesssim& \|\psi_{q+1}^{(t)}\|_{N+1+\alpha} \|R_q - R_{q,0}\|_{2+\alpha} + \|\psi_{q+1}^{(t)}\|_{1+\alpha} \|R_q - R_{q,0}\|_{N+2+\alpha} \\
    &\lesssim & \frac{\delta_{q+1}\lambda_q}{\mu_{q+1}} \frac{\delta_{q+1} \lambda_q}{\lambda_{q+1}} \lambda_{q+1}^{N+2+2\alpha} \\ 
    & \lesssim & \delta_{q+1}^{3/2} \lambda_q \lambda_{q+1}^{1/2} \lambda_{q+1}^N,
\end{eqnarray*}
where we use lemmas \ref{psi_estim} and \ref{le-molli-est}. For the fourth term, we first estimate 
\begin{eqnarray*}
    \|\bar u_q \cdot \nabla \psi_{q+1}^{(t)}\|_{N+\alpha} &\lesssim& \|\bar u_q\|_{N+\alpha} \|\psi_{q+1}^{(t)}\|_{1+\alpha} + \|\bar u_q\|_{\alpha} \|\psi_{q+1}^{(t)}\|_{N+1+\alpha} \\
    &\lesssim& \delta_q^{1/2} \lambda_q^{1/2} \frac{\delta_{q+1} \lambda_q}{\mu_{q+1}} \lambda_{q+1}^{N+\alpha}\\
    &\lesssim& \delta_q^{1/2} \delta_{q+1}^{1/2} \bigg(\frac{\lambda_q}{\lambda_{q+1}}\bigg)^{1/2} \lambda_{q+1}^N,
\end{eqnarray*}
from which we conclude that, for $N \leq L_\theta - 3$,
\begin{equation*}
    \|T_4\|_{N+\alpha} \lesssim \delta_q^{1/2} \delta_{q+1}^{1/2} \bigg(\frac{\lambda_q}{\lambda_{q+1}}\bigg)^{1/2} \delta_q^{1/2} \lambda_q^{1/2} \frac{\lambda_q}{\lambda_{q+1}} \lambda_{q+1}^{N+1+\alpha} \lesssim \delta_q \delta_{q+1}^{1/2} \lambda_q^2 \lambda_{q+1}^{-1/2} \lambda_{q+1}^{N+\alpha}.
\end{equation*}
For the final term, we use once again proposition \ref{prop-molli}, to obtain 
\begin{equation*}
    \|u_q \cdot \nabla \theta_q - P_{\lesssim \ell_q^{-1}}(u_q \cdot \nabla \theta_q)\|_N \lesssim \ell_q^{2}\|u_q \cdot \nabla \theta_q\|_{N+2} \lesssim \ell_q^2(\|u_q\|_{N+2}\|\theta_q\|_1 + \|u_q\|_{0}\|\theta_q\|_{N+3})\lesssim \delta_q \lambda_q^2 \frac{\lambda_q}{\lambda_{q+1}} \lambda_{q+1}^N,
\end{equation*}
for $N \leq L_\theta - 3$, which implies 
\begin{equation*}
    \|T_5\|_{N+\alpha} \lesssim \frac{\delta_{q+1}\lambda_q}{\mu_{q+1}}\delta_q \lambda_q^2 \frac{\lambda_q}{\lambda_{q+1}} \lambda_{q+1}^{N+2\alpha} \lesssim \delta_{q+1}^{1/2}\delta_q \lambda_q^3 \lambda_{q+1}^{-3/2}\lambda_{q+1}^N,
\end{equation*}
for $N \leq L_\theta - 4$. The first material derivative estimate follows, since we choose $L_t \leq \min\{L_\theta - 4, L_R - 3\}$.

We now turn to obtaining the two remaining estimates. Since we have sharp control over $L_t - 2 > 3$ derivatives of $\theta_{q+1}^{(t)}$, we have, for $N \leq 1$,
\begin{equation*}
    \|\theta_{q+1}^{(t)} - P_{\lesssim \ell_q^{-1}} \theta_{q+1}^{(t)}\|_N \lesssim \ell_q^2 \frac{\delta_{q+1} \lambda_q^{N+4}\ell_q^{-\alpha}}{\mu_{q+1}} \lesssim \frac{\delta_{q+1}^{1/2}\lambda_q^2}{\lambda_{q+1}^{3/2}}\lambda_{q+1}^N,
\end{equation*}
whereas for $N \geq 2$, 
\begin{equation*}
    \|\theta_{q+1}^{(t)} - P_{\lesssim \ell_q^{-1}} \theta_{q+1}^{(t)}\|_N \lesssim \|\theta_{q+1}^{(t)}\|_N \lesssim \delta_{q+1}^{1/2} \frac{\lambda_q}{\lambda_{q+1}^{1/2}} \bigg(\frac{\lambda_q}{\lambda_{q+1}}\bigg)^{N/2} \lambda_{q+1}^N \lesssim \frac{\delta_{q+1}^{1/2}\lambda_q^2}{\lambda_{q+1}^{3/2}}\lambda_{q+1}^N,
\end{equation*}
from which we conclude that this estimate holds for all values of $N$. We remark also that this is a better estimate than the one used for $\theta_q - \bar \theta_q$. Moreover, since $\theta_{q+1}^{(p)}$ is localized at frequency $\approx \lambda_{q+1}$, it holds that 
\begin{equation*}
    \|\Delta^{-1}\theta_{q+1}^{(p)}\|_N \lesssim \delta_{q+1}^{1/2} \lambda_{q+1}^{-3/2}\lambda_{q+1}^N.
\end{equation*}
We have, then, for $N \leq L_\theta - 3$,
\begin{eqnarray*}
    \|\div^{-1}\nabla^{\perp}S[\Delta^{-1}\theta_{q+1}^{(p)}, \theta_{q,\Ga}-\tilde\theta_{q,\Ga}]\|_{N+\alpha} &\lesssim& \delta_{q+1}^{1/2}\lambda_{q+1}^{-1/2} \delta_q^{1/2} \lambda_q^{1/2} \frac{\lambda_q}{\lambda_{q+1}} \lambda_{q+1}^{N+2\alpha},
\end{eqnarray*}
and the wanted estimate follows. 

For the final material derivative estimate, we write 
\begin{eqnarray*}
    \tilde D_{t, \Gamma}\div^{-1}\nabla^{\perp} S[\Delta^{-1}\theta_{q+1}^{(p)}, \theta_{q,\Ga}-\tilde\theta_{q,\Ga}] & = & \underbrace{\tilde u_{q, \Gamma} \cdot \nabla \div^{-1}\nabla^{\perp} S[\Delta^{-1}\theta_{q+1}^{(p)}, \theta_{q,\Ga}-\tilde\theta_{q,\Ga}]}_{T_1} \\ 
    && + \underbrace{\div^{-1}\nabla^{\perp} S[\tilde D_{t, \Gamma}\Delta^{-1}\theta_{q+1}^{(p)}, \theta_{q,\Ga}-\tilde\theta_{q,\Ga}]}_{T_2} \\ 
    && - \underbrace{\div^{-1}\nabla^{\perp} S[\tilde u_{q, \Gamma} \cdot \nabla \Delta^{-1}\theta_{q+1}^{(p)}, \theta_{q,\Ga}-\tilde\theta_{q,\Ga}]}_{T_3} \\ 
    && + \underbrace{\div^{-1}\nabla^{\perp} S[\Delta^{-1}\theta_{q+1}^{(p)}, D_t \theta_q - P_{\lesssim \ell_q^{-1}} D_t \theta_q]}_{T_4} \\ 
    && - \underbrace{\div^{-1}\nabla^{\perp} S[\Delta^{-1}\theta_{q+1}^{(p)}, u_q \cdot \nabla \theta_q - P_{\lesssim \ell_q^{-1}} (u_q \cdot \nabla \theta_q)]}_{T_5} \\ 
    && + \underbrace{\div^{-1}\nabla^{\perp} S[\Delta^{-1}\theta_{q+1}^{(p)}, \bar D_t \theta_{q+1}^{(t)} - P_{\lesssim \ell_q^{-1}} \bar D_t \theta_{q+1}^{(t)}]}_{T_6} \\ 
    && - \underbrace{\div^{-1}\nabla^{\perp} S[\Delta^{-1}\theta_{q+1}^{(p)}, \bar u_q \cdot \nabla \theta_{q+1}^{(t)} - P_{\lesssim \ell_q^{-1}}( \bar u_q \cdot \nabla \theta_{q+1}^{(t)})]}_{T_7}.
\end{eqnarray*}
We estimate each term separately. For $N \leq L_\theta - 4$, we have, as before, 
\begin{equation*}
    \|T_1\|_{N+\alpha} \lesssim \delta_q \delta_{q+1}^{1/2} \lambda_q^2 \lambda_{q+1}^{-1/2} \lambda_{q+1}^{N+3\alpha}.
\end{equation*}
For $T_2$, we argue similarly to the proof of lemma \ref{le-tran}: 
\begin{eqnarray*}
    \tilde D_{t, \Gamma} \Delta^{-1} \theta_{q+1}^{(p)} &=& \sum_{\xi, k, n} \Delta^{-1} P_{\approx \lambda_{q+1}} \big(\tilde D_{t, \Gamma}(g_{\xi, k, n+1} \bar a_{\xi, k, n}) \cos(\lambda_{q+1} \tilde \Phi_k \cdot \xi)\big) \\
    && + [\tilde D_{t, \Gamma}, \Delta^{-1}P_{\approx \lambda_{q+1}}] g_{\xi, k, n+1} \bar a_{\xi, k, n}\cos(\lambda_{q+1} \tilde \Phi_k \cdot \xi),
\end{eqnarray*}
and it follows that 
\begin{equation*}
    \|\tilde D_{t, \Gamma} \Delta^{-1} \theta_{q+1}^{(p)}\|_N \lesssim \mu_{q+1} \delta_{q+1}^{1/2} \lambda_{q+1}^{-3/2} \lambda_{q+1}^N \lesssim \delta_{q+1}\frac{\lambda_q}{\lambda_{q+1}}\lambda_{q+1}^{N+4\alpha}.
\end{equation*}
Consequently, for $N \leq L_\theta - 3$,
\begin{equation*}
    \|T_2\|_{N+\alpha} \lesssim \delta_{q+1} \lambda_q \delta_q^{1/2}\lambda_q^{1/2} \frac{\lambda_q}{\lambda_{q+1}} \lambda_{q+1}^{N+6\alpha} \lesssim \delta_{q+1}\delta_q^{1/2} \lambda_q^{5/2}\lambda_{q+1}^{-1} \lambda_{q+1}^{N+6\alpha}.
\end{equation*}
For the third term, we have 
\begin{equation*}
    \|\tilde u_{q, \Gamma} \cdot \nabla \Delta^{-1} \theta_{q+1}^{(p)}\|_{N} \lesssim \|\tilde u_{q, \Gamma}\|_N \|\Delta^{-1} \theta_{q+1}^{(p)}\|_1 + \|\tilde u_{q, \Gamma}\|_0 \|\Delta^{-1} \theta_{q+1}^{(p)}\|_{N+1} \lesssim \delta_q^{1/2} \delta_{q+1}^{1/2} \lambda_q^{1/2} \lambda_{q+1}^{-1/2} \lambda_{q+1}^N,
\end{equation*}
and, thus, for $N \leq L_\theta -3$,
\begin{equation*}
    \|T_3\|_{N+\alpha} \lesssim \delta_q \delta_{q+1}^{1/2} \lambda_q^2 \lambda_{q+1}^{-1/2} \lambda_{q+1}^{N+2\alpha}.
\end{equation*}
Terms $T_4$ and $T_5$ are treated similarly to $T_3$ and $T_5$ of the previous material derivative estimate. The estimates are: for $N \leq L_R - 3$,
\begin{equation*}
    \|T_4\|_{N+\alpha} \lesssim \delta_{q+1}^{3/2} \lambda_q \lambda_{q+1}^{1/2} \lambda_{q+1}^{N+2\alpha},
\end{equation*}
and, for $N \leq L_\theta - 4$, 
\begin{equation*}
    \|T_5\|_{N+\alpha} \lesssim \delta_{q+1}^{1/2} \delta_q \lambda_q^3 \lambda_{q+1}^{-3/2} \lambda_{q+1}^{N+2\alpha}.
\end{equation*}
From lemma \ref{w_t_estim} and proposition \ref{prop-molli}, we obtain 
\begin{equation*}
    \|\bar D_t \theta_{q+1}^{(t)} - P_{\lesssim \ell_q^{-1}} \bar D_t \theta_{q+1}^{(t)}\|_N \lesssim \delta_{q+1} \lambda_q^3 \lambda_{q+1}^{-1} \lambda_{q+1}^{N+\alpha}, \,\,\, \forall N \geq 0,
\end{equation*}
and, thus, for all $N \geq 0$,
\begin{equation*}
    \|T_6\|_{N+\alpha} \lesssim \delta_{q+1}^{3/2} \lambda_q^3 \lambda_{q+1}^{-3/2} \lambda_{q+1}^{N+3\alpha}.
\end{equation*}
The mollification estimate implies that for all $N \geq 0$, 
\begin{equation*}
    \|\bar u_q \cdot \nabla \theta_{q+1}^{(t)} - P_{\lesssim \ell_q^{-1}}(\bar u_q \cdot \nabla \theta_{q+1}^{(t)})\|_N \lesssim (\lambda_q \lambda_{q+1})^{-1} \frac{\delta_q^{1/2}\delta_{q+1} \lambda_q^{5 + 1/2}}{\mu_{q+1}} \lambda_{q+1}^{N+\alpha} \lesssim \delta_q^{1/2}\delta_{q+1}^{1/2}\lambda_q^{7/2} \lambda_{q+1}^{-3/2} \lambda_{q+1}^{N + \alpha},
\end{equation*}
and, thus, finally, 
\begin{equation*}
    \|T_7\|_{N+\alpha} \lesssim \delta_{q+1} \delta_q^{1/2} \lambda_q^{7/2} \lambda_{q+1}^{-2} \lambda_{q+1}^{N+3\alpha}.
\end{equation*}
This concludes the proof.
\end{proof}

\subsection{Conclusion} Lemma \ref{le-est-u-0} shows that the inductive propagation of the estimates concerning the density $\theta_{q+1}$ and velocity $u_{q+1}$. It remains to check the propagation of the estimates on the stress error $R_{q+1}$. In the following, we denote the material derivative corresponding to the vector field $u_{q+1}$ by  
\begin{equation*}
    D_{t,q+1} = \pa_t + u_{q+1}\cdot \na.
\end{equation*}
\begin{cor}
    The following hold:
    \begin{equation} \label{Spatial_estim_qplus1}
        \|R_{q+1}\|_N \leq \delta_{q+2} \lambda_{q+1}^{N-2\alpha}, \,\,\, \forall N \in \{0,1,...,L_R\},
    \end{equation}
    \begin{equation} \label{Material_estim_qplus1}
        \|D_{t, q+1} R_{q+1}\|_N \leq \delta_{q+2} \delta_{q+1}^{1/2} \lambda_{q+1}^{N+\frac32-2\alpha}, \,\,\, \forall N \in \{0,1,...,L_t\}.
    \end{equation}
\end{cor}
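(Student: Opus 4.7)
The proof combines the estimates from the various lemmas of Section \ref{sec-Nash} with the decomposition
\[
R_{q+1}=R_{q+1,L}+R_{q+1,O}+R_{q+1,R},
\]
together with the algebraic inequalities between the parameters $\delta_q,\lambda_q,\mu_{q+1},\tau_q,\ell_{t,q}$ enforced by the choice $0<\beta<1/2$ and $1<b<(1+2\beta)/(4\beta)$. First, I would go through each of the lemmas \ref{le-tran}, \ref{le-Nash}, \ref{le-flow-error-est}, \ref{le_est_delta_B}, \ref{le-est-glue}, \ref{le-Newton-est}, \ref{le-molli-est}, and the bilinear residual estimate, checking that each contributing term is bounded above by $\delta_{q+2}\lambda_{q+1}^{N-2\alpha}$. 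For instance, the flow error satisfies $\|\sum g_{\xi,k,n+1}^2(\bar A_{\xi,k,n}-A_{\xi,k,n})\|_N\lesssim \delta_{q+1}(\delta_{q+1}^{1/2}\lambda_q^{1/2})/(\delta_q^{1/2}\lambda_{q+1}^{1/2})\,\lambda_{q+1}^N$, and verifying the target bound reduces (after using $\delta_q=\lambda_q^{-2\beta}$ and $\lambda_{q+1}\approx\lambda_q^b$) to $\beta(2b-1)<1/2$ modulo factors $\lambda_{q+1}^{O(\alpha)}$, which is exactly the assumption. The transport, Nash, gluing, Newton, and mollification errors are bounded by $\delta_{q+1}\lambda_q/\lambda_{q+1}\,\lambda_{q+1}^N$ up to $\lambda_{q+1}^{O(\alpha)}$, which gives the target $\delta_{q+2}\lambda_{q+1}^{N-2\alpha}$ under the weaker condition $\beta<1/(2b)$.

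For the material derivative estimate, the issue is that the previous lemmas control $\tilde D_{t,\Gamma} R_{q+1,\bullet}$, whereas the statement requires $D_{t,q+1}R_{q+1}$. I would write
\[
D_{t,q+1} = \tilde D_{t,\Gamma} + (u_{q+1}-\tilde u_{q,\Gamma})\cdot \nabla, \qquad u_{q+1}-\tilde u_{q,\Gamma}=w_{q+1}^{(p)}+(\mathrm{Id}-P_{\lesssim \ell_q^{-1}})(u_q+w_{q+1}^{(t)}),
\]
and estimate the extra commutator term in $C^N$ using lemma \ref{le-est-Nash}, lemma \ref{w_t_estim}, the inductive estimate on $u_q$, and the mollification proposition \ref{prop-molli}, combined with the spatial bound on $R_{q+1}$ already obtained. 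The dominant piece is $w_{q+1}^{(p)}\cdot\nabla R_{q+1}$, which is controlled by $\|w_{q+1}^{(p)}\|_0\|R_{q+1}\|_{N+1}+\|w_{q+1}^{(p)}\|_N\|R_{q+1}\|_1\lesssim \delta_{q+1}^{1/2}\lambda_{q+1}^{1/2}\,\delta_{q+2}\lambda_{q+1}^{1-2\alpha}\,\lambda_{q+1}^N$; this is exactly the budget $\delta_{q+2}\delta_{q+1}^{1/2}\lambda_{q+1}^{N+3/2-2\alpha}$. Each remaining piece can be subsumed by the same bound, using $\mu_{q+1}\ll \delta_{q+1}^{1/2}\lambda_{q+1}^{3/2}$ where it appears (as in the Nash term of lemma \ref{le-Nash}) and $\ell_{t,q}^{-1}\leq \delta_{q+1}^{1/2}\lambda_{q+1}^{3/2}$ (from the definition of $\ell_{t,q}$) where it appears (as in the transport term of lemma \ref{le-tran}).

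The main obstacle is bookkeeping: each of the many error pieces must be individually matched to the right-hand side, and the loss of derivatives must be tracked through the chain $L_\theta\geq L_R+3\geq L_t+4$ to make sure the $C^{L_t}$ material derivative estimates close. In particular, for the estimates on $R_{q+1,R}$ involving the mollification commutators and the Constantin--E--Titi-type pieces, one needs $L_t\leq \min\{L_\theta-4,L_R-3\}$, which is satisfied by our choices $L_\theta=30$, $L_R=20$, $L_t=10$. Once these bookkeeping inequalities are verified, the closure of both \eqref{Spatial_estim_qplus1} and \eqref{Material_estim_qplus1} is immediate from absorbing the $\lambda_{q+1}^{O(\alpha)}$ losses into the gap $(\lambda_q/\lambda_{q+1})^\eta$ provided by $\beta<1/(2(2b-1))$, achieved by choosing $\alpha_0$ small depending on $\beta$ and $b$, and then $a_0$ large depending on all parameters.
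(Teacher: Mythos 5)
Your proposal is correct and follows essentially the same route as the paper: collect the term-by-term bounds from the lemmas of Section \ref{sec-Nash}, identify the flow error $\delta_{q+1}^{3/2}\lambda_q^{1/2}\delta_q^{-1/2}\lambda_{q+1}^{-1/2}$ as the dominant contribution governed by $b<\tfrac{1+2\beta}{4\beta}$, and for the material derivative decompose $D_{t,q+1}=\tilde D_{t,\Gamma}+(u_{q+1}-\tilde u_{q,\Gamma})\cdot\nabla$ with $w_{q+1}^{(p)}\cdot\nabla R_{q+1}$ as the borderline term, closing by absorbing the $\lambda_{q+1}^{O(\alpha)}$ losses. The only (cosmetic) difference is that you feed the final bound $\delta_{q+2}\lambda_{q+1}^{N-2\alpha}$ for $R_{q+1}$ into the product estimates, whereas the paper uses the sharper intermediate bound so that the strict gap needed to absorb implicit constants remains visible.
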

\begin{proof}
Collecting the estimates obtained in the previous sections we have 
    \begin{equation*}
        \|R_{q+1}\|_N \lesssim \bigg(\frac{\delta_{q+1}\lambda_q}{\lambda_{q+1}}+\frac{\delta_q^{\frac12}\delta_{q+1}^{\frac12}\lambda_q^{\frac32}}{\lambda_{q+1}^{\frac32}} + \frac{\delta_{q+1}^{\frac32} \lambda_q^{\frac12}}{\delta_q^{\frac12}\lambda_{q+1}^{\frac12}}\bigg) \lambda_{q+1}^{4\alpha} \lambda_{q+1}^N.
    \end{equation*}
Since $\beta < 1/2$, the dominant term is the third one. Therefore, 
\begin{equation*}
    \|R_{q+1}\|_N \lesssim \frac{\delta_{q+1}^{3/2} \lambda_q^{1/2}}{\delta_q^{1/2}\lambda_{q+1}^{1/2}} \lambda_{q+1}^{N+4\alpha},
\end{equation*}
and, by choosing $a_0$ sufficiently large, the implicit constants can be bounded by $\lambda_{q+1}^{\alpha}$, which implies 
\begin{equation*}
    \|R_{q+1}\|_N \leq \frac{\delta_{q+1}^{3/2} \lambda_q^{1/2}}{\delta_q^{1/2}\lambda_{q+1}^{1/2}} \lambda_{q+1}^{N+5\alpha}, \,\,\, \forall N \in \{0,1,..., L_R\}.
\end{equation*}
In view of 
\begin{equation*}
    b < \frac{1+2\beta}{4\beta},
\end{equation*}
the coefficient $\alpha > 0$ can be chosen sufficiently small in terms of $\beta$ and $b$ such that 
\begin{equation*}
    \frac{\delta_{q+1}^{3/2} \lambda_q^{1/2}}{\delta_q^{1/2}\lambda_{q+1}^{1/2}} \lambda_{q+1}^{7\alpha} \leq \delta_{q+2},
\end{equation*}
and, thus, \eqref{Spatial_estim_qplus1} is achieved.

    We are left to prove the material derivative estimate (\ref{Material_estim_qplus1}) corresponding to $u_{q+1}$. 
    Recall 
    \[u_{q+1}=u_q+w_{q+1}^{(t)}+w_{q+1}^{(p)}=\tilde u_{q,\Gamma}+(w_{q+1}^{(t)}-\tilde w_{q+1}^{(t)})+(u_q-\bar u_q) +w_{q+1}^{(p)}. \]
    Consequently,
    \begin{align*}
        \|D_{t,q+1} R_{q+1}\|_N \lesssim \|\tilde D_{t,\Ga} R_{q+1}\|_N+ \|(u_q - \bar u_q) \cdot\na R_{q+1}\|_N + \|(w_{q+1}^{(t)}-\tilde w_{q+1}^{(t)})\cdot\na R_{q+1}\|_N + \|w_{q+1}^{(p)} \cdot \nabla R_{q+1}\|_{N}.
    \end{align*}
    Bringing together the estimates obtained in the previous sections we deduce, for all $N \in \{0,1,..., L_t\}$,
    \begin{eqnarray*}
        \|\tilde D_{t, \Gamma} R_{q+1}\|_N &\lesssim& \bigg(\ell_{t,q}^{-1}\frac{\delta_{q+1}\lambda_q}{\lambda_{q+1}}\lambda_{q+1}^{4\alpha}+\mu_{q+1}\frac{\delta_{q}^{1/2}\delta_{q+1}^{1/2}\lambda_q^{3/2}}{\lambda_{q+1}^{3/2}}+\mu_{q+1}\delta_{q+1}  \\ 
        && + \tau_q^{-1}\frac{\delta_{q+1}\lambda_q}{\lambda_{q+1}}+\mu_{q+1}\frac{\delta_{q+1}\lambda_q}{\lambda_{q+1}}+\delta_{q+1}\delta_q^{1/2}\lambda_q^{3/2}\\
        &&+ \delta_{q+1}^{3/2}\lambda_q \lambda_{q+1}^{1/2}\lambda_{q+1}^{6\alpha}\bigg) \lambda_{q+1}^N\\
        &\lesssim& \delta_{q+1}^{3/2}\lambda_q \lambda_{q+1}^{1/2}\lambda_{q+1}^{N+6\alpha}.
    \end{eqnarray*}
    On the other hand, it follows from proposition \ref{prop-molli} and the inductive assumption (\ref{induct-u}) that
    \begin{equation*}
        \|\bar u_q - u_q\|_N\lesssim \ell_q^2\|u_q\|_{N+2} \lesssim \ell_q^2\delta_q^{1/2}\lambda_q^{N+2+1/2} \lesssim \delta_q^{1/2}\lambda_q^{1/2} \frac{\lambda_q}{\lambda_{q+1}}\lambda_{q+1}^N, \,\,\, \forall N \in \{0,1,...,L_\theta-2\},
    \end{equation*}
    and hence, for $N \leq L_t$,
    \begin{eqnarray*}
        \|(\bar u_q - u_q) \cdot \nabla R_{q+1}\|_N &\lesssim& \|\bar u_q - u_q\|_N \|R_{q+1}\|_1 + \|\bar u_q - u_q\|_0\|R_{q+1}\|_{N+1} \\
        &\lesssim & \delta_{q+1}^{3/2}\lambda_q^2 \lambda_{q+1}^{-1/2}\lambda_{q+1}^{N+5\alpha}
    \end{eqnarray*}
    where, in the last step, we used previously obtained estimate for $R_{q+1}$.
    Similarly, applying proposition \ref{prop-molli} and lemma \ref{w_t_estim}, we obtain
    \begin{eqnarray*}
        \|(w_{q+1}^{(t)}-\tilde w_{q+1}^{(t)}) \cdot \nabla R_{q+1}\|_N &\lesssim& \|w_{q+1}^{(t)}-\tilde w_{q+1}^{(t)}\|_N \|R_{q+1}\|_1 + \|w_{q+1}^{(t)}-\tilde w_{q+1}^{(t)}\|_0\|R_{q+1}\|_{N+1} \\
        &\lesssim & \delta_{q+1}^2 \delta_q^{-1/2}\lambda_q^{5/2}\lambda_{q+1}^{-1} \lambda_{q+1}^{N+5\alpha}.
    \end{eqnarray*}
    In the end, it follows from lemma \ref{le-est-Nash}, 
    \begin{eqnarray*}
        \|w_{q+1}^{(p)} \cdot \nabla R_{q+1}\|_N &\lesssim& \|w_{q+1}^{(p)}\|_N \|R_{q+1}\|_1 + \|w_{q+1}^{(p)}\|_0 \|R_{q+1}\|_{N+1} \\ 
        &\lesssim& \delta_{q+1}^2 \delta_q^{-1/2}\lambda_q^{1/2}\lambda_{q+1} \lambda_{q+1}^{N+5\alpha}.
    \end{eqnarray*}
    We conclude that, by absorbing the implicit constant into an extra $\lambda_{q+1}^{\alpha}$ factor, it holds that  
    \begin{equation*}
        \|D_{t, q+1} R_{q+1}\|_N \leq  \delta_{q+1}^2 \delta_q^{-1/2} \lambda_q^{1/2}\lambda_{q+1}\lambda_{q+1}^{N+7 \alpha}.
    \end{equation*}
    The estimate (\ref{Material_estim_qplus1}), then, follows for sufficiently small $\alpha > 0$
\end{proof}

\appendix
\section{Transport estimates} 
We recall standard estimates for solutions to the transport equation 
\begin{equation} \label{transport}
    \begin{cases}
    \partial_t f + u \cdot \nabla f = g, \\ 
    f \big | _{t = t_0} = f_0,
    \end{cases}
\end{equation}

\begin{prop}\cite[Proposition B.1]{BDLSV} \label{transport_estim}
Assume $|t-t_0| \|u\|_1 \leq 1$. Any solution $f$ of \eqref{transport} satisfies 
\begin{equation*}
    \|f(\cdot, t)\|_0 \leq \|f_0\|_0 + \int_{t_0}^t \|g(\cdot, \tau)\|_0 d \tau, 
\end{equation*}
\begin{equation*}
    \|f(\cdot, t)\|_\alpha \leq 2 \big( \|f_0\|_\alpha + \int_{t_0}^t \|g(\cdot, \tau)\|_\alpha d\tau\big),
\end{equation*}
for $\alpha \in [0,1]$.
More generally, for any $N \geq 1$ and $\alpha \in [0,1)$,
\begin{equation*}
[f(\cdot, t)]_{N+\alpha} \lesssim [f_0]_{N+\alpha} + |t-t_0| [u]_{N+\alpha} [f_0]_1 + \int_{t_0}^t \big( [g(\cdot, \tau)]_{N+\alpha} + (t-\tau) [u]_{N+\alpha} [g(\cdot, \tau)]_1\big) d\tau,
\end{equation*}
where the implicit constant depends on $N$ and $\alpha$.
Consequently, the backwards flow $\Phi$ of $u$ starting at time $t_0$ satisfies 
\begin{equation*}
    \|D\Phi(\cdot, t) - \I\|_0 \lesssim |t-t_0|[u]_1, 
\end{equation*}
\begin{equation*}
    [\Phi(\cdot, t)]_N \lesssim  |t - t_0| [u]_N, \,\,\,  \forall N \geq 2. 
\end{equation*}
\end{prop}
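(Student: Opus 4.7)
The plan is to reduce everything to the method of characteristics and then to a composition estimate. Let $X(\cdot, \cdot) : \mathbb T^2 \times \mathbb R \to \mathbb T^2$ be the forward Lagrangian flow of $u$ starting at $t_0$, i.e.\ $\partial_s X(y,s) = u(X(y,s),s)$ with $X(y,t_0) = y$. Along this flow, the transport equation integrates to the ODE
\begin{equation*}
    f(X(y,t),t) = f_0(y) + \int_{t_0}^t g(X(y,s),s)\, ds,
\end{equation*}
so the entire question reduces to controlling (i) the flow map $X(\cdot,t)$ in Hölder norms, and (ii) the effect of composition and change of variables $x = X(y,t)$.

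First I would establish the flow estimates. Differentiating the ODE in $y$ gives $\partial_s DX = (Du)\!\circ\! X \cdot DX$, so Grönwall yields $\|DX(\cdot,t)\|_0 \leq \exp(|t-t_0|\|Du\|_0) \leq e$ under the smallness hypothesis, and integrating $\partial_s(DX-\I)$ gives $\|DX(\cdot,t) - \I\|_0 \lesssim |t-t_0|[u]_1$. The same argument applied to $DX^{-1}$ (which satisfies $\partial_s(DX^{-1}) = -DX^{-1}(Du)\!\circ\! X$) gives $\|DX(\cdot,t)^{-1}\|_0 \lesssim 1$, so $X(\cdot,t)$ is bi-Lipschitz with universal constants. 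For higher derivatives, I would differentiate $N$ times, apply Faà di Bruno to $u \circ X$, and close via Grönwall while treating the top-order term $[u]_N$ separately from the lower-order terms through interpolation; this produces the bound $[X(\cdot,t)]_N \lesssim |t-t_0|[u]_N$ for $N \geq 2$, since $\partial^N X|_{t_0} = 0$. The analogous bound for the backward flow $\Phi$ follows by the same argument applied to the reverse-time ODE (or by differentiating $\Phi \circ X = \mathrm{id}$).

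Next I would handle the Hölder bounds on $f$ itself. The $C^0$ estimate is immediate from the formula above. For $\alpha \in (0,1]$, using bi-Lipschitz-ness of $X(\cdot,t)$ we can write, for any two points $x_1, x_2 \in \mathbb T^2$, $y_i = X(\cdot,t)^{-1}(x_i)$, and estimate
\begin{equation*}
    |f(x_1,t)-f(x_2,t)| \leq |f_0(y_1)-f_0(y_2)| + \int_{t_0}^t |g(X(y_1,s),s) - g(X(y_2,s),s)|\, ds,
\end{equation*}
and control $|y_1 - y_2|^\alpha$ and $|X(y_1,s)-X(y_2,s)|^\alpha$ by $|x_1-x_2|^\alpha$ using $\|DX(\cdot,t)^{-1}\|_0 \lesssim 1$ and $\|DX(\cdot,s)\|_0 \leq e$; the factor $2$ then comes from absorbing the implicit constant.

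For the general case $N \geq 1$, $\alpha \in [0,1)$, I would apply $\partial^N$ to the composition formula and use Faà di Bruno, organizing the terms by whether the top derivative falls on $f_0$/$g$ or on $X$. The terms where all derivatives hit $f_0$ or $g$ reproduce $[f_0]_{N+\alpha}$ and $\int [g(\cdot,\tau)]_{N+\alpha}\,d\tau$ after transferring through the bi-Lipschitz diffeomorphism. The terms where the top $N+\alpha$ derivative hits $X$ leave only $[f_0]_1$ or $[g(\cdot,\tau)]_1$ on the other factor and produce the contributions $|t-t_0|[u]_{N+\alpha}[f_0]_1$ and $\int_{t_0}^t (t-\tau)[u]_{N+\alpha}[g(\cdot,\tau)]_1\,d\tau$ via the flow estimate $[X(\cdot,s)]_{N+\alpha} \lesssim |s-t_0|[u]_{N+\alpha}$. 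The mixed terms are interpolated between these extremes using the standard Hölder-space interpolation inequality, and the smallness hypothesis $|t-t_0|\|u\|_1 \leq 1$ is what allows these intermediate contributions to be absorbed.

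The main obstacle will be the bookkeeping of the Faà di Bruno expansion in the $N+\alpha$ derivative estimate: carefully isolating the single ``worst'' term in which the top derivative falls on $X$ (and hence on $u$), rather than letting it be bounded crudely through $[X]_{N+\alpha}\lesssim|t-t_0|[u]_{N+\alpha}$ uniformly in all multi-indices. All other steps are standard Grönwall arguments or change-of-variable applications, so the heart of the proof lies in that interpolation-and-separation argument for the highest-order term.
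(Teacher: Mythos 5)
This proposition is quoted verbatim from \cite[Proposition B.1]{BDLSV} and the paper gives no proof of its own, so there is nothing to compare against beyond the standard argument — which is exactly what you propose: integrate along characteristics, control the Lagrangian and backward flows by Gr\"onwall, and handle the $C^{N+\alpha}$ norms through the composition (Fa\`a di Bruno) estimate, isolating the top-order term in which all derivatives land on the flow and interpolating the intermediate ones. The only detail to tighten is the factor $(t-\tau)$ multiplying $[u]_{N+\alpha}[g(\cdot,\tau)]_1$: bounding $[X(\cdot,\tau)]_{N+\alpha}\lesssim|\tau-t_0|[u]_{N+\alpha}$ and $[\Phi(\cdot,t)]_{N+\alpha}\lesssim|t-t_0|[u]_{N+\alpha}$ separately only yields the weaker factor $|t-t_0|$, so you should instead apply the flow estimate to the single two-time map $x\mapsto X(\Phi(x,t),\tau)$ (the flow from time $t$ back to time $\tau$), whose $C^{N+\alpha}$ seminorm is $\lesssim|t-\tau|\,[u]_{N+\alpha}$ by the same Gr\"onwall argument initialized at time $t$.
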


\section{Harmonic analysis}\label{sec.bha}
\subsection{A Littlewood-Paley partition of unity}

Let $d \geq 2$ and $\psi:\mathbb R^d \rightarrow \mathbb R$ be a smooth, spherically-symmetric function such that $\supp \psi \subset B_{3/2}(0)$ and $\psi(x) = 1$ for all $x \in B_1(0)$. For $j \geq 0$, denote 
\begin{equation*}
    \chi_j(\xi) = \psi\bigg( \frac{\xi}{2^j} \bigg) - \psi\bigg( \frac{\xi}{2^{j-1}} \bigg).
\end{equation*}
Then, for $f:\mathbb T^d \rightarrow \mathbb R$, we define the Littlewood-Paley projections 
\begin{equation*}
    \Delta_j f = \sum_{k \in \mathbb Z^d} \chi_j(k) \hat{f}(k) e^{i k \cdot x}, \,\,\, j \geq 0,
\end{equation*}
and 
\begin{equation*}
    \Delta_{-1} f = \hat f(0).
\end{equation*}
It will be notationally convenient to extend the definition to all $j \in \mathbb Z$ by 
\begin{equation*}
    \Delta_{j} f = 0 \,\,\, \forall j < -1.
\end{equation*}
We will also use the low frequency projections: 
\begin{equation*}
    S_j f = \sum_{i \leq j} \Delta_i f = \sum_{k \in \mathbb Z^2} \psi\bigg(\frac{k}{2^j}\bigg) \hat{f}(k) e^{ik\cdot x}.
\end{equation*}
It is not difficult to verify that for all $f \in C^\infty(\mathbb T^d)$, the following hold: 
\begin{itemize}
    \item $f(x) = \sum_{j=-1}^\infty \Delta_j f(x)=\hat{f}(0) + \sum_{j=0}^\infty \Delta_j f(x)$;

    \item $\supp \widehat{\Delta_j f} \subset \mathbb Z^d \cap \big(B_{2^{j+1}}(0) \setminus B_{2^{j-1}}(0)\big)$, for all $j \geq 0$;

    \item $\Delta_j \Delta_k f = 0$, whenever $|j-k| > 1$. 
\end{itemize}

\subsection{Basic Littlewood-Paley theory}

We recall here a few standard lemmas (see, for example, \cite{MS} for the proofs of these results on the Euclidean space). Since these are most commonly stated on $\mathbb R^d$ instead of $\mathbb T^d$, we also provide proofs for the convenience of the reader. 

\begin{defn}
We say $T$ is a Fourier multiplier operator of order $s \in \mathbb R$ if 
\begin{equation*}
    Tf(x) = \sum_{k \in \mathbb Z^d \setminus \{0\}} m(k) \hat f(k) e^{ik\cdot x},
\end{equation*}
for a multiplier $m \in C^\infty (\mathbb R^d \setminus \{0\})$ which is $s$-homogeneous.
\end{defn}

\begin{lem} \label{Bernstein}
    Let $f \in C^\infty(\mathbb T^d)$, and $T$ be an operator of order $s \in \mathbb R$. Then, it holds that 
    \begin{equation*}
        \|T \Delta_j f\|_0 \lesssim 2^{sj} \|\Delta_j f\|_0, 
    \end{equation*}
    with an implicit constant depending only on the operator $T$. 
\end{lem}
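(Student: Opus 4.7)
The plan is to represent $T\Delta_j f$ as a convolution against an $L^1$ kernel whose norm scales precisely like $2^{js}$, and then conclude by Young's inequality. The proof is entirely standard; I outline the setup carefully because the kernels $K_{0,j-m}$ built from the same construction are invoked in the proof of lemma~\ref{lem.bilin}, so it is useful to record $\bar\chi_j$ and $K_0$ explicitly here.

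First I would pick a smooth, spherically symmetric cutoff $\bar\chi_0 : \mathbb R^d \to \mathbb R$ that vanishes in a neighbourhood of the origin (say, on $B_{1/4}(0)$) and equals $1$ on the annulus $\{1/2 \leq |\xi| \leq 2\}$, with $\supp \bar\chi_0 \subset B_{3}(0) \setminus B_{1/8}(0)$. Setting $\bar\chi_j(\xi) := \bar\chi_0(\xi / 2^j)$, we have $\bar\chi_j \equiv 1$ on the Fourier support of $\Delta_j f$ (contained in $B_{2^{j+1}}\setminus B_{2^{j-1}}$), so
\[
    T\Delta_j f(x) \;=\; \sum_{k \in \mathbb Z^d \setminus \{0\}} m(k)\,\bar\chi_j(k)\, \widehat{\Delta_j f}(k)\, e^{ik\cdot x}.
\]
Since $m\bar\chi_j$ is smooth and compactly supported away from $0$, the inverse Euclidean Fourier transform
\[
    K_j(x) \;:=\; \frac{1}{(2\pi)^d}\int_{\mathbb R^d} m(\xi)\bar\chi_j(\xi)\, e^{ix\cdot \xi}\, d\xi
\]
is well-defined and lies in $\mathcal S(\mathbb R^d)$, and one checks (by Poisson summation, or directly by interpreting $\Delta_j f$ as a tempered distribution with the same Fourier coefficients as its periodization) that
\[
    T\Delta_j f(x) \;=\; \int_{\mathbb R^d} K_j(x-y)\, \Delta_j f(y)\, dy,
\]
where $\Delta_j f$ is identified with its periodic extension to $\mathbb R^d$.

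The key step is the rescaling that exploits the $s$-homogeneity of $m$: changing variables $\xi = 2^j \eta$ and using $m(2^j \eta) = 2^{js} m(\eta)$,
\[
    K_j(x) \;=\; 2^{j(s+d)}\, \frac{1}{(2\pi)^d}\int_{\mathbb R^d} m(\eta)\bar\chi_0(\eta)\, e^{i (2^j x)\cdot \eta}\, d\eta \;=\; 2^{j(s+d)}\, K_0(2^j x).
\]
Consequently $\|K_j\|_{L^1(\mathbb R^d)} = 2^{js}\, \|K_0\|_{L^1(\mathbb R^d)}$, and $K_0 \in L^1(\mathbb R^d)$ because $m\bar\chi_0$ is smooth and compactly supported (so $K_0$ is Schwartz-class). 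Young's convolution inequality then yields
\[
    \|T\Delta_j f\|_0 \;\leq\; \|K_j\|_{L^1(\mathbb R^d)}\, \|\Delta_j f\|_0 \;=\; 2^{js}\, \|K_0\|_{L^1}\, \|\Delta_j f\|_0,
\]
with $\|K_0\|_{L^1}$ depending only on $T$ (through $m$) and on the fixed cutoff $\bar\chi_0$. There is no serious obstacle here; the only minor point to be careful about is that $m$ need not be defined or smooth at the origin, which is why $\bar\chi_0$ is chosen to vanish near $0$, so that $m\bar\chi_0$ is smooth everywhere on $\mathbb R^d$ and its inverse Fourier transform is well-behaved.
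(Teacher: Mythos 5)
Your proof is correct and follows essentially the same route as the paper's: introduce a fattened cutoff $\bar\chi_j$ equal to $1$ on the Fourier support of $\Delta_j f$, write $T\Delta_j f$ as convolution with the kernel $\widecheck{m\bar\chi_j}$, and use the $s$-homogeneity of $m$ to get $\|\widecheck{m\bar\chi_j}\|_{L^1(\mathbb R^d)} = 2^{js}\|\widecheck{m\bar\chi_0}\|_{L^1(\mathbb R^d)}$ before applying Young's inequality. No gaps.
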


\begin{proof}
    Let $\bar\chi_0:\mathbb R^d\setminus \{0\} \rightarrow \mathbb R$ be a smooth, compactly supported function satisfying $\bar \chi_0(\xi) = 1$, for all $\xi \in \supp \chi_0$. With $\bar \chi_j(\xi) = \bar \chi_0(2^{-j}\xi)$, for $j \geq 0$, we have 
    \begin{equation*}
        T\Delta_j f(x) = \sum_{k \in \mathbb Z^d} m(k) \bar \chi_j (k) \widehat{\Delta_j f}(k) e^{i k\cdot x}.
    \end{equation*}
    It follows, then, that 
    \begin{equation*} 
        T \Delta_j f(x) = \int_{\mathbb R^d} \Delta_j f(x-y) \widecheck {m \bar \chi_j}(y),
    \end{equation*}
    where we identify $\Delta_j f$ with its periodic extension. Then,
    \begin{equation*}
        \|T \Delta_j f\|_0 \lesssim \| \widecheck {m \bar \chi_j}\|_{L^1(\mathbb R^d)} \|\Delta_j f\|_0.
    \end{equation*}
    The result follows once we note that 
    \begin{equation*}
        \widecheck {m \bar \chi_j}(x) = 2^{j(s+d)}\widecheck {m \bar \chi_0} (2^j x),
    \end{equation*}
    which implies 
    \begin{equation*}
        \|\widecheck {m \bar \chi_j}\|_{L^1} = 2^{sj} \|\widecheck {m \bar \chi_0}\|_{L^1}.
    \end{equation*}
\end{proof}

\begin{rem}
    Similar scaling arguments can be used to show that 
    \begin{equation*}
        \|\Delta_j f\|_N \lesssim \|f\|_N, \,\,\, \forall j \geq -1, N\geq 0;
    \end{equation*}
    \begin{equation*}
        \|S_j f\|_N \lesssim \|f\|_N, \,\,\, \forall j \geq -1, N\geq 0.
    \end{equation*}
\end{rem}

We gather in the following corollary immediate applications of lemma \ref{Bernstein}.

\begin{cor} \label{Bern_cor}
    The following 
    \begin{equation*}
        \|\Delta_j f\|_N \lesssim 2^{Nj}\|\Delta_j f\|_0,
    \end{equation*}
    \begin{equation*}
        \|\Lambda \Delta_j f\|_0 \lesssim 2^j \|\Delta_j f\|_0 \lesssim \|\Lambda \Delta_j f\|_0,
    \end{equation*}
    hold for all $f \in C^\infty(\mathbb T^d)$.
\end{cor}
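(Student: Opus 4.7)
My plan is to reduce both inequalities to Lemma \ref{Bernstein}, exploiting that $\Delta_j f$ is frequency-localized to the annulus $\{2^{j-1} \leq |\xi| \leq 2^{j+1}\}$ (for $j \geq 0$; the case $j = -1$ being a constant is either trivial or excluded from consideration).

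For the first inequality $\|\Delta_j f\|_N \lesssim 2^{Nj}\|\Delta_j f\|_0$, I would first handle integer $N$ by noting that for any multi-index $\gamma$ with $|\gamma| \leq N$, the operator $\partial^\gamma$ is a Fourier multiplier of order $|\gamma|$ with the $|\gamma|$-homogeneous symbol $(i\xi)^\gamma$, smooth away from the origin. Lemma \ref{Bernstein} then gives $\|\partial^\gamma \Delta_j f\|_0 \lesssim 2^{|\gamma| j}\|\Delta_j f\|_0 \leq 2^{Nj}\|\Delta_j f\|_0$, and summing over $|\gamma| \leq N$ yields the conclusion. For non-integer $N = M + \alpha$ with $0 < \alpha < 1$, I reduce to bounding the Hölder seminorm $[\partial^\gamma \Delta_j f]_\alpha$ for $|\gamma| = M$; since $[h]_\alpha \lesssim \|h\|_0^{1-\alpha} \|\na h\|_0^\alpha$ by standard interpolation, applying Lemma \ref{Bernstein} to $\partial^\gamma$ and $\na\partial^\gamma$ separately gives
\[ [\partial^\gamma \Delta_j f]_\alpha \lesssim (2^{Mj})^{1-\alpha} (2^{(M+1)j})^\alpha \|\Delta_j f\|_0 = 2^{(M+\alpha)j}\|\Delta_j f\|_0, \]
as desired.

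For the forward direction of the second inequality, $\|\Lambda \Delta_j f\|_0 \lesssim 2^j \|\Delta_j f\|_0$, the operator $\Lambda$ has the $1$-homogeneous symbol $|\xi|$, smooth away from the origin, so a direct application of Lemma \ref{Bernstein} suffices. For the reverse direction $\|\Delta_j f\|_0 \lesssim 2^{-j}\|\Lambda \Delta_j f\|_0$, I would mimic the scaling argument from the proof of Lemma \ref{Bernstein}: choose $\bar\chi_0 \in C_c^\infty(\R^d \setminus \{0\})$ equal to $1$ on the annulus containing $\supp \widehat{\Delta_0 f}$, set $\bar\chi_j(\xi) = \bar\chi_0(2^{-j}\xi)$, and define the convolution operator $T_j$ with multiplier $\bar\chi_j(\xi)/|\xi|$. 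Since $\bar\chi_j \cdot \widehat{\Delta_j f} = \widehat{\Delta_j f}$, one has $\Delta_j f = T_j(\Lambda \Delta_j f)$. The multiplier $\bar\chi_0(\xi)/|\xi|$ is smooth and compactly supported away from the origin, hence its inverse Fourier transform $K_0$ lies in $L^1(\R^d)$; scaling gives $\|K_j\|_{L^1} = 2^{-j}\|K_0\|_{L^1}$, and Young's inequality produces the claimed bound.

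The only mildly technical step is the interpolation argument for non-integer $N$, but nothing here presents a genuine obstacle; the corollary is essentially a packaging of Lemma \ref{Bernstein} together with the observation that on the frequency-localized piece $\Delta_j f$, the operator $\Lambda^{-1}$ (suitably truncated by $\bar\chi_j$) behaves as a bounded Fourier multiplier scaled by $2^{-j}$.
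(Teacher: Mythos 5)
Your proof is correct and takes essentially the same route as the paper's: both displayed inequalities are reductions to Lemma \ref{Bernstein}, and your reverse inequality $\|\Delta_j f\|_0 \lesssim 2^{-j}\|\Lambda\Delta_j f\|_0$ reproduces by hand the kernel-scaling argument that the paper invokes more compactly by writing $\|\Delta_j f\|_0 = \|\Lambda^{-1}\Delta_j \Lambda f\|_0$ and applying the lemma with $s = -1$. The extra interpolation step you supply for non-integer $N$ is fine but not needed beyond what the paper treats as immediate.
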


\begin{proof}
    The first result, as well as the first inequality of the second result follow immediately from lemma \ref{Bernstein}. For the final inequality we note that
    \begin{equation*}
        \|\Delta_j f\|_0 = \|\Lambda^{-1} \Delta_j \Lambda f\|_0 \lesssim 2^{-j} \|\Lambda \Delta_j f\|_0.
    \end{equation*}
\end{proof}

\begin{lem} \label{Besov_char} Let $0<\alpha<1$. There exists a constant $C>0$, depending only on $\alpha$, such that 
\begin{equation} \label{Besov_equiv}
    \frac{1}{C} \|f\|_\alpha \leq \sup_{j \geq -1} 2^{j\alpha} \|\Delta_j f\|_0 \leq C \|f\|_\alpha,
\end{equation}
for all $f \in C^\infty(\mathbb T^d)$.
\end{lem}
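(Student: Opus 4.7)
The plan is to establish the two inequalities separately, both relying on the basic scaling relation $\widecheck{\chi_j}(y) = 2^{jd} \widecheck{\chi_0}(2^j y)$ and the decomposition $f = \sum_{j \geq -1} \Delta_j f$.

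For the upper bound $\sup_j 2^{j\alpha}\|\Delta_j f\|_0 \lesssim \|f\|_\alpha$, I would handle $j = -1$ separately: since $\Delta_{-1} f = \hat{f}(0) = \fint f\,dx$, we have $\|\Delta_{-1} f\|_0 \leq \|f\|_0 \leq \|f\|_\alpha$. For $j \geq 0$, the key observation is that $\chi_j$ vanishes in a neighbourhood of the origin, so $\int_{\mathbb R^d} \widecheck{\chi_j}(y)\,dy = \chi_j(0) = 0$. Writing
\[
\Delta_j f(x) = \int_{\mathbb R^d} \widecheck{\chi_j}(y)\bigl(f(x-y) - f(x)\bigr)\,dy,
\]
(identifying $f$ with its periodic extension), the Hölder bound $|f(x-y)-f(x)| \leq \|f\|_\alpha |y|^\alpha$ yields $\|\Delta_j f\|_0 \leq \|f\|_\alpha \int |y|^\alpha |\widecheck{\chi_j}(y)|\,dy$. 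The scaling $\widecheck{\chi_j}(y) = 2^{jd}\widecheck{\chi_0}(2^j y)$ then gives $\int |y|^\alpha |\widecheck{\chi_j}(y)|\,dy = 2^{-j\alpha} \int |y|^\alpha |\widecheck{\chi_0}(y)|\,dy$, and $\widecheck{\chi_0}$ is Schwartz so this last integral is finite.

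For the lower bound $\|f\|_\alpha \lesssim \sup_j 2^{j\alpha}\|\Delta_j f\|_0$, denote $A := \sup_{j \geq -1} 2^{j\alpha}\|\Delta_j f\|_0$. The $L^\infty$ part is immediate: $\|f\|_0 \leq \sum_{j \geq -1} \|\Delta_j f\|_0 \leq A \sum_{j \geq -1} 2^{-j\alpha} \lesssim A$ since $\alpha > 0$. For the Hölder seminorm, fix $x \neq y$ in $\mathbb T^d$, let $N \in \mathbb Z$ be such that $2^{-N-1} \leq |x-y| < 2^{-N}$, and split
\[
|f(x) - f(y)| \leq \sum_{j \leq N} \|\nabla \Delta_j f\|_0 |x-y| + 2\sum_{j > N} \|\Delta_j f\|_0.
\]
Using Bernstein's inequality $\|\nabla \Delta_j f\|_0 \lesssim 2^j \|\Delta_j f\|_0$ (corollary~\ref{Bern_cor}), the first sum is bounded by $A |x-y| \sum_{j \leq N} 2^{j(1-\alpha)} \lesssim A |x-y| \cdot 2^{N(1-\alpha)}$, where convergence of the geometric series uses $\alpha < 1$. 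The second sum is bounded by $A \sum_{j > N} 2^{-j\alpha} \lesssim A \cdot 2^{-N\alpha}$, using $\alpha > 0$. Since $2^{-N} \asymp |x-y|$, both terms are $\lesssim A|x-y|^\alpha$, giving the seminorm bound.

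The whole argument is routine and the only delicate point is keeping track of the hypothesis $0 < \alpha < 1$: the condition $\alpha > 0$ ensures convergence of the high-frequency tail (and of the $L^\infty$ bound), while $\alpha < 1$ is essential to sum the low-frequency contribution involving $2^{j(1-\alpha)}$. No serious obstacle is expected; the main care is simply the separate treatment of $j = -1$ and the scaling computation for the Fourier-inverse of $\chi_j$.
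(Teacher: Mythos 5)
Your proof is correct and follows essentially the same route as the paper's: the upper bound via the vanishing mean of $\widecheck{\chi_j}$ together with the scaling $\widecheck{\chi_j}(y)=2^{jd}\widecheck{\chi_0}(2^jy)$, and the lower bound via the dyadic split at the scale $2^{-N}\asymp|x-y|$ combined with Bernstein's inequality. The only (immaterial) difference is that you control $\|f\|_0$ by summing $\|\Delta_j f\|_0$ directly, whereas the paper bounds the seminorm $[f]_\alpha$ and adds back $|\hat f(0)|$.
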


\begin{proof}
    Arguing similarly to the proof of the previous lemma, we note that 
    \begin{equation*}
        \Delta_j f(x) = \int_{\mathbb R^d} f(x-y) \check \chi_j(y) dy,
    \end{equation*}
    where $f$ is identified with its periodic extension. Since 
    \begin{equation*}
        \int_{\mathbb R^d} \check \chi_j = \chi_j(0) = 0,
    \end{equation*}
    it follows that, for all $j \geq 0$,
    \begin{eqnarray*}
        |\Delta_j f(x)| &=& \bigg| \int_{\mathbb R^d} \big( f(x-y) - f(x) \big) \check \chi_j(y) dy \bigg| \\ 
        & \lesssim & \|f\|_\alpha \int_{\mathbb R^d} |y|^\alpha |\check \chi_j(y)| dy \\ 
        & = & 2^{-j \alpha} \|f\|_\alpha \int_{\mathbb R^d} |y|^\alpha |\check \chi_0 (y)| dy.
    \end{eqnarray*}
    This implies $\sup_{j \geq 0} 2^{j\alpha} \|\Delta_j f\|_0 \lesssim \|f\|_\alpha$, and the second inequality in \eqref{Besov_equiv} follows once we note that $|\hat f(0)| \lesssim \|f\|_0 \leq \|f\|_\alpha$.

    For the first estimate, fix $x, y \in \mathbb T^d$ and let $k \in \mathbb Z$ such that 
    \begin{equation*}
        2^{-k-1} \leq |x-y| < 2^{-k}.
    \end{equation*}
    Then, 
    \begin{eqnarray*}
        |f(x) - f(y)| & \lesssim & \sum_{j \geq 0} |\Delta_j f(x) - \Delta_j f(y)| \\ 
        & \lesssim & \sum_{j \leq k} |\Delta_j f(x) - \Delta_j f(y)| + \sum_{j \geq k} \|\Delta_j f\|_0 \\ 
        & \lesssim & \sum_{j \leq k} \|\Delta_j f\|_1 |x-y| + \sum_{j \geq k} \|\Delta_j f\|_0.
    \end{eqnarray*}
    By lemma \ref{Bernstein}, 
    \begin{equation*}
        \sum_{j \leq k} \|\Delta_j f\|_1 |x-y| \lesssim \sum_{j \leq k}2^j \|\Delta_j f\|_0 2^{-k(1-\alpha)}|x-y|^\alpha \lesssim \big(\sup_{j \geq 0} 2^{j \alpha} \|\Delta_j f\|_0 \big) |x-y|^\alpha.
    \end{equation*}
    On the other hand, 
    \begin{equation*}
        \sum_{j \geq k} \|\Delta_j f\|_0 \lesssim \big(\sup_{j \geq 0} 2^{j \alpha} \|\Delta_j f\|_0 \big) \sum_{j \geq k} 2^{-j\alpha} \lesssim 2^{-(k+1)\alpha} \sup_{j \geq 0} 2^{j \alpha} \|\Delta_j f\|_0 \lesssim \big(\sup_{j \geq 0} 2^{j \alpha} \|\Delta_j f\|_0 \big) |x-y|^\alpha.
    \end{equation*}
    Bringing everything together, we find that 
    \begin{equation*}
        [f]_\alpha \lesssim \sup_{j \geq 0} 2^{j \alpha} \|\Delta_j f\|_0,
    \end{equation*}
    which implies the wanted inequality since $\|f - \hat f (0)\|_\alpha \lesssim [f]_\alpha$. 
\end{proof}

As a corollary, the standard estimate for $0$-homogeneous operators follows.

\begin{cor} \label{0-hom_estim}
Let $0 < \alpha < 1$ and $T$ be a Fourier multiplier operator of order $0$. Then, there exists a constant $C > 0$ depending on $T$ and $\alpha$  such that 
\begin{equation*}
    \|Tf\|_\alpha \leq C \|f\|_\alpha,
\end{equation*}
for all $f \in C^\infty (\mathbb T^d)$. 
\end{cor}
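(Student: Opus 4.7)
The plan is to combine the Besov-type characterization of Hölder spaces from lemma \ref{Besov_char} with the operator-norm bound of lemma \ref{Bernstein}. The key observation is that $T$ is a Fourier multiplier, and so is each Littlewood--Paley projector $\Delta_j$; consequently, $T$ and $\Delta_j$ commute and we have $\Delta_j T f = T \Delta_j f$ for all $j \geq 0$. Moreover, since the symbol of $T$ vanishes at $0$ (it is only defined on $\mathbb Z^d \setminus \{0\}$), the zero-mode component satisfies $\Delta_{-1} T f = \widehat{Tf}(0) = 0$.

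Thus I would first invoke lemma \ref{Besov_char} to obtain
\begin{equation*}
    \|Tf\|_\alpha \lesssim \sup_{j \geq -1} 2^{j\alpha} \|\Delta_j Tf\|_0 = \sup_{j \geq 0} 2^{j\alpha} \|T \Delta_j f\|_0.
\end{equation*}
Then, since $T$ is a Fourier multiplier of order $0$, lemma \ref{Bernstein} applied to each frequency block yields
\begin{equation*}
    \|T \Delta_j f\|_0 \lesssim \|\Delta_j f\|_0,
\end{equation*}
with an implicit constant depending only on the symbol $m$ of $T$. Combining these two inequalities gives
\begin{equation*}
    \|Tf\|_\alpha \lesssim \sup_{j \geq 0} 2^{j\alpha} \|\Delta_j f\|_0 \lesssim \|f\|_\alpha,
\end{equation*}
where the final step applies the other direction of lemma \ref{Besov_char}.

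There is essentially no obstacle here: the proof is a direct corollary of the two preceding lemmas, and the only minor point to verify is that the zero-frequency term is handled correctly by the convention that $T$ annihilates constants. All constants that appear depend only on $\alpha$ (through lemma \ref{Besov_char}) and on the symbol of $T$ (through lemma \ref{Bernstein}), which is exactly the dependence asserted in the statement.
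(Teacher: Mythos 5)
Your proof is correct and is essentially identical to the paper's: both pass to the Besov-type characterization of lemma \ref{Besov_char}, commute $T$ with the Littlewood--Paley projections, apply lemma \ref{Bernstein} on each block, and return via lemma \ref{Besov_char}. Your explicit remark that the zero mode is annihilated by $T$ is a harmless extra detail that the paper leaves implicit.
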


\begin{proof}
    We have
    \begin{equation*}
        \|Tf\|_\alpha \lesssim \sup_{j \geq 0} 2^{j \alpha} \|T \Delta_j f\|_0 \lesssim \sup_{j \geq 0} 2^{j \alpha} \| \Delta_j f\|_0 \lesssim \|f\|_\alpha,
    \end{equation*}
    where the first and last inequalities follow from lemma \ref{Besov_char} and the second one from lemma \ref{Bernstein}.
\end{proof}

\begin{rem}
    Corollary \ref{0-hom_estim} can be used to deduce estimates also for multiplier operators of different orders. Particularly relevant will be the estimate 
    \begin{equation*}
        \|\Lambda f\|_\alpha \lesssim \|f\|_{1+\alpha},
    \end{equation*}
    which follows by writing $\Lambda = \mathcal{R}\cdot \nabla$, where $\mathcal{R}$ is the ($0$-order) Riesz transform. 
\end{rem}

\section{Mollification, composition, and singular integral estimates}

\begin{prop}\cite[Lemma 2.1]{CDLS12}\label{prop-molli}
Let $\phi$ be a symmetric mollifier with $\int \phi=1$. For any smooth function $f$, the estimate 
\[\|f-f*\phi_\ell\|_N\lesssim \ell^2\|f\|_{N+2}, \ \ \forall \ N\geq 0\]
holds with implicit constant depending only on $N$.
\end{prop}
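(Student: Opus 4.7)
The plan is to reduce to the case $N=0$ and then exploit the symmetry of $\phi$ via a second-order Taylor expansion. Since convolution commutes with differentiation, for any multi-index $\gamma$ with $|\gamma|\leq N$ we have
\[ \partial^\gamma(f - f*\phi_\ell) = \partial^\gamma f - (\partial^\gamma f)*\phi_\ell, \]
so it suffices to prove the pointwise bound
\[ \|g - g*\phi_\ell\|_0 \lesssim \ell^2 \|g\|_2 \]
for any smooth $g$, and then apply it with $g = \partial^\gamma f$ (which contributes $\|\partial^\gamma f\|_2 \lesssim \|f\|_{N+2}$).

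For the base case, first write, using $\int \phi_\ell = 1$,
\[ g(x) - (g*\phi_\ell)(x) = \int_{\mathbb R^d} \big(g(x) - g(x-y)\big)\,\phi_\ell(y)\,dy. \]
I would then expand $g(x-y)$ via Taylor's formula around $x$ to second order,
\[ g(x-y) = g(x) - \nabla g(x)\cdot y + \int_0^1 (1-s)\, D^2 g(x-sy)(y,y)\,ds, \]
and substitute. The key observation, which is precisely where the symmetry hypothesis on $\phi$ enters, is that $\phi_\ell$ is even, so
\[ \int_{\mathbb R^d} y\, \phi_\ell(y)\,dy = 0, \]
killing the linear term. The constant term vanishes already from the normalization. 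Therefore only the quadratic remainder survives, and bounding it pointwise yields
\[ |g(x) - (g*\phi_\ell)(x)| \leq \|D^2 g\|_0 \int_{\mathbb R^d} |y|^2 |\phi_\ell(y)|\,dy = \|D^2 g\|_0 \cdot \ell^2 \int_{\mathbb R^d}|z|^2 |\phi(z)|\,dz, \]
after the change of variables $z = y/\ell$. The remaining integral is a finite constant depending only on $\phi$, which completes the base case with the implicit constant depending only on $\phi$.

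There is essentially no genuine obstacle here: the entire content of the lemma is the cancellation of the first-order term through the symmetry of $\phi$, which is what upgrades the naive bound $\|f - f*\phi_\ell\|_0 \lesssim \ell \|f\|_1$ to the claimed $\ell^2$. The only bookkeeping to mind is that the implicit constant is independent of $f$ and $\ell$ (depending on $N$ only through the multi-index combinatorics when one unpacks $\partial^\gamma$, but not on $f$'s size). One can alternatively phrase the base case via the Fourier multiplier $1 - \hat\phi(\ell\xi)$: symmetry of $\phi$ makes $\hat\phi$ real with $\nabla \hat\phi(0) = 0$, giving $|1 - \hat\phi(\ell\xi)| \lesssim \ell^2 |\xi|^2$ uniformly, which is exactly the same saving; I would prefer the real-space Taylor argument since it avoids any technicality about the multiplier acting on periodic distributions.
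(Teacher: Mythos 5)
Your proof is correct and is exactly the standard argument for this mollification estimate: the paper does not reprove it but cites \cite{CDLS12}, where the proof is precisely the second-order Taylor expansion with the symmetry of $\phi$ cancelling the linear term, preceded by the reduction to $N=0$ via commuting $\partial^\gamma$ with the convolution. No gaps; the dependence of the constant on the fixed mollifier $\phi$ is harmless and standard.
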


\begin{prop}\cite[Proposition A.1]{BDLSV} \label{comp_estim}
    Let $\Psi:\Omega \rightarrow \mathbb R$ and $u: \mathbb R^n \rightarrow \Omega$ be two smooth functions, with $\Omega \subset \mathbb R^N$. Then, for any $m \in \mathbb N \setminus \{0\}$, there exists a constant $C = C(m, N,n)$ such that 
    % \begin{equation*}
    %     [\Psi \circ u]_m \leq C\big([\Psi]_1 \|Du\|_{m-1} + \|D\Psi\|_{m-1} \|u\|_0^{m-1} \|u\|_m \big),
    % \end{equation*}
    \begin{equation*}
        [\Psi \circ u]_m \leq C\big([\Psi]_1 \|Du\|_{m-1} + \|D\Psi\|_{m-1} [u]_1^m \big).
    \end{equation*}
\end{prop}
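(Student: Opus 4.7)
The plan is to reduce the problem to the standard Fa\`a di Bruno formula for iterated derivatives of a composition, followed by two classical interpolation inequalities for $C^k$ seminorms and a single application of Young's inequality.

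First I would invoke the (multivariate) Fa\`a di Bruno formula to write, for any multi-index $\alpha$ with $|\alpha| = m$,
\begin{equation*}
    D^\alpha(\Psi \circ u) = \sum_{\pi} c_\pi\, D^{|\pi|}\Psi(u) \prod_{B \in \pi} D^{|B|} u,
\end{equation*}
where the sum ranges over partitions $\pi$ of the multi-index $\alpha$ into nonempty blocks $B$, and $c_\pi$ are combinatorial constants depending only on $m$, $N$, $n$. Grouping according to the number of blocks $p$ and their sizes $k_1,\dots,k_p$ with $k_1 + \dots + k_p = m$, $k_i \geq 1$, I obtain the pointwise bound
\begin{equation*}
    [\Psi \circ u]_m \;=\; \|D^m(\Psi\circ u)\|_0 \;\lesssim_m\; \sum_{p=1}^{m} \sum_{\substack{k_1+\cdots+k_p=m\\ k_i \geq 1}} \|D^p \Psi\|_0 \prod_{i=1}^{p} \|D^{k_i} u\|_0.
\end{equation*}
The boundary cases $p=1$ and $p=m$ are already of the desired form, since they give $[\Psi]_1 \|D^m u\|_0 \leq [\Psi]_1 \|Du\|_{m-1}$ and $\|D^m \Psi\|_0 [u]_1^m \leq \|D\Psi\|_{m-1} [u]_1^m$ respectively.

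For each intermediate $1 < p < m$, I would apply the standard Gagliardo--Nirenberg interpolation between the first and the highest controlled derivatives:
\begin{equation*}
    \|D^{k} u\|_0 \,\lesssim\, [u]_1^{(m-k)/(m-1)}\, \|Du\|_{m-1}^{(k-1)/(m-1)}, \qquad \|D^{p}\Psi\|_0 \,\lesssim\, [\Psi]_1^{(m-p)/(m-1)}\, \|D\Psi\|_{m-1}^{(p-1)/(m-1)},
\end{equation*}
for $1 \leq k,p \leq m$. Multiplying, and using $\sum_i(k_i-1) = m-p$ together with $\sum_i(m-k_i) = m(p-1)$, gives
\begin{equation*}
    \|D^p \Psi\|_0 \prod_i \|D^{k_i} u\|_0 \,\lesssim\, \bigl([\Psi]_1 \|Du\|_{m-1}\bigr)^{(m-p)/(m-1)}\, \bigl(\|D\Psi\|_{m-1}\,[u]_1^m\bigr)^{(p-1)/(m-1)}.
\end{equation*}

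Finally, I would apply Young's inequality with conjugate exponents $q = (m-1)/(m-p)$ and $q' = (m-1)/(p-1)$, whose reciprocals visibly sum to one, to conclude
\begin{equation*}
    \|D^p \Psi\|_0 \prod_i \|D^{k_i} u\|_0 \,\lesssim\, [\Psi]_1 \|Du\|_{m-1} + \|D\Psi\|_{m-1}\,[u]_1^m.
\end{equation*}
Summing over the finite collection of partitions yields the claim. The only real content lies in matching the interpolation exponents so that a single Young splitting simultaneously converts the \emph{entire} mixed product into the two target monomials; once that accounting is checked, the estimate is automatic. I expect the main (and only mild) obstacle to be precisely this bookkeeping verification of exponents, which I carried out above, together with confirming that the combinatorial constant in Fa\`a di Bruno depends only on $m$, $N$, $n$, as required.
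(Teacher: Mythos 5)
Your proof is correct and is essentially the standard argument for this estimate (the one given in the cited reference \cite{BDLSV}, Proposition A.1, and its antecedents): Fa\`a di Bruno, followed by convexity of the $C^k$ seminorms to interpolate each factor between the first and top derivatives, and a single application of Young's inequality, with the exponent bookkeeping $\sum_i(k_i-1)=m-p$ and $\sum_i(m-k_i)=m(p-1)$ exactly as you carried it out. The paper itself only cites this proposition without proof, so there is nothing further to compare.
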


% \begin{prop}\cite[Proposition A.3]{GR23}\label{prop-molli-bi}
% Let $\phi$ be a standard mollifier. For any smooth functions  $f$ and $g$ and $N\geq M\geq 0$, the estimate 
% \[\|(fg)*\phi_{\ell}-(f*\phi_\ell)(g*\phi_\ell)\|_N\lesssim \ell^{2-N+M}\left(\|f\|_{M+1}\|g\|_1+\|f\|_{1}\|g\|_{M+1}\right), \ \ \forall \ N\geq 0\]
% holds with implicit constant depending only on $N$ and $M$.
% \end{prop}

% \begin{prop}\cite{CZ}\label{prop-CZ}
% Periodic Calder\'on-Zygmund operators are bounded on the space of zero mean periodic $C^\alpha$ functions for $\alpha\in(0,1)$.
% \end{prop}

\begin{prop}\cite[Proposition D.1]{BDLSV} \label{prop-commu}
Let $\alpha\in(0,1)$, $N\geq 0$ and $u\in C^{N+\alpha}$ be a vector field. Let $T_K$ be a Calder\'on-Zygmund operator with kernel $K$. Then the estimate
\begin{equation}\notag
\|[u\cdot\nabla, T_K]f\|_{N+\alpha}\lesssim \|u\|_{1+\alpha}\|f\|_{N+\alpha}+\|u\|_{N+1+\alpha}\|f\|_{\alpha}
\end{equation}
holds for any $f\in C^{N+\alpha}$, with implicit constant depending on $\alpha, N, K$.
\end{prop}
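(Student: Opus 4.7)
The plan is to first reduce the general $N$ case to the base case $N=0$ by a Leibniz-type calculation that exploits the translation invariance of $T_K$, and then prove the base case via a Littlewood-Paley paradifferential argument in which the commutator structure generates a gain of one derivative on the low-high diagonal of the paraproduct.

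\textbf{Reduction to $N=0$.} Since $T_K$ is a zero-order Fourier multiplier on $\mathbb T^d$ it commutes with every constant-coefficient differential operator. Applying $\partial^\gamma$ with $|\gamma|=N$ and Leibniz' rule yields
\begin{equation*}
    \partial^\gamma [u\cdot \nabla,\, T_K]f \;=\; \sum_{\beta\leq \gamma}\binom{\gamma}{\beta}\bigl[\partial^{\gamma-\beta}u \cdot \nabla,\, T_K\bigr]\partial^\beta f.
\end{equation*}
The single term $\beta=\gamma$ equals $[u\cdot\nabla, T_K]\partial^\gamma f$, which I will bound via the base case; this already contributes $\|u\|_{1+\alpha}\|f\|_{N+\alpha}$. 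In every other term at least one derivative falls on $u$, so the commutator cancellation is no longer needed: I estimate each half of the commutator separately using the $C^\alpha$-boundedness of $T_K$ from corollary~\ref{0-hom_estim} together with the elementary product estimate $\|fg\|_\alpha\lesssim \|f\|_\alpha\|g\|_\alpha$, obtaining $\|u\|_{|\gamma-\beta|+\alpha}\|f\|_{|\beta|+1+\alpha}$. Setting $k=|\gamma-\beta|\in\{1,\dots,N\}$, convex interpolation of the Hölder scale together with Young's inequality for products then collapses every such contribution into $\|u\|_{1+\alpha}\|f\|_{N+\alpha}+\|u\|_{N+1+\alpha}\|f\|_\alpha$, which is exactly the claimed bound.

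\textbf{Base case.} Using $\partial_i T_K=T_K\partial_i$ I rewrite $[u\cdot\nabla, T_K]f=\sum_i[u^i, T_K]\partial_i f$, reducing the problem to the Calder\'on-type estimate $\|[u^i, T_K]\partial_i f\|_\alpha\lesssim \|u\|_{1+\alpha}\|f\|_\alpha$. I apply Bony's paraproduct decomposition to $u^i\partial_i f$, splitting it into low-high, high-low and high-high contributions via the Littlewood-Paley projectors $\Delta_j, S_j$ of appendix~\ref{sec.bha}, and exploit that $T_K$ is essentially diagonal on Littlewood-Paley annuli so that each piece of the commutator remains frequency-localized. The high-low and high-high pieces are handled by bare product estimates: each half of the commutator admits a bound $\lesssim \|u\|_{1+\alpha}\|f\|_\alpha$ times a summable decay factor in the dyadic index, via lemma~\ref{Bernstein} and the uniform $L^\infty$-boundedness of $T_K\Delta_j$; taking suprema in the output scale through lemma~\ref{Besov_char} disposes of these contributions.

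\textbf{Main obstacle: the low-high cancellation.} The delicate piece is the low-high paraproduct $\sum_j[S_{j-N_0}u^i, T_K]\partial_i\Delta_j f$, where one needs the commutator structure to extract an extra factor of $2^{-j}$. For this I will use the kernel representation
\begin{equation*}
    [S_{j-N_0}u^i, T_K]\partial_i\Delta_j f(x) \;=\; \mathrm{p.v.}\int K(x-y)\bigl(S_{j-N_0}u^i(x)-S_{j-N_0}u^i(y)\bigr)\,\partial_i\Delta_j f(y)\,dy
\end{equation*}
together with the Taylor expansion $S_{j-N_0}u^i(x)-S_{j-N_0}u^i(y)=(x-y)\cdot\int_0^1\nabla S_{j-N_0}u^i(y+s(x-y))\,ds$. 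The extra linear factor $(x-y)$ cancels one power of the Calder\'on-Zygmund singularity of $K$, so that the resulting operator becomes convolution with an $L^1$ kernel of norm $\lesssim 1$ uniformly in $j$, after handling the periodic summation and splitting off a smooth far-field tail. This yields $\|[S_{j-N_0}u^i, T_K]\partial_i\Delta_j f\|_0\lesssim \|\nabla u\|_0\|\Delta_j f\|_0$, and since the output is Fourier-localized near $|\xi|\sim 2^j$, multiplying by $2^{l\alpha}$ with $l\sim j$ and applying lemma~\ref{Besov_char} delivers the required bound $\lesssim \|u\|_{1+\alpha}\|f\|_\alpha$. The hardest technical point will be making this Taylor cancellation quantitative uniformly across dyadic scales on $\mathbb T^d$ and verifying the uniform $L^1$ control of the resulting kernel, which is the essence of the classical Coifman-Meyer paradifferential commutator calculus invoked via~\cite{BDLSV}.
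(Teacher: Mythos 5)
The paper does not prove proposition~\ref{prop-commu}: it is quoted verbatim from~\cite[Proposition D.1]{BDLSV}, whose own argument is a direct real-variable estimate of the H\"older difference quotient of the commutator (splitting the singular integral at the scale $|x_1-x_2|$ and Taylor-expanding $u$), not a Littlewood--Paley argument. Your overall strategy — reduction to $N=0$ by Leibniz plus interpolation, then a Bony decomposition in which only the low-high block needs the commutator cancellation — is a legitimate alternative and mirrors the structure of the paper's own lemma~\ref{lem.bilin}. The reduction step and the high-low/high-high blocks are fine as described.

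There is, however, a quantitative gap in the step you yourself identify as the main obstacle. From the raw kernel representation and the Taylor expansion you get
\begin{equation*}
    \bigl|[S_{j-N_0}u^i,T_K]\partial_i\Delta_j f(x)\bigr|\;\lesssim\;\|\nabla u\|_0\int |x-y|\,|K(x-y)|\,|\partial_i\Delta_j f(y)|\,dy\;\lesssim\;\|\nabla u\|_0\,\bigl\||z|K(z)\bigr\|_{L^1}\,\|\partial_i\Delta_j f\|_0 ,
\end{equation*}
and since $\||z|K(z)\|_{L^1}\lesssim 1$ (not $\lesssim 2^{-j}$) while $\|\partial_i\Delta_j f\|_0\sim 2^j\|\Delta_j f\|_0$, this yields $2^j\|\nabla u\|_0\|\Delta_j f\|_0$ — a full factor $2^j$ worse than the bound $\|\nabla u\|_0\|\Delta_j f\|_0$ you assert, and the resulting sum produces $\|u\|_{1+\alpha}\|f\|_{1+\alpha}$ rather than $\|u\|_{1+\alpha}\|f\|_\alpha$. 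The cancellation you need is not in the raw kernel but in its frequency localization: since $S_{j-N_0}u^i\,\partial_i\Delta_j f$ and $\partial_i\Delta_j f$ are both supported in Fourier annuli of radius $\approx 2^j$ (for $N_0$ large), you may replace $T_K$ by $T_K\widetilde\Delta_j$, whose kernel $K_j$ satisfies $\||z|K_j(z)\|_{L^1}\lesssim 2^{-j}$ by the scaling argument in the proof of lemma~\ref{Bernstein}. Only then does the first-moment bound compensate the $2^j$ from $\partial_i$, giving $\|[S_{j-N_0}u^i,T_K]\partial_i\Delta_j f\|_0\lesssim\|\nabla u\|_0\|\Delta_j f\|_0$ and, via lemma~\ref{Besov_char}, the claimed $\|u\|_{1+\alpha}\|f\|_\alpha$. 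With this correction the proof closes; note also that your reduction step tacitly requires $T_K$ to be of convolution type (so that it commutes with $\partial^\gamma$), which is the only case used in this paper.
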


\begin{prop}\cite[Lemma A.6]{BSV}\label{prop-commu-loc}
    Let $s\in \R$, $\la\geq 1$, and let $T_K$ be an order $s$ convolution operator localized at length scale $\la^{-1}$. That is, $T_K$ acts on smooth functions $f$ as 
    $$ T_K f(x) = \int_{\R^2} K(y)f(x-y)\,dy  $$
    for some kernel $K:\R^2\to\R$ that obeys
    $$ \||x|^a \na^b K(x)\|_{L^1(\R^2)} \lesssim \la^{b-a+s} $$
    for all $0\leq a , |b|\leq 1$ and some implicit constants $C=C(a,b)$. Then, for any smooth function $f:\T^2\to\R^2$ and smooth incompressible vector field $u:\T^2\to\R^2$, we have
    $$ \|[u\cdot\na, T_K]f\|_{0} \leq \la^s \|\na u\|_{0} \|f\|_0\,. $$
\end{prop}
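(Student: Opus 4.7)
\medskip

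The plan is to write the commutator as a single convolution integral whose kernel can be bounded pointwise by $|y||\nabla K(y)|$ times $\|\nabla u\|_0 \|f\|_0$, and then invoke the hypothesis on the kernel at indices $a=b=1$.

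First, I would expand both sides of the commutator explicitly. Since $T_K$ is convolution by $K$,
\begin{equation*}
    u(x)\cdot\nabla T_K f(x) = u(x)\cdot \int_{\mathbb R^2} \nabla K(y) f(x-y)\,dy,
\end{equation*}
where the derivative is moved onto $K$ by differentiating under the integral sign. For the other term, the incompressibility of $u$ lets me write $u\cdot\nabla f=\nabla\cdot(uf)$; substituting and then converting $\nabla_{x-y}$ to $-\nabla_y$ inside the integral yields
\begin{equation*}
    T_K(u\cdot\nabla f)(x) = -\int_{\mathbb R^2} K(y)\,\nabla_y\cdot\bigl(u(x-y)f(x-y)\bigr)\,dy = \int_{\mathbb R^2} \nabla K(y)\cdot u(x-y)\,f(x-y)\,dy
\end{equation*}
after an integration by parts (legitimate since $K$ has integrable derivatives and $u$, $f$ are smooth). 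Subtracting the two identities gives the clean representation
\begin{equation*}
    [u\cdot\nabla, T_K]f(x) = \int_{\mathbb R^2} \nabla K(y)\cdot\bigl(u(x)-u(x-y)\bigr)f(x-y)\,dy.
\end{equation*}

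Second, I would estimate the difference $|u(x)-u(x-y)|\leq |y|\,\|\nabla u\|_0$ by the mean value theorem and pull $\|f\|_0$ out as a uniform bound on $|f(x-y)|$. This produces the pointwise estimate
\begin{equation*}
    \bigl|[u\cdot\nabla, T_K]f(x)\bigr| \leq \|\nabla u\|_0\,\|f\|_0 \int_{\mathbb R^2} |y|\,|\nabla K(y)|\,dy.
\end{equation*}
The integral on the right is exactly $\||y|\,\nabla K\|_{L^1(\mathbb R^2)}$, which by hypothesis with $a=1$, $|b|=1$ is bounded by $\lambda^{1-1+s}=\lambda^s$. Taking the supremum in $x$ completes the proof.

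I do not anticipate a serious obstacle: the only non-trivial manipulation is the integration by parts converting $T_K(u\cdot\nabla f)$ into a form matching $u(x)\cdot\nabla T_K f$, and this hinges squarely on $\nabla\cdot u=0$, which is part of the hypotheses. The hypothesis on $\||x|^a\nabla^b K\|_{L^1}$ at $(a,b)=(1,1)$ is calibrated precisely to absorb the factor of $|y|$ produced by the mean value theorem, giving the advertised $\lambda^s$ scaling.
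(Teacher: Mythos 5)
The paper does not reproduce a proof of this proposition (it is quoted from \cite{BSV}, Lemma A.6), but your argument is the standard one given there: rewrite the commutator as $\int \nabla K(y)\cdot\bigl(u(x)-u(x-y)\bigr)f(x-y)\,dy$ using $\div u=0$ to integrate by parts, bound the difference by $|y|\,\|\nabla u\|_0$, and invoke the kernel hypothesis at $(a,|b|)=(1,1)$. This is correct; the only point you gloss over is that the integration by parts against the bounded periodic function $u f$ needs the boundary terms to vanish along a sequence of radii, which follows from $K,\nabla K\in L^1(\R^2)$ and is routine.
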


\section{A bilinear microlocal lemma}
We establish now a bilinear microlocal lemma for the SQG nonlinearity which is very similar to that introduced in \cite{IM}.

Let us recall here the definition of the operator $P_{\approx \lambda}$. Let $F$ be the set of lemma \ref{le-geo} and $A \subset \mathbb R^2$ be an annulus centered at the origin such that for all $\xi \in F$, the vectors $2\xi$ and $\xi/2$ are contained in $A$. Let, then, $\chi:\mathbb R^2 \rightarrow \mathbb R$ be a smooth function with support in a slightly larger annulus $A'$, which moreover satisfies $\chi(x) = 1$, for all $x \in A$. Given $\lambda \in \mathbb N$, we define the rescaled frequency cut-offs:
\begin{equation*}
    \chi_\lambda (\xi) = \chi(\lambda^{-1} \xi)
\end{equation*}
and 
\begin{equation}\label{def.lpproj}
    {P_{\approx \la} f}(\xi) := \sum_{\xi \in \mathbb Z^2} \chi_\lambda(\xi) \hat f(\xi) e^{i \xi \cdot x}.
\end{equation}

\begin{lem}[Bilinear Microlocal Lemma] \label{le-bilinear-odd}
Let $\xi \in \mathbb Z^2$, $a: \mathbb T^2 \rightarrow \mathbb R$ be a smooth function, and $\Phi:\mathbb T^2 \rightarrow \mathbb T^2$ a smooth diffeomorphism satisfying, for all $x \in \mathbb T^2$, $\nabla \Phi^T(x) \xi \in A$, where $A$ is the annulus in the definition of $P_{\approx \lambda}$. Define $\theta_{\xi}: \T^d \to \R$ as
\[\theta_\xi(x)= P_{\approx \la}\left[a(x) \cos{(\lambda \Phi(x)\cdot \xi)}\right]\,.\]
% with $\bar \xi=-\xi$ and $a_{\bar \xi}=a_{\xi}$.
Then, there exists a smooth symmetric $2$-tensor field $B_\la$ such that
\begin{equation}\notag
\nabla^\perp \Lambda^{-1} \theta_\xi \cdot \nabla \theta_\xi =\na^\perp \cdot \div B_{\lambda} 
% + \Lambda^{-1}\Theta_{\xi'} \nabla\Theta_{\xi}
\end{equation}
Moreover, $B_\lambda$ has the expansion 
\begin{equation}\notag
B_{\lambda}(x)=\frac{1}{4} a^2(x)\frac{\nabla\Phi^T\xi\otimes \nabla\Phi^T\xi}{\lambda|\nabla\Phi^T\xi|^3}+\delta B_\lambda(x)
\end{equation}
where $\delta B_\lambda(x)$ is an explicit error term.
\end{lem}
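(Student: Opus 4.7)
The plan is to reduce the nonlinearity $T[\theta_\xi]\cdot\nabla\theta_\xi$ to a bilinear Fourier multiplier in $\nabla^\perp\cdot\div$ form by means of an algebraic identity, and then to extract the principal symbol by Taylor-expanding the phase $\lambda\Phi(x-h)\cdot\eta$ around $h=0$. The error $\delta B_\lambda$ will be defined explicitly as the portion of the kernel integral that sees the higher-order Taylor remainder.

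The starting point is the algebraic identity. With $f$ at Fourier mode $\nu_1$ and $g$ at mode $\nu_2$, the symbol of $T[f]\cdot\nabla g$ is $m(\nu_1,\nu_2)= -\nu_1^\perp\cdot\nu_2 / |\nu_1|$. Its symmetrization (which is all that matters when the two arguments coincide) is
\[ m_s(\nu_1,\nu_2) = \tfrac{1}{2}(\nu_1^\perp\cdot\nu_2)\frac{|\nu_1|-|\nu_2|}{|\nu_1||\nu_2|}. \]
Using $(\nu_1+\nu_2)^\perp\cdot(\nu_1-\nu_2) = -2\nu_1^\perp\cdot\nu_2$ and $(\nu_1+\nu_2)\cdot(\nu_1-\nu_2) = |\nu_1|^2-|\nu_2|^2$, one verifies the identity
\[ m_s(\nu_1,\nu_2) = -(\nu_1+\nu_2)^\perp_a(\nu_1+\nu_2)_b M_{ab}(\nu_1,\nu_2),\quad M(\nu_1,\nu_2) = \tfrac{1}{4}\frac{(\nu_1-\nu_2)\otimes(\nu_1-\nu_2)}{|\nu_1||\nu_2|(|\nu_1|+|\nu_2|)}, \]
which is exactly a $\nabla^\perp\cdot\div$ symbol acting on a symmetric 2-tensor. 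Define then the kernel $K_\lambda$ through $\hat K_\lambda(\nu_1,\nu_2)=M(\nu_1,\nu_2)\chi(\lambda^{-1}\nu_1)\chi(\lambda^{-1}\nu_2)$, where $\chi$ is the cut-off defining $P_{\approx\lambda}$. Since $\chi$ vanishes near the origin, $\hat K_\lambda$ is smooth and compactly supported, so $K_\lambda\in L^1(\mathbb{R}^2\times\mathbb{R}^2)$. Writing $F:=a\cos(\lambda\Phi\cdot\xi)$, set
\[ B_\lambda(x):=\int_{\mathbb{R}^2\times\mathbb{R}^2} K_\lambda(h_1,h_2)\,F(x-h_1)\,F(x-h_2)\,dh_1\,dh_2, \]
treating $F$ as its periodic extension. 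A direct Fourier-side computation using the algebraic identity and $\theta_\xi = P_{\approx\lambda}F$ yields the identity $T[\theta_\xi]\cdot\nabla\theta_\xi = \nabla^\perp\cdot\div B_\lambda$.

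To extract the leading order, decompose $F=\tfrac{1}{2}(F^++F^-)$ with $F^\eta=ae^{i\lambda\Phi\cdot\eta}$, so that $B_\lambda = \tfrac{1}{4}\sum_{\eta,\zeta\in\{\pm\xi\}} B^{\eta,\zeta}_\lambda$ with each $B^{\eta,\zeta}_\lambda$ an explicit integral. Taylor-expand
\[ \Phi(x-h)=\Phi(x)-\nabla\Phi(x)h+R_\Phi(x,h),\qquad R_\Phi(x,h)=\sum_{j,k}h_jh_k\!\int_0^1\!(1-s)\partial_j\partial_k\Phi(x-sh)\,ds, \]
and $a(x-h)=a(x)+R_a(x,h)$ with $R_a(x,h)=\int_0^1\nabla a(x-sh)\cdot(-h)\,ds$. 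Writing $F^\eta(x-h)= a(x)e^{i\lambda\Phi(x)\cdot\eta}e^{-i\lambda\nabla\Phi(x)^T\eta\cdot h}+Y^\eta_\lambda(x,h)$ (with $Y^\eta_\lambda(x,0)=0$) and plugging into $B^{\eta,\zeta}_\lambda$, each such term splits as a principal contribution
\[ a^2(x)\,e^{i\lambda\Phi(x)\cdot(\eta+\zeta)}\,\hat K_\lambda\!\bigl(\lambda\nabla\Phi(x)^T\eta,\,\lambda\nabla\Phi(x)^T\zeta\bigr) \]
plus a remainder $\delta B^{\eta,\zeta}_\lambda$ linear in $Y^\eta_\lambda$ or $Y^\zeta_\lambda$ and hence controllably small. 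For $\eta=\zeta$ the principal term vanishes because $\nu_1-\nu_2=0$ in $M$; for $\eta=-\zeta=\pm\xi$ the hypothesis $\nabla\Phi^T\xi\in A$ gives $\chi=1$ and a direct computation yields
\[ \hat K_\lambda(\lambda\nabla\Phi^T\xi,-\lambda\nabla\Phi^T\xi)=\frac{\nabla\Phi^T\xi\otimes\nabla\Phi^T\xi}{2\lambda|\nabla\Phi^T\xi|^3}. \]
Summing the two nontrivial cross terms with the prefactor $1/4$ produces the asserted leading tensor $\tfrac{1}{4}a^2\,\nabla\Phi^T\xi\otimes\nabla\Phi^T\xi/(\lambda|\nabla\Phi^T\xi|^3)$, with $\delta B_\lambda=\tfrac{1}{4}\sum_{\eta,\zeta}\delta B^{\eta,\zeta}_\lambda$ given by the explicit integral formula just described.

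The main obstacle is the bookkeeping rather than any single analytic step: producing a $\delta B_\lambda$ that is \emph{explicit enough} to admit the sharp Fourier pointwise estimates in Lemma~\ref{le_est_delta_B}. In particular, one must arrange the Taylor remainders so that in each $\delta B^{\eta,\zeta}_\lambda$ every factor carries either an extra power of $h_i$ (yielding a gain of $\lambda^{-1}$ after integration against the $\lambda$-localized kernel $K_\lambda$) or an explicit derivative of $a$ or $\Phi$. The identification $R_\Phi(x,h)=\int_0^1(1-s)h^jh^k\partial_j\partial_k\Phi(x-sh)\,ds$ and an analogous formula for $R_a$ achieve this, and give the form of $\delta B^{\eta,\zeta}_\lambda$ that is exploited in the subsequent estimates.
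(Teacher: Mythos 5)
Your proposal is correct and follows essentially the same route as the paper's proof: the same algebraic identity recasting the symmetrized bilinear symbol as $-(\nu_1+\nu_2)^\perp_a(\nu_1+\nu_2)_b$ acting on the tensor symbol $\tfrac14(\nu_1-\nu_2)\otimes(\nu_1-\nu_2)/\big(|\nu_1||\nu_2|(|\nu_1|+|\nu_2|)\big)$, the same Schwartz kernel $K_\lambda$, and the same Taylor expansion of $a$ and $\Phi$ isolating $\widehat{K_\lambda}(\lambda\nabla\Phi^T\eta,\lambda\nabla\Phi^T\zeta)$, with only the cross terms $\eta=-\zeta$ surviving. The only difference is cosmetic: you symmetrize directly on the Fourier side, whereas the paper first adds the gradient $\nabla(\Lambda^{-1}\Theta_\zeta\Theta_\eta)$ in physical space (which is annihilated by $\nabla^\perp\cdot$) before passing to symbols.
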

\begin{proof}
Let us denote $\Theta_\xi' = a(x) e^{i\lambda \Phi(x)\cdot \xi}$ and $\Theta_\xi = P_{\approx \la} \Theta_\xi'$. Then,
$$ \theta_\xi = \frac{\Theta_\xi + \Theta_{- \xi}}2\,. $$

We begin by noting that 
\begin{equation*}
    \nabla^\perp \Lambda^{-1} \theta_\xi \cdot \nabla \theta_\xi= \frac{1}{4} \nabla^\perp \cdot  \sum_{\zeta, \eta \in \{\xi, - \xi\}} \Lambda^{-1} \Theta_\zeta \nabla \Theta_\eta = \frac{1}{8} \nabla^\perp \cdot \sum_{\zeta, \eta \in \{\xi, -\xi \}} \Lambda^{-1} \Theta_\zeta \nabla \Theta_\eta + \Lambda^{-1} \Theta_\eta \nabla \Theta_\zeta.
\end{equation*}
Since gradients are in the kernel of $\nabla^\perp \cdot$, we further rewrite
\begin{equation*}
     \Lambda^{-1} \Theta_\zeta \nabla \Theta_\eta + \Lambda^{-1} \Theta_\eta \nabla \Theta_\zeta = \nabla (\Lambda^{-1} \Theta_\zeta \Theta_\eta) + \Lambda^{-1}\Theta_\eta \nabla \Theta_\zeta - \Theta_\eta \nabla \Lambda^{-1}\Theta_\zeta,
\end{equation*}
so that 
\begin{equation*}
    \nabla^\perp \Lambda^{-1} \theta_\xi \cdot \nabla \theta_\xi = \frac{1}{8} \nabla^\perp \cdot \sum_{\zeta, \eta \in \{\xi, - \xi\}} \underbrace{\Lambda^{-1}\Theta_\eta \nabla \Theta_\zeta - \Theta_\eta \nabla \Lambda^{-1}\Theta_\zeta}_{=: Q[\Theta_\eta, \Theta_\zeta]}.
\end{equation*}
On the other hand, the same procedure can be done with the roles of $\Theta_\zeta$ and $\Theta_\eta$ reversed. This leads to the expression 
\begin{equation*}
    \nabla^\perp \Lambda^{-1} \theta_\xi \cdot \nabla \theta_\xi = \frac{1}{16} \nabla^\perp \cdot \sum_{\zeta, \eta \in \{\xi, - \xi\}} Q[\Theta_\eta, \Theta_\zeta] + Q[\Theta_\zeta, \Theta_\eta].
\end{equation*}
We analyze now the bilinear Fourier multiplier operator $Q$. 

We have 
\begin{eqnarray*}
    \widehat {Q[\Theta_\eta, \Theta_\zeta]}(k) &=& \sum_{j \in \mathbb Z^2} i j \bigg(\frac{1}{|k-j|} - \frac{1}{|j|} \bigg) \widehat \Theta_\eta(k-j) \widehat \Theta_\zeta (j) \\ 
    &=& \sum_{j \in \mathbb Z^2} i j \frac{k \cdot (2j - k)}{|k-j||j|(|k-j|+|j|)} \chi_\lambda(k-j) \chi_\lambda(j) \widehat{\Theta_\eta'}(k-j) \widehat{\Theta_\zeta'}(j).
\end{eqnarray*}
It follows, then, that 
\begin{eqnarray*}
    Q[\Theta_\eta, \Theta_\zeta](x) &=& \sum_{j,k \in \mathbb Z^2} i j \frac{(j+k)\cdot(j-k)}{|k||j|(|k|+|j|)} \chi_\lambda(k) \chi_\lambda(j) \widehat{\Theta_\eta'}(k) \widehat{\Theta_\zeta'}(j) e^{i(j+k)\cdot x} \\ 
    & = & \sum_{j,k \in \mathbb Z^2} i \frac{j \otimes (j-k)}{|k||j|(|k|+|j|)}(j+k) \chi_\lambda(k) \chi_\lambda(j) \widehat{\Theta_\eta'}(k) \widehat{\Theta_\zeta'}(j) e^{i(j+k)\cdot x}.
\end{eqnarray*}
Consequently, 
\begin{eqnarray*}
    Q[\Theta_\eta, \Theta_\zeta] + Q[\Theta_\zeta, \Theta_\eta] &=& \sum_{j,k \in \mathbb Z^2} i \frac{j \otimes (j-k) + k \otimes (k-j)}{|k||j|(|k|+|j|)}(j+k) \chi_\lambda(k) \chi_\lambda(j) \widehat{\Theta_\eta'}(k) \widehat{\Theta_\zeta'}(j) e^{i(j+k)\cdot x} \\ 
    &=& \sum_{j,k \in \mathbb Z^2}i \frac{(j-k) \otimes (j-k)}{|k||j|(|k|+|j|)}(j+k) \chi_\lambda(k) \chi_\lambda(j) \widehat{\Theta_\eta'}(k) \widehat{\Theta_\zeta'}(j) e^{i(j+k)\cdot x}.
\end{eqnarray*}
We define, then, 
\begin{equation*}
    B_\lambda(x) = \frac{1}{16}\sum_{\eta, \zeta \in \{\xi, - \xi\}}\sum_{j,k \in \mathbb Z^2} \frac{(j-k) \otimes (j-k)}{|k||j|(|k|+|j|)} \chi_\lambda(k) \chi_\lambda(j) \widehat{\Theta_\eta'}(k) \widehat{\Theta_\zeta'}(j) e^{i(j+k)\cdot x},
\end{equation*}
and note that it is a symmetric $2$-tensor field, as wanted. 

It remains to justify the claimed expansion. For this purpose, let $K_\lambda$ be the kernel defined on $\mathbb R^2 \times \mathbb R^2$ and taking values in the space of symmetric 2-tensors: 
\begin{equation*}
    K_\lambda(h_1, h_2) = \frac{1}{16 (2\pi)^4} \int_{\mathbb R^2 \times \mathbb R^2} \frac{(\nu_1 - \nu_2) \otimes (\nu_1 - \nu_2)}{|\nu_1||\nu_2|(|\nu_1| + |\nu_2|)} \chi_\lambda(\nu_1) \chi_\lambda(\nu_2) e^{i(\nu_1 \cdot h_1 + \nu_2 \cdot h_2)} d\nu_1 d\nu_2.
\end{equation*}
We note that since $\chi_\lambda$ is smooth and compactly supported away from the origin, $K_\lambda$ is Schwartz. We have, then, the following expression for $B_\lambda$: 
\begin{equation*}
    B_\lambda(x) = \sum_{\eta, \zeta \in \{\xi, -\xi\}} \underbrace{\int_{\mathbb R^2 \times \mathbb R^2} K_\lambda (h_1, h_2) \Theta_\eta'(x-h_1) \Theta_\zeta'(x-h_2) dh_1 dh_2}_{=:B_\lambda^{\eta, \zeta}},
\end{equation*}
where we identify $\Theta_\eta'$ and $\Theta_\zeta'$ with their periodic extensions. Recalling the definitions of $\Theta_\eta'$ and $\Theta_\zeta'$, we have 
\begin{equation*}
    B_\lambda^{\eta,\zeta}(x) = \int_{\mathbb R^2 \times \mathbb R^2} K_\lambda(h_1, h_2) a(x-h_1)a(x-h_2) e^{i\lambda \Phi(x-h_1)\cdot \eta} e^{i \lambda \Phi(x-h_2)\cdot \zeta}\, dh_1dh_2.
\end{equation*}
The desired expansion will follow as a consequence of Taylor's formula: 
\begin{equation*}
    \Phi(x-h) = \Phi(x) - \nabla \Phi(x)h + R_{\Phi}(x, h),
\end{equation*}
where the remainder is given by
\begin{equation*}
    R_\Phi(x, h) = \sum_{j,k = 1}^2 h^j h^k \int_{0}^1 (1-s) \partial_j \partial_k \Phi(x-sh) ds.
\end{equation*}
Then, 
\begin{equation*}
    a(x-h)e^{i\lambda \Phi(x-h)\cdot \eta} = \big(a(x) +\underbrace{(a(x-h)e^{i\lambda R_\Phi(x,h)\cdot \eta} - a(x))}_{=:Y^\eta_\lambda(x,h)} \big) e^{i\lambda \Phi(x)\cdot \eta}e^{-i \lambda \nabla \Phi(x) h \cdot \eta}.
\end{equation*}
For convenience of notation, we also introduce the function
\begin{equation*}
    Y_\lambda^{\eta,\zeta}(x,h_1,h_2) = a(x)(Y_\lambda^\eta(x,h_1) + Y_\lambda^\zeta(x,h_2)) +  Y_\lambda^\eta(x,h_1)Y_\lambda^\zeta(x,h_2).
\end{equation*}
With this expansion, we have
\begin{eqnarray*}
    B_\lambda^{\eta, \zeta}(x) &=& e^{i\lambda \Phi(x)\cdot(\eta+\zeta)} \int_{\mathbb R^2 \times \mathbb R^2} a^2(x) K_\lambda(h_1, h_2) e^{-i\lambda (\nabla \Phi^T \eta)\cdot h_1} e^{-i\lambda (\nabla \Phi^T \zeta)\cdot h_2} dh_1 dh_2 \\ 
    && + \underbrace{e^{i\lambda \Phi(x)\cdot(\eta+\zeta)} \int_{\mathbb R^2 \times \mathbb R^2} Y_\lambda^{\eta,\zeta}(x,h_1,h_2)K_\lambda(h_1, h_2) e^{-i\lambda (\nabla \Phi^T \eta)\cdot h_1} e^{-i\lambda (\nabla \Phi^T \zeta)\cdot h_2} dh_1 dh_2}_{=:\delta B_\lambda^{\eta, \zeta}(x)}. 
\end{eqnarray*}
We have, then, 
\begin{equation*}
    B_\lambda^{\eta, \zeta}(x) = a^2(x) e^{i\lambda \Phi(x)\cdot(\eta+\zeta)} \widehat{K_\lambda}(\lambda \nabla \Phi^T \eta, \lambda \nabla \Phi^T \zeta) + \delta B_\lambda^{\eta, \zeta}(x).
\end{equation*}
Note that $\widehat {K_\lambda} (\nu, \nu) = 0$ for all $\nu \in \mathbb R^2$, while 
\begin{equation*}
    \widehat{K_\lambda}(\nu, -\nu) = \frac{1}{8} \frac{\nu \otimes \nu}{|\nu|^3},
\end{equation*}
for all $\nu \in \chi_\lambda^{-1}(\{1\})$. In particular, the latter holds for $\nu = \pm \lambda \nabla \Phi^T \xi$.
We conclude, then, that 
\begin{equation*}
    B_\lambda(x) = \frac{1}{4} a^2(x)\frac{\nabla\Phi^T\xi\otimes \nabla\Phi^T\xi}{\lambda|\nabla\Phi^T\xi|^3}+\delta B_\lambda(x),
\end{equation*}
with 
\begin{equation*}
    \delta B_\lambda(x) = \sum_{\eta, \zeta \in \{\xi, -\xi\}} \delta B_\lambda^{\eta, \zeta}(x).
\end{equation*}

\end{proof}

In order to estimate the error term $\delta B_\xi$, we will need the following estimate, obtained by scaling, for the physical space kernel $K_\la$. For $\bar h = (h_1,h_2) \in \R^2\times \R^2$, we have for every $m\in \N$
\begin{equation}\label{est.K}
    \la^m \||\bar h|^m K_\la (h_1,h_2)\|_{L^1(\R^2\times \R^2)} \lesssim_m \la^{-1}\,.
\end{equation}

% Note that \[Q_{\xi,\xi}+Q_{\bar \xi,\bar \xi}=0.\]
% Hence for 
% \[\Theta_{\xi,r}=:\frac12(\Theta_\xi+\Theta_{\bar \xi})=P_\lambda[a_\xi(x,t)\cos(\lambda \Phi(x,t)\cdot \xi)]\]
% which is real valued, we have
% \begin{equation}\notag
% \begin{split}
% \Lambda^{-1}\Theta_{\xi,r}\nabla\Theta_{\xi,r}&=\frac14Q_{\xi,\bar \xi}+\frac18(Q_{\xi,\xi}+Q_{\bar \xi,\bar \xi})=\frac14Q_{\xi,\bar \xi}
% =\frac14\nabla\cdot B_\lambda[\Theta_\xi,\Theta_\xi].
% \end{split}
% \end{equation}

\medskip

\section{Tools of convex integration}
\label{sec-geo}

\subsection{A geometric lemma}

\begin{lem}[\cite{S12}]\label{le-geo}
Denote by $B_{1/2}(Id)$ the metric ball centered at the identity in the space $\mathcal S^{2\times 2}$ of symmetric $2\times 2$ matrices. There exist a finite set $F\subset \mathbb Z^2$ and smooth functions $\gamma_{\xi}: B_{1/2}(Id)\to \mathbb R$ for any $\xi\in F$ such that 
\[R=\sum_{\xi\in F}\gamma_{\xi}^2(R)\xi\otimes \xi\]
for $\forall R\in B_{1/2}(Id)$.
\end{lem}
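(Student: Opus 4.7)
The plan is to construct $F$ explicitly and exhibit the decomposition by linear algebra, taking advantage of the fact that $\mathcal S^{2\times 2}$ is only $3$-dimensional. The strategy has two parts: first, exhibit a single positive rank-one decomposition of $\mathrm{Id}$ using lattice vectors; second, extend this to a smoothly-varying positive decomposition on the whole ball $B_{1/2}(\mathrm{Id})$ by a linear right-inverse argument, and finally take square roots.

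For the first part, I propose taking $F = \{e_1,\, e_2,\, e_1+e_2,\, e_1-e_2\} \subset \mathbb Z^2$ (or a symmetrized version thereof). A direct computation gives the explicit identity
\[
\tfrac{1}{2}\,e_1\otimes e_1 + \tfrac{1}{2}\,e_2\otimes e_2 + \tfrac{1}{4}(e_1+e_2)\otimes(e_1+e_2) + \tfrac{1}{4}(e_1-e_2)\otimes(e_1-e_2) \;=\; \mathrm{Id},
\]
so $\mathrm{Id}$ is a strictly positive combination of the rank-one tensors $\xi\otimes\xi$, $\xi\in F$. Moreover, since these four tensors span the $3$-dimensional space $\mathcal S^{2\times 2}$ (any three of them are already linearly independent), the linear map $\Phi:\mathbb R^F \to \mathcal S^{2\times 2}$ defined by $\Phi(c) = \sum_{\xi\in F} c_\xi\, \xi\otimes\xi$ is surjective.

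For the second part, I choose a linear right inverse $L:\mathcal S^{2\times 2}\to \mathbb R^F$ of $\Phi$ arranged so that $L(\mathrm{Id})$ equals the positive coefficient vector $(1/2,1/2,1/4,1/4)$ exhibited above; this is possible because the affine family of right inverses of $\Phi$ has dimension $|F|-3=1$, providing sufficient freedom. Setting $c_\xi(R):= L(R)_\xi$ then produces linear, hence smooth, functions satisfying $\sum_{\xi\in F} c_\xi(R)\,\xi\otimes\xi = R$ identically, with $c_\xi(\mathrm{Id})>0$. By continuity, $c_\xi$ remains strictly positive on a neighbourhood of $\mathrm{Id}$; on this neighbourhood I define
\[
\gamma_\xi(R) := \sqrt{c_\xi(R)}\,,
\]
which is smooth precisely because $c_\xi$ is smooth and bounded away from zero.

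The only subtle step is ensuring that the positivity neighbourhood contains all of $B_{1/2}(\mathrm{Id})$. This is a quantitative statement controlling $\|L\|$ against $\min_\xi c_\xi(\mathrm{Id})=1/4$; with the explicit $F$ above one can verify it directly, but if it fails one simply enlarges $F$ by adjoining further lattice vectors (e.g.\ $2e_1$, $2e_2$, etc.), which shrinks $\|L\|$ relative to $\min_\xi c_\xi(\mathrm{Id})$ and enlarges the positivity region without affecting the rest of the argument. I expect this quantitative check to be the only place requiring care; the remainder is a soft combination of surjectivity of $\Phi$ with smoothness of the square-root on the positive half-line.
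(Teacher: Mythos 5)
Your approach is the standard one and is, in essence, the proof in the cited reference \cite{S12}; the paper itself offers no proof of this lemma. Your rank-one identity for $\mathrm{Id}$ is correct, the four tensors do span $\mathcal S^{2\times 2}$, and the quantitative check you defer does in fact close with a single explicit linear right inverse: for $R=\begin{pmatrix} p & r\\ r & q\end{pmatrix}$ take the coefficients $\tfrac{3p-q}{4}$, $\tfrac{3q-p}{4}$, $\tfrac{p+q+4r}{8}$, $\tfrac{p+q-4r}{8}$ attached to $e_1$, $e_2$, $e_1+e_2$, $e_1-e_2$ respectively; one checks directly that this inverts $\Phi$, reproduces your values $(\tfrac12,\tfrac12,\tfrac14,\tfrac14)$ at the identity, and, writing $R=\mathrm{Id}+E$, stays strictly positive on the Frobenius ball of radius $\tfrac12$ by Cauchy--Schwarz (e.g.\ $|E_{11}+E_{22}+4E_{12}|\le\sqrt{10}\,\|E\|_F<2$). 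Two corrections, though. First, a minor one: the affine family of right inverses has dimension $3(|F|-3)=3$, not $|F|-3$. Second, and more substantively, your fallback of adjoining $2e_1$, $2e_2$ would accomplish nothing, since $(2\xi)\otimes(2\xi)=4\,\xi\otimes\xi$ is parallel to a tensor already in the family and so neither enlarges the positivity cone nor helps the coefficient bounds; had enlargement been necessary, you would need genuinely new directions (e.g.\ $2e_1\pm e_2$), or else the standard soft fix of covering the compact ball by finitely many neighbourhoods on which a local linear selection is positive and patching with a squared partition of unity (a convex combination of positive decompositions is again one).
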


\subsection{An inverse divergence operator}
We use the following inverse-divergence operator
\begin{equation} \label{invdiv}
    (\div^{-1} u)^{ij} =  \Delta^{-1} (\partial_i u^j + \partial_j  u^i -  \delta_{ij}\div u),
\end{equation}
which maps smooth, mean-zero vector fields $u$ to smooth, symmetric and trace-free $2$-tensors $\div^{-1}u$. 

\begin{prop}[\cite{ChDLS12}]
If $u$ is a smooth, mean-zero vector field, then the $2$-tensor field $\div^{-1} u$ defined by \eqref{invdiv} is symmetric and satisfies 
\begin{equation*}
    \div \div^{-1} u = u.  
\end{equation*}
\end{prop}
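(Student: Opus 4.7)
The plan is to verify the two claimed properties directly from the definition \eqref{invdiv}. The symmetry property is immediate: swapping the indices $i$ and $j$ in the expression
\[ (\div^{-1} u)^{ij} = \Delta^{-1}(\partial_i u^j + \partial_j u^i - \delta_{ij} \div u) \]
leaves it unchanged, since the first two terms exchange roles and $\delta_{ij}$ is symmetric. No computation is needed.

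For the divergence identity, I would simply compute $\partial_j (\div^{-1} u)^{ij}$ by commuting $\partial_j$ with $\Delta^{-1}$, which is valid on mean-zero functions. This gives
\[ \partial_j(\div^{-1} u)^{ij} = \Delta^{-1}\bigl( \partial_i \partial_j u^j + \partial_j \partial_j u^i - \partial_i \div u\bigr) = \Delta^{-1} \Delta u^i = u^i, \]
where the two terms involving $\partial_i \div u$ cancel and the middle term is $\Delta u^i$. The final equality uses that $u$ is mean-zero, so that $\Delta^{-1} \Delta$ acts as the identity on each component $u^i$ (which is also mean-zero).

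The only subtlety is the mean-zero hypothesis, which is needed twice: once to apply $\Delta^{-1}$ to the right-hand side of the definition (note that the expression in parentheses has zero mean because each term is a derivative), and once to justify that $\Delta^{-1} \Delta u^i = u^i$. Since both checks reduce to routine Fourier-side manipulations on the torus, I do not anticipate any real obstacle.
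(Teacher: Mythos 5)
Your proof is correct and is exactly the standard direct verification that the paper (which simply cites \cite{ChDLS12} without reproducing the argument) relies on: symmetry is manifest from the formula, and the divergence computation with the cancellation of the $\partial_i \div u$ terms plus the mean-zero hypothesis gives $\div\div^{-1}u = u$. No gaps.
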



\begin{thebibliography}{XX}

\bibitem{BDLSV}
T. Buckmaster, C. De Lellis, L. Sz\'ekelyhidi, and V. Vicol.
\newblock {\em Onsager's conjecture for admissible weak solutions}.
\newblock Comm. Pure Appl. Math., https://doi.org/10.1002/cpa.21781. 2018.

\bibitem{BSV}
T. Buckmaster, S. Shkoller, and V. Vicol.
\newblock {\em Nonuniqueness of weak solutions to the SQG equation}.
\newblock Communications on Pure and Applied Mathematics, Vol. LXXII, 1809--1874, 2019.

\bibitem{BVb}
T.~Buckmaster, and V.~Vicol.
\newblock {\em Convex integration constructions in hydrodynamics.}
\newblock Bull. Amer. Math. Soc. 58, 1--44, 2021.

\bibitem{BV20}
T.~Buckmaster, and V.~Vicol.
\newblock {\em Convex integration and phenomenologies in turbulence.}
\newblock  EMS Surveys in Mathematical Sciences, 6(1):173--263, 2020.

\bibitem{BHP_euler}
A. Bulut, M.K. Huynh, and S. Palasek.
\newblock {\em Convex integration above the Onsager exponent for the forced Euler equations}.
\newblock arXiv:2301.00804, 2023.

\bibitem{BHP}
A. Bulut, M.K. Huynh, and S. Palasek.
\newblock {\em Non-uniqueness up to the Onsager threshold for the forced SQG equation}.
\newblock arXiv:2310.12947, 2023.

\bibitem{CaV}
L.A. Caffarelli and A. Vasseur.
\newblock {\em Drift diffusion equations with fractional diffusion and the quasi-geostrophic equation}.
\newblock Ann. of Math. (2), 171(3):1903--1930, 2010.


\bibitem{CZ}
A.P.~Calder\'on, and A.~Zygmund. 
\newblock {\em Singular integrals and periodic functions.}
\newblock  Studia Math., 14:249--271, 1954.

\bibitem{CaCoS}
A. Castro, D. C\'ordoba and  J. G\'omez-Serrano.
\newblock {\em Global smooth solutions for the inviscid SQG
equation.}
\newblock Vol. 266, no. 1292. American Mathematical Society, 2020.

\bibitem{CasC}
A. Castro and D. C\'ordoba.
\newblock {\em Global existence, singularities and ill-posedness for a nonlocal flux}.
\newblock Adv. Math., 219(6): 1916--1936, 2008.

%\bibitem{CCCF}
%D. Chae, A. C\'ordoba, D. C\'ordoba and M.A. Fontelos.
%\newblock {\em Finite time singularities in a 1D model of the quasi-geostrophic equation}.
%\newblock Adv. Math., 194: 203--223, 2005.

\bibitem{CKL}
X. Cheng, H. Kwon, and D. Li.
\newblock {\em Non-uniqueness of steady-state weak solutions to the surface quasi-geostrophic equations}.
\newblock Commun. Math. Phys. 388, 1281--1295, 2021.

\bibitem{ChDLS12}
A.~Choffrut, C.~De Lellis, L.~Sz{\'e}kelyhidi~Jr.
\newblock { \em Dissipative continuous Euler flows in two and three dimensions.} 
\newblock  arXiv:1205.1226, 2012.

%\bibitem{CD}
%A. Cheskidov and M. Dai.
%\newblock {\em The existence of a global attractor for the forced critical surface quasi-geostrophic equation in $L^2$}.
%\newblock Journal of Mathematical Fluid Mechanics, 20: 213--225, 2018.

\bibitem{CET}
P.~Constantin, W.~E, and E.~Titi.
\newblock {\em Onsager's conjecture on the energy conservation for solutions of
{E}uler's equation.}
\newblock  Comm. Math. Phys., 165(1):207--209, 1994.

\bibitem{CMT}
P. Constantin, A. J. Majda, and E. Tabak.
\newblock {\em Formation of strong fronts in the 2-D quasi-geostrophic thermal active scalar}.
\newblock Nonlinearity, 7(6): 1495--1533, 1994.

%\bibitem{CTV1}
%P. Constantin, A. Tarfulea, and V. Vicol.
%\newblock {\em Absence of anomalous dissipation of energy in forced two dimensional fluid equations}.
%\newblock Archive for Rational Mechanics and Analysis, 212: 875--903, 2014.

\bibitem{CTV2}
P. Constantin, A. Tarfulea, and V. Vicol.
\newblock {\em Long time dynamics of forced critical SQG}.
\newblock Communications in Mathematical Physics, 335: 93--141, 2015.

\bibitem{CoVi}
P. Constantin and V. Vicol.
\newblock {\em Nonlinear maximum principles for dissipative linear nonlocal operators and applications}.
\newblock Geometric and Functional Analysis, 22(5): 1289--1321, 2012.

\bibitem{CW}
P. Constantin and J. Wu.
\newblock {\em Regularity of H\"older continuous solution of the supercritical quasi-geostrophic equation}.
\newblock Ann. Inst. H. Poincar\'e Anal. Non Lin\'eaire, 25: 1103--1110, 2008.

\bibitem{CDLS12}
S.~Conti, C.~De Lellis, L.~Sz{\'e}kelyhidi~Jr.
\newblock {\em \textit{h}-principle and rigidity for $C^{1,\alpha}$ isometric embeddings.}
\newblock Nonlinear partial differential equations, 83--116, Springer, 2012. 

\bibitem{Cor}
D. C\'ordoba.
\newblock{\em Nonexistence of simple hyperbolic blow-up for the quasi-geostrophic equation.}
\newblock Ann.
of Math. (2), 148(3): 1135--1152, 1998.


\bibitem{CC}
A. C\'ordoba and D. C\'ordoba.
\newblock {\em A maximum principle applied to quasi-geostrophic equations}.
\newblock Comm. Math. Phys., 249(3): 511--528, 2004.

\bibitem{CF} 
D. C\'ordoba and C. Fefferman.
\newblock {\em  Growth of solutions for QG and 2D Euler equations.}
\newblock J. Amer.
Math. Soc., 15(3):665--670, 2002.

\bibitem{DP-s}
M. Dai and Q. Peng.
\newblock {\em Non-unique stationary solutions of forced SQG}.
\newblock arXiv:2302.03283, 2023.

\bibitem{DP}
M. Dai and Q. Peng.
\newblock {\em Non-unique weak solutions of forced SQG}.
\newblock arXiv:2310.13537, 2023.

\bibitem{DLS2}
C. De Lellis, and L. Sz\'ekelyhidi.
\newblock {\em The Euler equations as a differential inclusion}.
\newblock Ann. of Math.,  Vol.170 No.3: 1417--1436, 2009.

\bibitem{DLS1}
C. De Lellis, and L. Sz\'ekelyhidi.
\newblock {\em Dissipative continuous Euler flows}.
\newblock Invent. Math., Vol.193 No. 2: 377--407, 2013.

\bibitem{DLS3}
C. De Lellis, and L. Sz\'ekelyhidi.
\newblock {\em The h-principle and the equations of fluid dynamics}.
\newblock Bull. Amer. Math. Soc. (N.S.), 49 (3): 347--375, 2012.

\bibitem{DLSs1}
C. De Lellis, and L. Sz\'ekelyhidi.
\newblock {\em On turbulence and geometry: from Nash to Onsager}.
\newblock Notices Amer. Math. Soc., 66(5): 677-685, 2019.

\bibitem{DLSs2}
C. De Lellis, and L. Sz\'ekelyhidi.
\newblock {\em Weak stablity and closure in turbulence}.
\newblock Philos. Trans. Roy Soc. A, 380 (2218): 16pp, 2022.


% \bibitem{DLS14}
% C.~De~Lellis and L.~Sz\'ekelyhidi, Jr.
% \newblock Dissipative {E}uler flows and {O}nsager's conjecture.
% \newblock {\em J. Eur. Math. Soc. (JEMS)}, 16(7):1467--1505, 2014.

\bibitem{GR23}
V. Giri and R. O. Radu.
\newblock {\em The 2D Onsager conjecture: a Newton-Nash iteration}.
\newblock arXiv:2305.18105, 2023.

\bibitem{Is}
P. Isett.
\newblock {\em A Proof of Onsager's Conjecture}.
\newblock Ann. of Math., Vol.188 No.3: 1--93, 2018.


\bibitem{Isett}
P. Isett.
\newblock {\em H\"older continuous Euler flows with compact support in time}.
\newblock ProQuest LLC, Ann Arbor, MI, 2013. Thesis (Ph.D.)-Princeton University.

\bibitem{IL}

 P. Isett and S.-Z. Looi. 
 \newblock{\em A proof of Onsager’s conjecture for the SQG equation}. 
 \newblock in preparation. 2024

\bibitem{IM}
P. Isett and A. Ma.
\newblock {\em A direct approach to nonuniqueness and failure of compactness for the SQG equation}.
\newblock Nonlinearity, 34(5): 3122--3162, 2021.

\bibitem{IMnew}
P. Isett and A. Ma.
\newblock {\em On the conservation laws and the structure of the nonlinearity for SQG and its generalizations}.
\newblock  	arXiv:2403.08279, 2024.

\bibitem{IV}
P. Isett and V. Vicol.
\newblock {\em H\"older continuous solutions of active scalar equations}.
\newblock Ann. PDE., 1(1): 1--77, 2015.

\bibitem{KN}
A. Kiselev and F. Nazarov.
\newblock{\em A variation on a theme of Caffarelli and Vasseur.}
\newblock Zap. Nauchn.
Sem. S. Peterburg. Otdel. Mat. Inst. Steklov. (POMI), 370(Kraevye Zadachi Matematicheskoi
Fiziki i Smezhnye Voprosy Teorii Funktsii. 40):58--72, 220, 2009.

\bibitem{KN2}
A. Kiselev and F. Nazarov.
\newblock{\em A simple energy pump for the surface quasi-geostrophic equation.}
\newblock in Nonlinear Partial Differential Equations, pages 175--179. Springer, 2012.

\bibitem{KNV}
A. Kiselev, F. Nazarov, and A. Volberg. 
\newblock{ \em Global well-posedness for the critical 2D dissipative
quasi-geostrophic equation.}
\newblock Invent. Math., 167(3):445--453, 2007.

\bibitem{Mar}
F. Marchand.
\newblock {\em Existence and regularity of weak solutions to the quasi-geostrophic equations in the spaces $L^p$ or $\dot H^{-\frac12}$}.
\newblock Commun. Math. Phys., 277(1):45--67, 2008.

\bibitem{MS}
C. Muscalu and W. Schlag.
\newblock {\em Classical and Multilinear Harmonic Analysis}.
\newblock{Cambridge Studies in Advanced Mathematics}, Cambridge University Press, 2013.

\bibitem{Nash}
J. Nash. 
\newblock {\em $C^1$ isometric imbeddings}.
\newblock Ann. Math., 60:383–396, 1954.

\bibitem{Onsager49}
L.~Onsager.
\newblock {\em Statistical hydrodynamics.}
\newblock Il Nuovo Cimento (1943-1954), 6:279--287, 1949.

\bibitem{Ped}
J. Pedlosky.
\newblock {\em Geophysical Fluid Dynamics}.
\newblock Springer, New York, 1982.

\bibitem{Res}
S.G. Resnick.
\newblock {\em Dynamical problems in non-linear adjective partial differential equations}.
\newblock Ph. D. Thesis, University of Chicago, 1995.

\bibitem{S12}
L.~Sz\'ekelyhidi, Jr.
\newblock {\em From isometric embeddings to turbulence.}
\newblock HCDTE lecture notes. Part II. Nonlinear hyperbolic PDEs, dispersive and transport equations, 7, 2012.

\end{thebibliography}
\end{document}